\DeclareMathAlphabet{\pazocal}{OMS}{zplm}{m}{n}
\DeclareSymbolFont{tipa}{T3}{cmr}{m}{n}
\DeclareMathAccent{\invbreve}{\mathalpha}{tipa}{16}
\pgfplotsset{compat=newest}
\newtheorem{theorem}{Theorem}[section]
\newtheorem*{theorem*}{Theorem}
\newtheorem{lemma}{Lemma}[section]
\newtheorem*{lemma*}{Lemma}
\newtheorem{corollary}{Corollary}[section]
\newtheorem{prop}{Proposition}[section]
\theoremstyle{definition}
\newtheorem{definition}{Definition}[section]
\newtheorem*{remark*}{Remark}
\newtheorem{remark}{Remark}
\newcommand{\ignore}[1]{}
\title{Classification of analytic $\text{SO}^o(p,q)$-actions on closed $(p+q-1)$-dimensional manifolds I :  $p, q \geq 3$}
\author{{\bf Spyridon Lentas} }
\date{}
\begin{document}

\maketitle

\begin{abstract}
	This paper provides a classification of analytic actions of the semi-orthogonal group $\text{SO}^\circ(p,q)$, for $p,q \geq 3$, on closed, connected $(p+q-1)$-dimensional manifolds. Adapting Uchida's construction of $\text{SO}^\circ(p,q)$ actions on $\text{S}^{p+q-1}$, we explicitly construct analytic actions of $\text{SO}^\circ(p,q)$ on $\text{S}^{p} \times \text{S}^{q-1}$ and $\text{S}^{p-1} \times \text{S}^{q}$, as well as actions on $\text{SO}^\circ(p,q) \times_P \text{S}^1$, where $P$ is a maximal parabolic subgroup of $\text{SO}^\circ(p,q)$. The main result demonstrates that any analytic $\text{SO}^\circ(p,q)$ action on a closed, connected $(p+q-1)$-dimensional manifold is covered by one of the constructed actions.

\end{abstract}

\section{Introduction}
Consider the Lie group $ \text{SO}^\circ(p,q)$, for $p,q\geq 3$. We are interested in analytic actions of $\text{SO}^\circ(p,q)$ on closed, connected manifolds. When the dimension of the manifold is $< p+q-2$, there can be no nontrivial action of $\text{SO}^\circ(p,q)$. When the dimension is $p+q-2$ there exists essentially only one action, namely the one on the homogeneous space $\text{SO}^\circ(p,q) \big/ P_{\text{null}}$, where $P_{\text{null}}\leq \text{SO}^\circ(p,q)$ is a maximal parabolic subgroup, which is isomorphic to the stabiliser of an isotropic line in the standard representation of $\text{SO}^\circ(p,q)$ on $\mathbb{R}^{p+q}$ with a scalar product of signature $(p,q)$. This paper contains results from the author's PhD dissertation, see \cite{Lentas_thesis}. We classify analytic $\text{SO}^\circ(p,q)$ actions on closed, connected manifolds of dimension $p+q-1$. More specifically, we get a classification up to analytic isomorphism for the analytic $\text{SO}^\circ(p,q)$ actions on $\text{S}^{p+q-1}$, $\text{S}^p \times \text{S}^{q-1}$, $\text{S}^{p-1} \times \text{S}^q$ and $\text{SO}^\circ(p,q) \times_{P_{\text{null}}} \text{S}^1$, where a maximal compact subgroup acts in a standard way, and then we show that an arbitrary manifold of dimension $p+q-1$ with a nontrivial $\text{SO}^\circ(p,q)$ action is $\text{SO}^\circ(p,q)$-equivariantly covered by one of these spaces. Here, $P_{\text{null}}$ acts on $\text{S}^1$ via an analytic flow, see Section \ref{axns w nullcone} for the relevant definitions. 
Theorem \ref{general thm} below gives the classification up to diffeomorphism of all manifolds admitting a nontrivial action.

\begin{theorem} \label{general thm}
Suppose $\textup{SO}^\circ(p,q)$, $p,q \geq 3$, acts analytically on a  closed, connected manifold $M$ of dimension $p+q-1$. Consider $\textup{SO}(p) \simeq \textup{SO}(p) \times \{1\} \leq \textup{SO}(p) \times \textup{SO}(q) \leq \textup{SO}^\circ(p,q)$ and $\textup{SO}(q)$ similarly.
\begin{itemize}
    \item If both $\textup{SO}(p)$ and $\textup{SO}(q)$ have a fixed point, then $M$ is covered by $\textup{S}^{p+q-1}$ 
    \item If only $\textup{SO}(p)$ has a fixed point, then $M$ is  covered by $\textup{S}^p \times \textup{S}^{q-1}$ 
    \item If only $\textup{SO}(q)$ has a fixed point, then $M$ is  covered by $\textup{S}^{p-1} \times \textup{S}^{q}$ 
    \item If neither $\textup{SO}(p)$ nor $\textup{SO}(q)$ have a fixed point, then $M$ is covered by \\
    $\textup{SO}^\circ(p,q) \times_{P_{\textup{null}}} \textup{S}^1$. If $P_{\textup{null}}=M_P A_P N_P$ is the Langlands decomposition of $P_{\textup{null}}$, then $P_{\textup{null}}$ acts on $\textup{S}^1$ by a flow via $A_P$.
\end{itemize}
\end{theorem}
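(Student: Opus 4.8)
The plan is to exploit the fact that a maximal compact subgroup $K = \mathrm{SO}(p) \times \mathrm{SO}(q)$ of $\mathrm{SO}^\circ(p,q)$ acts on the $(p+q-1)$-manifold $M$, and that $K$-actions on low-dimensional manifolds are rigid. The heart of the argument is to understand the $K$-action first, then bootstrap up to the full $\mathrm{SO}^\circ(p,q)$-action using analyticity and the structure of parabolic subgroups. So I would begin by analyzing the possible orbit types of $\mathrm{SO}(p)$ and $\mathrm{SO}(q)$ acting on $M$. Since $\dim M = p+q-1$ and the smallest nontrivial orbits of $\mathrm{SO}(p)$ (resp. $\mathrm{SO}(q)$) are spheres $\mathrm{S}^{p-1}$ (resp. $\mathrm{S}^{q-1}$), a dimension count forces the orbit structure to be very constrained. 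The four cases of the theorem correspond exactly to whether or not each factor has a fixed point, which I expect to be detected by examining the isotropy representations and using that a point fixed by $\mathrm{SO}(p)$ forces (by the slice theorem and the dimension of the complementary sphere direction) a standard linear model locally.

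\textbf{Main structural steps.} First, I would establish that the existence of a fixed point of $\mathrm{SO}(p)$ (resp. $\mathrm{SO}(q)$) is equivalent to the manifold containing an orbit of $\mathrm{SO}^\circ(p,q)$ isomorphic to the conformal sphere $\mathrm{S}^{p-1}$ or $\mathrm{S}^{q-1}$ or one of the parabolic homogeneous spaces. Here I would invoke the earlier construction (the Uchida-type models on $\mathrm{S}^{p+q-1}$, $\mathrm{S}^p \times \mathrm{S}^{q-1}$, $\mathrm{S}^{p-1} \times \mathrm{S}^q$, and $\mathrm{SO}^\circ(p,q) \times_{P_{\mathrm{null}}} \mathrm{S}^1$) as the template. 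The strategy is to locate a distinguished orbit: in the first three cases, a fixed point of a compact factor gives, via the slice representation, a copy of the standard linear $\mathrm{SO}^\circ(p,q)$-module $\mathbb{R}^{p+q}$ (or a quotient sphere thereof) sitting inside $M$; analyticity then propagates this local model. In the last case, where no compact factor has a fixed point, I would show that every $\mathrm{SO}^\circ(p,q)$-orbit has dimension $\geq p+q-2$, that the generic orbit is the $(p+q-2)$-dimensional parabolic space $\mathrm{SO}^\circ(p,q)/P_{\mathrm{null}}$, and hence $M$ fibers over a $1$-manifold with these as fibers — yielding the associated bundle $\mathrm{SO}^\circ(p,q) \times_{P_{\mathrm{null}}} \mathrm{S}^1$, with the residual $A_P$-action giving the flow on $\mathrm{S}^1$ since $M_P N_P$ must act trivially on the base by the structure of parabolic actions.

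\textbf{The covering statement} would then follow by comparing the developing map of the $\mathrm{SO}^\circ(p,q)$-structure on $M$ with that of the model space: once one knows $M$ contains an open dense orbit (or an open dense set foliated by top-dimensional orbits) that is equivariantly identified with the corresponding piece of the model, the complement is a union of lower-dimensional orbits whose local structure is pinned down by analyticity, and a monodromy/lifting argument produces the equivariant covering $\widetilde M \to M$ from the simply-connected model. I would use that the models are $\mathrm{SO}^\circ(p,q)$-equivariantly simply connected (or have explicitly computable equivariant fundamental group) to ensure the covering goes in the stated direction.

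\textbf{The main obstacle} I anticipate is the last case: ruling out exotic behavior when neither compact factor fixes a point. Without a fixed point to anchor a linear slice model, one must control the global orbit structure of a noncompact semisimple group acting on a manifold only one dimension above its minimal parabolic quotient — in particular showing there are no higher-dimensional orbits forcing the manifold to be homogeneous, and no degenerations other than the expected circle of parabolic copies. Handling the $A_P$-flow on $\mathrm{S}^1$ and showing $M_P N_P$ acts trivially transverse to the generic orbit (so that the bundle structure is genuinely $\mathrm{SO}^\circ(p,q) \times_{P_{\mathrm{null}}} \mathrm{S}^1$ and not something twisted) will require a careful analysis of the Langlands decomposition together with analyticity of the action near the non-generic orbits.
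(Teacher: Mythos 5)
Your opening step (constraining the $\mathrm{SO}(p)$- and $\mathrm{SO}(q)$-orbit types by dimension counting) matches the paper's Section 3 in spirit, but from there the proposal has genuine gaps. First, you never introduce the paper's central device: the fixed-point set of $H=\mathrm{SO}(p-1)\times\mathrm{SO}(q-1)$, which the orbit analysis shows is a nonempty union of circles, together with the analytic map $\tilde f:\Sigma\to\mathbb{R}\mathrm{P}^1$ recording the isotropy algebra (via Uchida's Lemma on subalgebras containing $\mathfrak{so}(p-1)\oplus\mathfrak{so}(q-1)$) and the flow induced by the one-parameter subgroup $\mathcal{M}(p,q)$. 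This circle data is what lets one build the model action globally and then write down an explicit equivariant map (e.g.\ $k\star z\mapsto k\star\psi(z)$, or $[g,z]\mapsto g\star\psi(z)$ in the parabolic case) which is checked to be well defined, $G$-equivariant, a local analytic isomorphism (by transversality of the $K$-orbits with the circle), and onto; compactness then gives the covering. Your substitute mechanism --- ``slice representation at a fixed point plus analyticity propagates the local model'' --- has no argument behind it: analytic continuation of a local linear model to a global equivariant covering is exactly the hard content, and your developing-map/monodromy argument does not apply since there is no $(G,X)$-structure present, the covering goes from the model to $M$ rather than from a universal object, and the claimed simple connectivity fails for $\mathrm{SO}^\circ(p,q)\times_{P_{\mathrm{null}}}\mathrm{S}^1$.

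Second, your description of the fourth case is structurally wrong. When neither compact factor has a fixed point, the generic $\mathrm{SO}^\circ(p,q)$-orbits are the open, $(p+q-1)$-dimensional nullcone orbits $G/G_{\mathrm{null}}$, not copies of the $(p+q-2)$-dimensional space $G/P_{\mathrm{null}}$ (which occur only as the closed orbits through zeros of the flow), so $M$ does not fiber over a $1$-manifold with fibers $G/P_{\mathrm{null}}$; indeed $G\times_{P_{\mathrm{null}}}\mathrm{S}^1$ fibers over $G/P_{\mathrm{null}}$ with circle fibers, the opposite of what you assert. The paper instead shows that the absence of fixed points forces $\tilde f\equiv[\pm1:1]$ on the circle, so isotropy along it lies in the parabolic $P_{\mathrm{null}}$; it then defines the $P_{\mathrm{null}}$-action on $\mathrm{S}^1$ through the homomorphism $\pi:P_{\mathrm{null}}\to\mathbb{R}$ killing $M_PN_P$ and maps the associated bundle onto $M$ as above. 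Finally, note that even your first step is substantially harder than a dimension count: ruling out exotic and transitive $K$-actions requires Uchida's classification of low-codimension subgroups of $\mathrm{SO}(p)$ together with Witte's cocompact-subgroup theorem and Goursat's lemma, none of which your outline accounts for.
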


 We are actually able to say more about the actions, see Theorem \ref{final result SOpq}. There are four orbit types that appear in these actions. Specifically, the orbits are diffeomorphic to the homogeneous spaces $\displaystyle \text{SO}^\circ(p,q) \big/ \text{SO}^\circ(p-1,q)$, $\text{SO}^\circ(p,q) \big/ \text{SO}^\circ(p,q-1)$, $\text{SO}^\circ(p,q) \big/ G_{\text{null}} $ or $\text{SO}^\circ(p,q) \big/ P_{\text{null}}$. Here, $G_{\text{null}} \leq \text{SO}^\circ(p,q)$ is the stabiliser of a null vector in the standard representation of $\text{SO}^\circ(p,q)$ on $\mathbb{R}^{p+q}$ and we call these orbits \textit{nullcone orbits}. Topologically these are $\mathbb{R}^q \times \text{S}^{p-1}$, $\mathbb{R}^p \times \text{S}^{q-1}$, a component of the nullcone in $\mathbb{R}^{p+q}$ with respect to a scalar form of signature $(p,q)$, and $\text{S}^{p-1}\times \text{S}^{q-1}$ respectively.

Firstly, in Section \ref{actions on Spq}, we define analytic actions of $\text{SO}^\circ(p,q)$, $p,q \geq 3$, on $\text{S}^{p} \times \text{S}^{q-1}$ where the maximal compact subgroup $ \text{SO}(p)\times \text{SO}(q)$ acts in a standard way, see the beginning of Section \ref{actions on Spq}. These actions correspond to analytic flows satisfying certain conditions, see Definition \ref{basic j1 flow}, on the fixed point set of $ \text{SO}(p-1)\times \text{SO}(q-1)$, which is a principal isotropy group with codimension 1 orbits for the action of $ \text{SO}(p)\times \text{SO}(q)$. Because of its technical nature, the proof of the analyticity of the constructed actions is moved to Appendix \ref{analyticity of actions}. Subsequently, we get a classification of those $\text{SO}^\circ(p,q)$ actions on $\text{S}^p \times \text{S}^{q-1}$, using a result of \cite{HITCHIN1991359}. The $\text{SO}^\circ(p,q)$ actions on $\text{S}^{p-1} \times \text{S}^q$ are defined similarly, while the actions on $\text{S}^{p+q-1}$ are defined in \cite{Uchida}. The actions of $\text{SO}^\circ(p,q)$ on $\text{SO}^\circ(p,q) \times_{P_{\text{null}}} \text{S}^1$ are defined in Section \ref{axns w nullcone}.

 Subsequently, we consider an analytic action of $\text{SO}^\circ(p,q)$ on a closed, connected manifold $M$ of dimension $p+q-1$. An important step towards classifying these actions is analysing the action of a maximal compact subgroup of $\text{SO}^\circ(p,q)$ and determining the orbit types that can appear, see Section \ref{SOp SOq axns} and in particular Proposition \ref{concl for SOpxSOq orbit types}. Finally, in Section \ref{SO(p,q) axns} we classify the $\text{SO}^\circ(p,q)$ actions. On the fixed point set of $\text{SO}(p-1)\times \text{SO}(q-1)$, which is a finite union of circles, we get an analytic flow, like above.
If there are no nullcone orbits, then this flow helps us define an $\text{SO}^\circ(p,q)$ action on $\text{S}^{p+q-1}$, $\text{S}^p \times \text{S}^{q-1}$ or $\text{S}^{p-1} \times \text{S}^q$ depending on the existence of fixed points of $\text{SO}(p)$ and $\text{SO}(q)$, and get an $\text{SO}^\circ(p,q)$-equivariant covering map to $M$. In the presence of a nullcone orbit, which is equivalent to neither $\text{SO}(p)$ nor $\text{SO}(q)$ having fixed points, $M$ is $\text{SO}^\circ(p,q)$-equivariantly covered by $\text{SO}^\circ(p,q) \times_{P_{\text{null}}} \text{S}^1$.

In a second paper, which is forthcoming, we will deal with the classification of analytic $\text{SO}^o(p,q)$ actions in the case $p=1,2$.

\subsection{Background and Motivation}

In \cite{Uchida}, Uchida studied smooth actions of $\text{SO}^\circ(p,q)$ on $\text{S}^{p+q-1}$ with the extra assumption that the action of a maximal compact subgroup on $\text{S}^{p+q-1}$ is the standard orthogonal one. He showed that such actions are in one to one correspondence with pairs $(\Phi, f)$ where $\Phi$ is a smooth flow on $\text{S}^1$ and $f : \text{S}^1 \rightarrow \mathbb{R}\text{P}^1$ is smooth, and $\Phi$ and $f$ satisfy certain conditions. We will briefly recall Uchida's construction in Section \ref{actions on Spq}. Here, $\text{S}^1$ is diffeomorphic to the fixed point set of $H$, which is a principal isotropy group with codimension 1 orbits for the action of the maximal compact subgroup. The function $f$ encodes the isotropy algebra at a point in $\text{S}^1$ for the given action of $\text{SO}^\circ(p,q)$, see Lemma \ref{lemma Uch} and Remark \ref{rmk definition of f}. Note that Uchida's result gives a correspondence between smooth $\text{SO}^\circ(p,q)$ actions and pairs $(\Phi, f)$, but only classifies them up to homeomorphism. 

 Such a pair $(\Phi, f)$ was first introduced by Asoh in \cite{AsohSL2C} in order to study actions of $\text{SL}_2(\mathbb{C})$ on $\text{S}^3$. It was then modified by Uchida in \cite{Uchida} to study the actions mentioned above and modified even further by Mukoyama in \cite{MukoyamaSp2r} to study smooth $\text{Sp}(2,\mathbb{R})$ actions on $\text{S}^4$. Uchida in \cite{Uchida_SOpqII} used Mukoyama's approach to study smooth $\text{SO}^\circ(p,2)$ actions on $\text{S}^{p+1}$, as well as $\text{Sp}(p,q)$ actions on $\text{S}^{4p+4q-1}$ in \cite{UchidaSppq}. Mukoyama also studied smooth $\text{SU}(p,q)$ actions on $\text{S}^{2p+2q-1}$ and $\mathbb{R} \text{P}^{p+q-1}$ in \cite{MukoyamaSUpq}. On the other hand, assuming analyticity, Schneider classified $\text{SL}_2(\mathbb{R})$ actions on 2-dimensional compact manifolds in \cite{SchneiderSL2axns}. We note that $\text{SL}_2(\mathbb{R})$, $\text{SL}_2(\mathbb{C})$ and $\text{Sp}(2,\mathbb{R})$ are locally isomorphic to $\text{SO}^\circ(1,2)$, $\text{SO}^\circ(1,3)$ and $\text{SO}^\circ(2,3)$ respectively.

 In a different direction, Uchida classified analytic $\text{SL}_n(\mathbb{R})$ actions on $\text{S}^n$ in \cite{Uchida_clssf_rlanalytic_SLn}. Later, both the analytic and the smooth $\text{SL}_n(\mathbb{R})$ actions on $n$-dimensional closed manifolds were classified by Fisher and Melnick in \cite{FisherMelnick}. Son, in \cite{Son_Reanalytic_SLn_n<m<2n-2}, extended the result in the analytic case to dimensions $m$, where $n\leq m \leq 2n-3$. She also classified the smooth actions in these dimensions when the action if fixed point free.

\subsection{Notation}
Throughout the text we use the following notation

\begin{itemize}
    \item[-] If $A$ is a Lie group, $A^\circ$ denotes its identity component
    \item[-] $ \displaystyle G := \text{SO}^\circ(p,q)= \left\{ X \in \text{SL}_{p+q}(\mathbb{R}) : X \, I_{p,q} \, X^T = I_{p,q} \right\}$, $p\geq3$,\\ where $I_{p,q} = \begin{bmatrix}
    -I_p & \\
    & I_q
\end{bmatrix}$
where $I_p$ and $I_q$ are the identity $p \times p$ and $q \times q$ matrices respectively.
    \item[-] $ \displaystyle K := \left\{ \begin{bmatrix}
    A & \\ & B 
\end{bmatrix} \in G : A \in \text{SO}(p) , B \in \text{SO}(q) \right\} \leq G   $, $K \simeq \text{SO}(p) \times \text{SO}(q)$.
    \item[-] $ \displaystyle H := \left\{ \begin{bmatrix}
    1 & && \\ 
    & \Tilde{A} && \\ 
    && 1 & \\
    &&& \Tilde{B}
\end{bmatrix} \in K : \Tilde{A} \in \text{SO}(p-1) , \Tilde{B} \in \text{SO}(q-1) \right\} \leq K$,\\
$H \simeq \text{SO}(p-1) \times \text{SO}(q-1)$
    \item[-] $\mathcal{M}(p,q) = \left\{ \begin{bmatrix}
    \text{cosh}(\theta) & & \text{sinh}(\theta) & \\
    & I_{p-1} && \\
    \text{sinh}(\theta) & & \text{cosh}(\theta) & \\
    &&& I_{q-1}
\end{bmatrix} \right\} \leq G$
    \item[-] $j_1 = \begin{bmatrix}
        -1&& \\ & -1 & \\ && I_{p+q-2}
    \end{bmatrix} \in \text{SO}(p) \times \{ I_q \} \leq K$
    \item[-] $j_2 = \begin{bmatrix}
        I_p &&& \\ & -1 && \\ &&-1 & \\ &&& I_{q-2}
    \end{bmatrix} \in \{ I_p \} \times \text{SO}(q) \leq K$
    \item[-] If $G$ is a group and $X$ is a space on which $G$ acts, then for $g \in G$ and $x \in X$ we will denote the action of $g$ on $x$ by $g \star x$. An exception to this will be when a matrix $A$ acts on a vector $v$, where we will just write $Av$.
    \item[-] If $A$ is a Lie group, $B \leq A$ a Lie subgroup and $X$ is a space on which $B$ acts, then $\displaystyle A \times_B X$ is the quotient space of $A \times X$ module the equivalence relation $\displaystyle (a,x) \sim (ab, b^{-1}x)$. $A$ acts on $A \times_B X$ by multiplication on the left: $\displaystyle \Tilde{a} \star (a,x) = (\Tilde{a}a,x)$ 
    
\end{itemize}

\subsection*{Acknowledgments}
The author wishes to express his deep gratitude to his advisor, Prof. Karin Melnick, for her invaluable guidance and unwavering support throughout the completion of this project. The author is also thankful to Prof. Bill Goldman for several helpful discussions.




\section{Construction of $\text{SO}^\circ(p,q)$ actions on $\text{S}^p\times \text{S}^{q-1}$ and $\text{S}^{p-1}\times \text{S}^q$} \label{actions on Spq}

Assume $p,q \geq 3$ and consider $G:=\text{SO}^\circ(p,q)$. We see $\text{S}^p$ as a subset of $\mathbb{R}^{p+1}$ with basis $\{ e_1, \cdots , e_{p+1} \}$, and S$^{q-1}$ as a subset of $\mathbb{R}^q$ with basis $\{ \epsilon_1, \cdots , \epsilon_q \}$.  We consider the following action of $K := \text{SO}(p) \times \text{SO}(q)$, which we call the \textit{standard} action of $K$ on $\text{S}^p \times \text{S}^{q-1}$: \\
First, we embed $\text{SO}(p)$ in $\text{SO}(p+1)$ by 
\begin{equation} \label{SOp embed in SOp+1}
    \kappa \mapsto \tilde{\kappa} := 
    \begin{bmatrix}
        \kappa & \\ & 1
    \end{bmatrix}
\end{equation}
Then, $K$ acts on $\text{S}^p \times \text{S}^{q-1}$ by matrix-vector multiplication in the obvious way: $\text{SO}(p)$ acts on the first factor via its embedding in $\text{SO}(p+1)$ and $\text{SO}(q)$ acts on the second factor. Evidently, this action is analytic. In this section we will construct $G$ actions on $\text{S}^p \times \text{S}^{q-1}$ that extend the standard $K$ action. Note that for these actions $\text{SO}(p) \times \{ I_q\} \leq K$ has fixed points, while $\{I_p \} \times \text{SO}(q) \leq K$ does not.

Let $\mathcal{F}$ be the fixed point set of the subgroup $H:=\text{SO}(p-1) \times\text{SO}(q-1)$. Then, $\mathcal{F}$ comprises the circles $\big\{ ( \alpha e_1 + \beta e_{p+1} , \epsilon_1 ) : \alpha^2 + \beta^2 =1 \big\}$ $\big\{ ( \alpha e_1 + \beta e_{p+1} , - \epsilon_1 ) : \alpha^2 + \beta^2 =1 \big\}$. Set
\[ \mathcal{S} =  \big\{ ( \alpha e_1 + \beta e_{p+1} , \epsilon_1 ) : \alpha^2 + \beta^2 =1 \big\} \]
Recall, $j_1 = \begin{bmatrix}
        -1&& \\ & -1 & \\ && I_{p+q-2}
    \end{bmatrix} \in K$, which acts on $\mathcal{S}$ by $j_1 \star ( \alpha e_1 + \beta e_{p+1} , \epsilon_1 ) \mapsto ( -\alpha e_1 + \beta e_{p+1} , \epsilon_1 )$. Note that, the other connected component of $\mathcal{F}$ is equal to $j_2 \star \mathcal{S}$. Recall that by ``$\star"$ we denote the action of a group element, see the Notation section in the Introduction.

The main ingredient for our construction will be a special type of flows on $\text{S}^1$. Here, we see $\text{S}_1$ as the unit circle in $\mathbb{R}^2$, namely $\text{S}^1 = \{ \alpha \, e_1 + \beta \, e_2 : \alpha^2+\beta^2=1 \}$.
\begin{definition} \label{basic j1 flow}
    Assume $\Phi_\theta$ is a nontrivial analytic flow on $\textup{S}^1$ and let  $\textup{J}_1$ be the reflection with respect to the $y$-axis, namely $\textup{J}_1 ( \alpha \, e_1 + \beta \, e_2) = - \alpha \, e_1 + \beta \, e_2$. We will say that $\Phi_\theta$ is a \underline{basic $\textup{J}_1$-flow} if:
    \begin{itemize}
        \item It has exactly two fixed points on $\textup{S}^1$, none of which are $\pm e_2$.
        \item $\textup{J}_1 \Phi_\theta(z) = \Phi_{-\theta} \left( \textup{J}_1 z \right)$, for $\theta \in \mathbb{R}$ and $z \in \textup{S}^1$.
        \item The Jacobian of $\Phi_\theta$ is $-\frac{2}{n}$, respectively $\frac{2}{n}$, at the attracting, respectively repelling, fixed point, where $n \in \mathbb{N}$.
    \end{itemize}
\end{definition}
 Note that then, if $z_1, z_2$ are the fixed points of $\Phi_\theta$, we have $\textup{J}_1 \star z_1 = z_2$. The actions we will construct in this section will correspond to basic $\text{J}_1$-flows. Note that, one of the fixed points of $\Phi_\theta$ must be attracting and the other repelling.

\begin{remark} \label{remark classification of basic j1 fl}
Suppose $\Phi_\theta$ is a basic $\textup{J}_1$-flow on $\text{S}^1$ and let $\partial_{\text{S}^1}$ be a basic vector field of $\text{S}^1$. Then, $\Phi_\theta$ generates an analytic vector field $X = g \cdot \partial_{\text{S}^1}$. In \cite{HITCHIN1991359}, Hitchin classified analytic vector fields based on a number of local and global invariants. For a basic $\textup{J}_1$-flow, since we have two fixed points one of which is necessarily attracting and the other necessarily repelling, the only invariants that matter are the Jacobian at the fixed points, which is $\pm 2/n$ and the \textit{global invariant} $\mu$ which can be thought of as $\mu = \int_{\textup{S}^1  } \frac{1}{g}$. Of course, in our case the right hand side does not make sense, but it can be defined using complex integration methods around the zeros of $g$, see \cite{HITCHIN1991359}.
\end{remark}
Now, let 
\begin{equation} \label{rho: S1 -> mathcal S}
    \rho_0 : \text{S}^1 \rightarrow \mathcal{S}
\end{equation}
be the analytic isomorphism defined by $\alpha \, e_1 + \beta \, e_2 \mapsto \left( \alpha \, e_1 + \beta \, e_{p+1}, \epsilon_1 \right)$.
\begin{definition} \label{induced basic j1 flow}
    Suppose $\Phi_\theta$ is a basic $\textup{J}_1$-flow on $\textup{S}^1$. Via the isomorphism $\rho_0$, see (\ref{rho: S1 -> mathcal S}), $\Phi_\theta$ induces an analytic flow $\Phi_\theta'$ on $\mathcal{S}$. We will call analytic flows on $\mathcal{S}$ arising this way, \underline{induced basic $j_1$-flows}. 
\end{definition}
The following is immediate:
\begin{lemma} \label{induced basic j1 flows properties}
Suppose $\Phi_\theta$ is a nontrivial analytic flow on $\mathcal{S}$. Then, $\Phi_\theta$ is an induced basic $j_1$-flow on $\mathcal{S}$ if and only if it satisfies the following relations:
\begin{itemize}
        \item It has exactly two fixed points on $\mathcal{S}$, none of which are $\left( e_{p+1}, \epsilon_1 \right)$ or $\left( -e_{p+1}, \epsilon_1 \right)$.
        \item $j_1 \Phi_\theta(z) = \Phi_{-\theta} \left( j_1 z \right)$, for $\theta \in \mathbb{R}$ and $z \in \mathcal{S}$.
        \item The Jacobian of $\Phi_\theta$ is $-\frac{2}{n}$, respectively $\frac{2}{n}$, at the attracting, respectively repelling, fixed point, where $n \in \mathbb{N}$.
\end{itemize}
\end{lemma}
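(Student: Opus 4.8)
The plan is to deduce everything by transporting Definition \ref{basic j1 flow} through the analytic isomorphism $\rho_0$ of (\ref{rho: S1 -> mathcal S}). By Definition \ref{induced basic j1 flow}, a flow $\Phi_\theta'$ on $\mathcal{S}$ is an induced basic $j_1$-flow precisely when $\Phi_\theta := \rho_0^{-1} \circ \Phi_\theta' \circ \rho_0$ is a basic $\textup{J}_1$-flow on $\textup{S}^1$; since $\rho_0$ is an analytic isomorphism, $\Phi_\theta'$ is a nontrivial analytic flow if and only if $\Phi_\theta$ is, so it suffices to show that the three bulleted conditions on $\Phi_\theta'$ are equivalent, under this conjugation, to the three bulleted conditions of Definition \ref{basic j1 flow} on $\Phi_\theta$.

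First I would record the two compatibility facts about $\rho_0$. It carries $\pm e_2$ to $(\pm e_{p+1}, \epsilon_1)$, which is immediate from the defining formula $\alpha e_1 + \beta e_2 \mapsto (\alpha e_1 + \beta e_{p+1}, \epsilon_1)$. And it intertwines the reflection $\textup{J}_1$ on $\textup{S}^1$ with the action of $j_1$ on $\mathcal{S}$, i.e.\ $\rho_0(\textup{J}_1 z) = j_1 \star \rho_0(z)$ for all $z \in \textup{S}^1$; this is a one-line check, since $\rho_0(\textup{J}_1(\alpha e_1 + \beta e_2)) = \rho_0(-\alpha e_1 + \beta e_2) = (-\alpha e_1 + \beta e_{p+1}, \epsilon_1) = j_1 \star (\alpha e_1 + \beta e_{p+1}, \epsilon_1)$, using the formula for the action of $j_1$ on $\mathcal{S}$ recorded before Definition \ref{basic j1 flow}.

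Then each condition transfers mechanically. Since $\rho_0$ is a bijection, $\Phi_\theta'$ has exactly two fixed points on $\mathcal{S}$ iff $\Phi_\theta$ has exactly two on $\textup{S}^1$, and by the first compatibility fact these fixed points avoid $(\pm e_{p+1}, \epsilon_1)$ iff those of $\Phi_\theta$ avoid $\pm e_2$. The relation $j_1 \Phi_\theta'(w) = \Phi_{-\theta}'(j_1 w)$ for $w \in \mathcal{S}$ is obtained from $\textup{J}_1 \Phi_\theta(z) = \Phi_{-\theta}(\textup{J}_1 z)$ by conjugating with $\rho_0$ and invoking the second compatibility fact, and conversely. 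Finally, because $\rho_0$ is a diffeomorphism and we are in dimension one, the derivative of the time-$\theta$ map at a fixed point, hence the value of its Jacobian there as well as the attracting/repelling dichotomy, is preserved under conjugation by $\rho_0$: if $w_0 = \rho_0(z_0)$, then $D\Phi_\theta'(w_0) = D\rho_0(z_0)\, D\Phi_\theta(z_0)\, D\rho_0(z_0)^{-1}$, and on the one-dimensional tangent spaces these are scalar multiplications, so the two Jacobians coincide. Thus the Jacobian of $\Phi_\theta'$ at its attracting (resp.\ repelling) fixed point is $-2/n$ (resp.\ $2/n$) iff the same holds for $\Phi_\theta$.

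There is no substantive obstacle; the only point deserving a sentence of care is the conjugation-invariance of the fixed-point Jacobian just discussed, which is exactly why the lemma is flagged as \emph{immediate}. Assembling the three equivalences gives that $\Phi_\theta'$ satisfies the listed relations if and only if $\rho_0^{-1}\circ\Phi_\theta'\circ\rho_0$ is a basic $\textup{J}_1$-flow, i.e.\ if and only if $\Phi_\theta'$ is an induced basic $j_1$-flow.
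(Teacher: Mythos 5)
Your proof is correct and is exactly the transport-of-structure argument the paper has in mind: the paper states the lemma as ``immediate'' precisely because conjugation by the analytic isomorphism $\rho_0$ carries each condition of Definition \ref{basic j1 flow} to the corresponding listed condition, with $\pm e_2 \mapsto (\pm e_{p+1},\epsilon_1)$, $\rho_0 \circ \textup{J}_1 = j_1 \circ \rho_0$, and the one-dimensional fixed-point Jacobian (and attracting/repelling type) being conjugation invariants. No gaps; your explicit check of these compatibilities is just the spelled-out version of what the paper leaves unsaid.
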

Furthermore, the following function, defined in terms of an induced basic $j_1$-flow, will be useful to us later:
\begin{remark} \label{f_phi def remark}
Let $\Phi_\theta$ be an induced basic $j_1$-flow on $\mathcal{S}$. Via $\Phi_\theta$ we can define a function, $f_\Phi$ on $\mathcal{S}$ in the following way:
\begin{itemize}
    \item $f_\Phi(\pm e_{p+1},\epsilon_1) := 0$
    \item $f_\Phi(z) : = \text{tanh}(\theta)$, for a point  $z \in \mathcal{S}$ of the form $z = \Phi_\theta \left( (\pm e_{p+1},\epsilon_1) \right)$
    \item $f_\Phi(z_1): = 1$ and $f_\Phi(z_2):=-1$
\end{itemize}
\end{remark}

 Analogously, we can define a standard action of $K$ on $\text{S}^{p-1} \times \text{S}^q$, \textit{basic $\textup{J}_2$-flows} on $\text{S}^1$, and \textit{induced basic $j_2$-flows} on the connected component of the fixed point set of $H$, $\mathcal{S}_2 = \left\{ \left( e_1 , \alpha \epsilon_1 + \beta \epsilon_{q+1} \right) : \alpha^2 + \beta^2 =1 \right\}$. The analogous construction of the one we will describe below will give $G$ actions on $\text{S}^{p-1} \times \text{S}^q$ which extend the standard action of $K$. Note that for these actions, $\{I_p \} \times \text{SO}(q)$ has fixed points, but $\text{SO}(p) \times \{ I_q\}$ does not. In the rest of this section we will deal with the $\text{S}^p \times \text{S}^{q-1}$ case. The statements and proofs for the $\text{S}^{p-1} \times \text{S}^q$ are completely analogous.

\subsection{The Basic Construction} \label{basic constr}

In this section we construct a $G$ action on $\text{S}^{p} \times \text{S}^{q-1}$ given a basic $\textup{J}_1$-flow on $\text{S}^1$. This construction is adapted from the construction in \cite{Uchida}, see Section \ref{Uchida constr section}. Let $m(\theta) := \begin{bmatrix}
    \text{cosh}(\theta) & & \text{sinh}(\theta) & \\
    & I_{p-1} && \\
    \text{sinh}(\theta) & & \text{cosh}(\theta) & \\
    &&& I_{q-1}
\end{bmatrix}$ and set 
\[ \mathcal{M}(p,q) = \left\{ m(\theta) \, : \, \theta \in \mathbb{R} \right\} \]
which is a one-parameter subgroup of $G$. Moreover, consider the isotropy group of the point $a \, e_1 + b \, e_{p+1} \in \mathbb{R}^{p+q}$, for $[a:b] \in \mathbb{R}\text{P}^1$, in the standard representation of $G$ on $\mathbb{R}^{p+q}$, which we denote by $H_{[a:b]} \leq G$. The following equality is established in \cite{Uchida}:
\begin{equation} \label{G decomp}
    G = K \, \mathcal{M}(p,q) \, \left( H_{[a:b]} \right)^\circ
\end{equation}
for any $[a:b] \in \mathbb{R}\text{P}^1$.

Suppose $\widetilde{\Phi}_\theta$ is a basic $\textup{J}_1$-flow on $\text{S}^1$, see Definition \ref{basic j1 flow}. Let $\Phi_\theta$ be the induced basic $j_1$-flow on $\mathcal{S}$, see Definition \ref{induced basic j1 flow}. Recall that $\mathcal{S} =  \big\{ ( \alpha e_1 + \beta e_{p+1} , \epsilon_1 ) : \alpha^2 + \beta^2 =1 \big\}$ is a connected component of the fixed point set, $\mathcal{F}$, of $H$ in the standard action of $K$ on $\text{S}^{p} \times \text{S}^{q-1}$. Recall also the function $f_\Phi$, see Remark \ref{f_phi def remark}.  For $z \in \mathcal{S}$, let \
\begin{equation} \label{U(z) eq definition}
 U(z) = (H_{[f_\Phi(z):1]})^\circ
\end{equation}
Then, equation (\ref{G decomp}) can be written as
\begin{equation} \label{eq:1}
    G = K \, \mathcal{M}(p,q) \, U(z)
\end{equation}
for any $z \in \mathcal{S}$. Now, let $g \in G$ and $(v,w) \in \text{S}^p \times \text{S}^{q-1}$. There exists $z \in \mathcal{S}$ and $k_0 = (\kappa_1, \kappa_2) \in K$ such that $k_0 \star z = (v,w)$. Write $gk_0 = k \, m(\theta) \, u_z$ , according to (\ref{eq:1}). Then, we define
\begin{equation} \label{axn def}
    g \star (v,w) : = k \star \Phi_{\theta}(z) 
\end{equation}
We prove that this action is well-defined in Section \ref{subsection: well-def of axns} and we show that it is analytic in Appendix \ref{analyticity of actions}. We will refer to $G$ actions defined this way as \textit{actions from the basic construction}. Note that $\Phi_\theta (z)$ is of the form $\big( \alpha_{\Phi_\theta (z)} e_1 + \beta_{\Phi_\theta (z)} e_{p+1} , \epsilon_1 \big)$, for some $\alpha_{\Phi_\theta (z)}, \beta_{\Phi_\theta (z) } \in \mathbb{R}$ such that $\alpha_{\Phi_\theta (z) }^2 + \beta_{\Phi_\theta (z) }^2 = 1$.

\begin{remark} Although the basic construction is described on $\text{S}^p \times \text{S}^{q-1}$ in terms of basic $\text{J}_1$-flows, there is an obvious analogue on $\text{S}^{p-1} \times \text{S}^q$ in terms of basic $\text{J}_2$-flows. We will will also refer to those actions as actions from the basic construction when it is clear we are talking about actions of $G$ on $\text{S}^{p-1} \times \text{S}^q$.
\end{remark}

\begin{theorem} \label{basic axns thm}
Two actions from the basic construction are analytically isomorphic if and only if the corresponding basic $\textup{J}_1$-flows are analytically isomorphic. Moreover, the basic $\textup{J}_1$-flows are classified up to analytic isomorphism by the Jacobian at the fixed points, which is $\pm 2/n$, for $n \in \mathbb{N}$, and the global invariant $\mu$.
\end{theorem}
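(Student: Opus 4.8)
The plan is to prove the two implications separately and then invoke the Hitchin classification cited in Remark~\ref{remark classification of basic j1 fl} for the final sentence. For the ``if'' direction, suppose $\Phi_\theta$ and $\Phi'_\theta$ are analytically isomorphic basic $\textup{J}_1$-flows on $\textup{S}^1$, say via an analytic diffeomorphism $\psi : \textup{S}^1 \to \textup{S}^1$ conjugating one to the other (and intertwining the reversing involution, i.e.\ $\psi \textup{J}_1 = \textup{J}_1 \psi$, which one can arrange since $\textup{J}_1$ swaps the two fixed points in either flow). Transport $\psi$ through $\rho_0$ to get an analytic diffeomorphism of $\mathcal{S}$, extend it $K$-equivariantly to all of $\textup{S}^p \times \textup{S}^{q-1}$ using that every point is $K\star z$ for some $z \in \mathcal{S}$, and check that this extension is well-defined (the ambiguity is exactly the isotropy $H$, which acts trivially on $\mathcal{S}$ and commutes appropriately) and $G$-equivariant by unwinding the definition~(\ref{axn def}): the decomposition $gk_0 = k\,m(\theta)\,u_z$ is carried to the analogous decomposition for the second action because $\psi$ commutes with the $j_1$-action and preserves $f_\Phi$ up to the conjugacy, so $U(z)$ is matched with $U'(\psi(z))$. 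This is the routine but slightly fiddly part; the key technical point is that $f_\Phi$ is a conjugacy invariant of the flow, which follows from Remark~\ref{f_phi def remark} since $f_\Phi$ is determined by the flow parametrization and the identification of attracting/repelling fixed points.

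For the ``only if'' direction, suppose $F : \textup{S}^p \times \textup{S}^{q-1} \to \textup{S}^{p-1} \times \textup{S}^q$ wait --- both actions are on $\textup{S}^p \times \textup{S}^{q-1}$, so suppose $F$ is an analytic $G$-equivariant diffeomorphism between the two actions. Since the actions agree with the standard $K$-action on $K\star$-orbits, and $\mathcal{F}$ (the fixed set of $H$) is a $G$-action invariant... more precisely, $\mathcal{S}$ can be characterized intrinsically: it is a connected component of $\mathrm{Fix}(H)$, and $F$ must send $\mathrm{Fix}(H)$ to $\mathrm{Fix}(H)$, hence (after composing with $j_2$ if necessary, which swaps the two components) $F(\mathcal{S}) = \mathcal{S}$. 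The restriction $F|_{\mathcal{S}}$ is then an analytic diffeomorphism of $\mathcal{S}$. The crucial claim is that $F|_{\mathcal{S}}$ conjugates $\Phi_\theta$ to $\Phi'_\theta$: this should follow because the flow $\Phi_\theta$ is recovered from the $G$-action as the action of the one-parameter subgroup $\mathcal{M}(p,q)$ (or rather the relevant piece of it) on $\mathcal{S}$ --- indeed, for $z \in \mathcal{S}$ with $f_\Phi(z) = \tanh\theta_0$, decomposing $m(t)\cdot(\text{stabilizing element})$ recovers $\Phi_t$ up to reparametrization, and $G$-equivariance of $F$ forces $F \circ \Phi_t = \Phi'_{t} \circ F$ on $\mathcal{S}$. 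I would spell this out by taking a point $z_0$ fixed by $\mathcal{M}(p,q)\cap$ something, tracking how $m(\theta)$ moves it, and using~(\ref{axn def}) directly.

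The final sentence of the theorem is then a direct citation: by Remark~\ref{remark classification of basic j1 fl}, for analytic vector fields on $\textup{S}^1$ with exactly two zeros (one attracting, one repelling), Hitchin's invariants in \cite{HITCHIN1991359} reduce to the Jacobian $\pm 2/n$ at the zeros together with the global invariant $\mu$; so two basic $\textup{J}_1$-flows are analytically isomorphic iff these agree. I would add one sentence noting that the $\textup{J}_1$-reversibility condition does not introduce any further invariants, since $\textup{J}_1$ is forced to swap the two fixed points and is itself determined up to conjugacy, so it imposes no obstruction to realizing matched invariants by an actual conjugacy.

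I expect the main obstacle to be the ``only if'' direction: showing that an abstract $G$-equivariant analytic diffeomorphism must restrict on $\mathcal{S}$ to a conjugacy of the defining flows. This requires identifying $\Phi_\theta$ canonically from the $G$-action --- essentially reversing the basic construction --- and care is needed because $\mathcal{M}(p,q)$ does not act on $\textup{S}^p \times \textup{S}^{q-1}$ by a genuine flow preserving $\mathcal{S}$; rather, $\Phi_\theta$ emerges after correcting by the $K$-factor in the decomposition~(\ref{eq:1}). Making this precise, and checking the well-definedness and equivariance of the extension in the ``if'' direction, are where the real work lies; the citation to Hitchin is immediate once the reduction to flows is in place.
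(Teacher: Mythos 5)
Your proposal follows the same overall skeleton as the paper: arrange the conjugacy $\Psi$ of the flows to intertwine $\textup{J}_1$, transport it to $\mathcal{S}$ via $\rho_0$, extend $K$-equivariantly by $k \star z \mapsto k \star \Psi(z)$, verify well-definedness and $G$-equivariance, handle the converse by restricting an equivariant isomorphism to the fixed set of $H$, and quote Hitchin (Remark \ref{remark classification of basic j1 fl}) for the classification of the flows. The one substantive deviation is how you get $f_1 = f_2 \circ \Psi$: the paper derives the ODE $g\,\partial(f) = f^2-1$ from (A1)--(A3) and uses uniqueness of solutions with $f(z')=0$ at a $j_1$-fixed point, whereas you deduce it directly from Remark \ref{f_phi def remark}; your shortcut is valid, since $\Psi \circ j_1 = j_1 \circ \Psi$ forces $\Psi$ to permute $\{(\pm e_{p+1},\epsilon_1)\}$ and a conjugacy sends the attracting fixed point to the attracting one, so $f_2(\Psi(\Phi^1_\theta(\pm e_{p+1},\epsilon_1))) = \tanh\theta = f_1(\Phi^1_\theta(\pm e_{p+1},\epsilon_1))$ (your phrase ``determined by the flow and the attracting/repelling fixed points'' should really say ``by the flow and the $j_1$-fixed base points''). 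Two smaller remarks. First, your worry in the converse that $\mathcal{M}(p,q)$ does not genuinely act on $\mathcal{S}$ by $\Phi_\theta$ is unfounded: taking the trivial decomposition $m(\theta) = I\, m(\theta)\, I$ in (\ref{axn def}) gives $m(\theta)\star z = \Phi_\theta(z)$ for $z \in \mathcal{S}$, which is exactly why the paper treats this direction as immediate (if $F(\mathcal{S}) = j_2 \star \mathcal{S}$ one corrects by $j_2$, noting $j_2 m(\theta) j_2 = m(-\theta)$ is absorbed by a further $j_1$). Second, to conclude the two actions are \emph{analytically isomorphic} you must also check that the extended map is bijective and a local analytic isomorphism; the paper does this explicitly (onto, injective, and local analyticity via the arguments of Lemmas \ref{G-equiv map btw two axns} and \ref{local anal isom brw two axns}, the latter requiring a transversality argument at the points where $f = \pm 1$), and your proposal passes over this step entirely, which is the only real incompleteness.
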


 The first part of Theorem \ref{basic axns thm} is proved in Section \ref{isom axns}. For the second part see Remark \ref{remark classification of basic j1 fl}. Of course, there is an analogous theorem for the analytic $\textup{SO}^\circ(p,q)$ actions on $\text{S}^{p-1} \times \text{S}^q$:

\begin{theorem}
Two $\textup{SO}^\circ(p,q)$ actions on $\textup{S}^{p-1} \times \textup{S}^q$ from the basic construction are analytically isomorphic if and only if the corresponding basic $\textup{J}_2$-flows are analytically isomorphic. The basic $\textup{J}_2$-flows are classified up to analytic isomorphism by the Jacobian at the fixed points and the global invariant $\mu$.
\end{theorem}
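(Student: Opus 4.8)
The plan is to reduce the theorem for $\textup{S}^{p-1}\times\textup{S}^q$ to the already-established Theorem \ref{basic axns thm} for $\textup{S}^p\times\textup{S}^{q-1}$ by exploiting the symmetry $\textup{SO}^\circ(p,q)\cong\textup{SO}^\circ(q,p)$. Concretely, conjugation by the permutation matrix $w$ that swaps the first $p$ coordinates with the last $q$ coordinates of $\mathbb{R}^{p+q}$ sends $I_{p,q}$ to $-I_{q,p}$, hence identifies $\textup{SO}^\circ(p,q)$ with $\textup{SO}^\circ(q,p)$, carries $K=\textup{SO}(p)\times\textup{SO}(q)$ to $\textup{SO}(q)\times\textup{SO}(p)$, and intertwines the subgroup $H=\textup{SO}(p-1)\times\textup{SO}(q-1)$ with its analogue on the other side. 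Under this identification the standard $K$-action on $\textup{S}^{p-1}\times\textup{S}^q$ becomes the standard action on $\textup{S}^{q}\times\textup{S}^{(p)-1}$, a basic $\textup{J}_2$-flow on $\text{S}^1$ becomes a basic $\textup{J}_1$-flow (the roles of $j_1$ and $j_2$ are swapped by $w$), the one-parameter group $\mathcal{M}(p,q)$ is conjugated to the corresponding $\mathcal{M}(q,p)$, and the decomposition (\ref{eq:1}) is preserved. Thus an $\textup{SO}^\circ(p,q)$-action on $\textup{S}^{p-1}\times\textup{S}^q$ from the basic construction is, after this relabelling, literally an $\textup{SO}^\circ(q,p)$-action on $\textup{S}^{q}\times\textup{S}^{q'-1}$ (with $q'=p$) from the basic construction associated to the corresponding $\textup{J}_1$-flow.

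The key steps, in order, are: (i) define $w$ and verify $w\,\textup{SO}^\circ(p,q)\,w^{-1}=\textup{SO}^\circ(q,p)$ together with the matching of $K$, $H$, $\mathcal{M}$, $j_1\leftrightarrow j_2$; (ii) check that the analytic diffeomorphism $\textup{S}^{p-1}\times\textup{S}^q\to\textup{S}^q\times\textup{S}^{p-1}$ swapping the two factors is equivariant with respect to $w$, so it transports the standard $K$-action on one side to the standard action on the other; (iii) observe that the reflection $\textup{J}_2$ in Definition (the $\textup{J}_2$-flow analogue) is conjugated by this identification precisely to $\textup{J}_1$, so basic $\textup{J}_2$-flows on $\text{S}^1$ correspond bijectively, and analytic-isomorphism-preservingly, to basic $\textup{J}_1$-flows; (iv) conclude that the basic construction on $\textup{S}^{p-1}\times\textup{S}^q$ for a $\textup{J}_2$-flow $\Phi$ agrees, through the factor-swap diffeomorphism, with the basic construction on $\textup{S}^q\times\textup{S}^{p-1}$ for the corresponding $\textup{J}_1$-flow $\Phi'$; (v) invoke Theorem \ref{basic axns thm} (with $p$ and $q$ interchanged), which is legitimate since the hypothesis $p,q\geq 3$ is symmetric, to get that two such actions are analytically isomorphic iff the $\textup{J}_1$-flows are, iff the $\textup{J}_2$-flows are; and (vi) note that the classification of basic $\textup{J}_2$-flows by the Jacobian $\pm 2/n$ at the fixed points and the global invariant $\mu$ follows from Remark \ref{remark classification of basic j1 fl} applied to the corresponding $\textup{J}_1$-flows, since Hitchin's invariants in \cite{HITCHIN1991359} are intrinsic to the vector field on $\text{S}^1$ and are unchanged by the orientation-respecting relabelling.

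The main obstacle I anticipate is bookkeeping around orientations and the precise placement of the distinguished coordinates: the definitions of $j_1$, $j_2$, $\mathcal{M}(p,q)$, and of $\textup{J}_1$ versus $\textup{J}_2$ single out specific basis vectors ($e_1,e_{p+1}$ on one side, $\epsilon_1,\epsilon_{q+1}$ on the other), and one must choose the conjugating permutation $w$ so that these distinguished vectors are matched up correctly and so that $w$ actually lands in the identity component (it may need to be composed with a reflection in an auxiliary coordinate to have determinant $1$ and preserve the time-orientation cone, i.e. to map $\textup{SO}^\circ$ to $\textup{SO}^\circ$ rather than to another component). Once $w$ is pinned down correctly, everything else is a transport-of-structure argument and the ``completely analogous'' nature of the two constructions, already flagged in the text, makes the remaining verifications routine. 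I would therefore spend the bulk of the write-up on step (i), and then state steps (ii)--(vi) compactly, since each is an immediate consequence of the identification established in (i) together with Theorem \ref{basic axns thm}.
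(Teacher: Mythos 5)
Your argument is correct, but it takes a different route from the paper: the paper offers no separate proof of this theorem at all, relying on the remark that the construction, statements and proofs for $\textup{S}^{p-1}\times\textup{S}^{q}$ are ``completely analogous'' to those for $\textup{S}^{p}\times\textup{S}^{q-1}$, i.e.\ one is expected to rerun the proof of Theorem \ref{basic axns thm} with the roles of the two factors and of $j_1,j_2$ interchanged. You instead reduce formally to Theorem \ref{basic axns thm} by transport of structure along the isomorphism $\textup{SO}^\circ(p,q)\cong\textup{SO}^\circ(q,p)$ given by conjugation with the block-swap permutation $w$, together with the factor-swap diffeomorphism $\textup{S}^{p-1}\times\textup{S}^q\to\textup{S}^q\times\textup{S}^{p-1}$; this is legitimate and arguably cleaner, since it replaces a repeated argument by a quotation of the already-proved case with $p$ and $q$ interchanged (the hypothesis $p,q\geq 3$ being symmetric). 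Two small remarks. First, your worry that $w$ must be adjusted to have determinant $1$ or to ``preserve the time-orientation cone'' is unnecessary: $w$ is not required to lie in either group, only to conjugate one onto the other, and any continuous group isomorphism automatically carries $\textup{SO}^\circ$ onto $\textup{SO}^\circ$; what does require care is exactly the bookkeeping you flag, namely that $w$ matches the distinguished vectors so that $K\mapsto K$, $H\mapsto H$, $\mathcal{M}(p,q)\mapsto\mathcal{M}(q,p)$ with the same parameter $\theta$ (no time reversal, since $m(\theta)$ is symmetric in the two distinguished coordinates), and $j_1\leftrightarrow j_2$. Second, the substantive point in your step (iv) is that the data defining the basic construction transport correctly: the stabiliser $H_{[a:b]}$ of $a e_1+b e_{p+1}$ is carried to the stabiliser indexed by $[b:a]$ on the other side, so the function $\tilde f$ transports with its coordinates swapped, the groups $U(z)$ and the decomposition (\ref{eq:1}) correspond, and hence the transported action is literally the basic-construction action for the corresponding basic $\textup{J}_1$-flow; this check is routine but should be recorded, since it is what makes the appeal to Theorem \ref{basic axns thm} (and to Remark \ref{remark classification of basic j1 fl} for the invariants $\pm 2/n$ and $\mu$, which are intrinsic to the vector field under an orientation-preserving identification of the circles) conclusive.
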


\subsection{Properties of $\textup{SO}^\circ(p,q)$ actions on $\textup{S}^p \times \textup{S}^{q-1}$} \label{properties of SOpq axns on Sp x Sq-1}

 Suppose we have an analytic action of $G$ on $\textup{S}^p \times \textup{S}^{q-1}$ that extends the standard $K$ action. We are going to extract some important data for such actions, which will be useful later. The results in this section also show that these kind of actions are action from the basic construction, see in particular Lemma \ref{relations <-> basic flow}. Recall that $H_{[a:b]}$ denotes the isotropy subgroup of the point $a \, e_1 + b \, e_{p+1} \in \mathbb{R}^{p+q}$ in the standard representation of $G$ on $\mathbb{R}^{p+q}$ and let $\mathfrak{h}_{[a:b]}$ denote its Lie algebra.
\begin{lemma}{\cite[Lemma 1.7]{Uchida}} \label{lemma Uch}
    Suppose $p,q \geq 3$. Let $\mathfrak{a}$ be a proper subalgebra of $\mathfrak{so}(p,q)$ which contains $\mathfrak{h} \simeq \mathfrak{so}(p-1) \oplus \mathfrak{so}(q-1)$. If 
    \[\textup{dim} \, \mathfrak{so}(p,q) - \textup{ dim} \, \mathfrak{a} \leq p +q-1\]
    then $\mathfrak{a} = \mathfrak{h}_{[a:b]}$ for some $(a,b) \neq (0,0)$ or $\mathfrak{a} = \mathfrak{h}_{[1: \epsilon]} \oplus \theta^1$ for $\epsilon = \pm 1$, where the one-dimensional space $\theta^1$ is generated by the matrix $E_{1,p+1} + E_{p+1,1}$
\end{lemma}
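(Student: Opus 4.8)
Pass to the Lie algebra and exploit that $\mathfrak{a}$ is an $\mathrm{ad}(\mathfrak{h})$-invariant subalgebra of $\mathfrak{so}(p,q)$. Fix the orthogonal splitting $\mathbb{R}^{p+q}=U\oplus V$ with $U=\langle e_1,e_{p+1}\rangle$ (signature $(1,1)$), $V=V_-\oplus V_+$, $V_-=\langle e_2,\dots,e_p\rangle$, $V_+=\langle e_{p+2},\dots,e_{p+q}\rangle$; thus $V$ has signature $(p-1,q-1)$, $\mathfrak{h}$ acts as $\mathfrak{so}(V_-)\oplus\mathfrak{so}(V_+)$ on $V$ and trivially on $U$, and $H_{[a:b]}$ is the stabiliser of $ae_1+be_{p+1}\in U$. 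Identifying $\mathfrak{so}(p,q)$ with $\Lambda^2\mathbb{R}^{p+q}$ via the form gives the $\mathfrak{h}$-stable decomposition
\[
\mathfrak{so}(p,q)=\mathfrak{l}\oplus(U\wedge V)\oplus\theta^1,\qquad \mathfrak{l}:=\mathfrak{so}(V_-)\oplus\mathfrak{so}(V_+)\oplus(V_-\wedge V_+)=\mathfrak{so}(V),
\]
where $W:=V_-\wedge V_+\cong V_-\otimes V_+$ is $\mathbb{R}$-irreducible over $\mathfrak{h}$ (except when $p=q=3$, where $W=W_+\oplus W_-$), $\theta^1=\Lambda^2 U=\mathbb{R}(E_{1,p+1}+E_{p+1,1})$, $\mathfrak{l}\cong\mathfrak{so}(p-1,q-1)$ is the pointwise stabiliser of $U$, and $U\wedge V=(e_1\wedge V)\oplus(e_{p+1}\wedge V)$ splits further over $\mathfrak{h}$ into $e_1\wedge V_\pm$ and $e_{p+1}\wedge V_\pm$. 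I will use the identity $[a\wedge b,c\wedge d]=\langle b,c\rangle\,a\wedge d-\langle b,d\rangle\,a\wedge c-\langle a,c\rangle\,b\wedge d+\langle a,d\rangle\,b\wedge c$; in particular $[e_1\wedge v,e_1\wedge w]=v\wedge w$ for $v\in V_-,w\in V_+$, $[e_1\wedge v,e_{p+1}\wedge v]=-\langle v,v\rangle\,e_1\wedge e_{p+1}$, and $[X,e_1\wedge v]=e_1\wedge(Xv)$ for $X\in\mathfrak{l}$.

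\textbf{Step 1: $\mathfrak{l}\subseteq\mathfrak{a}$.} Since $\mathfrak{h}\subseteq\mathfrak{a}$ and $[\mathfrak{h},\mathfrak{a}]\subseteq\mathfrak{a}$, the subspace $\mathfrak{a}$ is an $\mathfrak{h}$-submodule and decomposes along the isotypic components above; the hypothesis is equivalent to $\dim(\mathfrak{a}/\mathfrak{h})\ge pq-1=\dim W+(p+q-2)$. The combined dimension of all the $\mathfrak{h}$-summands other than $W$ is $2(p-1)+2(q-1)+1=2p+2q-3$, which is $<pq-1$ unless $(p-2)(q-2)\le 2$, that is, unless $(p,q)\in\{(3,3),(3,4),(4,3)\}$. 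Hence outside these three pairs $W\subseteq\mathfrak{a}$, so $\mathfrak{l}=\mathfrak{h}\oplus W\subseteq\mathfrak{a}$. For the three exceptional pairs the same count still forces $\mathfrak{a}$ to meet both $e_1\wedge V_-\oplus e_{p+1}\wedge V_-$ and $e_1\wedge V_+\oplus e_{p+1}\wedge V_+$ nontrivially, and then $[e_1\wedge v,e_1\wedge w]=v\wedge w\ne 0$ (for $v,w\ne 0$), together with the $\mathfrak{h}$-invariance of $\mathfrak{a}$, forces $W\cap\mathfrak{a}\ne 0$ and hence $W\subseteq\mathfrak{a}$; the subcase $p=q=3$ takes a little more bookkeeping because $W$ is reducible there, but a short trace through the same brackets (keeping track of $W_\pm$) handles it.

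\textbf{Step 2: identifying $\mathfrak{a}$.} Now $\mathfrak{a}=\mathfrak{l}\oplus\mathfrak{n}$ with $\mathfrak{n}$ an $\mathfrak{l}$-submodule of $(U\wedge V)\oplus\theta^1$. As an $\mathfrak{l}=\mathfrak{so}(V)$-module, $U\wedge V$ is the sum of two copies of the standard module $V$ ($=e_1\wedge V$ and $e_{p+1}\wedge V$), which is absolutely irreducible and nontrivial since $p+q-2\ge 4$, while $\theta^1$ is a trivial line; hence $\mathfrak{n}=(L\wedge V)\oplus\varepsilon\,\theta^1$ for a subspace $L\subseteq U$ and $\varepsilon\in\{0,1\}$, and $\dim\mathfrak{n}\ge p+q-2$ forces $\dim L\ge 1$. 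If $\dim L=2$, then $e_1\wedge V$ and $e_{p+1}\wedge V$ both lie in $\mathfrak{a}$, so $[e_1\wedge v,e_{p+1}\wedge v]=-\langle v,v\rangle e_1\wedge e_{p+1}\ne 0$ puts $\theta^1$ in $\mathfrak{a}$, i.e.\ $\mathfrak{a}=\mathfrak{so}(p,q)$, contradicting properness. So $L=\mathbb{R}\,n$ is a line with $n=ae_1+be_{p+1}$. If $\varepsilon=0$: the identities $[n\wedge x,n\wedge x']=(a^2-b^2)\,x\wedge x'\in\mathfrak{l}$ and $[\mathfrak{l},n\wedge x]\subseteq n\wedge V$ show $\mathfrak{a}=\mathfrak{l}\oplus(n\wedge V)$ is a subalgebra; both of its summands annihilate the vector $be_1+ae_{p+1}$ (the $\langle\,,\,\rangle$-orthocomplement of $n$ inside $U$), so $\mathfrak{a}\subseteq\mathfrak{h}_{[b:a]}$, and since $\dim\mathfrak{a}=\binom{p+q-2}{2}+(p+q-2)=\binom{p+q-1}{2}=\dim\mathfrak{h}_{[b:a]}$ one gets $\mathfrak{a}=\mathfrak{h}_{[b:a]}$, an algebra of the first stated type. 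If $\varepsilon=1$: closure under brackets requires $[\theta^1,n\wedge V]=(\text{boost}\cdot n)\wedge V\subseteq n\wedge V$, i.e.\ $\text{boost}\cdot n\in\mathbb{R}\,n$; since the boost $E_{1,p+1}+E_{p+1,1}$ preserves precisely the two null lines of $U$, this forces $n$ to be null, $[a:b]=[1:\pm1]$, and then $n\wedge V$ and $\mathfrak{l}$ annihilate the null vector $n$ itself, so $\mathfrak{l}\oplus(n\wedge V)=\mathfrak{h}_{[1:\pm1]}$ and $\mathfrak{a}=\mathfrak{h}_{[1:\pm1]}\oplus\theta^1$, the second stated type. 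This exhausts the cases.

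\textbf{Main obstacle.} I expect the only real difficulty to be Step 1 in the three small pairs $(p,q)\in\{(3,3),(3,4),(4,3)\}$, where the codimension bound by itself is not conclusive and one must run through the bracket relations among the small $\mathfrak{h}$-summands $e_1\wedge V_\pm,e_{p+1}\wedge V_\pm,\theta^1$ — and, when $p=q=3$, the splitting $W=W_+\oplus W_-$ — to rule out everything but $W\subseteq\mathfrak{a}$. This part is elementary but fiddly; the rest is the module decomposition above together with the handful of $\Lambda^2$-brackets listed at the outset.
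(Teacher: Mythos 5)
The paper itself gives no proof of this lemma: it is imported verbatim as \cite[Lemma 1.7]{Uchida}, so there is no in-paper argument to compare yours against line by line; your write-up is an independent reconstruction, and in spirit it follows the same kind of $\mathrm{ad}(\mathfrak{h})$-module analysis as Uchida's original. The generic part of your argument is correct: complete reducibility under the compact group $\textup{SO}(p-1)\times\textup{SO}(q-1)$, the observation that no constituent of $W=V_-\wedge V_+$ occurs in the remaining summands, and the count $\dim(\mathfrak{a}/\mathfrak{h})\ge pq-1>2p+2q-3$ for $(p-2)(q-2)>2$ do force $W\subseteq\mathfrak{a}$; and Step 2, resting on the absolute irreducibility of the standard module $V$ over $\mathfrak{l}\cong\mathfrak{so}(p-1,q-1)$ (valid since $p+q-2\ge 4$), correctly pins $\mathfrak{a}$ down to $\mathfrak{h}_{[a:b]}$ or $\mathfrak{h}_{[1:\pm 1]}\oplus\theta^1$, with the dimension counts checking out in both the null and non-null cases.

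The genuine soft spot is exactly where you flag it, but it is more than fiddly bookkeeping as written. For $(p,q)\in\{(3,4),(4,3)\}$ the sentence ``the same count still forces $\mathfrak{a}$ to meet both $U\wedge V_-$ and $U\wedge V_+$ nontrivially'' is false as a standalone consequence of the count: for $(4,3)$, assuming $\mathfrak{a}\cap(U\wedge V_+)=0$ only caps $\dim(\mathfrak{a}/\mathfrak{h})$ at $6+6+1=13\ge 11$, no contradiction. The step must be run as a contradiction argument: if $W\cap\mathfrak{a}=0$ (using irreducibility of $W$ in these two cases and the isotypic decomposition, which confines $\mathfrak{a}$ to $\mathfrak{h}\oplus(U\wedge V)\oplus\theta^1$), the bound forces $\mathfrak{a}=\mathfrak{h}\oplus(U\wedge V)\oplus\theta^1$ exactly, and then $[e_1\wedge v,e_1\wedge w]=v\wedge w\ne 0$ contradicts closure under brackets. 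For $p=q=3$ you defer the work entirely (``handles it''), and there is something to do: since $W=W_+\oplus W_-$, the assumption $W\not\subseteq\mathfrak{a}$ only gives $\dim(\mathfrak{a}\cap W)\le 2$, hence $\dim\big(\mathfrak{a}\cap(U\wedge V)\big)\ge 8-2-1=5$, so both $U\wedge V_\pm$ are met; the brackets then produce a nonzero decomposable element $v\wedge w\in\mathfrak{a}\cap W$, and one must check (it is true) that no nonzero decomposable tensor lies in $W_+$ or $W_-$, so its $\mathfrak{h}$-span is all of $W$, a contradiction. Finally, you should record explicitly the complete-reducibility/isotypic facts you use to split $\mathfrak{a}$ along the listed summands — including the harmless but real coincidence $\Lambda^2 V_-\cong V_-$ when $p=4$ (and its mirror for $q=4$), which affects the type-$V_\mp$ components but not $W$. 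With these repairs the proof is complete.
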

\begin{remark} \label{rmk definition of f}
Recall that $\mathcal{F}$ is the fixed point set of $H$ and that $\mathcal{F} = \mathcal{S} \bigcup j_2 \star \mathcal{S}$. The above lemma allows us to define a function $\tilde{f}: \mathcal{F}\rightarrow \mathbb{R}\text{P}^1$ in the following way: for $z \in \mathcal{F}$, set $\tilde{f}(z):= [a_z:b_z]$, where $[a_z:b_z] \in \mathbb{R}\text{P}^1$ is the unique point such that $\mathfrak{h}_{[a_z:b_z]} \leq \mathfrak{g}_z$, where $\mathfrak{g}_z$ is the Lie isotropy algebra at $z$. The function $\tilde{f}$ is analytic, see \cite[p. 778]{Uchida}. This way of defining a function $\tilde{f}$ on the fixed point set of $H$ will be used numerous times.
    \end{remark}
 For the actions that we consider, since there is no point on $\mathcal{F}$ fixed by $\text{SO}(q)$, the second coordinate of $\tilde{f}$ would always be non-zero. So, we can define an analytic function $f$ by $f(z)= \frac{a}{b} \text{ if } \tilde{f}(z) = [a:b]$. Additionally, it is easy to see that $\mathcal{M}(p,q)$ normalises $H$ and hence, it preserves $\mathcal{F}$. The action of $\mathcal{M}(p,q)$ gives an analytic flow $\Phi_\theta$ on $\mathcal{F}$. Abusing the notation, we will write $\text{SO}(p)$ for $\text{SO}(p) \times \{ I_q\} \leq K$ and $\text{SO}(q)$ for $\{I_p \} \times \text{SO}(q) \leq K$. Recall $j_1 = \begin{bmatrix}
    -I_2 & \\ & I_{p+q-2}
\end{bmatrix} \in \text{SO}(p)$ and $j_2 = \begin{bmatrix}
    I_{p+q-2} & \\ & -I_2
\end{bmatrix} \in  \text{SO}(q)$. 

\begin{remark} \label{(A1)-(A4) props remark}
By straightforward matrix multiplications, we see that $\Phi_\theta$ and $f$ satisfy the following properties, for $z \in \mathcal{S}$ and $\theta \in \mathbb{R}$:
\begin{itemize}
    \item[(A1)] $ \displaystyle j_i \star \Phi_\theta (z) = \Phi_{- \theta} (j_i \star z)$ \qquad $(i=1,2)$
    \item[(A2)] $ f(j_i \star z) = -f(z)$ \qquad $(i=1,2)$
    \item[(A3)] $\displaystyle f\big( \Phi_\theta (z) \big) = \frac{f(z) + \text{tanh}(\theta)}{1+f(z) \text{tanh}(\theta)}$ 
    \item[(A4)] $\displaystyle f(z) = 0 \Leftrightarrow z = (\pm e_{p+1}, \epsilon_1)$ or $(\pm e_{p+1}, - \epsilon_1)$
\end{itemize}
Note that (A4) can be written alternatively as
\begin{itemize}
    \item[$\text{(A4)}^\prime$] $\displaystyle f(z) = 0 \Leftrightarrow z$ is fixed by $\text{SO}(p)$
\end{itemize}   
\end{remark}

\begin{lemma} \label{relations <-> basic flow}
    \begin{itemize}
        \item[(i)] Suppose $\Phi_\theta$ is an induced basic $j_1$-flow on $\mathcal{S}$, see Definition \ref{induced basic j1 flow}. Then, $f_\Phi$, see Remark \ref{f_phi def remark}, is analytic, and $\Phi_\theta$ and $f_\Phi$ can be extended to $\mathcal{F}$ so that they satisfy relations \textup{(A1)-(A4)}.
        \item[(ii)] Suppose $\Phi_\theta$ and $f$ are an analytic flow and function respectively on $\mathcal{F}$ such that they satisfy relations \textup{(A1)-(A4)}. Then, $\Phi_\theta'=\displaystyle \Phi_\theta \big|_\mathcal{S}$ is an induced basic $j_1$-flow on $\mathcal{S}$ and $\displaystyle f_{\Phi_{\theta}'} = f$.
    \end{itemize}
\end{lemma}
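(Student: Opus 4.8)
We treat the two implications in turn, and I will flag the one genuinely substantive point at the end.

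\textit{Part (i).} By Definition \ref{induced basic j1 flow} (equivalently Lemma \ref{induced basic j1 flows properties}), an induced basic $j_1$-flow $\Phi_\theta$ on $\mathcal{S}$ is a nontrivial analytic flow with exactly two fixed points $z_1$ (attracting) and $z_2$ (repelling), neither equal to $(\pm e_{p+1},\epsilon_1)$, which satisfies $j_1\star\Phi_\theta(z)=\Phi_{-\theta}(j_1\star z)$ on $\mathcal{S}$ and whose Jacobian is $-\tfrac{2}{n}$ at $z_1$ and $+\tfrac{2}{n}$ at $z_2$. The first task is analyticity of $f_\Phi$ on $\mathcal{S}$. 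Since $j_1$ is the reflection of $\mathcal{S}$ fixing $(\pm e_{p+1},\epsilon_1)$ and interchanging $z_1,z_2$, exactly one of the two distinguished points lies on each of the two open arcs of $\mathcal{S}\setminus\{z_1,z_2\}$; on such an arc the map $\theta\mapsto\Phi_\theta(\pm e_{p+1},\epsilon_1)$ is an analytic diffeomorphism onto the arc with analytic inverse (the generating vector field being nonvanishing there), so $f_\Phi=\tanh\circ(\text{time coordinate})$ is analytic on the open arcs. At $z_1$ I would use the elementary one-dimensional fact that an analytic vector field with nonzero linear part $\lambda$ at a zero is analytically linearizable, so there is an analytic chart $x$ with $x(z_1)=0$ in which $\Phi_\theta(x)=e^{-\frac{2}{n}\theta}x$; substituting into $f_\Phi(\Phi_\theta(\pm e_{p+1},\epsilon_1))=\tanh\theta$ and using $\tanh\theta=\tfrac{1-e^{-2\theta}}{1+e^{-2\theta}}$ yields $f_\Phi=\dfrac{x_0^{\,n}-x^{\,n}}{x_0^{\,n}+x^{\,n}}$ near $z_1$ (with $x_0\ne0$ the coordinate of the chosen base point), which is analytic with value $1$ at $z_1$; the computation at $z_2$ is identical with signs reversed. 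Finally, extend $\Phi_\theta$ and $f_\Phi$ to $\mathcal{F}=\mathcal{S}\sqcup j_2\star\mathcal{S}$ by $\Phi_\theta(j_2\star w):=j_2\star\Phi_{-\theta}(w)$ and $f_\Phi(j_2\star w):=-f_\Phi(w)$ for $w\in\mathcal{S}$; one checks directly that this is again an analytic flow and that (A1)--(A4) hold, for $i=2$ by construction, for $i=1$ by combining the $j_1$-equivariance on $\mathcal{S}$ with the commutation $j_1j_2=j_2j_1$, for (A3) from the addition formula for $\tanh$, and for (A4) from $\tanh\theta=0\iff\theta=0$ together with $f_\Phi(z_1)=1$, $f_\Phi(z_2)=-1$.

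\textit{Part (ii).} A flow preserves connected components, so $\Phi_\theta$ preserves $\mathcal{S}$ and $\Phi_\theta':=\Phi_\theta|_\mathcal{S}$ is a flow on $\mathcal{S}$; since $j_1$ preserves $\mathcal{S}$, (A1) with $i=1$ gives the required equivariance. The heart of the matter is locating the fixed points. Taking $z$ with $f(z)=0$, which exists by (A4) (e.g.\ $z=(e_{p+1},\epsilon_1)$), (A3) gives $f(\Phi_\theta(z))=\tanh\theta$, injective in $\theta$; hence $\Phi'$ is nontrivial, no orbit through such a point is periodic, and $\Phi'$ has at least one fixed point $w$. Applying (A3) at $w$ forces $f(w)^2=1$; conversely if $f(w)=\pm1$ then (A3) shows the whole $\Phi'$-orbit of $w$ lies in $f^{-1}(\pm1)$, which is finite since $f$ is analytic and nonconstant (it attains both $0$ and $\pm1$), so $w$ is fixed. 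Thus the fixed-point set is $f^{-1}(1)\sqcup f^{-1}(-1)$, interchanged by $j_1$ via (A2), so it has $2N$ points for some $N\ge1$ and divides $\mathcal{S}$ into $2N$ arcs. Parametrising each arc by a $\Phi'$-orbit and using (A3) shows $f$ is strictly monotone from $-1$ to $1$ along it, hence has exactly one zero per arc, i.e.\ $2N$ zeros of $f$ on $\mathcal{S}$ in total; but (A4) gives exactly two, so $N=1$, the two zeros are $(\pm e_{p+1},\epsilon_1)$, and these are not fixed points. For the Jacobian: near the sink $z_1$ one has $f\to1$ and $1-f$ has finite vanishing order $n$ in an analytic chart (as $f^{-1}(1)=\{z_1\}$); linearising the analytic flow as $\Phi_\theta'(x)=e^{\lambda_1\theta}x$ with $\lambda_1<0$ and comparing with $f(\Phi_\theta(\pm e_{p+1},\epsilon_1))=\tanh\theta$ exactly as in Part (i) gives $\lambda_1=-\tfrac{2}{n}$; the value at the source $z_2$ is $+\tfrac{2}{n}$ with the same $n$, because $j_1$ maps a neighbourhood of $z_1$ analytically onto one of $z_2$ and, by (A2), carries $1-f$ to $1+f$. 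By Lemma \ref{induced basic j1 flows properties}, $\Phi_\theta'$ is therefore an induced basic $j_1$-flow. The equality $f_{\Phi_\theta'}=f$ on $\mathcal{S}$ is then immediate: both vanish at $(\pm e_{p+1},\epsilon_1)$, both equal $\pm1$ at $z_1,z_2$, and for $z=\Phi_\theta'(\pm e_{p+1},\epsilon_1)$ one has $f_{\Phi_\theta'}(z)=\tanh\theta$ by definition while $f(z)=\tanh\theta$ by (A3) with a base point of $f$-value $0$.

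The main obstacle in both directions is the behaviour at the fixed points of the flow and its interplay with the normalisation of the Jacobian: one must translate between the geometric datum (the linearisation exponent of $\Phi$ at a fixed point) and the algebraic datum (the vanishing order of $1\mp f$ there), and the bridge is precisely one-dimensional analytic linearisation together with the identity $\tanh\theta=\tfrac{1-e^{-2\theta}}{1+e^{-2\theta}}$. Everything else in the proof reduces to the addition formula for $\tanh$, to the component-preservation of flows, and to bookkeeping with the commuting reflections $j_1,j_2$ across the two components of $\mathcal{F}$.
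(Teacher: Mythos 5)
Part (i) of your argument is essentially the paper's proof. Away from the fixed points you use the time parametrisation of the orbits of $(\pm e_{p+1},\epsilon_1)$; at a fixed point both you and the paper invoke one-dimensional analytic (Poincar\'e) linearisation together with $\tanh\theta=\frac{1-e^{-2\theta}}{1+e^{-2\theta}}$, the crucial point in both write-ups being that the hypothesis $J=\mp\frac{2}{n}$ makes the exponent an integer, so the resulting expression for $f_\Phi$ is analytic in the linearising coordinate; and the extension to $\mathcal{F}$ is by the same formulas $\Phi_\theta(j_2\star w)=j_2\star\Phi_{-\theta}(w)$, $f_\Phi(j_2\star w)=-f_\Phi(w)$ that the paper uses. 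In part (ii) you are in fact more careful than the paper about the fixed-point count (identifying the fixed set with $f^{-1}(\pm 1)$ and matching arcs with the two zeros from (A4) is a correct and welcome filling-in of a step the paper treats as immediate), and your verification of $f_{\Phi_\theta'}=f$ is fine.

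However, your Jacobian step in part (ii) has a genuine gap: you linearise $\Phi_\theta'$ at $z_1$ as $x\mapsto e^{\lambda_1\theta}x$ with $\lambda_1<0$, which presupposes that the fixed point is nondegenerate, i.e.\ $\lambda_1\neq 0$. That is part of what has to be proved here: unlike in part (i), the hypotheses (A1)--(A4) do not include any statement about the linear part, and an analytic flow can perfectly well have an attracting fixed point with vanishing linear part (e.g.\ the flow of $-x^3\,\partial_x$), in which case no such linearisation exists. The paper avoids this by working with the generating field $X=g\cdot\partial_{\mathcal S}$ and the ODE $g\,\partial_{\mathcal S}(f)=1-f^2$ forced by (A3): writing $1-f=x^{n}u$ with $u(0)\neq 0$ in a chart at $z_1$ and differentiating the ODE $n+1$ times (equivalently, solving for $g$) yields $g'(0)=-\frac{2}{n}$ directly, so nondegeneracy comes out of the computation rather than being assumed, and $g\circ j_1=g$ gives $+\frac{2}{n}$ at $j_1(z_1)$. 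Your route can be repaired in one line: if $g$ vanished to order $\geq 2$ at $z_1$, the orbit of the base point would approach $z_1$ only polynomially in $\theta$, so $1-f$ along it would decay polynomially, contradicting $1-f\bigl(\Phi_\theta(e_{p+1},\epsilon_1)\bigr)=1-\tanh\theta\sim 2e^{-2\theta}$; once $\lambda_1\neq 0$ is established, your rate comparison does give $\lambda_1=-\frac{2}{n}$. With that sentence added, your proof is correct and matches the paper in substance, differing only in using linearisation-plus-decay-rates where the paper differentiates the ODE.
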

\begin{proof}
\begin{itemize}
    \item[(i)] 
Suppose $\Phi_\theta$ is an induced basic $j_1$-flow on $\mathcal{S}$. Firstly, we prove that $f_\Phi$ is analytic on $\mathcal{S}$. Observe that, $f_\Phi$ is evidently analytic on the orbits of $(\pm e_{p+1}, \epsilon_1)$. To prove the analyticity at a fixed point of $\Phi_\theta$, say $z_1$, we can use Poincar\'e's theorem, see \cite[Section 22]{Arnold_geom_methds_in_ODEs}, to linearise the flow around $z_1$. The theorem implies that there exists a change of coordinates around $z_1$ such that, after using a chart centered at $z_1$, the flow has the form $ \displaystyle \Phi_\theta(x) = x \, e^{tJ_\Phi(z_1)}$, where $J_\Phi$ is the Jacobian of $\Phi_\theta$ at $z_1$, which is nonzero. Then, let $y$ be a point in the domain in which the above form of $\Phi_\theta$ is valid. Note that, by (A3), $f_\Phi$ takes the form $ \displaystyle f_\Phi \left( \Phi_\theta(y) \right) = \frac{f(y) + \text{tanh}(\theta)}{1+f(y) \text{tanh}(\theta)}$.  
Let $x = \Phi_\theta(y)$. Since $\Phi_\theta(y) = y \, e^{tJ_\Phi(z_1)}$, we can solve for $\theta$ and get $ \displaystyle \theta = \frac{1}{J_\Phi(z_1)} \, \ln \left(\frac{x}{y}\right)$. Then, a straightforward calculation shows that $ \displaystyle \text{tanh}(\theta) = \frac{1-\left(\frac{x}{y}\right)^{2/J_\Phi(z_1)}}{1+\left(\frac{x}{y}\right)^{2/J_\Phi(z_1)}} $. But the right hand side of this equality is an analytic function in a neighbourhood of $x=0$. Hence, $f_\Phi$ is analytic at $z_1$. Similarly, it is shown that $f_\Phi$ is analytic at the other fixed point of $\Phi_\theta$ as well.  Subsequently, we extend $\Phi_\theta$ to all of $\mathcal{F} = \mathcal{S} \bigcup j_2 \star \mathcal{S}$, by defining for $z \in j_2 \star \mathcal{S}$ and $\theta \in \mathbb{R}$, $\Phi_\theta (z) : = j_2 \star \left( \Phi_{-\theta} \left( j_2 \star z \right) \right)$. Similarly, we extend $f_\Phi$ on $j_2\star \mathcal{S}$ by defining $f_\Phi (z) : = - f_\Phi (j_2 \star z)$. Clearly, $\Phi_\theta$ and $f_\Phi$ satisfy the relations (A1)-(A4).  Evidently, $\Phi_\theta$ and $f_\Phi$ are analytic on $\mathcal{F}$. 
\item[(ii)] Suppose $\Phi_\theta$ and $f$ are an analytic flow and function such that they satisfy relations (A1)-(A4). The fact that the fixed points of $\displaystyle \Phi_\theta'$ are not $(\pm e_{p+1}, \epsilon_1)$ and the relation $\displaystyle j_1 \star \Phi_\theta ' (z) = \Phi_{-\theta} ' (j_1 \star z)$ are immediate. The flow $\Phi_\theta'$ generates a vector field $X$ on $\mathcal{S}$. If $\partial_\mathcal{S}$ is a basic vector field of $\mathcal{S}$, the we can write $X = g \cdot \partial_\mathcal{S}$, where $g$ is an analytic function on $\mathcal{S}$. Then, property (A3) for $z=(e_{p+1}, \epsilon_1)$ shows that $f$ satisfies the following differential equation
\begin{equation} \label{ODE f and g}
 g \, \partial_\mathcal{S} \left( f \right) = 1 - f^2
 \end{equation}
Note that this equation is valid at the fixed points of $\Phi_\theta'$ also, since at those points $g=0$ and $f = \pm 1$. The last equality follows from (A3) by taking $\theta \rightarrow \pm \infty$. Let $z_1$ be one of the fixed points of $\Phi_\theta$ for which $f(z_1)=1$. We can pick a chart centered at $z_1$, where (\ref{ODE f and g}) takes the form
\begin{equation} \label{ODE f and g on interval}
    g \cdot f' = 1- f^2
\end{equation}
The functions in (\ref{ODE f and g on interval}) are to be understood as composed with the chart, but we still use the same letters. Since $f$ is an analytic function on $\mathcal{S}$ and since it is not constantly equal to 1, it cannot be constantly equal to 1 around $z_1$. That means that the order of vanishing of $1-f$ in (\ref{ODE f and g on interval}) at $0$ cannot be infinite. By order of vanishing we mean the smallest number $n \in \mathbb{N}$ such that $(1-f)^{(n)}(0)= f^{(n)}(0) \neq 0$, but $(1-f)^{(k)}(0) = 0$ for $0 \leq k < n$, where $(1-f)^{(0)} = 1-f$. But then, differentiating equation (\ref{ODE f and g on interval}) $(n+1)$-times and evaluating at $0$, we get that $g'(0) = - \frac{2}{n}$
Of course, $g'(0)$ is equal to the Jacobian of $\Phi_\theta'$ at $z_1$. By (A1), we get that $g \circ j_1 = g$, hence we see that at the other fixed point of $\Phi_\theta'$, which is $j_1(z_1)$, we have that the Jacobian of $\Phi_\theta'$ is $J_\Phi\left( j_1(z_1) \right) = \frac{2}{n}$. Then, by Lemma \ref{induced basic j1 flows properties}, $\Phi_\theta'$ is an induced basic $j_1$-flow.
 \end{itemize}
\end{proof}

\begin{remark}
    In the following, we will drop the subscript $``\Phi"$ from $f_\Phi$ and we will simply write $f$.
\end{remark}

\subsection{The Uchida construction} \label{Uchida constr section}
 As it was mentioned in the Introduction, in \cite{Uchida}, Uchida studied smooth actions of $G$ on $\text{S}^{p+q-1}$ extending the standard orthogonal action of $K$. 
\begin{remark} \label{Uchida conditions}
Uchida showed that such actions are in one to one correspondence with pairs $(\Phi, \tilde{f})$ where $\Phi$ is a smooth flow on $\text{S}^1$ and $\tilde{f} : \text{S}^1 \rightarrow \mathbb{R}\text{P}^1$ is smooth, and $\phi$ and $\tilde{f}$ satisfy:
\begin{itemize}
    \item[(i)] $j_i \star \Phi_\theta (z) = \Phi_{- \theta} (j_i \star z)$ \qquad $(i=1,2)$
    \item[(ii)] $\tilde{f}(z) = [a:b] \Rightarrow \tilde{f}(j_i \star z) = [a : -b]$ \qquad $(i=1,2)$
    \item[(iii)] $\tilde{f}(z) = [a:b] \Rightarrow \tilde{f} \left( \Phi_\theta (z) \right) = \left[ a \, \text{cosh}(\theta) + b \, \text{sinh}(\theta) : a \, \text{sinh}(\theta) + b \, \text{cosh}(\theta) \right] $ 
    \item[(iv)] $\tilde{f}(z) = [0:1] \Leftrightarrow z = e_{p+1} \quad \text{and} \quad \tilde{f}(z) = [1:0] \Leftrightarrow z = e_1$
\end{itemize}
where, $j_1$ and $j_2$ act on $\text{S}^1$ as the reflections with respect to the $y$-axis and $x$-axis respectively, see \cite[Theorem]{Uchida}. We will refer to the above conditions as \textit{Uchida conditions}.
\end{remark}

The correspondence roughly goes as follows. Given a $G$ action on $\text{S}^{p+q-1}$, let $\mathcal{F}$ be the fixed point set of $H$. Then, $\mathcal{F}$ is diffeomorphic to $\text{S}^1$.  The pair $(\Phi, \tilde{f})$ is obtained via $\mathcal{M}(p,q)$ and Lemma \ref{lemma Uch} respectively. For the reverse direction, suppose a pair $(\Phi, \tilde{f})$ satisfying the Uchida conditions is given. We can assume that $(\Phi, \tilde{f})$ are a flow and a function on $\mathcal{F}$, since $\mathcal{F}$ is diffeomorphic to $\text{S}^1$. For $z \in \mathcal{F}$ with $\tilde{f}(z) = [a:b]$, let $ U'(z) = (H_{[a:b]})^\circ$. Then, by equation (\ref{G decomp}), $G = K \, \mathcal{M}(p,q) \, U'(z)$ for any $z \in \mathcal{F}$. Therefore, an action of $G$ on $\text{S}^{p+q-1}$ such that $K$ acts in the standard way is defined in the following way: Let $v \in \text{S}^{p+q-1}$ and $g \in G$. There exist $z \in \text{S}^1$ and $k \in K$ such that $k \star z = v$. Using (\ref{eq:1}), write $ gk = k_1 \, m(\theta) \, u$. Then, the action is defined as $ g \star v := k_1 \star \Phi_\theta (z) $, where on the right hand side the action of $k_1$ is the standard orthogonal one.

Let $\mathcal{S}_3 = \{ \alpha \, e_1 + \beta \, e_{p+1} : \alpha^2+\beta^2=1 \} \subseteq \text{S}^{p+q-1}$. Then $\mathcal{S}_3=\mathcal{F}$ for $G$ actions on $\text{S}^{p+q-1}$ extending the orthogonal $K$ action. 

\begin{definition}
    Assume $\Phi_\theta$ is a nontrivial analytic flow on $\textup{S}^1$. We will say that $\Phi_\theta$ is a \underline{basic $(\textup{J}_1,\textup{J}_2)$-flow} if:
    \begin{itemize}
        \item It has exactly 4 fixed points on $\text{S}^1$, none of which are the points $\pm e_1$ or $\pm e_{2}$.
        \item $\textup{J}_i \Phi_\theta (z) = \Phi_{- \theta} (\text{J}_i z)$, $i=1,2$, where $\textup{J}_1$ is the reflection with respect to the $y$-axis and $\textup{J}_2$ the reflection with respect to the $x$-axis.
        \item The Jacobian of $\Phi_\theta$ at the fixed points is $\pm 2/n$, where $n \in \mathbb{N}$.
    \end{itemize}
\end{definition}
 Note that the second condition in the above definition implies that if $z_1$ is a fixed point of $\Phi_\theta$, then the rest of the fixed points are $\text{J}_i*z_1$, for $i=1,2$, and $\text{J}_1\text{J}_2\star z_1$. Additionally, the same condition implies that if the Jacobian of $\Phi_\theta$ at $z_1$ is $2/n$, the $\text{J}_1\text{J}_2\star z_1$ has the same Jacobian, while the Jacobian equals $-2/n$ at $\text{J}_i\star z_1$, $i=1,2$. Similarly to Definition \ref{induced basic j1 flow}, we can define \textit{induced basic $(j_1,j_2)$-flows} on $\mathcal{S}_3$. Furthermore, similarly to Lemma \ref{relations <-> basic flow} it can be shown that starting from an induced basic $(j_1,j_2)$-flow on $\mathcal{S}_3$ we can get a pair $(\Phi_\theta,\tilde{f})$ satisfying the Uchida conditions, where $\tilde{f}:\mathcal{S}_3 \rightarrow \mathbb{R}\text{P}^1$. Consequently, by the results in \cite{Uchida}, we get an analytic $G$ action on $\text{S}^{p+q-1}$ extending the orthogonal $K$ action. We will refer to an action of $G$ on $\text{S}^{p+q-1}$ arising this way as an \textit{action from the Uchida construction corresponding to a basic} $(\textup{J}_1,\textup{J}_2)$-\textit{flow}. Similar arguments to those we will use to prove Theorem \ref{basic axns thm} can be used to prove a similar result in this case, see also Remark \ref{remark classification of basic j1 fl}:
\begin{theorem} \label{basic Uchida axns thm}
\textup{(see also \cite[Theorem]{Uchida})} Two actions from the Uchida construction corresponding to basic $(\textup{J}_1,\textup{J}_2)$-flows are analytically isomorphic if and only if the corresponding basic $(\textup{J}_1,\textup{J}_2)$-flows are analytically isomorphic. Moreover, the basic $(\textup{J}_1,\textup{J}_2)$-flows are classified up to analytic isomorphism by the Jacobian at the fixed points, which is $\pm 2/n$, for $n \in \mathbb{N}$, and the global invariant $\mu$.
\end{theorem}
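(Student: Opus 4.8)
The plan is to follow the proof of Theorem~\ref{basic axns thm} (carried out in Section~\ref{isom axns}), with $\mathcal S$ replaced by $\mathcal S_3$, the single reflection $j_1$ replaced by the pair $j_1,j_2$, and the affine function $f$ replaced by the projective function $\tilde f\colon\mathcal S_3\to\mathbb R\text P^1$ furnished by Lemma~\ref{lemma Uch}; the final sentence of the theorem is then Hitchin's classification \cite{HITCHIN1991359} applied as in Remark~\ref{remark classification of basic j1 fl}. The ``only if'' direction is the easy one: since both actions extend the standard $K$-action, the fixed-point set of $H$ is $\mathcal S_3$ for each, and any $G$-equivariant analytic diffeomorphism $F$ carries $\textup{Fix}(H)$ to $\textup{Fix}(H)$, hence $\mathcal S_3$ to $\mathcal S_3$. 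Because $\mathcal M(p,q)\le G$ normalises $H$ and acts on $\mathcal S_3$ exactly by the flow attached to each action, equivariance forces $F|_{\mathcal S_3}\circ\Phi^1_\theta=\Phi^2_\theta\circ F|_{\mathcal S_3}$ for all $\theta$; transporting to $\textup S^1$, $F|_{\mathcal S_3}$ is an analytic isomorphism of the two basic $(\textup J_1,\textup J_2)$-flows. (Being the restriction of a $K$-equivariant map, it is automatically $\textup J_1,\textup J_2$-equivariant, which is why the two notions of ``isomorphic flows'' — with or without $\textup J_i$-equivariance — coincide here.)

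For the ``if'' direction, let $\psi\colon\textup S^1\to\textup S^1$ be an analytic isomorphism of basic $(\textup J_1,\textup J_2)$-flows $\Phi^1_\theta,\Phi^2_\theta$. The first step is to normalise $\psi$ so that in addition $\psi\textup J_i=\textup J_i\psi$ for $i=1,2$: the map $\psi\textup J_i\psi^{-1}$ is an analytic involution reversing $\Phi^2_\theta$, and for a basic $(\textup J_1,\textup J_2)$-flow the four hyperbolic fixed points and their attracting/repelling pattern are permuted rigidly, so the group generated by $\{\Phi^2_\theta\}_\theta$, $\textup J_1$ and $\textup J_2$ already contains every flow-symmetry and every flow-reversing symmetry; hence $\psi$ may be post-composed with a time-$\theta_0$ map of $\Phi^2$ and, if needed, with $\textup J_1\textup J_2$ to achieve $\textup J_i$-equivariance. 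Transporting $\Phi^a_\theta$ and $\psi$ to $\mathcal S_3$, the $(j_1,j_2)$-analogue of Remark~\ref{f_phi def remark} and Lemma~\ref{relations <-> basic flow} produces from $\Phi^a$ (together with the fixed loci of $j_1$ and $j_2$) an analytic function $\tilde f^a\colon\mathcal S_3\to\mathbb R\text P^1$ by an explicit recipe, with $\tilde f^a=[1:\pm1]$ at the fixed points, $\tilde f^a=[0:1]$ exactly at the $j_1$-fixed points, $\tilde f^a=[1:0]$ exactly at the $j_2$-fixed points, and satisfying the Riccati relation of Uchida condition (iii) of Remark~\ref{Uchida conditions}. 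Since $\psi$ intertwines the flows and the $j_i$, it intertwines the recipes, so $\tilde f^2\circ\psi=\tilde f^1$; that is, $(\psi,\mathrm{id})$ is an isomorphism of Uchida pairs $(\Phi^1,\tilde f^1)\cong(\Phi^2,\tilde f^2)$.

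Now I would define $F\colon\textup S^{p+q-1}\to\textup S^{p+q-1}$ by $F(k\star z)=k\star\psi(z)$ for $k\in K$ and $z\in\mathcal S_3$, where on the right the $K$-action is the standard orthogonal one. Well-definedness reduces, via the description of the $K$-orbits meeting $\mathcal S_3$ (two points of $\mathcal S_3$ lie in a common $K$-orbit exactly when they differ by an element of $\langle j_1,j_2\rangle$, and the $K$-isotropy is $H$ off the $j_i$-fixed loci and larger on them), to $\psi$ being equivariant for the normaliser $N_K(H)$ — generated modulo $H$ by $j_1,j_2$ — and compatible with $\tilde f$ at the singular orbits; both have been arranged. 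Then $F$ is a $K$-equivariant diffeomorphism with inverse built from $\psi^{-1}$, and it is analytic by the same slice-chart argument used in Appendix~\ref{analyticity of actions} for the basic construction. Finally, $G$-equivariance: given $g\in G$ and $v=k\star z$, write $gk=k_1\,m(\theta)\,u$ with $u\in U'(z)=(H_{\tilde f^1(z)})^\circ$ using the decomposition $G=K\,\mathcal M(p,q)\,U'(z)$ (see (\ref{G decomp}) and (\ref{eq:1})), so that $g\star_1 v=k_1\star\Phi^1_\theta(z)$; since $\tilde f^2(\psi(z))=\tilde f^1(z)$, the very same factorisation $gk=k_1\,m(\theta)\,u$ computes $g\star_2\psi(z)=k_1\star\Phi^2_\theta(\psi(z))$, and $\Phi^2_\theta(\psi(z))=\psi(\Phi^1_\theta(z))$ yields $F(g\star_1 v)=g\star_2 F(v)$.

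For the last sentence, apply Hitchin's classification of analytic vector fields on $\textup S^1$ \cite{HITCHIN1991359} as in Remark~\ref{remark classification of basic j1 fl}: the $(\textup J_1,\textup J_2)$-symmetry pins down the number of zeros (four), the attracting/repelling pattern, and the equality of Jacobians at the two sources and at the two sinks, while the closing-up condition forces each Jacobian to equal $\pm 2/n$; the only residual invariants are then $n\in\mathbb N$ and the global invariant $\mu$, and two basic $(\textup J_1,\textup J_2)$-flows are analytically isomorphic iff these coincide. I expect the genuine work to be in the ``if'' direction: the normalisation of $\psi$ to be $(\textup J_1,\textup J_2)$-equivariant, and above all the proof that the $K$-equivariant extension $F$ is well defined and analytic across the singular orbits where $\tilde f$ attains $[0:1]$ or $[1:0]$ — precisely the delicate point that occupies Appendix~\ref{analyticity of actions} for the basic construction. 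By contrast, $G$-equivariance is a formal consequence of the factorisation (\ref{eq:1}) once $\tilde f^2\circ\psi=\tilde f^1$ is known.
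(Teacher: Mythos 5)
Your proposal follows essentially the same route as the paper, which establishes Theorem \ref{basic Uchida axns thm} simply by rerunning the argument of Theorem \ref{basic axns thm} (Section \ref{isom axns}) with $\mathcal{S}$ replaced by $\mathcal{S}_3$, the pair $(j_1,j_2)$ in place of $j_1$, the $(j_1,j_2)$-analogue of Remark \ref{f_phi def remark} and Lemma \ref{relations <-> basic flow} supplying the Uchida pair, and Hitchin's classification as in Remark \ref{remark classification of basic j1 fl} for the final sentence; your map $F(k\star z)=k\star\psi(z)$, the checks of well-definedness, analyticity and $G$-equivariance via (\ref{eq:1}), and the easy converse all match the paper's treatment. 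The one step where you are sketchy --- normalising $\psi$ to commute with both $\textup{J}_1$ and $\textup{J}_2$ (in particular excluding the possibility that $\psi\textup{J}_1\psi^{-1}$ is of the form $\textup{J}_2\Phi^2_t$, which composition with time maps and $\textup{J}_1\textup{J}_2$ cannot repair) --- is glossed in exactly the same way by the paper, which for the analogous normalisation in Theorem \ref{basic axns thm} defers to \cite{Lentas_thesis}.
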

 Note that this theorem strengthens Uchida's result about $G$ actions on $\text{S}^{p+q-1}$ extending the orthogonal $K$ action, in the analytic setting.

\subsection{Adapted lemmas from Uchida}

Suppose $\Phi_\theta$ is an induced basic $j_1$-flow on $\mathcal{S}$ and $f = f_\Phi$, see Definition \ref{induced basic j1 flow} and Remark \ref{f_phi def remark}. Recall that by Lemma \ref{relations <-> basic flow}, $\Phi$ and $f$ satisfy relation (A1)-(A4) from Remark \ref{(A1)-(A4) props remark}.

For $z \in \mathcal{S}$, let
\begin{equation} \label{P(z) eq}
P(z) = \frac{1}{f(z)^2+1} \left( f(z) \, e_1 +  \, e_{p+1} \right)^T \cdot \left( f(z) \, e_1 +  \, e_{p+1} \right) \in \mathbb{R}^{(p+q) \times (p+q)} 
\end{equation}
Then, define the following subgroup of $G$
\[ U\left(P(z) \right) = \left\{ g \in G : gP(z)g^T = P(z) \right\} \]
Recall $U(z)$ by (\ref{U(z) eq definition}) and note that $U\left(P(z) \right)^\circ = U(z)$, see \cite{Uchida}. Straightforward matrix multiplication and (A3) show that, for $z \in \mathcal{S}$,
\begin{equation} \label{m(theta) and P}
    m(\theta)P(z)m(\theta) = \lambda(\theta,z) P \left( \Phi_\theta(z) \right)
\end{equation}
and hence
\begin{equation} \label{m(theta) and U}
m(-\theta)U(z)m(\theta)=U\left( \Phi_\theta (z) \right)    
\end{equation}
See also \cite{Uchida}.
We present three adaptations of lemmas from \cite{Uchida} that hold in our setting. The proofs are similar to \cite{Uchida}.

\begin{lemma}\textup{(see also \cite[Lemma 3.5]{Uchida})} \label{adapted lemma 1}
    Suppose $kP(z)k^T=P(w)$ for some $k \in K$ and $z,w \in \mathcal{F}$. Then 
    \begin{enumerate}
        \item[(1)] If $f(z) \neq 0$, then $f(z)=f(w)$ and $k\in H \bigcup j_1j_2H$ or $f(z) = f(j_i(w))$ and $k \in j_1H \bigcup j_2H$.
        \item[(2)] If $f(z)=0$, then $f(z)=f(w)$ and $k \in U(z) \bigcup j_1j_2U(z)$. 
    \end{enumerate}
\end{lemma}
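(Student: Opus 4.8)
The plan is to analyze the equation $kP(z)k^T=P(w)$ by exploiting the explicit form of $P(z)$ as (a scalar multiple of) a rank-one projection built from the vector $v_z := f(z)\,e_1 + e_{p+1}$. Indeed, $P(z) = \frac{1}{\|v_z\|^2} v_z v_z^T$ (as a symmetric matrix), so the condition $kP(z)k^T = P(w)$ says precisely that $k$ carries the line $\mathbb{R}v_z$ to the line $\mathbb{R}v_w$, i.e. $k v_z = \pm \frac{\|v_z\|}{\|v_w\|} v_w$ (and since $k \in K = \mathrm{SO}(p)\times\mathrm{SO}(q)$ is orthogonal and $v_z, v_w$ lie in the span of $e_1, e_{p+1}$, one gets $\|v_z\| = \|v_w\|$, hence $k v_z = \pm v_w$). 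First I would make this reduction precise: write $k = \mathrm{diag}(A,B)$ with $A \in \mathrm{SO}(p)$, $B \in \mathrm{SO}(q)$; since $v_z = f(z) e_1 + e_{p+1}$ has its first $p$ coordinates equal to $f(z) e_1$ and last $q$ coordinates equal to $e_1$ (the first basis vector of $\mathbb{R}^q$), the equation $k v_z = \pm v_w$ decouples into $A(f(z) e_1) = \pm f(w) e_1$ in $\mathbb{R}^p$ and $B e_1 = \pm e_1$ in $\mathbb{R}^q$, with a common sign.

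Now I would split into the two cases of the statement. If $f(z) \neq 0$: the first equation forces $A e_1 = \pm \frac{f(w)}{f(z)} e_1$, and since $A$ is orthogonal this gives $|f(w)| = |f(z)|$, so $f(w) = \pm f(z)$, and correspondingly $A e_1 = \pm e_1$. Together with $B e_1 = \pm e_1$ (same sign throughout), the four sign combinations correspond exactly to $k$ lying in one of the cosets $H$, $j_1 H$, $j_2 H$, $j_1 j_2 H$ — here one uses that $j_1 = \mathrm{diag}(-1,-1,I_{p+q-2})$ flips the sign of the first coordinate in the $\mathbb{R}^p$-block (in fact $j_1$ is $\mathrm{diag}(-1,I_{p-1})$ on the first factor once one accounts for the determinant, combined with the action on $\mathcal{S}$), and $j_2$ flips the sign of the first coordinate in the $\mathbb{R}^q$-block, while $H = \mathrm{SO}(p-1)\times\mathrm{SO}(q-1)$ is exactly the subgroup of $K$ fixing both $e_1 \in \mathbb{R}^p$ and $e_1 \in \mathbb{R}^q$ up to the sign bookkeeping. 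I would then match: the sign pair $(+,+)$ gives $f(z) = f(w)$ and $k \in H$; the pair $(-,-)$ gives $f(z) = f(w)$ and $k \in j_1 j_2 H$ (since $j_1 j_2$ fixes both relevant lines with sign $-1$ on each, but acts trivially on the relevant point of $\mathcal{S}$ — care is needed here because $j_1$ does \emph{not} fix $\mathcal{S}$ pointwise, so the precise statement about which coset matches which relation between $f(z), f(w)$ must be read off from how $j_i$ acts on $\mathcal{S}$, using (A2)); and the pairs with opposite signs of $f(z), f(w)$ give $f(z) = f(j_i(w))$ via (A2) and $k \in j_1 H \cup j_2 H$.

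For the case $f(z) = 0$: here $v_z = e_{p+1}$, so $P(z) = e_{p+1}^T e_{p+1}$ is the projection onto $\mathbb{R}e_{p+1}$, and $kP(z)k^T = P(w)$ forces $v_w$ to be proportional to $k e_{p+1}$. Since $k e_{p+1}$ again has first $p$ coordinates zero (as $A$ acts on $\mathbb{R}^p$ and $e_{p+1}$ sits in the $\mathbb{R}^q$ block) — more carefully, $e_{p+1}$ is $0 \oplus e_1$ in $\mathbb{R}^p \oplus \mathbb{R}^q$, so $k e_{p+1} = 0 \oplus B e_1$, whose $\mathbb{R}^p$-part vanishes — we get $v_w = 0 \oplus (\pm e_1)$, i.e. $f(w) = 0$, so $f(z) = f(w)$. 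And the remaining constraint is just $B e_1 = \pm e_1$ with no constraint on $A$; by (A4), $z$ and $w$ are points fixed by $\mathrm{SO}(p)$, the stabilizer of $e_{p+1}$-line in $K$ is $U(z) \cup j_1 j_2 U(z)$ (the extra component coming from the simultaneous sign flip), giving $k \in U(z) \cup j_1 j_2 U(z)$; here I would invoke $U(P(z))^\circ = U(z)$ and identify $U(P(z)) \cap K$ with this stabilizer.

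The main obstacle I expect is the sign-bookkeeping in case (1): because $j_1$ and $j_2$ act nontrivially on $\mathcal{S}$ (indeed $j_1 \star (\alpha e_1 + \beta e_{p+1}, \epsilon_1) = (-\alpha e_1 + \beta e_{p+1}, \epsilon_1)$), the four orthogonal sign choices on the vector $v_z$ do not correspond in the obvious naive way to the four cosets of $H$ in $\langle j_1, j_2\rangle H$; one must carefully track, for each coset, what relation between $f(z)$ and $f(w)$ (resp. $f(j_i(w))$) it forces, using relations (A1)–(A4). The cleanest route is probably to fix representatives, compute $j_i P(z) j_i^T$ explicitly from (A2) and the formula for $P$, and then argue that $k P(z) k^T = P(w)$ combined with $j_i P(w) j_i^T = P(j_i w)$ reduces the general case to the case $k \in H \cup j_1 j_2 H$ after multiplying $k$ by a suitable $j_i$; that residual case is then the genuinely rigid one handled by the decoupled orthogonality argument above. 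Everything else is routine matrix multiplication of the kind already used to establish (\ref{m(theta) and P}) and (\ref{m(theta) and U}).
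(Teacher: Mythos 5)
Your argument is correct: reading $kP(z)k^T=P(w)$ as saying that $k$ carries the line $\mathbb{R}\left(f(z)e_1+e_{p+1}\right)$ to $\mathbb{R}\left(f(w)e_1+e_{p+1}\right)$, decoupling over the two blocks of $K$ (so the $\mathbb{R}^q$-component forces the proportionality scalar to be $\pm1$ and the $\mathbb{R}^p$-component forces $f(w)=\pm f(z)$ when $f(z)\neq 0$), and matching the sign choices to the cosets $H$, $j_1H$, $j_2H$, $j_1j_2H$ via (A2) — respectively to $U(z)\cup j_1j_2U(z)$ when $f(z)=0$, using $\text{SO}(p)\times \text{SO}(q-1)\subset U(z)$ — yields exactly the stated dichotomy. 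The paper gives no proof of this lemma, deferring to Uchida's Lemma 3.5, and your rank-one/stabilizer computation is essentially that same direct argument.
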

\begin{lemma}\textup{(see also \cite[Lemma 3.6]{Uchida})} \label{adapted lemma 2}
    If $z \in \mathcal{S}$ and $f(\Phi_\theta (z) ) = f(j_i(z))$, then $|f(z)|\neq 1$ and $\Phi_\theta (z) = j_1(z)$.
\end{lemma}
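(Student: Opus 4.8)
\textbf{Proof plan for Lemma \ref{adapted lemma 2}.}

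The plan is to argue by contradiction, mimicking the structure of \cite[Lemma 3.6]{Uchida}. Suppose $z\in\mathcal{S}$ and $f(\Phi_\theta(z))=f(j_i(z))$. First I would use property (A2) to rewrite the hypothesis as $f(\Phi_\theta(z))=-f(z)$, and then invoke (A3) to get $\frac{f(z)+\tanh(\theta)}{1+f(z)\tanh(\theta)}=-f(z)$. Clearing denominators yields $f(z)+\tanh(\theta)=-f(z)-f(z)^2\tanh(\theta)$, i.e. $2f(z)=-\tanh(\theta)(1+f(z)^2)$. If $|f(z)|=1$ then $1+f(z)^2=2$, forcing $\tanh(\theta)=-f(z)=\mp1$, which is impossible for finite $\theta$; this already rules out $|f(z)|=1$, proving the first assertion. (One should also treat the degenerate possibility that $z$ is a fixed point of $\Phi_\theta$ with $f(z)=\pm1$ separately: there $\Phi_\theta(z)=z$ and $f(j_i(z))=-f(z)\neq f(z)$, contradicting the hypothesis directly.)

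For the second assertion, having $|f(z)|\neq 1$, I would pass to the level of the matrices $P(\cdot)$ and the subgroups $U(\cdot)$. The equality $f(\Phi_\theta(z))=f(j_i(z))$ together with (A2), (A3) should be promoted to an identity relating $P(\Phi_\theta(z))$ and $P(j_i(z))$; more precisely, since $P(w)$ depends only on $f(w)$ (and the same fixed vectors $e_1,e_{p+1}$), equal $f$-values give equal $P$-values, so $P(\Phi_\theta(z))=P(j_i(z))$. Now use (\ref{m(theta) and P}): $m(\theta)P(z)m(\theta)=\lambda(\theta,z)P(\Phi_\theta(z))$, and the fact that $j_i$ acts on $P(z)$ by $j_iP(z)j_i^T=P(j_i(z))$ (which follows from the $K$-equivariance of $P$ and (A2)). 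Combining, $j_i^{-1}m(\theta)P(z)m(\theta)j_i$ is a scalar multiple of $P(z)$, so the matrix $g:=m(\theta)j_i$ (or a suitable conjugate) normalizes the line spanned by $P(z)$; chasing through $U(P(z))=\{g:gP(z)g^T=P(z)\}$ and the decomposition (\ref{eq:1}) $G=K\,\mathcal{M}(p,q)\,U(z)$ should pin down $m(\theta)j_i$ up to $U(z)$, and hence force $\Phi_\theta(z)=j_1(z)$. The role of $|f(z)|\neq1$ here is exactly that $P(z)$ then has a two-dimensional image meeting the $(e_1,e_{p+1})$-plane in a line transverse to the null directions, so its stabilizer is ``small enough'' that the only element of $K\mathcal{M}(p,q)$ sending it to $P(j_i(z))=P(\Phi_\theta(z))$ modulo $U(z)$ is $j_1$ itself (up to $H$); when $|f(z)|=1$ the vector $f(z)e_1+e_{p+1}$ is null and this rigidity fails.

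The main obstacle I anticipate is the bookkeeping in the second part: correctly identifying how $j_1$ versus $j_2$ act on $P(z)$ and $U(z)$, and extracting from ``$m(\theta)j_i$ stabilizes $P(z)$ up to scalar'' the sharp conclusion $\Phi_\theta(z)=j_1(z)$ rather than merely $\Phi_\theta(z)\in\{j_1(z),j_2(z),j_1j_2(z)\}$. This is where one must use that $z\in\mathcal{S}$ (not all of $\mathcal{F}$) and that $j_2$ moves $\mathcal{S}$ off itself, so $j_2(z)\notin\mathcal{S}\ni\Phi_\theta(z)$, eliminating the $j_2$ and $j_1j_2$ cases; the surviving case $\Phi_\theta(z)=j_1(z)$ is the claim. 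I would lean on Lemma \ref{adapted lemma 1}(1) applied with $k=$ the $K$-part of $m(\theta)j_i$ in its $K\mathcal{M}(p,q)U(z)$-decomposition to organize this last step cleanly, exactly as Uchida does in \cite[Lemma 3.6]{Uchida}.
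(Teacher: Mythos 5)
Your first assertion is handled correctly: combining (A2) with (A3) exactly as you do rules out $|f(z)|=1$, and this is the step those relations give immediately. (The paper itself prints no proof here, saying only that the argument adapts Uchida's Lemma 3.6, so the comparison is with the proof its stated relations make available.)

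The second half of your plan, however, has a genuine gap. The objects $P(w)$ and $U(w)$ depend on the point $w$ only through the value $f(w)$, and $f$ is two-to-one on $\mathcal{S}$ away from the two flow fixed points: each value in $(-1,1)$ is attained once on each of the two open arcs into which $z_1,z_2$ cut $\mathcal{S}$, each arc being the flow orbit of one of $(\pm e_{p+1},\epsilon_1)$. So ``equal $f$-values give equal $P$-values'' is merely a restatement of the hypothesis, and no amount of stabilizer rigidity — $j_im(\theta)P(z)m(\theta)j_i=\lambda P(z)$, non-nullity of $f(z)e_1+e_{p+1}$, Lemma \ref{adapted lemma 1}, the decomposition (\ref{eq:1}) — can single out the point $j_1(z)$ among the two points of $\mathcal{S}$ carrying the value $-f(z)$; at best you recover that $j_1m(\theta)\in U(P(z))$, which again only constrains $\theta$ in terms of $f(z)$. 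Your worry about eliminating $j_2(z)$ and $j_1j_2(z)$ is beside the point, since those lie in the other component $j_2\star\mathcal{S}$ of $\mathcal{F}$; the alternative that must be excluded is the second point of $\mathcal{S}$ with value $-f(z)$, sitting on the other arc, and it is excluded only by the flow structure, which your argument never invokes. (Note also that Section \ref{subsection: well-def of axns} uses precisely this lemma to upgrade the $f$-value identity $f(\Phi_{\theta+\theta'}(z))=f(j_1(z))$ to a point identity, so a proof that manipulates only $P$ and $U$ adds nothing to the hypothesis.) The missing — and in fact the whole — ingredient is short: since $|f(z)|\neq 1$, $z$ is not a fixed point of $\Phi$, so by Remark \ref{f_phi def remark} one has $z=\Phi_s(w)$ with $w=(\pm e_{p+1},\epsilon_1)$ and $f(z)=\tanh s$; the hypothesis with (A2) and the definition of $f$ give $\tanh(s+\theta)=-\tanh s$, hence $\theta=-2s$; and then (A1) together with $j_1\star w=w$ yields $\Phi_\theta(z)=\Phi_{-s}(w)=\Phi_{-s}(j_1\star w)=j_1\star\Phi_s(w)=j_1(z)$. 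This three-line finish should replace the entire second paragraph of your plan.
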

\begin{lemma}{\textup{(\cite[Lemma 3.7]{Uchida})}} \label{adapted lemma 3}
    If $j_im(\theta) \in U(z)$, then $|f(z)| \neq 1$ and $i=1$.
\end{lemma}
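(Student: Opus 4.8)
The plan is to pass to the $2\times 2$ block on which everything happens. Both $m(\theta)$ and $j_i$ preserve the plane $\Pi:=\Span\{e_1,e_{p+1}\}\subseteq\mathbb{R}^{p+q}$ and act as the identity on a complementary subspace; in the ordered basis $(e_1,e_{p+1})$ one has $m(\theta)|_\Pi=\left(\begin{smallmatrix}\cosh\theta&\sinh\theta\\ \sinh\theta&\cosh\theta\end{smallmatrix}\right)$, while $j_1|_\Pi=\left(\begin{smallmatrix}-1&0\\0&1\end{smallmatrix}\right)$ and $j_2|_\Pi=\left(\begin{smallmatrix}1&0\\0&-1\end{smallmatrix}\right)$ (in the standard representation on $\mathbb{R}^{p+q}$ one has $j_1e_1=-e_1$, $j_1e_{p+1}=e_{p+1}$, $j_2e_1=e_1$ and $j_2e_{p+1}=-e_{p+1}$, the last of these being the choice forced by condition (ii) of Remark \ref{Uchida conditions}). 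Set $v_z:=f(z)\,e_1+e_{p+1}\in\Pi$, so that $P(z)$ from \eqref{P(z) eq} is the positive scalar $\tfrac{1}{f(z)^2+1}$ times the outer product $v_z v_z^T$. Then $U(P(z))=\{g\in G:(gv_z)(gv_z)^T=v_zv_z^T\}=\{g\in G:gv_z=\pm v_z\}$, and the sign defines a continuous homomorphism $U(P(z))\to\{\pm1\}$; since $U(z)=U(P(z))^\circ$ is connected, every element of $U(z)$ in fact fixes $v_z$. Consequently the hypothesis $j_im(\theta)\in U(z)$ is equivalent to the single linear equation $j_i|_\Pi\,m(\theta)|_\Pi\,\binom{f(z)}{1}=\binom{f(z)}{1}$.

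Next I would simply solve this equation, writing $f=f(z)$. For $i=1$ it reads $-f\cosh\theta-\sinh\theta=f$ together with $f\sinh\theta+\cosh\theta=1$; using the first to eliminate $\sinh\theta=-f(\cosh\theta+1)$ and substituting into the second gives $\cosh\theta-1=f^2(\cosh\theta+1)$, i.e.\ $f^2=\tanh^2(\theta/2)<1$, so in particular $|f(z)|\neq 1$, which is the first assertion. For $i=2$ the equation reads $f\cosh\theta+\sinh\theta=f$ together with $f\sinh\theta+\cosh\theta=-1$; here $\theta=0$ is impossible since $\cosh 0=1\neq-1$, and eliminating $\sinh\theta=-f(\cosh\theta-1)$ yields $\cosh\theta+1=f^2(\cosh\theta-1)$, i.e.\ $f^2=\coth^2(\theta/2)>1$, so $|f(z)|>1$. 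But $z\in\mathcal{S}$ (equivalently $z\in\mathcal{F}$), and by construction $f=f_\Phi$ takes values only in $[-1,1]$, see Remark \ref{f_phi def remark}; hence $|f(z)|>1$ is impossible. Therefore $i\neq 2$, so $i=1$.

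This is essentially the argument of \cite[Lemma 3.7]{Uchida} transcribed to the pair $(\Phi_\theta,f)$ on $\mathcal{S}$, and there is no serious obstacle. The two points that do require attention are: keeping straight which coordinate of $\mathbb{R}^{p+q}$ the reflection $j_2$ negates (it must negate $e_{p+1}$, not merely act off $\Span\{e_1,e_{p+1}\}$; otherwise the statement would already fail at the fixed points $z_1,z_2$, where $|f|=1$), and the short connectedness argument showing that $U(z)$ fixes $v_z$ outright rather than only its span — without the latter the sign flip $j_im(\theta)v_z=-v_z$ is a priori allowed, and a symmetric computation shows it would make $i=2$ compatible with $|f|<1$, so the connectedness input is genuinely used.
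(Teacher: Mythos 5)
Your argument is correct and is essentially the paper's: for $i=2$ your elimination produces exactly the identity $1+f(z)^2=(f(z)^2-1)\cosh\theta$ that the paper contradicts via positivity (using $|f|\leq 1$ on $\mathcal{S}$), while your $i=1$ computation $f(z)^2=\tanh^2(\theta/2)<1$ just fills in the part the paper defers to Uchida's Lemma 3.7. Working with the fixed vector $f(z)\,e_1+e_{p+1}$ rather than the rank-one matrix $P(z)$ is only a cosmetic difference, and the connectedness step is not even needed since by definition $U(z)=(H_{[f(z):1]})^\circ$ already consists of elements fixing that vector.
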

 Note that in our case $i=2$ cannot happen. Indeed, then we would get $(1+f^2(z)) = (f^2(z) -1)\text{cosh}(\theta)$ which is impossible, since the left hand side is always positive, while the right hand side is always negative.

\subsection{Well-definedness of the $\text{SO}^\circ(p,q)$ actions} \label{subsection: well-def of axns}

Firstly, we show that the definition of the action in (\ref{axn def}) of an element $g$ of $G$, assuming $z\in \mathcal{S}$ and $k_0 = (\kappa_1,\kappa_2) \in K$ are fixed, is independent of the expression of $gk_0$ using (\ref{eq:1}). To that end, suppose that $gk_0 = k m(\theta) u = k' m(\theta ') u'$, for $k, k' \in K$, $\theta, \theta ' \in \mathbb{R}$ and $u, u' \in U(z)$. For $\theta \in \mathbb{R}$, let
\begin{equation} \label{lambda eq}
    \lambda(\theta,z) = \frac{1}{f^2(z)+1} \left[ \left( f(z) \, \text{cosh}(\theta) + \text{sinh}(\theta) \right)^2 + \left( f(z) \, \text{sinh}(\theta) + \text{cosh}(\theta) \right)^2 \right]
\end{equation}
We have
\begin{align*}
    km(\theta) P(z) m(\theta) k^T & = k' m(\theta ') P(z) m(\theta ') (k')^T \\
    \Rightarrow \lambda(\theta, z) k P\big( \Phi_\theta (z) \big) k^T & = \lambda(\theta ', z) k' P\big( \Phi_{\theta '} (z) \big) (k')^T 
\end{align*}
by (\ref{m(theta) and P}). By taking the trace of both sides, we get 
\begin{equation} \label{eq: lambda}
    \lambda(\theta, z) = \lambda(\theta ',z)
\end{equation}
and 
\begin{equation} \label{eq:3}
    k P\big( \Phi_\theta (z) \big) k^T  = k' P\big( \Phi_{\theta '} (z) \big) (k')^T
\end{equation}
Now, (\ref{eq:3}) gives $\displaystyle (k')^T k P\big( \Phi_\theta (z) \big) k^T k' = P\big( \Phi_{\theta '} (z) \big) $, which by Lemma \ref{adapted lemma 1} and Lemma \ref{adapted lemma 2} implies
\begin{equation*}
\begin{cases}
    f \big( \Phi_\theta (z) \big) = f \big( \Phi_{\theta'} (z) \big) & \text{or} \\
    f \big( \Phi_\theta (z) \big) = - f \big( \Phi_{\theta'} (z) \big)
\end{cases}
\end{equation*}
\begin{equation} \label{eq:4}
    \Rightarrow \begin{cases}
    f \big( \Phi_{\theta - \theta '} (z) \big) = f \big( z \big) & \text{or} \\
    f \big( \Phi_{\theta + \theta'} (z) \big) = - f \big( z \big) = f(j_1(z))
\end{cases}
\end{equation}

We deal first with the first case of (\ref{eq:4}). Suppose $f \left( \Phi_{\theta - \theta '} (z) \right) = f \big( z \big)$. If $f(z)=1$, then by (\ref{eq: lambda}) we get:
\begin{align*}
      \big( \text{cosh}(\theta) + \text{sinh}(\theta) \big)^2 & = \big( \text{cosh}(\theta ') + \text{sinh}(\theta ') \big)^2 \\
    \Rightarrow ( e^{\theta} )^2  = (e^{\theta '})^2  &\Rightarrow \theta = \theta '
\end{align*}

Similarly, if $f(z)=-1$, we again get $\theta = \theta '$. On the other hand, if $|f(z)| \neq 1$, then we have
\begin{align*}
    f \big( \Phi_{\theta - \theta '} (z) \big) & = f \big( z \big) \\
    \xRightarrow{\theta_0 = \theta - \theta '} \frac{f(z) + \text{tanh}(\theta_0)}{1+ f(z)\text{tanh}(\theta_0)} & = f(z) \\
    \Rightarrow f(z) + \text{tanh}(\theta_0) & = f(z) + f^2(z) \text{tanh}(\theta_0) \\
    \Rightarrow \text{tanh}(\theta_0) & = f^2(z) \text{tanh}(\theta_0) \\
    \Rightarrow \big(1-f^2(z) \big) \text{tanh}(\theta_0) & = 0 \xRightarrow{|f(z)| \neq 1} \text{tanh}(\theta_0)  = 0 \\
   \Rightarrow \theta_0  = 0 & 
    \Rightarrow \theta  = \theta '
\end{align*}

Having now that $\theta = \theta '$, we have that $k^{-1}k'=m(\theta)u(u')^{-1}m(\theta) \in m(\theta)U(z)m(\theta)^{-1}=U \big(\Phi_\theta (z) \big)$
by (\ref{m(theta) and U}). Now, suppose an element $\tilde{k}=(\kappa_1 , \kappa_2 ) \in K$ is in $U(\zeta)$ for some $\zeta \in$ S$^1$. Then, $\kappa_1  \left( f(\zeta)e_1 \right) = f(\zeta) e_1 \text{ and } \kappa_2 e_{p+1} = e_{p+1}$. Thus, $\kappa_2 \in\text{SO}(q-1)$ and, if $f(\zeta) \neq 0$, then $\kappa_1 \in\text{SO}(p-1)$, while if $f(\zeta)=0$, then $\zeta = (\pm e_{p+1}, \epsilon_1)$
In any case we see that $K \cap U(\zeta) = \text{Stab}_K(\zeta)$. Therefore, $k^{-1}k' \in$ Stab$_K \left( \Phi_\theta (z) \right)$ and hence
\begin{equation*}
    k \star \Phi_\theta (z) = k' \star \Phi_{\theta }(z)
\end{equation*}

 Now, we consider the second case of (\ref{eq:4}), namely the case $f \big( \Phi_{\theta + \theta'} (z) \big) = f(j_1(z))$. By equation (\ref{eq:3}) and Lemma \ref{adapted lemma 1}, we get that $k^{-1}k' \in j_1H \cup j_2H$. Hence, there exists $h \in H$ for which $k' = kj_ih$. Then
\begin{align*}
    km(\theta)u  = k' m(\theta ') u' 
  &\Rightarrow  m(\theta) u  = j_ihm(\theta ') u'
    \Rightarrow m(\theta) u  = j_im(\theta ')hu' \\
    \Rightarrow m(\theta) u  = m(- \theta ')j_ihu'
    &\Rightarrow j_i m(\theta + \theta ')  = hu'u^{-1} 
\end{align*}
Hence, $j_i m(\theta + \theta ') \in U(z)$ and by Lemma \ref{adapted lemma 3}, $i=1$ and $| f(z) | \neq 1$, which then implies $\Phi_{\theta + \theta '} (z) = j_1(z)$. As a result, we have 
\begin{align*}
    k' \star \Phi_{\theta '} (z) & = kj_1h \star \Phi_{\theta '} (z)
      = kj_1 \star \Phi_{\theta '}(z) \\
     & = kj_1 \star \Phi_{-\theta + (\theta + \theta ')}(z) 
      = kj_1m(- \theta) \star \Phi_{\theta + \theta '}(z) \\
     & = km(\theta) j_1m(\theta + \theta ') \star z 
      = km(\theta) \star z 
      = k \star \Phi_\theta (z) \\
\end{align*}
Hence, the action in (\ref{axn def}) is well defined when $k_0=(\kappa_1, \kappa_2) \in K$ and $z \in \mathcal{S}$ are fixed.

Next, suppose $(v,w) \in \text{S}^p \times \text{S}^{q-1}$. We now show that the action in (\ref{axn def}) is independent of the way $(v,w)$ is written as $(v,w)=k_0 \star z$ for $k_0 \in K$ and $z \in \mathcal{S}$. To the end, assume $(v,w) = k_0 \star z = k_0' \star z'$, for $k_0=(\kappa_1,\kappa_2), \, k_0'= (\kappa_1 ',\kappa_2 ') \in K$ and $z=(\alpha_ze_1+\beta_ze_{p+1}, \epsilon_1), \, z'=(\alpha_{z'}e_1+\beta_{z'}e_p+1, \epsilon_1) \in \mathcal{S}$. Moreover, for a $g \in G$, let $gk_0= km(\theta)u$ and $gk_0'= k'm(\theta ')u'$. Recall that for $\kappa \in \text{SO}(p)$, $\displaystyle \tilde{\kappa} = 
    \begin{bmatrix}
        \kappa & \\ & 1
    \end{bmatrix}$, see equation (\ref{SOp embed in SOp+1}). Now, 
\begin{align*}
     k_0 \star z  = k_0' \star z' 
    \Rightarrow  \begin{cases}
        \tilde{\kappa_1}(\alpha_ze_1+\beta_ze_{p+1}) = \tilde{\kappa_1 '}(\alpha_{z'}e_1+\beta_{z'}e_{p+1}) \\
        \kappa_2 \epsilon_1 = \kappa_2' \epsilon_1
    \end{cases}
\end{align*}
From that we deduce that $\beta_z = \beta_{z'}$ and that $\alpha_z = \pm \alpha_{z'}$, since $\alpha_z^2+\beta_z^2=\alpha_{z'}^2+\beta_{z'}^2=1$.  Hence, $z=j_1^\eta z'$, where $\eta = 0$ or 1. Hence, $(k_0')^{-1}k_0j_1^\eta$ fixes $z'$ and therefore, $(k_0')^{-1}k_0j_1^\eta \in U(z') $, which implies $ k_0' = k_0j_1^\eta u'' $, for some $u'' \in U(z')$. Then,
\begin{align*}
    gk_0' & = gk_0j_1^{\eta}u'' 
     = km(\theta)uj_1^{\eta}u'' 
     = km(\theta)j_1^\eta \big( j_1^\eta u j_1^\eta u'' \big) 
     = kj_1^\eta m\big( (-1)^\eta \theta \big) \big( j_1^\eta u j_1^\eta u'' \big)
\end{align*}
Now, $P(j_i(z)) = j_iP(z)j_i $ and therefore, $ j_1^\eta u j_1^\eta \in U(z')$. But then, \\
$kj_1^\eta m\big( (-1)^\eta \theta \big) \big( j_1^\eta u j_1^\eta u'' \big)$ and $k'm(\theta ')u'$ are different expressions for $gk_0'$ using (\ref{eq:1}) for the point $z' \in \mathcal{S}$ and as we've already seen the action defined in (\ref{axn def}) agrees for the two expressions, namely $kj_1^\eta \star \Phi_{(-1)^\eta \theta} (z') = k' \star \Phi_{\theta '}(z')$. If $\eta = 0$, then $ z=z'$ and $ k' \star \Phi_{\theta '}(z') = k \star \Phi_{ \theta} (z)$, while if $\eta=1$, then $z=j_1z'$ and $k' \star \Phi_{\theta '}(z') = kj_1 \star \Phi_{- \theta} (z') = k \star \Phi_\theta (j_1z') = k \star \Phi_\theta (z)$.

Finally, we see that this definition does indeed give an action of $G$. Let $g, g' \in G$, $(v,w) \in$ S$^p \times$ S$^{q-1}$ and $(v,w) = (\kappa_1,\kappa_2) \star z$, and write $g(\kappa_1,\kappa_2) = km(\theta)u$ and $g'k = k'm(\theta ')u'$, where $u \in U(z)$ and $u' \in U(\Phi_\theta (z))$. Then,
\begin{align*}
    g'g(\kappa_1, \kappa_2) & = g' km(\theta)u 
     = k' m(\theta ') u' m(\theta)u 
     = k' m(\theta ' + \theta) \big( m(- \theta)u'm(\theta) \big) u 
\end{align*}
But, by (\ref{m(theta) and U}), $m(- \theta)u'm(\theta) \in U(z)$, therefore
\begin{align*}
    g' \star \Big( g \star z \Big) & = g' \star \Big( k \star \Phi_\theta (z) \Big) 
     = k' \star \Phi_{\theta '} (\Phi_\theta (z)) 
     = k' \star \Phi_{\theta ' + \theta} (z) 
     = g'g \star z
\end{align*}
It is also easy to see that this action extends the action of $K$ on S$^p \times$ S$^{q-1}$.

\subsection{Isomorphic Actions} \label{isom axns}
Here we prove Theorem \ref{basic axns thm}
\begin{proof}{(of Theorem \ref{basic axns thm})}
Suppose that for two $G$-actions from the basic construction, the basic $\textup{J}_1$-flows on $\text{S}^1$ that they correspond to, say $\widetilde{\Phi}^1_\theta$ and $\widetilde{\Phi}^2_\theta$, are analytically isomorphic. Let $\Phi^1_\theta$ be the induced basic $j_1$-flow of $\widetilde{\Phi}^1_\theta$ on $\mathcal{S}$, see Definition \ref{induced basic j1 flow}. Similarly, consider the induced basic $j_1$-flow $\Phi^2_\theta$ of $\widetilde{\Phi}^2_\theta$.  Recall $\mathcal{S} = \{ (\alpha e_1 + \beta e_{p+1}, \epsilon_1 )\} \subseteq \text{S}^p \times \text{S}^{q-1}$. Now, since $\widetilde{\Phi}^1_\theta$ and $\widetilde{\Phi}^2_\theta$ are analytically isomorphic, it is immediate that $\Phi^1_\theta$ and $\Phi^2_\theta$ are also analytically isomorphic. Equivalently, the vector fields that $\Phi^1_\theta$ and $\Phi^2_\theta$ generate, say $X$ and $Y$ respectively, are isomorphic.
We note that, by the result in \cite{HITCHIN1991359}, analytic vector fields on the circle are classified, see also Remark \ref{remark classification of basic j1 fl}. Therefore, there exists an analytic isomorphism, $\Psi$, of $\mathcal{S}$ such that $\Psi \circ \Phi^1_\theta = \Phi^2_\theta \circ \Psi$. Moreover, we can assume that $\Psi$ satisfies $\Psi \circ j_1 = j_1 \circ \Psi$, since $\Phi_\theta^{i}$, $i=1,2$, satisfy that relation with $j_1$, see \cite{Lentas_thesis} for more details. Let $f_1 = f_{\Phi^1}$ and $f_2 = f_{\Phi^2}$, see Remark \ref{f_phi def remark}. We claim that $\Psi$ also satisfies $f_1 = f_2 \circ \Psi$. Indeed, if $X= g_X \cdot \partial_1$, where $\partial_1$ is a basic vector of $\mathcal{S}$, then by (A1) and (A3) of Remark \ref{(A1)-(A4) props remark}, it is easy to see that

\begin{equation*}
    \begin{cases}
     g_X \circ j_1 = g_X \\
     g_X \cdot  (f_1)' = (f_1)^2 -1
    \end{cases}
\end{equation*}
and similarly for $Y$.

Let $z = \left( \pm e_{p+1}, \epsilon_1 \right)$. Then, $f_1(z)=0$ and $g_X(z) \neq 0$. By $\Psi \circ j_1 = j_1 \circ \Psi$, $z' = \Psi (z)$ is also fixed by $j_1$, i.e. it is either $(e_{p+1}, \epsilon_1)$ or $(-e_{p+1}, \epsilon_1)$ and hence, $f_2 \left( z' \right) = 0$ and $g_Y \left( z' \right) \neq 0$.
Let $I$ be a neighbourhood of $z$ and see $I$ as an interval on the real line. Similarly for an $I'$ around $z'$, such that $\Psi: I \rightarrow I'$ analytic isomorphism.

In terms of $g_X$ and $g_Y$, the relation $\Psi \circ \Phi^1 = \Phi^2 \circ \Psi$ becomes $g_X \cdot \Psi' = g_Y \circ \Psi$, which is equivalent to $g_X \circ \Psi^{-1} = g_Y \cdot \big( \Psi^{-1} \big)'$. Now, 
\begin{align*}
    & g_X \cdot  f_1' = f_1^2 -1 
    \Rightarrow  g_X \circ \Psi^{-1} \cdot f_1' \circ \Psi^{-1} = f^2_1 \big( \Psi^{-1} \big) - 1 \\
    \Rightarrow & g_Y \cdot \big( \Psi^{-1} \big)' \cdot f_1' \circ \Psi^{-1} = f^2_1 \big( \Psi^{-1} \big) - 1
    \Rightarrow g_Y \cdot \big( f_1 \circ \Psi^{-1} \big)' = f^2_1 \big( \Psi^{-1} \big) - 1 
\end{align*}
Therefore, both $f_2$ and $f_1 \circ \Psi^{-1}$ solve the equation
\begin{equation*}
    \begin{cases}
        u' = \frac{1}{g_Y} \left( u^2 -1 \right)\\
        u(z') = 0
    \end{cases}
\end{equation*}
where by $u'$ we denote the derivative of $u$. Therefore they're equal around $z'$, and being analytic, they're equal. Therefore, we have an analytic isomorphism $\Psi : \mathcal{S} \rightarrow \mathcal{S}$ such that $\Psi \circ \Phi^1_\theta = \Phi^2_\theta \circ \Psi \text{ and } f_1 = f_2 \circ \Psi$

 Now, we can define a map $\tilde{\Psi}: \text{S}^p \times \text{S}^{q-1} \rightarrow \text{S}^p \times \text{S}^{q-1}$ by $ k \star \Phi^1_\theta(z) \mapsto k \star \Phi^2_\theta \left( \Psi(z) \right)$
\begin{itemize}
    \item[$\bullet$] $\tilde{\Psi}$ is well defined: \\
    Suppose $k \star \Phi^1_\theta (z) = k' \star \Phi^1_{\theta '} (z')$. If $\Phi^1_\theta (z) = ( \pm e_{p+1}, \epsilon_1)$, then by $ \Phi^1_\theta (z) = k^{-1}k' \star \Phi^1_{\theta '}(z')$, we get that $k^{-1}k' \in \text{SO}(p) \times \text{SO}(q-1)$ and $\Phi^1_{\theta '}(z')=\Phi^1_\theta (z) = (\pm e_{p+1}, \epsilon_1)$. Then, by $0 = f_1(\pm e_{p+1}, \epsilon_1) = f_2 \left( \Psi (\pm e_{p+1}, \epsilon_1) \right) $ we have $ \Psi (\pm e_{p+1}, \epsilon_1) = (e_{p+1}, \epsilon_1)$ or $(-e_{p+1}, \epsilon_1)$, hence $k \star \Psi (\pm e_{p+1}, \epsilon_1) = k' \star \Psi (\pm e_{p+1}, \epsilon_1)$. Finally, $k \star \Psi (\pm e_{p+1}, \epsilon_1) = k' \star \Psi (\pm e_{p+1}, \epsilon_1) $ implies $ k \star \Psi (\Phi^1_\theta(z)) = k' \star \Psi (\Phi^1_{\theta ' }(z'))$. Therefore, $k \star \Phi^2_\theta \left( \Psi(z) \right) = k' \star \Phi^2_{\theta '} \left( \Psi(z') \right)$. If $\Phi^1_\theta(z) \neq (\pm e_{p+1}, \epsilon_1)$, then $\Phi^1_\theta(z') = \Phi^1_\theta(z)$ or $j_1 \star \Phi^1_\theta (z)$, and $k^{-1}k' \in H$ or $j_1H$ respectively. In the former case the result is immediate. In the latter, it follows by the equations $\Phi^i_\theta \circ j_1 = j_1 \circ \Phi^i_\theta$ and $\Psi \circ j_1 = j_1 \circ \Psi$.
    \item[$\bullet$] $\tilde{\Psi}$ is onto: \\
    Let $(v,w) \in \text{S}^p \times \text{S}^{q-1}$. There exist $k \in K$, $z \in \mathcal{S}$ and $\theta \in \mathbb{R}$ such that $(v,w) = k \star \Phi^2_\theta (z)$. Then, $(v,w) = \tilde{\Psi} \left( k \star \Phi^1 \left( \Psi^{-1} (z) \right) \right)$
    \item[$\bullet$] $\tilde{\Psi}$ is 1-1:
    The proof proceeds similarly to the well definedness of $\tilde{\Psi}$. 
    \item[$\bullet$] $\tilde{\Psi}$ is $G$-equivariant: The proof is identical to that of Lemma \ref{G-equiv map btw two axns} in Section \ref{axn with SOp fixed pnt}.
    \item[$\bullet$] $\tilde{\Psi}$ is a local analytic isomorphism: The proof is identical to that of Lemma \ref{local anal isom brw two axns} in Section \ref{axn with SOp fixed pnt}.
\end{itemize}
Therefore, the two $G$ actions are analytically isomorphic, if the analytic isomorphism $\Psi$ on $\mathcal{S}$ between $\Phi^1_\theta$ and $\Phi^2_\theta$ exists. On the other hand, it is immediate that an analytic, $G$-equivariant isomorphism between two such $G$-actions will result in isomorphic induced basic $j_1$-flows on $\mathcal{S}$ and hence, the corresponding basic $\text{J}_1$-flows on $\text{S}^1$ are also isomorphic.
\end{proof}
\section{Extendable $\text{SO}(p) \times \text{SO}(q)$ actions} \label{SOp SOq axns}

Consider the maximal compact subgroup of $G$, $K= \text{SO}(p) \times \text{SO}(q)$. We are interested in the analytic actions of $K$ on a closed, connected manifold $M$ of dimension $p+q-1$, that extend to an analytic action of $G$. To that end, we are going to look at the possible $\text{SO}(p)$ and $\text{SO}(q)$ orbits. Uchida has classified the subgroups of $\text{O}_p$ of codimension at most $2p-2$, see \cite{Uchida1981REALAS} and \cite{Uchida_clssf_rlanalytic_SLn}. We show that Uchida's result can be applied here.
We assume $p \geq q$ and that $K$ acts on a manifold $M$ as above. Assume also that the action extends to $G$. Let $x \in M$.

\noindent \textbf{Notation:} We denote by $\mathcal{O}^p$ the $\text{SO}(p)$-orbit of $x$ and by $\mathcal{O}^q$ its $\text{SO}(q)$-orbit. 
\begin{lemma}
    $\textup{dim}\mathcal{O}^p < 2p-1$
\end{lemma}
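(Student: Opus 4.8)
The plan is to bound $\dim\mathcal{O}^p$ simultaneously by $\dim M$ and by $\dim\textup{SO}(p)$, and then to eliminate the single borderline case by exploiting that $\textup{SO}(p)$ and $\textup{SO}(q)$ commute inside $K$. Since $\mathcal{O}^p\subseteq M$ and $\mathcal{O}^p\cong\textup{SO}(p)/\textup{SO}(p)_x$, we have $\dim\mathcal{O}^p\le\min\{\dim M,\ \dim\textup{SO}(p)\}=\min\{p+q-1,\ p(p-1)/2\}$. As $q\le p$, the first quantity is $\le 2p-1$, with equality only when $p=q$, while $p(p-1)/2<2p-1$ precisely for $p\le 4$. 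Hence $\dim\mathcal{O}^p<2p-1$ holds automatically unless $p=q\ge 5$; in that remaining case $\dim\mathcal{O}^p\le 2p-1=\dim M$, and only the equality $\dim\mathcal{O}^p=2p-1$ must be excluded.

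So suppose $p=q\ge 5$ and $\dim\mathcal{O}^p=2p-1=\dim M$. Then $\mathcal{O}^p$ is an open submanifold of $M$; being an orbit of the compact group $\textup{SO}(p)$ it is also closed, so by connectedness of $M$ we get $M=\mathcal{O}^p$, i.e.\ $\textup{SO}(p)$ acts transitively on $M$. Put $L=\textup{SO}(p)_x$, so $M\cong\textup{SO}(p)/L$ with $\dim L=p(p-1)/2-(2p-1)>0$. Because the complementary factor $\textup{SO}(q)\le K$ commutes with $\textup{SO}(p)$, every element of $\textup{SO}(q)$ acts on $M\cong\textup{SO}(p)/L$ as an $\textup{SO}(p)$-equivariant diffeomorphism; the group of all such diffeomorphisms is canonically $W:=N_{\textup{SO}(p)}(L)/L$, a compact Lie group with $\dim W\le\dim\textup{SO}(p)-\dim L=2p-1$. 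This yields a smooth homomorphism $\rho\colon\textup{SO}(q)\to W$. Since $2p-1<p(p-1)/2=\dim\textup{SO}(q)$ for $p\ge 5$, the kernel of $\rho$ is a positive-dimensional normal subgroup of $\textup{SO}(q)$, hence all of $\textup{SO}(q)$ because $\mathfrak{so}(q)$ is simple ($q\ge 5$). Therefore $\textup{SO}(q)$ acts trivially on $M$.

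This contradicts nontriviality of the action: the kernel of the $G$-action on $M$ is normal in the simple group $G=\textup{SO}^\circ(p,q)$, so it is either finite or all of $G$; the former is impossible, as it would have to contain the infinite group $\textup{SO}(q)$, and the latter would force $\mathcal{O}^p$ to be a single point, contrary to $\dim\mathcal{O}^p=2p-1$. Hence $\dim\mathcal{O}^p<2p-1$. I expect the only point needing care to be the identification of the $\textup{SO}(p)$-equivariant self-diffeomorphisms of $\textup{SO}(p)/L$ with $N_{\textup{SO}(p)}(L)/L$ (a standard fact from the theory of compact transformation groups) and the attendant check that $\rho$ is a genuine homomorphism of Lie groups; everything else is elementary dimension counting together with simplicity of $\mathfrak{so}$.
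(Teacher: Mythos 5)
Your proof is correct, and its opening reduction coincides with the paper's: $\dim\mathcal{O}^p\le\min\{p+q-1,\ \dim\mathrm{SO}(p)\}$ forces $p=q$ in the critical case, and for $p=q\le 4$ the inequality $\dim\mathrm{SO}(p)<2p-1$ already finishes, exactly as in the paper. Where you genuinely diverge is the borderline case $p=q\ge 5$ with a full-dimensional orbit. The paper stays at the point $x$: because $\mathrm{SO}(p)$ and $\mathrm{SO}(q)$ commute, $\mathrm{Stab}_{\mathrm{SO}(q)}(x)$ fixes $\mathcal{O}^p$ pointwise and hence acts trivially on $T_x\mathcal{O}^p=T_xM$; if this stabiliser has positive dimension the $K$-action is not locally effective, and if it is zero-dimensional then $\dim\mathcal{O}^q=\dim\mathrm{SO}(q)>\dim M$, a contradiction. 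You instead use transitivity to write $M\cong\mathrm{SO}(p)/L$ and map all of $\mathrm{SO}(q)$ into the group of $\mathrm{SO}(p)$-equivariant diffeomorphisms $N_{\mathrm{SO}(p)}(L)/L$, whose dimension is at most $2p-1<\dim\mathrm{SO}(q)$, so simplicity of $\mathfrak{so}(q)$ forces $\mathrm{SO}(q)$ to act trivially, and simplicity of $G$ then yields the contradiction. Both arguments are dimension counts exploiting the commuting factors; yours requires the identification of equivariant self-maps of $\mathrm{SO}(p)/L$ with $N_{\mathrm{SO}(p)}(L)/L$ (standard, and valid since $L$ is compact) together with continuity of the induced homomorphism $\rho$, which you flag and which does hold (the right action of $N_{\mathrm{SO}(p)}(L)/L$ on $\mathrm{SO}(p)/L$ is free, so $\rho$ is pinned down by $h\mapsto h\star x$ and in fact $\ker\rho=\mathrm{Stab}_{\mathrm{SO}(q)}(x)$); this last identity shows your detour through $N_{\mathrm{SO}(p)}(L)/L$ could be collapsed to the paper's stabiliser count. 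What your version buys is a cleaner endgame: rather than invoking local effectiveness as a standing consequence of extendability, you derive the contradiction directly from the fact that the kernel of the $G$-action is normal in the simple group $G$ and cannot contain $\mathrm{SO}(q)$ unless the action is trivial, which is incompatible with $\dim\mathcal{O}^p=2p-1$.
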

\begin{proof}
Since $p \geq q$, dim$\mathcal{O}^p \leq p +q-1 \leq 2p-1$. If dim$\mathcal{O}^p$ = $2p-1$, then it follows $q=p$. Consider \text{Stab}$_{\text{SO}(q)}(x)$, which acts trivially on $\text{T}_x\mathcal{O}^p = \text{T}_xM$. If \text{Stab}$_{\text{SO}(q)}(x)$ has dimension $\geq 1$, then the action of $K$ is not locally effective and so it cannot extend to an action of $G$. If \text{Stab}$_{\text{SO}(q)}(x)$ has dimension $=0$, then $\mathcal{O}^q$ has dimension $q(q-1)/2 = p(p-1)/2$. For $p=q \geq 5$ this is bigger than the dimension of the manifold and for $p=q=4$ or 3, it is impossible for $\mathcal{O}^p$ to be of dimension $2p-1$ since that is bigger than the dimension of $\text{SO}(p)$.
\end{proof}
 So, we suppose we have dim$\mathcal{O}^p \leq 2p-2$ and therefore, we can apply Uchida's result. Let $V := $T$_x\mathcal{O}^p \bigcap $T$_x\mathcal{O}^q$. In particular, $V \leq$ T$_x\mathcal{O}^p$ is a trivial subrepresentation of the isotropy representation of $H' =\text{Stab}_{\text{SO}(p)}(x)$. Therefore, the dimension of $V$ is bounded by the dimension of a maximal subspace of $\text{T}_x \mathcal{O}^p$ on which $H'$ acts trivially via the isotropy representation. In turn, the dimension of $\text{T}_x \mathcal{O}^q$ is bounded by 
\begin{equation} \label{q ineq 1}
    \text{dim} \text{T}_x \mathcal{O}^q \leq \text{dim}M - \text{dim} \text{T}_x \mathcal{O}^p + \text{dim}V 
 \end{equation}
At the same time $q$ must satisfy
\begin{equation} \label{q ineq 2}
    \text{dim} \mathcal{O}^p \leq p+q-1 \quad \text{and} \quad q \leq p
    \end{equation} 

 We present Uchida's classification of the orbit types, $\text{SO}(p) \big/ H'$, mentioned above, in Table \ref{table:1} below. The last column comes from taking (\ref{q ineq 2}) into consideration.

\begin{table}[h] \footnotesize
    \centering
\caption{Uchida Table}
\label{table:1}
    \begin{tabular}{|c|c|c|c|c|} 
        \hline
$p$ & subgroup $H'$ & dim$\mathcal{O}^p$ & dim$M$ & $q$   \\
\hline  \hline
$p$ & $\text{SO}(p-2)$ & $2p-3$ & $\leq 2p-1$ & $p-2 \leq q \leq p$ \\
$p$ & $\text{SO}(p-2) \times \text{SO}(2)$ & $2p-4$ & $\leq 2p-1$ & $p-3 \leq q \leq p$ \\
9 & Spin$(7)$ & 15 & $\leq 17$ & $7 \leq q \leq 9$   \\
8 & $\text{G}_2$ & 14 & $\leq 15$ & $7 \leq q \leq 8$\\
8 & $\text{U}_4$ & 12 & $\leq 15$ & $5 \leq q \leq 8$ \\
8 & $\text{SU}_4$ & 13 & $\leq 15$ & $6 \leq q \leq 8$ \\
7 & $\text{G}_2$ & 7 & $\leq 13$ & $3 \leq q \leq 7$\\
7 & $\text{U}_3$ & 12 & $\leq 13$ & $6 \leq q \leq 7$\\
7 & $\text{SO}(3) \times \text{SO}(4)$ & 12 & $\leq 13$ & $6 \leq q \leq 7$ \\ 
6 & $\text{SO}(3) \times \text{SO}(3)$ & 9 & $\leq 11$ & $4 \leq q \leq 6$ \\
6 & $\text{U}_3$ & 6 & $\leq 11$ & $3 \leq q \leq 6$ \\
6 & $\text{SU}_3$ & 7 & $\leq 11$ & $3 \leq q \leq 6$\\ 
6 & $\text{U}_2 \times \text{U}_1$ & 10 & $\leq 11$ & $5 \leq q \leq 6$ \\
5 & $\text{U}_2$ & 6 & $\leq 9$ & $3 \leq q \leq 5$ \\
5 & $\text{SU}_2$ & 7 & $\leq 9$ & $3 \leq q \leq 5$ \\
5 & $\text{U}_1 \times \text{U}_1$ & 8 & $\leq 9$ & $4 \leq q \leq 5$\\
5 & $\text{SO}(3)$ & 7 & $\leq 9$ & $3 \leq q \leq 5$  \\
3 & $\{ 1 \}$ & 3 & $\leq 5$ & 3 \\
\hline 
$p$ & $\text{SO}(p-1)$ & $p-1$ & $\leq 2p-1$ & $3 \leq q \leq p$ \\
8 & Spin$(7)$ & 7 & $\leq 15$ & $3 \leq q \leq 8$ \\
4 & $\text{SU}_2$ & 3 & $\leq 7$ & $3 \leq q \leq 4$ \\
4 & $\text{U}_2$ & 2 & $\leq 7$ & $3 \leq q \leq 4$  \\
\hline
    \end{tabular}

\end{table}

 For reference, we will call the last four rows of Table \ref{table:1} the bottom part and the rest the top part. The embedding of $H'$ in $\text{SO}(p)$ in each case is the usual one, except for the penultimate row of the top part of Table \ref{table:1} where it is the irreducible 5-dimensional representation of $\text{SO}(3)$.
\newpage
\begin{lemma}
    In all the cases for $\mathcal{O}^p$, the dimension of a maximal subspace of $\text{T}_x \mathcal{O}^p$ on which the respective subgroup $H'$ acts trivially is at most 1. In particular, $V$ is at most 1 dimensional.
\end{lemma}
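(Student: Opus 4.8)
The plan is to go through Table \ref{table:1} row by row and, for each orbit type $\text{SO}(p)/H'$, identify the isotropy representation of $H'$ on $\text{T}_x\mathcal{O}^p \cong \mathfrak{so}(p)/\mathfrak{h}'$ and show that its trivial isotypic component has dimension at most $1$. The key structural observation is that $\text{T}_x\mathcal{O}^p$ decomposes as an $H'$-module in a way visible from the embedding $H' \hookrightarrow \text{SO}(p)$: writing $\mathfrak{so}(p) = \mathfrak{h}' \oplus \mathfrak{m}$, the module $\mathfrak{m}$ is built out of the standard representation $\mathbb{R}^p$ of $\text{SO}(p)$ restricted to $H'$ together with $\Lambda^2$ of it, minus $\mathfrak{h}'$; concretely, if $\mathbb{R}^p|_{H'} = W \oplus (\text{trivial of dim } k)$ for a $k$-dimensional fixed subspace, then $\mathfrak{m}$ contains $\Lambda^2(\text{trivial})$, $W\otimes(\text{trivial})$ copies, and $\Lambda^2 W \ominus \mathfrak{h}'$. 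So the count of trivial summands in $\mathfrak{m}$ is controlled by $\binom{k}{2}$ plus the number of trivial summands in $W$ (which is $0$ by construction of $k$) plus the multiplicity of the trivial representation in $\Lambda^2 W$, i.e. the number of invariant $2$-forms on $W$ not already accounted for by $\mathfrak{h}'$.

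First I would handle the two generic families at the top and bottom of the table. For $H' = \text{SO}(p-1)$ we have $\mathbb{R}^p|_{H'} = \mathbb{R}^{p-1}\oplus\mathbb{R}$, so $k=1$, $\binom{k}{2}=0$, $W=\mathbb{R}^{p-1}$ is irreducible nontrivial (using $p\ge 3$ so $p-1\ge 2$), $\Lambda^2\mathbb{R}^{p-1}=\mathfrak{so}(p-1)=\mathfrak{h}'$ has no trivial part, so $\mathfrak{m}\cong\mathbb{R}^{p-1}$ has no trivial summand at all — dimension $0$, which is $\le 1$. For $H'=\text{SO}(p-2)$, $\mathbb{R}^p|_{H'}=\mathbb{R}^{p-2}\oplus\mathbb{R}^2$ with $k=2$, giving $\binom{2}{2}=1$ trivial summand from $\Lambda^2$ of the $2$-dimensional fixed space, while $W=\mathbb{R}^{p-2}$ contributes nothing and $\Lambda^2 W=\mathfrak{so}(p-2)=\mathfrak{h}'$ contributes nothing; so the trivial part of $\mathfrak{m}$ is exactly $1$-dimensional. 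The cases $\text{SO}(p-2)\times\text{SO}(2)$, $\text{SO}(p-1)$ with $\text{Spin}(7)\subset\text{SO}(8)$, $\text{SU}_2,\text{U}_2\subset\text{SO}(4)$, etc., are analogous: in each the restriction of $\mathbb{R}^p$ to $H'$ has a small fixed subspace (dimension $0$ or $1$) so the only possible trivial summands come from a rank $\le 1$ fixed space (contributing $0$) plus invariant $2$-forms on the complement.

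The remaining rows are the exceptional and unitary cases: $\text{Spin}(7)\subset\text{SO}(9)$, $\text{G}_2\subset\text{SO}(7),\text{SO}(8)$, $\text{SU}_4,\text{U}_4\subset\text{SO}(8)$, $\text{SU}_3,\text{U}_3\subset\text{SO}(6),\text{SO}(7)$, $\text{U}_2\times\text{U}_1$, $\text{SU}_2,\text{U}_2,\text{U}_1\times\text{U}_1\subset\text{SO}(5)$, $\text{SO}(3)\times\text{SO}(3)\subset\text{SO}(6)$, $\text{SO}(3)\times\text{SO}(4)\subset\text{SO}(7)$, and the irreducible $\text{SO}(3)\subset\text{SO}(5)$. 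For each of these I would compute $\mathbb{R}^p|_{H'}$ (for $\text{SU}_n$ it is the underlying real of $\mathbb{C}^n$, possibly plus a trivial line; for $\text{U}_n$ similarly; for $\text{G}_2\subset\text{SO}(7)$ it is the irreducible $7$; for $\text{Spin}(7)\subset\text{SO}(8)$ the spin representation; the irreducible $\text{SO}(3)\subset\text{SO}(5)$ is $\Sym^2_0\mathbb{R}^3$), decompose $\Lambda^2$ of it, subtract $\mathfrak{h}'$, and read off the trivial multiplicity — for the $\text{U}_n$ cases there will be exactly one invariant $2$-form (the Kähler form) beyond $\mathfrak{h}'=\mathfrak{u}_n$'s own, but since that Kähler form already lies in $\mathfrak{u}_n$ the trivial part of $\mathfrak{m}$ again collapses to at most the $\binom{k}{2}$ from a fixed line, hence $\le 1$. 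The main obstacle is organizational rather than conceptual: it is the bookkeeping of these branching rules, and in particular being careful with the $\text{U}_n$ versus $\text{SU}_n$ distinction and with the irreducible $\text{SO}(3)$ embedding, to confirm uniformly that no extra invariant $2$-form survives in the quotient $\mathfrak{m}=\mathfrak{so}(p)/\mathfrak{h}'$. Once each row is checked, the bound on $\dim V$ follows since $V$ is a trivial $H'$-subrepresentation of $\text{T}_x\mathcal{O}^p$, hence sits inside the at-most-$1$-dimensional trivial isotypic component.
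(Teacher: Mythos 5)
Your strategy --- identify $\text{T}_x\mathcal{O}^p$ with $\Lambda^2\mathbb{R}^p|_{H'}\ominus\mathfrak{h}'$ and count trivial summands row by row via branching --- is a legitimate method, but it is genuinely different from what the paper does. The paper dismisses every row as an unargued check except the irreducibly embedded $\text{SO}(3)\subset\text{SO}(5)$, and for that single case it avoids plethysm entirely: the fixed subspace of the isotropy representation is $\mathfrak{n}_{\mathfrak{so}(5)}(\mathfrak{h}')/\mathfrak{h}'$, so a $2$-dimensional trivial subspace would force $\dim N_{\text{SO}(5)}(H')\geq \dim H'+2=5$, and Table \ref{table:1} itself then leaves only $N^\circ\simeq\text{SO}(4)$ standardly embedded, contradicting irreducibility of the $5$-dimensional representation of $\text{SO}(3)$. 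Your route would instead compute $\Lambda^2\!\left(\Sym^2_0\mathbb{R}^3\right)\cong W_7\oplus\mathfrak{so}(3)$ (the $7$-dimensional irreducible plus the adjoint), so the complement of $\mathfrak{h}'$ is irreducible and the trivial part is in fact $0$ --- a sharper conclusion than the paper's bound, at the price of doing the representation theory; the paper's normalizer trick buys the bound using nothing beyond the table it already has.

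The gap is that you only assert the case checks, and at least one blanket assertion is wrong exactly at the delicate rows. For the standard $\text{SU}_2\subset\text{SO}(4)$, $\mathfrak{su}(2)$ is one chiral factor of $\mathfrak{so}(4)=\mathfrak{su}(2)\oplus\mathfrak{su}(2)$ and the complement is the other factor, on which $\text{SU}(2)$ acts trivially; equivalently, the $\text{SU}(2)$-invariant $2$-forms on $\mathbb{R}^4$ outside $\mathfrak{su}(2)$ are spanned by $\omega$, $\mathrm{Re}\,\Omega$, $\mathrm{Im}\,\Omega$, a $3$-dimensional space. Your remark that the K\"ahler form ``already lies in $\mathfrak{u}_n$'' is specific to $\text{U}_n$ and does not help for $\text{SU}_n$, and for $n=2$ the forms $\mathrm{Re}\,\Omega$, $\mathrm{Im}\,\Omega$ are invariant as well. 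Hence for the rows $p=5$, $H'=\text{SU}_2$ and $p=4$, $H'=\text{SU}_2$ (and trivially for $p=3$, $H'=\{1\}$) your own counting produces a trivial component of dimension $3$, not $\leq 1$, so your sketch does not deliver the stated bound there; those rows cannot be swept into the ``analogous bookkeeping'' list and need separate treatment (they are also precisely where the paper's one-sentence ``easy to check'' is carrying the most weight). For the rows where the bound does hold --- $\text{SU}_3,\text{SU}_4$ give exactly one trivial line (the K\"ahler form), while the $\text{U}_n$, $\text{G}_2$, $\text{Spin}(7)$, $\text{SO}(k)\times\text{SO}(l)$, $\text{U}_1\times\text{U}_1$ and $\text{SO}(p-1)$, $\text{SO}(p-2)$ rows give $0$ or $1$ --- your method works, but the branchings must actually be written out: the entire content of the lemma lives in that bookkeeping, which the proposal defers.
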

\begin{proof}
    This is easy to check in all cases, except maybe for the penultimate row of the top part of Table \ref{table:1}.
Let $v \in \text{T}_x\mathcal{O}^p$, nonzero, such that $H'$ acts trivially on $v$ in the isotropy representation. Since we can identify $\text{T}_x \mathcal{O}^p$ with $\mathfrak{so}_5 \Big/ \mathfrak{h}'$, where $\mathfrak{h}'$ is the Lie algebra of $H'$, we can choose an $X \in \mathfrak{so}_5$ such that $X \, \text{mod} \mathfrak{h}' \equiv v$ under this identification, and $e^{tX} \in N_{\text{SO}(5)}( H')$, the normaliser of $H'$ in $\text{SO}(5)$. Now, if there exists a subspace $\Tilde{V} \leq \text{T}_x \mathcal{O}^p$ on which $H'$ acts trivially and dim$\Tilde{V} \geq 2 $, then for $N_{\text{SO}(5)}( H')$ we would have
\[ \text{dim}N_{\text{SO}(5)}( H') \geq \text{dim}H' + 2 \Rightarrow \text{dim}N_{\text{SO}(5)}( H') \geq 5 \]
But then, $N_{\text{SO}(5)}( H')$ is a subgroup of $\text{SO}(5)$ of codimension at most 5. According to Table \ref{table:1}, the only possibility is $N_{\text{SO}(5)}( H') \simeq \text{SO}(4)$ and (by conjugating) we can assume  $N_{\text{SO}(5)}( H')$ is imbedded in $\text{SO}(5)$ with the standard representation of $\text{SO}(4)$. But then, since $\text{SO}(3) \leq N_{\text{SO}(5)}( H')$, the representation of $\text{SO}(3)$ in $\text{SO}(5)$ is not irreducible, which is a contradiction. So, the dimension of a subspace of $\text{T}_x \mathcal{O}^p$ on which $H'$ acts trivially is at most 1.
\end{proof}

 Immediate from Table \ref{table:1} is the following:
\begin{lemma} \label{ Oq 1-dim}
    $\mathcal{O}^q$ cannot be 1-dimensional.
\end{lemma}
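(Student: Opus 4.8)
The plan is to deduce this directly from the classification underlying Table \ref{table:1}. If $\dim\mathcal{O}^q = 1$, then $\textup{Stab}_{\textup{SO}(q)}(x)$ is a closed subgroup of $\textup{SO}(q)$ of codimension $1$, and in particular $\dim\mathcal{O}^q = 1 \le 2q-2$ since $q \ge 3$; hence Uchida's classification of subgroups of $\textup{SO}(q)$ of codimension at most $2q-2$ applies, with the generic parameter $p$ of Table \ref{table:1} now playing the role of $q$. So $\textup{Stab}_{\textup{SO}(q)}(x)$ must appear, up to conjugacy, among the subgroups $H'$ listed there.

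It then suffices to inspect the orbit dimensions in that list. For arbitrary $q$ the possibilities are $\dim\mathcal{O}^q \in \{\,q-1,\ 2q-3,\ 2q-4\,\}$, arising from $H' \simeq \textup{SO}(q-1)$, $\textup{SO}(q-2)$, $\textup{SO}(q-2)\times\textup{SO}(2)$ respectively; the remaining (finitely many, low-rank) entries of Table \ref{table:1} all have orbit dimension at least $2$, the minimum being $\dim\bigl(\textup{SO}(4)/\textup{U}_2\bigr) = 2$. Since $q\ge 3$, every one of these values is $\ge 2$, so the value $1$ is never attained, and therefore $\mathcal{O}^q$ cannot be $1$-dimensional.

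For completeness one could instead argue softly: a closed codimension-$1$ subgroup $H \le \textup{SO}(q)$ would make $\textup{SO}(q)/H$ a closed connected $1$-manifold, hence $\textup{S}^1$, producing a transitive action of the compact connected group $\textup{SO}(q)$ on $\textup{S}^1$, which must factor through $\textup{SO}(2)$; this yields a nontrivial homomorphism $\textup{SO}(q)\to\textup{SO}(2)$, impossible for $q\ge 3$ since $\textup{SO}(q)$ is semisimple. There is no real obstacle in either approach; the only point requiring a moment's care is checking that a hypothetical $1$-dimensional $\textup{SO}(q)$-orbit lies within the codimension range $\le 2q-2$ covered by the classification, so that one is entitled to read off Table \ref{table:1}.
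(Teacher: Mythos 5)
Your primary argument is exactly the paper's: the paper proves this lemma by declaring it ``immediate from Table \ref{table:1}'', i.e.\ by observing that a $1$-dimensional orbit would force a codimension-$1$ stabiliser in $\textup{SO}(q)$, which is within the codimension range $\leq 2q-2$ covered by Uchida's classification yet absent from it, and you spell out precisely this check (including the applicability point) correctly. Your alternative ``soft'' argument via a transitive $\textup{SO}(q)$-action on $\textup{S}^1$ factoring through $\textup{SO}(2)$ is also valid and has the minor advantage of not invoking the classification at all.
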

 Moreover, we have
\begin{lemma} \label{SOq fix pnt}
    Suppose that 
    \textup{dim}$M$ - \textup{dim}$\mathcal{O}^p \leq q-1$. Then, $x$ cannot be fixed by $\textup{SO}(q)$, i.e. $\mathcal{O}^q$ cannot be 0-dimensional.
\end{lemma}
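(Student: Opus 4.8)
The plan is to argue by contradiction: assume $x$ is fixed by $\mathrm{SO}(q)$, i.e.\ $\mathcal{O}^q=\{x\}$, and deduce that the $K$-action fails to be locally effective, which is incompatible with its extending to $G$. Since the two factors $\mathrm{SO}(p)$ and $\mathrm{SO}(q)$ of $K$ commute and $\mathrm{SO}(q)$ fixes $x$, for every $g\in\mathrm{SO}(q)$ and $a\in\mathrm{SO}(p)$ we have $g\star(a\star x)=a\star(g\star x)=a\star x$; hence $\mathrm{SO}(q)$ fixes the whole orbit $\mathcal{O}^p$ pointwise, so its isotropy representation on $T_xM$ is trivial on the subspace $T_x\mathcal{O}^p$. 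The orbit $\mathcal{O}^p$ of the compact group $\mathrm{SO}(p)$ is an embedded submanifold, and the hypothesis $\dim M-\dim\mathcal{O}^p\le q-1$ gives $\dim\mathcal{O}^p\ge p$. Averaging a Riemannian metric over $\mathrm{SO}(q)$, I would then split $T_xM=T_x\mathcal{O}^p\oplus W$ orthogonally into $\mathrm{SO}(q)$-invariant subspaces, with $\dim W=\dim M-\dim\mathcal{O}^p\le q-1$.

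The key input is a representation-theoretic fact: for every $q\ge3$, every real representation of $\mathrm{SO}(q)$ of dimension at most $q-1$ has positive-dimensional kernel. For $q\ge5$ the only irreducible $\mathrm{SO}(q)$-representation of dimension $<q$ is the trivial one (the smallest nontrivial one is the $q$-dimensional standard representation; the spin representations of $\mathrm{Spin}(q)$ do not descend to $\mathrm{SO}(q)$, and the other tensor representations have dimension $\ge\binom{q}{2}>q$), so any representation of dimension $\le q-1$ is trivial; for $q=3$ the smallest nontrivial representation is again $3$-dimensional, so the same holds; and for $q=4$ the only nontrivial representations of dimension $\le3$ are the two pulled back along the projections $\mathrm{SO}(4)\to\mathrm{SO}(3)$, each of which has $3$-dimensional kernel. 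Applying this to $W$, the kernel $N\le\mathrm{SO}(q)$ of the $\mathrm{SO}(q)$-representation on $W$ is positive-dimensional, and it equals the kernel of the $\mathrm{SO}(q)$-representation on all of $T_xM$, the representation on $T_x\mathcal{O}^p$ being trivial.

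To conclude, using the $\mathrm{SO}(q)$-invariant metric the exponential map $\exp_x$ is $\mathrm{SO}(q)$-equivariant, so $N$, acting trivially on $T_xM$, fixes a neighbourhood of $x$ pointwise; since the action is analytic and $M$ is connected, $N$ then acts trivially on all of $M$. But $N$ is positive-dimensional and $N\le\mathrm{SO}(q)\le K$, so the $K$-action is not locally effective and therefore cannot extend to an action of $G$ (as in the first Lemma of this section; indeed $G=\mathrm{SO}^\circ(p,q)$ is almost simple, so a nontrivial $G$-action has discrete kernel, while $\dim\mathcal{O}^p\ge p>0$ forces the restricted $K$-action, hence the $G$-action, to be nontrivial). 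This contradicts the hypothesis, so no such $x$ exists; equivalently, $\mathcal{O}^q$ cannot be $0$-dimensional.

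I expect the main obstacle to be the representation-theoretic step, and within it the case $q=4$: unlike every other $q\ge3$, $\mathrm{SO}(4)$ does admit a nontrivial $(q-1)$-dimensional representation, so one cannot simply conclude that $\mathrm{SO}(q)$ acts trivially on $W$, only that it acts with a three-dimensional kernel. The remaining ingredients — Bochner-type linearization at a fixed point of a compact action, and the rigidity of analytic actions on connected manifolds — are standard.
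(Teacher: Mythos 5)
Your proposal is correct and follows essentially the same route as the paper: it shows the isotropy representation of $\mathrm{SO}(q)$ at $x$ is trivial on $T_x\mathcal{O}^p$ and has positive-dimensional kernel on the complement $W$ of dimension $\leq q-1$ (trivial for $q\neq 4$, factoring through $\mathrm{SO}(3)$ for $q=4$), contradicting local effectiveness, which is forced by the extension to the simple group $\mathrm{SO}^\circ(p,q)$. The paper handles the $q=4$ case by a dimension count on the homomorphism $\mathrm{SO}(4)\rightarrow \mathrm{O}_3$ rather than your explicit classification of low-dimensional representations, but this is the same idea.
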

\begin{proof}
    Firstly, let $q \neq 4$. Assume SO$_q$ stabilises $x$ and consider its isotropy representation on T$_xM$. Then, $\text{SO}(q)$ acts trivially on T$_x\mathcal{O}^p$ since $\text{SO}(p)$ and $\text{SO}(q)$ commute in $G$. Moreover, since it does not have a non-trivial representation of dimension $\leq q-1$, we conclude that its isotropy representation is trivial. But then, SO$_q$ acts trivially on a neighbourhood of $x$ in $M$, which contradicts our assumption that the action extends to $\text{SO}^\circ(p,q)$ since $\text{SO}^\circ(p,q)$ is simple and hence the action is locally effective.
    
     For $q=4$, there is the possibility that dim$M$ - dim$\mathcal{O}^p =3$ and that the representation of $\text{SO}(4)$ on a 3-dimensional complement of $\text{T}_x \mathcal{O}^p$ is not trivial. In that case, pick an $\text{SO}(4)$ invariant metric on $M$ and since $\text{SO}(4)$ acts trivially on $\text{T}_x \mathcal{O}^p$. The isotropy representation gives a homomorphism $\text{SO}(4) \rightarrow \text{O}_3$. Then, this homomorphism has kernel with nontrivial dimension, which contradicts the local effectiveness of the action again.
\end{proof}


\begin{lemma} \label{axn contra 1}
   Suppose 
that $\mathcal{O}^p \simeq \textup{SO}(p) / H'$ and that, in the isotropy representation of $H'$ on $\textup{T}_x \mathcal{O}^p$, the maximum dimension of a subspace on which $H'$ acts trivially is $\delta$, with $\delta = 0$ or 1. Then, the following situation is impossible:
    \begin{itemize}
        \item $(2p-1) - \textup{dim} \mathcal{O}^p \leq 3 - \delta$ 
        \item[] and 
        \item $\textup{dim} \mathcal{O}^p +1 -p \geq 5$
    \end{itemize}
\end{lemma}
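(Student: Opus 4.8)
The plan is to convert the two numerical hypotheses into an upper bound on $\dim\mathcal{O}^q$ so small that, together with the constraints already established on $\textup{SO}(q)$-orbits, it is self-contradictory.

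First I would read off what the hypotheses say about $\dim\mathcal{O}^p$. The second bullet gives $\dim\mathcal{O}^p\geq p+4$, and combined with $\dim\mathcal{O}^p\leq p+q-1$ from (\ref{q ineq 2}) this forces $q\geq 5$; in particular $q-1\geq 4$. The first bullet gives $\dim\mathcal{O}^p\geq 2p-4+\delta$, so, using $\dim M=p+q-1$ and $p\geq q$,
\[
\dim M-\dim\mathcal{O}^p\ \leq\ (q-p)+3-\delta\ \leq\ 3-\delta .
\]

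Next I would feed this into the codimension estimate for $\mathcal{O}^q$. Since $V=\textup{T}_x\mathcal{O}^p\cap\textup{T}_x\mathcal{O}^q$ is a trivial subrepresentation of the isotropy representation of $H'$ on $\textup{T}_x\mathcal{O}^p$, we have $\dim V\leq\delta$, and (\ref{q ineq 1}) yields
\[
\dim\mathcal{O}^q\ \leq\ \dim M-\dim\mathcal{O}^p+\dim V\ \leq\ (3-\delta)+\delta\ =\ 3 .
\]
Now I would kill off each admissible value in turn. Since $\dim M-\dim\mathcal{O}^p\leq 3-\delta\leq 3\leq q-1$, Lemma \ref{SOq fix pnt} excludes $\dim\mathcal{O}^q=0$; Lemma \ref{ Oq 1-dim} excludes $\dim\mathcal{O}^q=1$; hence $\dim\mathcal{O}^q\in\{2,3\}$, so $\textup{Stab}_{\textup{SO}(q)}(x)$ is a proper closed subgroup of $\textup{SO}(q)$ of codimension $2$ or $3$. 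But $\dim\mathcal{O}^q\leq 3\leq 2q-2$, so Uchida's classification of small-codimension subgroups applies verbatim to $\textup{SO}(q)$ (it is the classification behind Table \ref{table:1}, read with $q$ in place of $p$), and for $q\geq 5$ it shows that every positive-dimensional homogeneous space of $\textup{SO}(q)$ has dimension at least $q-1\geq 4$. This contradiction completes the argument.

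The delicate point is the final step: one has to be certain the hypotheses genuinely put us in the regime $q\geq 5$ --- where $\textup{SO}(q)$ has no proper closed subgroup of codimension $\leq 3$ --- and one has to justify applying Uchida's subgroup classification to $\textup{SO}(q)$ rather than to $\textup{SO}(p)$, which is legitimate precisely because $\dim\mathcal{O}^q\leq 3\leq 2q-2$ places us within its range of validity. Everything else is bookkeeping with the two displayed hypotheses and the inequalities (\ref{q ineq 1}) and (\ref{q ineq 2}).
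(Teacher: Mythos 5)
Your proof is correct and follows essentially the same route as the paper's: the first hypothesis combined with (\ref{q ineq 1}), (\ref{q ineq 2}) and $\dim V\leq\delta$ forces $\dim\mathcal{O}^q\leq 3$, while the second forces $q\geq 5$, which is incompatible with the possible $\textup{SO}(q)$-orbit dimensions once Lemma \ref{SOq fix pnt} (and Lemma \ref{ Oq 1-dim}) rule out the low-dimensional cases. You simply spell out the appeal to Uchida's classification for $\textup{SO}(q)$ that the paper leaves implicit.
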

\begin{proof}
    The first condition forces dim$\mathcal{O}^q \leq 3$. 
    However, the second condition forces $q \geq 5$ which contradicts $1\leq \text{dim}\mathcal{O}^q \leq 3$, and $\text{SO}(q)$ cannot fix $x$ by Lemma \ref{SOq fix pnt}.
\end{proof}


 Note that if dim$\mathcal{O}^p +$ dim$\mathcal{O}^q \geq$ dim$M$, then the $K$ action is transitive. Indeed, the $K$-orbit of $x$ is then open, closed and connected, therefore it equals $M$. It turns out that transitive actions cannot occur:
\begin{lemma} \label{transitive axns}
    Suppose that the action of $K$ on $M$ is transitive. Then, the action does not extend to $G$. 
\end{lemma}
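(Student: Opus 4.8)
The plan is to argue by contradiction: assume the transitive $K$-action is the restriction of an analytic $G$-action. Since $K\le G$, the group $G$ then also acts transitively on the compact manifold $M$, so $M=G/G_x$ with $G_x=\mathrm{Stab}_G(x)$ and $G=K\,G_x$; in fact I would not use the precise shape of $G_x$, only the resulting \emph{orbit data}. Because $\mathrm{SO}(p)\times\{I_q\}$ and $\{I_p\}\times\mathrm{SO}(q)$ are normal in $K$ and $K$ is transitive, every $\mathrm{SO}(p)$-orbit is a $K$-translate of $\mathcal{O}^p$ and every $\mathrm{SO}(q)$-orbit a $K$-translate of $\mathcal{O}^q$; write $d_p:=\dim\mathcal{O}^p$ and $d_q:=\dim\mathcal{O}^q$, which are then constant on $M$. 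Transitivity means $\mathfrak{so}(p)\oplus\mathfrak{so}(q)$ surjects onto $\mathrm{T}_xM$, so $\mathrm{T}_xM=\mathrm{T}_x\mathcal{O}^p+\mathrm{T}_x\mathcal{O}^q$; since $\dim(\mathrm{T}_x\mathcal{O}^p\cap\mathrm{T}_x\mathcal{O}^q)\le 1$, this forces
\[
 d_p+d_q\in\{\,p+q-1,\ p+q\,\}.
\]

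Next I would pin down $d_p$ and $d_q$. First, $\mathrm{SO}(q)$ cannot fix $x$: if it did then $d_q=0$, hence $\mathrm{T}_xM=\mathrm{T}_x\mathcal{O}^p$ and $d_p=p+q-1=\dim M$, so $\dim M-d_p=0\le q-1$, contradicting Lemma \ref{SOq fix pnt}; and $d_q\neq 1$ by Lemma \ref{ Oq 1-dim}, so $d_q\ge 2$. The symmetric argument — using $q\le p$, a dimension count that rules out a fixed point of $\mathrm{SO}(p)$, and the fact that $\mathrm{SO}(p)$ has no proper subgroup of codimension $\le 1$ for $p\ge 3$ — gives $d_p\ge 2$ as well. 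Feeding $d_q\ge 2$ into the bound $\dim\mathrm{T}_x\mathcal{O}^q\le\dim M-\dim\mathrm{T}_x\mathcal{O}^p+\dim V$ from $(\ref{q ineq 1})$ yields $d_p\le p+q-2$; combined with $d_p\le 2p-2$ (the first lemma of this section), $d_q\le 2q-2$, $q\le p$, $d_p+d_q\ge p+q-1$, and the classification in Table \ref{table:1}, only finitely many pairs of orbit types $(\mathcal{O}^p,\mathcal{O}^q)$ survive — indeed $d_p$ is confined to a short window around $p$ and $d_q$ to one around $q$.

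Finally I would eliminate the surviving configurations one at a time. Cases in which $d_p$ is large relative to $p$ are ruled out by Lemma \ref{axn contra 1}, after verifying its two hypotheses from the inequalities above with $\delta=\dim V\le 1$. Cases in which $d_q$ is pushed into a range that $\mathrm{SO}(q)$ admits no orbit of are immediate from Table \ref{table:1}, since the minimal nontrivial $\mathrm{SO}(q)$-orbit is $\mathrm{S}^{q-1}$, with only a handful of lower-dimensional sporadic exceptions; the remaining sub-cases in which $\mathrm{SO}(q)$ (or $\mathrm{SO}(p)$) would have to fix $x$ fall again to Lemma \ref{SOq fix pnt} and Lemma \ref{ Oq 1-dim}. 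In the tight ``locally product'' configurations $d_p+d_q=p+q-1$ one invokes effectiveness: $G$ is simple, hence acts locally effectively, whereas the orbit picture forces one of $\mathrm{SO}(p),\mathrm{SO}(q)$ to have trivial linear isotropy at $x$ and therefore, by Bochner linearisation of a compact-group action near a fixed point, to act trivially near $x$ and so on all of $M$ — a contradiction. This closes every case, proving the lemma.

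The main obstacle is exactly this last step: organising the finite case analysis over Table \ref{table:1} so that the exceptional orbit types ($\mathrm{G}_2$, $\mathrm{Spin}(7)$, the various $\mathrm{U}_k$, $\mathrm{SU}_k$, $\mathrm{SO}(a)\times\mathrm{SO}(b)$) and the small-rank cases are all disposed of cleanly rather than ad hoc; the conceptual crux is to see uniformly why an everywhere top-dimensional pair of $\mathrm{SO}(p)$- and $\mathrm{SO}(q)$-orbit structures can never be assembled into an action of the simple group $G$.
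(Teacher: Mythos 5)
Your reduction to orbit data is where the argument breaks down. After assuming the extension to $G$ you say you ``would not use the precise shape of $G_x$, only the resulting orbit data,'' and you propose to close the remaining transitive configurations by a local effectiveness argument (``trivial linear isotropy at $x$, hence Bochner linearisation, hence trivial near $x$''). This cannot work, because the compact group $K$ itself admits effective, analytic, transitive actions realising exactly the orbit data that survives your case analysis. For example, for $(p,q)=(3,3)$ the configuration $\mathcal{O}^p\simeq \text{SO}(3)/\text{SO}(2)$, $\mathcal{O}^q\simeq \text{SO}(3)$ is realised by the effective transitive action of $\text{SO}(3)\times\text{SO}(3)$ on $\text{S}^2\times\text{SO}(3)$; the isotropy group $K_x\simeq\text{SO}(2)\times\{1\}$ acts faithfully on $\text{T}_x\mathcal{O}^p$, so no contradiction with local effectiveness is available, and Bochner linearisation does not even apply since $x$ is not a fixed point of either factor. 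The same applies to the transitive configurations listed in Section \ref{2nd kind parabolics} for $(p,q)=(4,4)$ and $(4,3)$, e.g.\ $\mathcal{O}^p\simeq\text{SO}(4)/\text{SO}(2)$ with $\mathcal{O}^q\simeq\text{SO}(4)/\text{U}_2$. These are precisely the cases that Lemmas \ref{ Oq 1-dim}, \ref{SOq fix pnt}, \ref{axn contra 1} and Table \ref{table:1} do not eliminate, and they are the entire content of Lemma \ref{transitive axns}.

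The obstruction in these cases is invisible at the level of the $K$-orbit picture and of the linear isotropy at a point; it comes from the structure of $G_x$ as a closed cocompact subgroup of the noncompact simple group $G$. This is how the paper proceeds (Section \ref{contra for transitive axns}): $M\cong G/H'$ compact forces, by Witte's theorem on cocompact subgroups, $(H')^\circ$ to sit inside a parabolic subgroup with $(H')^\circ=XYN$ in the refined Langlands decomposition; the dimension counts via root-space computations pin down which maximal parabolics are possible, and then either simplicity of the Levi factor ($\text{SO}^\circ(p-1,q-1)$, $\text{SL}_3(\mathbb{R})$, $\text{SL}_4(\mathbb{R})$), a rank comparison for the compact isotropy $H_1\times H_2$ inside a maximal compact of the Levi, or Goursat's lemma applied to $H_1\times H_2\leq \text{SO}(p-1)\times\text{SO}(q-1)$ together with Table \ref{table:1} yields the contradiction. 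Without some argument of this kind — i.e.\ without using the precise shape of $G_x$ — your proof cannot be completed, so as it stands the proposal has a genuine gap at its final and essential step.
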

\noindent The proof of Lemma \ref{transitive axns} is the content of Section \ref{contra for transitive axns}.

Using Lemma \ref{ Oq 1-dim}, Lemma \ref{SOq fix pnt}, Lemma \ref{axn contra 1} and Lemma \ref{transitive axns} it is easy to check for most cases that if $\mathcal{O}^p$ is any from the top part of Table \ref{table:1}, then there is no possible $\mathcal{O}^q$ for which such a $K$ action would extend to $G$. The only pairs $\left( \mathcal{O}^p, \mathcal{O}^q \right)$ which the above lemmas fail to eliminate are
$\displaystyle \left( \text{SO}(7) \Big/ \text{G}_2 \,  ,  \, \text{SO}(4) \Big/ \text{U}_2 \right)$ , $\left( \text{SO}(6) \Big/ \text{U}_3 \, , \, \text{SO}(4) \Big/ \text{U}_2 \right)$ and \\ $\left( \text{SO}(4) \Big/ (\text{SO}(2) \times \text{SO}(2)) \, , \, \text{SO}(4) \Big/ \text{U}_2 \right)$. These also do not extend to a $G$ action as it will be shown in Section \ref{non-ext non-trnstv axns}. As for when $\mathcal{O}^p$ is one from the bottom part of Table \ref{table:1}, the lemmas can be used to exclude the cases when $\mathcal{O}^q$ is from the top part of Table \ref{table:1}, but the cases where both $\mathcal{O}^p$ and $\mathcal{O}^q$ are from the bottom part still remain. These cases are treated in Section \ref{non-ext non-trnstv axns} as well.

\subsection{Transitive $\text{SO}(p) \times \text{SO}(q)$ actions} \label{contra for transitive axns}

Here we see why a transitive $\text{SO}(p) \times \text{SO}(q)$ action on a manifold $M$ of dimension $p+q-1$ does not extend to an action of $G= \text{SO}^\circ(p,q)$. Assume it does. Then, we get a compact homogeneous space of $G$. Let $H'$ be a subgroup of $G$ such that $\displaystyle G \big/ H' \cong M$. That means that $H'$ is a cocompact subgroup of $G$. We will need the following version of a theorem of Witte which we state for convenience:
\begin{theorem*}{(\cite[Main Theorem 1.2]{witte_cocompactsubgrps})}
    Let $G$ be a connected, semisimple Lie group with finite center and $H' \leq G$ closed and cocompact. Then, there exists a parabolic subgroup $P$ of $G$ such that, if $P^\circ = LEAN$ is the refined Langlands decomposition (see below) of $P^\circ$, then there exist closed, connected subgroup $Y \leq EA$ and a normal, closed, connected subgroup $X \leq L$ such that $(H')^\circ=XYN$
\end{theorem*}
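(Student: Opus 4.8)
The plan is to recover the parabolic $P$ from the unipotent part of $H'$, use cocompactness to force the full nilradical $N$ into $(H')^\circ$, and then reduce to a structure problem inside the reductive Levi factor $LEA$. Since $G$ has finite center, I would first realize it as (a finite cover of) the real points of a connected semisimple algebraic $\mathbb{R}$-group, so that Zariski closures, Levi decompositions and the Borel--Tits machinery are available. Let $\mathbf H$ denote the Zariski closure of $H'$ in $G$. As $\mathbf H \supseteq H'$ is cocompact and algebraic, it carries a clean Levi and parabolic structure; the idea is to pin this structure down and then read off $(H')^\circ \subseteq \mathbf H$. Write the Levi decomposition $\mathbf H = \mathbf S \ltimes \mathbf U$, with $\mathbf U$ the unipotent radical and $\mathbf S$ reductive.

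Next I would let \textbf{Borel--Tits produce the parabolic}. If $\mathbf U \neq \{1\}$, the Borel--Tits theorem attaches to $\mathbf U$ a canonical parabolic $P := N_G(\mathbf U)$ whose unipotent radical $N$ contains $\mathbf U$; since $\mathbf H$ normalizes its own unipotent radical, $\mathbf H \subseteq P$ and hence $(H')^\circ \subseteq P^\circ = LEAN$. (If $\mathbf U = \{1\}$ then $\mathbf H$ is reductive and one takes the improper parabolic $P = G$, $N = \{1\}$.) I would then show $N \subseteq (H')^\circ$: because $G/P$ is compact, the fibration $P/H' \hookrightarrow G/H' \to G/P$ descends cocompactness of $H'$ in $G$ to cocompactness of $H'$ in $P$, and since $A$ acts on $\operatorname{Lie}(N)$ with \emph{nonzero} weights, any missing direction of $N$ would produce a properly divergent $N$-direction in the compact quotient $P/H'$; ruling this out upgrades $\mathbf U$ to the full $N$ and gives $N \subseteq (H')^\circ$.

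Finally I would \textbf{reduce to the Levi and assemble $X,Y$}. As $N$ is normal in $P$ and contained in $(H')^\circ$, pass to $P^\circ/N = LEA$, an almost-direct product of the semisimple group $L$, the compact group $E$, and the split vector group $A$. Let $X$ and $Y$ be the images of $(H')^\circ$ in $L$ and in $EA$ respectively; both are closed and connected ($Y$ because connected subgroups of the reductive group $EA$ are closed), so that $(H')^\circ = XYN$. It then remains to see that $X$ is \emph{normal} in $L$: the (possibly disconnected) group $H'$ normalizes its identity component, hence normalizes $X$, and cocompactness of $H'$ forces its image in $L$ to be a cocompact — in particular Zariski-dense, by Borel density — set of elements normalizing $X$, which compels $X$ to be a product of simple factors of $L$. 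This yields the asserted decomposition, the disconnected part of $H'$ accounting for a cocompact lattice in the complementary factors of $L/X$ and in the residual split directions of $A$; note that the conclusion concerns $(H')^\circ$ while the hypothesis of cocompactness is on $H'$ itself.

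\textbf{Main obstacle.} The crux is the unipotent step: proving $(H')^\circ \subseteq P$ \emph{and} that cocompactness forces the entire nilradical $N$ — not merely a cocompact, possibly lattice-like, subgroup of $N$ — into $(H')^\circ$. This is exactly where the nonvanishing of the $A$-weights on $\operatorname{Lie}(N)$ must be played against the compactness of $G/H'$, through a divergence and properness analysis of the $N$-action on the quotient, and it is the technical heart of the argument. The secondary difficulty is securing the normality of $X$ in $L$ via the Borel-density input supplied by the full, possibly disconnected, group $H'$.
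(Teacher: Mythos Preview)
The paper does not prove this statement; it is quoted verbatim from Witte's paper \cite{witte_cocompactsubgrps} and used as a black box in Section~\ref{contra for transitive axns}. So there is no ``paper's own proof'' to compare against.

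That said, your sketch names the right ingredients (algebraic hull, Borel--Tits, Borel density), but two of the steps are handled too loosely to count as a proof. First, the argument that $N\subseteq (H')^\circ$: your divergence reasoning at best shows that $H'\cap N$ is cocompact in $N$, which would allow a lattice rather than all of $N$; getting the \emph{connected} group $N$ inside the \emph{identity component} $(H')^\circ$ is precisely the delicate point and needs the full algebraic-hull analysis, not just a properness statement. Second, your construction of $X$ and $Y$ as ``the images of $(H')^\circ$ in $L$ and in $EA$'' does not yield $(H')^\circ/N = XY$: in an almost-direct product $L\cdot EA$ a connected subgroup need not split as (projection to $L$) times (projection to $EA$), and in Witte's actual argument $X$ is rather the intersection with $L$ (shown to be normal via density), after which the quotient by $XN$ lands in $EA$ and furnishes $Y$. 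As written, your $X$ and $Y$ are not obviously contained in $(H')^\circ/N$, so the equality $(H')^\circ=XYN$ is unjustified.
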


 The \textit{refined Langlands decomposition} of $P^\circ$, as defined in \cite{witte_cocompactsubgrps}, follows from the Langlands decomposition, $P^\circ = MAN$, of $P^\circ$, by considering the product, $L$, of all the noncompact simple factors of $P^\circ$ and the maximal compact factor, $E$, of $P^\circ$.

We follow \cite{Knapp} for the definitions and notation about the roots and the root spaces of $G$, and \cite{Warner} about the definitions and notation about parabolic subgroups. We show below that, because of the dimension of the manifolds we consider, $H'$ could only be in a maximal parabolic of $G$. There are two different kind of maximal parabolic subgroups of $G$. In the standard representation of $G$ on $\mathbb{R}^{p+q}$ with a scalar product of type $(p,q)$, the first kind preserves a null line and the second kind preserves a maximal isotropic subspace. 

\subsubsection{The first kind of maximal parabolics} \label{firstkindparab}
If $\mathfrak{so}(p,q) = \left\{ \begin{bmatrix}
    X & A \\ A^T & Y
\end{bmatrix}  \right\}$ with $X,Y$ antisymmetric matrices, then a maximal abelian subspace, $\mathfrak{a}$, of the symmetric part is the collection of matrices with $X=0, Y=0$ and $A$ being of the form $ \displaystyle A = \begin{bmatrix}
    & \vdots & \\
    && a_q \\ 
    & \reflectbox{$\ddots$} & \\
    a_1 && \\
\end{bmatrix}$, where everything except for the $a_i$'s is 0. Let $f_i$ denote the restricted root that picks out the $a_i$ component of an element of $\mathfrak{a}$.

First the case $p=q$. A fundamental root system is
\begin{equation*}
    \mathcal{Y} = \{ f_1 - f_2 , \cdots , f_{p-1}-f_{p} , f_{p-1}+f_p \}
\end{equation*}
and the restricted root spaces $\mathfrak{g}^{\pm f_i \pm f_j} $, $i \neq j$, have dimension 1. Subsets of $\mathcal{Y}$ correspond to parabolic subgroups $P_\Theta$ of $G$ and vice versa, see \cite{Warner}. 
Firstly, we take 
\begin{equation*}
    \Theta = \{ f_2 - f_3 , \cdots , f_{p-1}-f_{p} , f_{p-1}+f_p \}
\end{equation*}
Denote by $\Sigma^+$ the space of positive roots, where positivity is induced by $\mathcal{Y}$ in the standard way. Moreover, denote by $\langle \Theta \rangle^+$, respectively $\langle \Theta \rangle^-$, the space of positive, respectively negative, roots that also belong in the span of $\Theta$. Next, define $\mathfrak{a}_\Theta$ to be the subspace of $\mathfrak{a}$ where all roots are 0, $\mathfrak{m}$ to be the centraliser of $\mathfrak{a}$ in $\mathfrak{k}= \mathfrak{so}(p) \oplus \mathfrak{so}(q)$, $\displaystyle \mathfrak{n}^\pm(\Theta) = \sum_{\lambda}\mathfrak{g}^{\lambda}$, for $\lambda \in \langle \Theta \rangle^\pm$, and $\displaystyle \mathfrak{n}^+_{\Theta} = \sum_{\lambda}\mathfrak{g}^{\lambda}$, for $\lambda \in \Sigma^+ - \langle \Theta \rangle^+$. In our case, $\mathfrak{a}_\Theta$ is 1-dimensional since it is the span of the element with $a_2=a_3=\cdots=a_p =0$, while $\mathfrak{m}=0$ since $p=q$, see \cite[Chapter VI, Section 4]{Knapp}. Moreover,
\begin{equation*}
    \mathfrak{n}^+(\Theta) = \sum_{2\leq i < j \leq p}\mathfrak{g}^{f_i \pm f_j} \quad \text{and} \quad
    \mathfrak{n}^+_{\Theta} = \sum_{2\leq i  \leq p}\mathfrak{g}^{f_1 \pm f_j}
\end{equation*}
Then, if $\mathfrak{p}_\Theta$ is the Lie algebra of the parabolic subgroup $P_\Theta$, we have that
\[ |\mathfrak{p}_\Theta| = |\mathfrak{m}| +|\mathfrak{a}|+|\mathfrak{n}^+_{\Theta}|+|\mathfrak{n}^+({\Theta})| + |\mathfrak{n}^-(\Theta)|\]
where, by $|\cdot|$ we mean the dimension of the respective vector space, see \cite[Chapter 1, Section 1.2.4]{Warner}. We note that, 
\[ |\mathfrak{n}^+({\Theta})| = |\mathfrak{n}^-(\Theta)| \]
Therefore, we have the following data
\begin{center}
\begin{tabular}{ |c|c| } 
\hline
$| \mathfrak{n}^+(\Theta)|$ & $=p^2-3p+2$ \\
$|\mathfrak{n}^+_{\Theta}|$ & $=2(p-1)$ \\
$|\mathfrak{p}_\Theta| = |\mathfrak{m}| +|\mathfrak{a}|+|\mathfrak{n}^+_{\Theta}|+|\mathfrak{n}^+({\Theta})| + |\mathfrak{n}^-(\Theta)|$ & $=2p^2-3p+2$ \\
\hline
\end{tabular}
\end{center}

 As for $p>q$, we have $\mathfrak{m} = \mathfrak{so}_{p-q}$. As fundamental root system we take
\begin{equation*}
    \mathcal{Y} = \{ f_1-f_2, \cdots, f_{q-1} - f_q, f_q \}
\end{equation*}
The restricted root spaces $\mathfrak{g}^{\pm f_i \pm f_j}$ have dimension 1, while the $\mathfrak{g}^{\pm f_i}$'s have dimension $p-q$. As $\Theta$ we take
\begin{equation*}
    \Theta = \{ f_2-f_3, \cdots , f_{q-1}-f_q, f_q \}
\end{equation*}
Then, $\mathfrak{a}_{\Theta}$ is 1-dimensional, spanned by the vector with $a_1=1$. Moreover, we have
\begin{equation*}
    \mathfrak{n}^+(\Theta) = \sum_{2\leq i < j \leq q} \mathfrak{g}^{f_i \pm f_j} \oplus \sum_{2\leq i \leq q}\mathfrak{g}^{f_i} \quad \text{and} \quad
    \mathfrak{n}^+_{\Theta} = \sum_{2\leq i \leq q}\mathfrak{g}  ^{f_1 \pm f_i} \oplus \mathfrak{g}^{f_1}
\end{equation*}
Hence,
\begin{center}
\begin{tabular}{ |c|c| } 
\hline
$|\mathfrak{m}|$ & $=\frac{(p-q)(p-q-1)}{2}$ \\
$| \mathfrak{n}^+(\Theta)| $ & $=pq-2q-p+2$ \\
$|\mathfrak{n}^+_{\Theta}|$ & $=p+q-2$ \\
$|\mathfrak{p}_\Theta| = |\mathfrak{m}| +|\mathfrak{a}|+|\mathfrak{n}^+_{\Theta}|+|\mathfrak{n}^+({\Theta})| + |\mathfrak{n}^-(\Theta)|$ & $=\frac{1}{2} (p^2+q^2) + pq -\frac{3}{2}p - \frac{3}{2}q+2$ \\
\hline
\end{tabular}
\end{center}

 In both cases, the codimension then of $P_\Theta$ is $p+q-2$ and so, the codimension of $H'$ in $P_\Theta$ is 1. $P_\Theta$ leaves invariant a null-line in the standard representation of $G$ in $\mathbb{R}^{p+q}$ with a scalar product of signature $(p,q)$ and hence, we know that $M_\Theta^\circ \simeq \text{SO}^\circ(p-1,q-1)$, $A_\Theta \simeq \mathbb{R}$. Suppose $A_\Theta \leq (H')^\circ$. Then, by Witte's theorem, $M_\Theta^\circ$ would have a codimension 1 normal subgroup. However, $\text{SO}^\circ(p-1,q-1)$ is semisimple, so that is impossible. Therefore, 
\[ (H')^\circ \simeq M_\Theta^\circ N\]

Assume now that we have a transitive action of $\text{SO}(p) \times \text{SO}(q)$ such that $H' \leq P_\Theta$. If $\mathcal{O}^p \simeq \text{SO}(p) \big/ H_1$ and $\mathcal{O}^q \simeq \text{SO}(q) \big/ H_2 $, since $Y := H_1 \times H_2 \leq H'$ is compact and connected, it necessarily lies in a maximal compact subgroup of $(H')^\circ$ which is isomorphic to $\text{SO}(p-1) \times \text{SO}(q-1)$. Therefore, $Y$ can be conjugated into a subgroup of $\text{SO}(p-1) \times \text{SO}(q-1)$ and we can assume $Y \leq \text{SO}(p-1) \times \text{SO}(q-1)$.
In order to get a contradiction we are going to need \textit{Goursat's lemma}:
\begin{lemma*}{(Goursat's lemma)}
    Let $G_1, G_2$ be groups and suppose $A \leq G_1 \times G_2$. Denote $p_1:A \rightarrow G_1$ and $p_2: A \rightarrow G_2$ the projections to first and second factor and let $N_1$ be the kernel of $p_2$ and $N_2$ the kernel of $p_1$. Then, $N_1$ and $N_2$ can be seen as normal subgroups of $p_1(A)$ and $p_2(A)$ respectively, and the image of $A$ in $\displaystyle p_1(A) \big/ N_1 \times p_2(A) \big/ N_2$ is the graph of an isomorphism between $p_1(A) \big/ N_1 $ and $ p_2(A) \big/ N_2$
\end{lemma*}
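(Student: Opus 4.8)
The plan is to prove Goursat's lemma along the standard route through the first isomorphism theorem, arranged so that the ``graph of an isomorphism'' conclusion comes out transparently. Throughout I would work inside $A$ itself, since $N_1$ and $N_2$ are most naturally subgroups of $A$: explicitly $N_1=\ker(p_2|_A)=A\cap(G_1\times\{e\})$ and $N_2=\ker(p_1|_A)=A\cap(\{e\}\times G_2)$.

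First I would record the elementary structural facts. Because $N_1\subseteq G_1\times\{e\}$, the restriction $p_1|_{N_1}$ is injective, so $p_1$ identifies $N_1$ with the subgroup $p_1(N_1)\leq p_1(A)$; and since $N_1=\ker(p_2|_A)$ is normal in $A$ while $p_1\colon A\to p_1(A)$ is onto, $p_1(N_1)$ is normal in $p_1(A)$. The symmetric statements hold for $N_2$ and $p_2(N_2)\trianglelefteq p_2(A)$, which already gives the first assertion of the lemma. Moreover $N_1$ and $N_2$ commute elementwise in $A$ (one lies in $G_1\times\{e\}$, the other in $\{e\}\times G_2$) and meet only in $e$, so $N_1N_2$ is a subgroup of $A$, normal (being generated by two normal subgroups), and internally equal to $N_1\times N_2$.

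The core step is to consider the homomorphism $f\colon A\to (p_1(A)/N_1)\times(p_2(A)/N_2)$ given by $f(a)=\bigl(p_1(a)N_1,\,p_2(a)N_2\bigr)$ and to set $\Gamma=f(A)$, the image appearing in the statement. I would check that $\ker f=N_1N_2$: the inclusion $\supseteq$ is immediate, since an element of $N_1$ has trivial $p_2$-image and $N_1$-trivial $p_1$-image (and dually for $N_2$); for $\subseteq$, if $f(a)=e$ then $p_1(a)\in N_1$, so $p_1(n^{-1}a)=e$ for some $n\in N_1$, hence $n^{-1}a\in\ker p_1=N_2$ and $a\in N_1N_2$. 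Next, the first coordinate projection $\mathrm{pr}_1\colon\Gamma\to p_1(A)/N_1$ is surjective (both $p_1\colon A\to p_1(A)$ and $p_1(A)\to p_1(A)/N_1$ are onto), and its composite with $f\colon A\to\Gamma$ is the map $a\mapsto p_1(a)N_1$, whose kernel equals $N_1N_2=\ker f$; hence $\mathrm{pr}_1$ is injective on $\Gamma$, and being also surjective it is an isomorphism $\Gamma\to p_1(A)/N_1$. Symmetrically $\mathrm{pr}_2$ is an isomorphism $\Gamma\to p_2(A)/N_2$. Then $\alpha:=\mathrm{pr}_2\circ(\mathrm{pr}_1)^{-1}\colon p_1(A)/N_1\to p_2(A)/N_2$ is an isomorphism and $\Gamma=\{(x,\alpha(x)):x\in p_1(A)/N_1\}$ is exactly its graph, which is the assertion.

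I do not expect a genuine obstacle here: the only point requiring care is the bookkeeping of the three identifications ($N_1\leftrightarrow p_1(N_1)$, $N_2\leftrightarrow p_2(N_2)$, and $N_1N_2\cong N_1\times N_2$), so that ``$\ker p_2$, viewed inside $p_1(A)$'' is unambiguous. In the Lie-group setting in which the lemma is later applied, one may additionally remark that all the maps in sight are continuous homomorphisms, so $\alpha$ is automatically an isomorphism of Lie groups; this needs no separate argument.
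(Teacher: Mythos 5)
Your proof is correct and complete: the identifications $N_1\cong p_1(N_1)\trianglelefteq p_1(A)$, $N_2\cong p_2(N_2)\trianglelefteq p_2(A)$, the computation $\ker f=N_1N_2$, and the observation that each coordinate projection restricted to $\Gamma=f(A)$ is bijective together give exactly the asserted graph-of-an-isomorphism statement. Note that the paper states Goursat's lemma without proof, quoting it as a classical fact to be applied in the transitive-action argument, so there is no in-paper argument to compare with; your write-up is the standard proof via the first isomorphism theorem and could serve verbatim as a reference proof.
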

 Now, as far as the dimension of $Y$ is concerned, on the one hand we have
\begin{equation} \label{dimeq}
     \text{dim}(Y) = \text{dim}(\text{SO}(p)) + \text{dim}(\text{SO}(q)) - (p+q-1)
\end{equation}
On the other hand, we also have
\begin{equation} \label{dimeqN}
     \text{dim}(Y) = \text{dim}(p_1(Y)) + \text{dim}(N_2) = \text{dim}(p_2(Y)) + \text{dim}(N_1) 
\end{equation}
Since $p_1(Y) \leq \text{SO}(p-1)$, by equality (\ref{dimeq}) above, we have 
\begin{equation} \label{dimineq}
     \text{dim}(Y) \leq \text{dim}(\text{SO}(p-1)) + \text{dim}(N_2)
\end{equation}
By (\ref{dimeq}) and (\ref{dimineq}), we get that $N_2$ is a subgroup of $\text{SO}(q)$ of codimension at most $q$ that is also contained in $\text{SO}(q-1)$. By Table \ref{table:1}, the only possiblility is $N_2 = \text{SO}(q-1)$. Therefore,
\[ N_2 = p_2(Y) = \text{SO}(q-1)\]
Then,(\ref{dimeq}) and (\ref{dimeqN}) imply that $N_1 $ is a codimension $p$ subgroup of $\text{SO}(p)$ that is also contained in $\text{SO}(p-1)$. By Table \ref{table:1}, this is impossible.


\subsubsection{The second kind of maximal parabolics} \label{2nd kind parabolics}
If $p=q$, we take
\begin{equation*}
    \Theta = \{ f_1-f_2, \cdots , f_{p-1} - f_p \}
\end{equation*}
in which case $\mathfrak{a}_\Theta$ is  1-dimensional, spanned by the element $a_1=a_2=\cdots = a_p =1$. We also have
\begin{equation*}
    \mathfrak{n}^+(\Theta) = \sum_{1\leq i < j \leq p} \mathfrak{g}^{f_i - f_{j}} \quad \text{and} \quad
    \mathfrak{n}^+_{\Theta} = \sum_{1\leq i < j\leq p}\mathfrak{g}^{f_i + f_{j}}
\end{equation*}
As a result, we get
\begin{center}
\begin{tabular}{ |c|c| } 
\hline
$| \mathfrak{n}^+(\Theta)|$ & $=\frac{1}{2}(p^2-p)$ \\
$|\mathfrak{n}^+_{\Theta}|$ & $=\frac{1}{2}(p^2-p)$ \\
$|\mathfrak{p}_\Theta| = |\mathfrak{m}| +|\mathfrak{a}|+|\mathfrak{n}^+_{\Theta}|+|\mathfrak{n}^+({\Theta})| + |\mathfrak{n}^-(\Theta)|$ & $=\frac{1}{2}(3p^2-p)$ \\
\hline
\end{tabular}
\end{center}
Then, the codimension of $P_{\Theta}$ in $G$ is $\frac{1}{2}(p^2-p)$. Therefore, if $H\leq G$ is such that codim($H$) $=2p-1$, we would need 
\begin{align*}
    \text{codim}(H) & \geq \text{codim}(P_{\Theta}) 
    2p-1  \geq \frac{1}{2}(p^2-p) 
    5p-2  \geq p^2
\end{align*}
This can only happen for $p=3$ or $4$.

If $p>q$ we take as $\Theta$
\begin{equation*}
    \Theta = \{ f_1 - f_2, \cdots , f_{q-1} - f_q \}
\end{equation*}
Again $\mathfrak{a}$ is 1-dimensional, generated by the element with $a_1= \cdots a_q =1$. In addition, we have
\begin{equation*}
    \mathfrak{n}^+(\Theta) = \sum_{1 \leq i < j \leq q} \mathfrak{g}^{f_i - f_j} \quad \text{and} \quad
    \mathfrak{n}^+_{\Theta} = \sum_{1 \leq i < j \leq q}\mathfrak{g}^{f_i+f_j} \oplus \sum_{1 \leq i \leq q}\mathfrak{g}^{f_i}
\end{equation*}
Therefore, in this case we have
\begin{center}
\begin{tabular}{ |c|c| } 
\hline
$|\mathfrak{m}|$ & $=\frac{(p-q)(p-q-1)}{2}$ \\
$| \mathfrak{n}^+(\Theta)| $ & $=\frac{1}{2}(q^2-q)$ \\
$|\mathfrak{n}^+_{\Theta}|$ & $=\frac{1}{2}(q^2-q)+pq-q^2$ \\
$|\mathfrak{p}_\Theta| = |\mathfrak{m}| +|\mathfrak{a}|+|\mathfrak{n}^+_{\Theta}|+|\mathfrak{n}^+({\Theta})| + |\mathfrak{n}^-(\Theta)|$ & $=\frac{1}{2} \big( p^2 + 2q^2 -p \big)$ \\
\hline
\end{tabular}
\end{center}
Hence, the codimencion of $P_\Theta $ is $\displaystyle \frac{1}{2} \big( -q^2 + 2pq -q \big)$. It is not difficult to see that for $p > q \geq 4$, this codimension is greater that $p+q-1$ and so, an $H' \leq P_\Theta \leq G$ such that dim$\big( G/H' \big)=p+q-1$ cannot exist. For $(p,q)=(4,3)$, we get that codim$(P_\Theta) = 6$ $(=p+q-1)$.

For $(p,q)=(4,4)$, the transitive $\text{SO}(4) \times \text{SO}(4)$ cases are the ones with the following $\mathcal{O}^p$'s and $\mathcal{O}^q$'s:
\begin{align*}
   (\text{i}) \,  \mathcal{O}^p \simeq \text{SO}(4) \big/ \text{SU}_2 \quad & \text{ and } \quad \mathcal{O}^q \simeq \text{SO}(4) \big/ \left( \text{SO}(2) \times \text{SO}(2) \right) \\
   (\text{ii}) \,  \mathcal{O}^p \simeq \text{SO}(4) \big/ \text{SO}(3) \quad & \text{ and } \quad \mathcal{O}^q \simeq \text{SO}(4) \big/ \left( \text{SO}(2) \times \text{SO}(2) \right) \\
  (\text{iii}) \,    \mathcal{O}^p \simeq \text{SO}(4) \big/ \text{SO}(2) \quad & \text{ and } \quad \mathcal{O}^q \simeq \text{SO}(4) \big/  \text{U}_2  \\
\end{align*}
For $(p,q)=(4,3)$, the cases are:
\begin{alignat*}{2}
  (\text{iv}) & \, \mathcal{O}^p \simeq \text{SO}(4) \big/ \left( \text{SO}(2) \times \text{SO}(2) \right) \quad && \text{ and } \quad \mathcal{O}^q \simeq \text{SO}(3) \big/  \text{SO}(2) \\
  (\text{v}) & \, \mathcal{O}^p \simeq \text{SO}(4) \big/ \text{SU}_3 \quad && \text{ and } \quad \mathcal{O}^q \simeq \text{SO}(4) \big/  \text{SO}(3) \\
\end{alignat*}
For $(p,q)=(3,3)$, the only case is:
\[ \mathcal{O}^p \simeq \text{SO}(3) \big/ \text{SO}(2) \quad  \text{ and } \quad \mathcal{O}^q \simeq \text{SO}(3)  \]

In the following, we assume that the action of $\text{SO}(p) \times \text{SO}(q)$ on $M$ extends to an action of $G = \text{SO}^\circ(p,q)$ and that $H' \leq P_\Theta$; recall $H' = \text{Stab}_G(x)$.

For $(p,q)=(4,4)$, recall that $P_\Theta$ is the parabolic subgroup of $G$ that corresponds to the subset $ \Theta = \{ f_1-f_2, \cdots , f_{3} - f_4 \}$. Changing the quadratic form on $\mathbb{R}^8$ from 
\[ -x_1^2 -x_2^2 -x_3^2 - x_4^2 + x_5^2 + x_6^2 + -x_7^2+ x_8^2  \]
to 
\[ 2x_1x_8 + 2x_2x_7 + 2x_3x_6+2x_4x_5\]
 and computing the Lie algrebra of $P_\Theta$ in this new representation, we can see that $P_\Theta$ leaves invariant the subspace spanned by the first 4 basis vectors, which is a maximal isotropic 4-dimensional subspace. This computation is rather straightforward, but also prolix and so it is omitted. Consequently, if $P_\Theta = M_\Theta A_\Theta N_\Theta$ is the Langlands decomposition, we have that $M_\Theta \simeq \text{SL}_4(\mathbb{R})$. If the action of $K$ extends to $G$ and $H' \leq P_\Theta$, by Witte's theorem, $N_\Theta \leq H'$.\\
$\bullet$ If $A_\Theta \leq H'$, then again by Witte's theorem, there exists a closed, connected and \textit{normal} subgroup $Y$ of $M_\Theta$, such that $(H')^\circ = YA_\Theta N_\Theta$. Counting dimensions, $Y$ needs to be of codimension 1 in $M_\Theta$. But, $M_\Theta \simeq \text{SL}_4(\mathbb{R})$ is simple, which gives a contradiction. \\
$\bullet$ If $A_\Theta \nleq H'$, then by counting dimensions, $(H')^\circ = M_\Theta^\circ N_\Theta$. Now, we have $\text{SU}_2 \times \text{SO}(2) \times \text{SO}(2) \leq (H')^\circ$. Since $\text{SU}_2 \times \text{SO}(2) \times \text{SO}(2)$ is compact, it is necessarily contained
in a maximal compact subgroup of $M_\Theta^\circ$; recall that $N_\Theta$ is unipotent. Hence, a conjugate of $\text{SU}_2 \times \text{SO}(2) \times \text{SO}(2)$ is a subgroup of $\text{SO}(4)$. But, $\text{SU}_2 \times \text{SO}(2) \times \text{SO}(2)$ has rank 3, while $\text{SO}(4)$ has rank 2, which gives us a contradiction for case (i). A similar argument gives a contradiction for case (ii).

For $(p,q)=(4,3)$, in the case of (iv) : $\text{SU}_2 \times \{1\} \leq H'$. By a similar analysis like above, 
we can see that $M_\Theta^\circ \simeq \text{SL}_3(\mathbb{R})$. \\
$\bullet$ If $A_\Theta \leq H'$, like above we get a contradiction by the simplicity of $M_\Theta^\circ$.\\
$\bullet$ If $A_\Theta \nleq H'$, then $\text{SU}_2 \leq (H')^\circ = M_\Theta^\circ N_\Theta$ is contained in a maximal compact of $M_\Theta^\circ \simeq \text{SL}_3(\mathbb{R})$. Hence, a conjugate of $\text{SU}_2$ is a subgroup of $\text{SO}(3)$. However, both $\text{SU}_2$ and $\text{SO}(3)$ are 3-dimensional and connected, hence this conjugation gives an isomorphism between them, which is a contradiction. A similar argument gives a contradiction for case (v).

For $(p,q)=(3,3)$, we have dim$(G)=15$ and dim$(H')=10$. Like before, we can see that $M_\Theta \simeq \text{SL}_3(\mathbb{R})$. We write again $P_\Theta = M_\Theta A_\Theta N_\Theta$ for the Langlands decomposition. Recall, $N_\Theta \leq H'$. Witte's theorem gives a closed, connected and \textit{normal} $Y \trianglelefteq M_\Theta^\circ$ inside $(H')^\circ$. Since $M_\Theta \simeq \text{SL}_3(\mathbb{R})$ is simple, $Y$ is either all of $M_\Theta^\circ$ or trivial. In the former case, the dimension of $H'$ is at least 11, while in the latter it is at most 4. Both contradict the fact that dim$(H')=10$.

\subsection{Non-extendable non-transitive cases} \label{non-ext non-trnstv axns}

Here, we treat cases where the action of $K$ does not extend to $G$, but we do not have an immediate contradiction by counting dimensions.

\begin{lemma} \label{U2 U2}
    Suppose $\textup{SO}(4)\times \textup{SO}(4)$ acts on a manifold $M$ of dimension 7. Let $x \in M$ and denote $\mathcal{O}^p$ the orbit of $x$ with respect to $\textup{SO}(4)\times \{ 1 \}$ and $\mathcal{O}^q$ the orbit of $x$ with respect to $\{1 \} \times \textup{SO}(4)$. Suppose 
\[ \mathcal{O}^p \simeq \textup{SO}(4) \big/ \textup{U}_2 \quad \textup{ and } \quad \mathcal{O}^q \simeq \textup{SO}(4) \big/ \textup{U}_2 \]
Then, the action is not locally effective. In particular, it cannot extend to an action of $\textup{SO}^\circ(4,4)$.
\end{lemma}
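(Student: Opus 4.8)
The plan is to exhibit a positive–dimensional subgroup of $\text{SO}(4)\times\text{SO}(4)$ acting trivially on $M$; as in the paper's recurring argument, such local ineffectiveness of the $\text{SO}(4)\times\text{SO}(4)$-action already precludes any extension to the simple group $\text{SO}^\circ(4,4)$. Up to conjugation assume $\text{Stab}_{\text{SO}(4)\times\{1\}}(x)=\text{U}_2\times\{1\}$ and $\text{Stab}_{\{1\}\times\text{SO}(4)}(x)=\{1\}\times\text{U}_2$ with the standard embeddings $\text{U}_2\hookrightarrow\text{SO}(4)$, so $\text{U}_2\times\text{U}_2$ fixes $x$. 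First I would record two elementary facts. Since the two $\text{SO}(4)$-factors commute and $\text{U}_2\times\{1\}$ fixes $x$, the group $\text{U}_2\times\{1\}$ actually fixes every point of $\mathcal{O}^q=(\{1\}\times\text{SO}(4))\star x$, hence acts trivially on $T_x\mathcal{O}^q$; symmetrically $\{1\}\times\text{U}_2$ acts trivially on $T_x\mathcal{O}^p$. Second, under the $\text{SO}(4)$-equivariant identification $\mathcal{O}^p\cong\text{SO}(4)/\text{U}_2$, the representation of $\text{U}_2\times\{1\}$ on the $2$-dimensional space $T_x\mathcal{O}^p$ is isomorphic to the isotropy representation of $\text{SO}(4)/\text{U}_2\cong\text{S}^2$; being a $2$-dimensional orthogonal representation of the connected group $\text{U}_2$, its image lies in $\text{SO}(2)$, so its kernel $\Sigma_1\leq\text{U}_2$ has $\dim\Sigma_1\geq 3$, and the representation is nontrivial because $\text{U}_2$ is not normal in $\text{SO}(4)$. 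Symmetrically one obtains $\Sigma_2\leq\text{U}_2$ with $\dim\Sigma_2\geq 3$ such that $\{1\}\times\Sigma_2$ acts trivially on $T_x\mathcal{O}^q$.

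Next I would show $T_x\mathcal{O}^p\cap T_x\mathcal{O}^q=0$: this intersection is $\text{U}_2\times\{1\}$-invariant, and $\text{U}_2\times\{1\}$ acts trivially on it as a subspace of $T_x\mathcal{O}^q$ but by the (restriction of the) nontrivial isotropy representation of $\text{S}^2$ as a subspace of $T_x\mathcal{O}^p$, and that representation has no nonzero fixed vector. Hence $W:=T_x\mathcal{O}^p\oplus T_x\mathcal{O}^q$ is $4$-dimensional inside the $7$-dimensional $T_xM$. The subgroup $\Sigma_1\times\Sigma_2\leq\text{U}_2\times\text{U}_2$, of dimension at least $6$, preserves each of $\mathcal{O}^p$ and $\mathcal{O}^q$ — each factor fixes $x$ and lies in an $\text{SO}(4)$ commuting with the group generating the other orbit — hence preserves $W$; and it acts trivially on $W$, since $\Sigma_1\times\{1\}$ acts trivially on $T_x\mathcal{O}^p$ by construction and on $T_x\mathcal{O}^q$ because $\text{U}_2\times\{1\}$ does, and symmetrically for $\{1\}\times\Sigma_2$.

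Now choose a $(\Sigma_1\times\Sigma_2)$-invariant complement $R$ of $W$ in $T_xM$ using an invariant inner product; then $\dim R=3$, and the isotropy representation of $\Sigma_1\times\Sigma_2$ on $T_xM=W\oplus R$ factors through $\text{O}(R)\cong\text{O}(3)$. Since $\dim(\Sigma_1\times\Sigma_2)\geq 6>3=\dim\text{O}(3)$, the kernel of this representation has dimension at least $3$; let $L$ be its identity component, a positive-dimensional, compact, connected subgroup of $\text{SO}(4)\times\text{SO}(4)$ that fixes $x$ and acts trivially on all of $T_xM$. Averaging an auxiliary metric near $x$ to make it $L$-invariant, $L$ acts locally by isometries fixing $x$ with trivial differential, hence trivially on a neighbourhood of $x$; by analyticity of the action and connectedness of $M$, $L$ then acts trivially on all of $M$. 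Thus the action is not locally effective, which in particular rules out any extension to $\text{SO}^\circ(4,4)$.

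The main obstacle is not a single deep step but the bookkeeping: one must verify carefully that commuting factors act trivially on the tangent spaces of each other's orbits, that $T_x\mathcal{O}^p$ and $T_x\mathcal{O}^q$ meet only in $0$ — this is exactly what keeps $R$ three-dimensional and makes the final count $\dim(\Sigma_1\times\Sigma_2)-\dim\text{O}(R)\geq 6-3>0$ succeed, whereas a four-dimensional $R$ would only give $6-6=0$ and no conclusion — and that the isotropy representation of $\text{SO}(4)/\text{U}_2$ is nontrivial with a kernel of dimension at least $3$, which comes down to $\text{U}_2$ admitting no faithful real representation of dimension $\le 2$. The concluding passage from triviality of the isotropy representation to triviality on $M$ is routine for analytic actions and should be stated explicitly.
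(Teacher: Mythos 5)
Your proof is correct and follows essentially the same route as the paper: both arguments examine the isotropy representation of $\mathrm{U}_2\times\mathrm{U}_2$ at $x$, split $T_xM$ orthogonally using an invariant metric into the orbit tangent spaces plus a $3$-dimensional complement, and conclude by a dimension count that a positive-dimensional subgroup acts trivially on $T_xM$, contradicting local effectiveness. The only difference is that you pass through the kernels $\Sigma_1,\Sigma_2$ and count $6>3$ where the paper maps $\mathrm{U}_2\times\mathrm{U}_2$ directly into $\mathrm{O}(2)\times\mathrm{O}(2)\times\mathrm{O}(3)$ and counts $8>5$; your explicit verification that $T_x\mathcal{O}^p\cap T_x\mathcal{O}^q=0$ (which the paper's direct-sum notation takes for granted) and of the final step from trivial isotropy to non-effectiveness are welcome refinements, not deviations.
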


\begin{proof}
    Assume it is locally effective. We will get a contradiction by looking at the isotropy representation of $\text{U}_2 \times \text{U}_2$ at $x$. The subspace $ \text{T}_x \mathcal{O}^p \oplus \text{T}_x \mathcal{O}^q$ is an invariant subspace of $\text{T}_xM$ with respect to the isotropy representation. By introducing a $\text{U}_2 \times \text{U}_2$ invariant metric on $M$, we can take the orthogonal complement of the aforementioned subspace and hence obtain a decomposition $\text{T}_xM = \text{T}_x \mathcal{O}^p \oplus \text{T}_x \mathcal{O}^q \oplus W$, where $W \simeq \mathbb{R}^3$ is also invariant under the action of $\text{U}_2 \times \text{U}_2$. Now, since $\text{U}_2 \times \text{U}_2 $ acts by isometries, the isotropy representation gives us a map from $\text{U}_2 \times \text{U}_2$ into block matrices of the form $ \displaystyle \begin{bmatrix}
    A&& \\ &B& \\ &&C
\end{bmatrix}$, where $A,B \in \text{O}_2$ and $C \in \text{O}_3$. However, dim$(\text{U}_2 \times \text{U}_2)=8$, while dim$(\text{O}_2\times \text{O}_2 \times \text{O}_3 ) = 5$. This means that the map above must have nontrivial kernel, i.e. there exists a subgroup $H$ of $\text{U}_2 \times \text{U}_2$ with dimension at least 1, that acts trivially on $\text{T}_xM$. This contradicts the assumption that $\text{SO}(4)\times \text{SO}(4)$ acts locally effectively.
\end{proof}

\begin{lemma} \label{U2 case1}
    Suppose $\textup{SO}(4)\times \textup{SO}(q)$, $q \geq 3$, acts on a manifold $M$ of dimension $q+3$. Let $x \in M$ and denote $\mathcal{O}^p$ the orbit of $x$ with respect to $\textup{SO}(4)\times \{ 1 \}$ and $\mathcal{O}^q$ the orbit of $x$ with respect to $\{1 \} \times \textup{SO}(q)$. Suppose 
\[ \mathcal{O}^p \simeq \textup{SO}(4) \big/ \textup{U}_2  \]
and that $\textup{SO}(q)$ fixes $x$. Then, the action is not locally effective. In particular, it cannot extend to an action of $\textup{SO}^\circ(4,q)$.
\end{lemma}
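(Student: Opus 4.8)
The plan is to assume the action is locally effective and derive a contradiction by analysing the isotropy representation of $\text{Stab}_G(x)$ at $x$, where $G=\text{SO}^\circ(4,q)$. After conjugating we may assume $\text{Stab}_{\text{SO}(4)}(x)=\text{U}_2\subseteq\text{SO}(4)$ in the standard embedding, and by hypothesis $\text{SO}(q)$ fixes $x$, so $\text{U}_2\times\text{SO}(q)\subseteq\text{Stab}_G(x)$; write $\text{SU}_2=[\text{U}_2,\text{U}_2]$. Since $\text{SO}(q)$ commutes with $\text{SO}(4)$ and fixes $x$, it fixes the whole orbit $\mathcal{O}^p$ pointwise, hence acts trivially on $\text{T}_x\mathcal{O}^p$, which has dimension $\dim\text{SO}(4)/\text{U}_2=2$. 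Averaging a Riemannian metric over the compact group $K$ and taking orthogonal complements, I get a $(\text{U}_2\times\text{SO}(q))$-invariant splitting $\text{T}_xM=\text{T}_x\mathcal{O}^p\oplus W$ with $\dim W=q+1$, on which $\text{U}_2$ and $\text{SO}(q)$ act by commuting orthogonal representations.

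The first step is to pin down $W$ as an $\text{SO}(q)$-module. Local effectiveness forces $\text{SO}(q)\to\text{O}(W)$ to have finite kernel: otherwise a positive-dimensional connected normal subgroup of $\text{SO}(q)$ (all of $\text{SO}(q)$ when $q\neq 4$, a copy of $\text{SU}_2$ when $q=4$) would act trivially on $W$ and on $\text{T}_x\mathcal{O}^p$, hence trivially on $\exp_x(\text{T}_xM)$, and therefore, by analyticity and connectedness of $M$, on all of $M$, contradicting local effectiveness. Since $\dim W=q+1$ and every nontrivial irreducible real representation of $\text{SO}(q)$ has dimension $\geq q$, while the only one of dimension $q$ or $q+1$ with finite kernel is the standard $q$-dimensional representation (here one checks the small cases, in particular that for $q=4$ the only faithful $4$-dimensional representation is the standard one and there is no faithful $5$-dimensional irreducible), I conclude $W\cong\mathbb{R}^q_{\mathrm{std}}\oplus\mathbb{R}_{\mathrm{triv}}$. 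Thus the $\text{SO}(q)$-isotypic decomposition of $\text{T}_xM$ is $V_0\oplus\mathbb{R}^q_{\mathrm{std}}$ with $V_0=(\text{T}_xM)^{\text{SO}(q)}=\text{T}_x\mathcal{O}^p\oplus\mathbb{R}$ three-dimensional.

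The second step is to push $\text{SU}_2$ through this decomposition and show it acts trivially on $\text{T}_xM$. Because $\text{U}_2$ commutes with $\text{SO}(q)$ it preserves both isotypic pieces. On $\mathbb{R}^q_{\mathrm{std}}$ it commutes with the absolutely irreducible standard $\text{SO}(q)$-action, so by Schur's lemma it acts by real scalars, hence trivially as it is compact and connected. On $V_0$, note that $\text{U}_2\subseteq\text{SO}(4)$ preserves the $2$-dimensional subspace $\text{T}_x\mathcal{O}^p$, so its action on the $3$-dimensional space $V_0$ is reducible; since $\text{SU}_2$ has no nontrivial real representation of dimension $\leq 2$ (its $2$-dimensional complex representation is of quaternionic type, so its smallest nontrivial real representation is $3$-dimensional), $\text{SU}_2$ acts trivially on $V_0$. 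Hence $\text{SU}_2$ acts trivially on $\text{T}_xM$, so trivially on a neighbourhood of $x$ and thus on all of $M$; as $\text{SU}_2$ is a positive-dimensional connected subgroup of the simple group $G$, this contradicts local effectiveness, which proves the lemma. The main obstacle is the representation-theoretic bookkeeping of the first step, namely forcing $W\cong\mathbb{R}^q\oplus\mathbb{R}$ and in particular handling $q=4$, where $\text{SO}(4)$ is not simple; once that is done, the rest is a short equivariance argument of the same type as in Lemma \ref{U2 U2}.
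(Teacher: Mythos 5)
Your proof is correct and follows essentially the same route as the paper: a $\text{U}_2\times\text{SO}(q)$-invariant metric, the splitting $\text{T}_xM=\text{T}_x\mathcal{O}^p\oplus W$, and the identification $W\cong\mathbb{R}^q_{\mathrm{std}}\oplus\mathbb{R}$ as an $\text{SO}(q)$-module, leading to a positive-dimensional subgroup acting trivially on $\text{T}_xM$, contradicting local effectiveness. The only differences are in the finish: the paper concludes by counting dimensions of $\text{U}_2\times\text{SO}(q)$ against its block-diagonal image in $\text{O}_2\times\text{O}_q$ (and disposes of $q=4$ with a brief ``smaller blocks'' remark), whereas you exhibit $\text{SU}_2$ explicitly in the kernel via Schur's lemma on $\mathbb{R}^q_{\mathrm{std}}$ and the absence of nontrivial real $\text{SU}_2$-representations of dimension $\leq 2$ --- the same device the paper uses in Lemma \ref{U2 case 2} --- while treating the $q=4$ representation theory more explicitly.
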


\begin{proof}
Consider a $\text{U}_2 \times \text{SO}(q)$ invariant metric and decompose $\text{T}_xM$ as $\text{T}_xM = \text{T}_x \mathcal{O}^p \oplus W$, where $W \simeq \mathbb{R}^{q+1}$ is invariant under the action of $\text{U}_2 \times \text{SO}(q)$; here, $\text{T}_x\mathcal{O}^q$ is trivial. Now, $\text{SO}(q)$ acts on $W$. If that action is trivial, then we get that $\text{SO}(q)$ acts trivially on $\text{T}_xM$ and this contradicts the local effectiveness of the action. Assume momentarily that $q\neq 4$. Then, the action of $\text{SO}(q)$ on $W$ decomposed as $W \simeq \mathbb{R}^q \oplus \mathbb{R}^1$, with $\mathbb{R}^q$ being the standard representation of $\text{SO}(q)$ and $\mathbb{R}^1$ being the trivial one. Since $\text{U}_2$ preserves $W$ and sends $\text{SO}(q)$-invariant subspaces to $\text{SO}(q)$-invariant subspaces because $\text{U}_2$ and $\text{SO}(q)$ commute, $\text{U}_2$ preserves $\mathbb{R}^q$ and $\mathbb{R}^1$. Therefore, we get a map from $\text{U}_2 \times \text{SO}(q)$ to block matrices of the form $ \displaystyle \begin{bmatrix}
    A && \\ &B& \\ &&1
\end{bmatrix}$, where $A \in \text{O}_2$ and $B \in \text{O}_q$. But dim$(\text{U}_2 \times \text{SO}(q))=4 + \frac{1}{2}q(q-1)$, while dim$(\text{O}_2 \times \text{O}_q)=1 + \frac{1}{2}q(q-1)$. Therefore we get a subgroup of $\text{U}_2 \times \text{SO}(q)$ that acts trivially on $\text{T}_xM$ and that contradicts the local effectiveness of our action. If $q \neq 4$, then the action of $\text{SO}(4)$ on $\mathbb{R}^5$ could decompose differently, but that would only result in smaller blocks in the place of ``$B$'' above. Counting dimensions, we would arrive at a similar contradiction.
    \end{proof}

\begin{lemma} \label{U2 case 2}
    Suppose $\textup{SO}(4)\times \textup{SO}(q)$, $q \geq 3$, acts on a manifold $M$ of dimension $q+3$. Let $x \in M$ and denote $\mathcal{O}^p$ the orbit of $x$ with respect to $\textup{SO}(4)\times \{ I_q \}$ and $\mathcal{O}^q$ the orbit of $x$ with respect to $\{I_4 \} \times \textup{SO}(q)$. Suppose 
\[ \mathcal{O}^p \simeq \textup{SO}(4) \big/ \textup{U}_2 \quad \textup{and} \quad \textup{dim}(M) -\textup{dim}(\mathcal{O}^q) - \textup{dim}(\mathcal{O}^p) \leq 2\]
Then, the action is not locally effective. In particular, it cannot extend to an action of $\textup{SO}^\circ(4,q)$.
\end{lemma}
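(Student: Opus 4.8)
The plan is to argue as in Lemmas \ref{U2 U2} and \ref{U2 case1}: assuming the action is locally effective, I exhibit a positive-dimensional subgroup of $\textup{Stab}_K(x)$ acting trivially on $\textup{T}_xM$, which is a contradiction. Write $\textup{SO}(4)=\textup{SO}(4)\times\{I_q\}$, $\textup{SO}(q)=\{I_4\}\times\textup{SO}(q)$, and set $\textup{U}_2=\textup{Stab}_{\textup{SO}(4)}(x)$, $H_2=\textup{Stab}_{\textup{SO}(q)}(x)$; both lie in $\textup{Stab}_K(x)$. First comes the dimension bookkeeping: since $\mathcal{O}^p\simeq\textup{SO}(4)/\textup{U}_2$ we have $\dim\mathcal{O}^p=6-4=2$, so the hypothesis $\dim M-\dim\mathcal{O}^q-\dim\mathcal{O}^p\le 2$ forces $\dim\mathcal{O}^q\ge q-1$.

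Next, fix a $\textup{Stab}_K(x)$-invariant inner product on $\textup{T}_xM$ and put $W_1=\textup{T}_x\mathcal{O}^p$, $W_2=\textup{T}_x\mathcal{O}^q$, both $\textup{U}_2\times H_2$-invariant. The argument rests on two facts. First, since $\textup{SO}(4)$ and $\textup{SO}(q)$ commute in $K$, for $g\in\textup{U}_2$ and $v\in\mathfrak{so}(q)$ one has $g_*(v^M_x)=(\textup{Ad}(g)v)^M_x=v^M_x$ (using $gx=x$ and $\textup{Ad}(g)v=v$), so $\textup{U}_2$ acts trivially on $W_2$. Second, $\mathcal{O}^p\simeq\textup{SO}(4)/\textup{U}_2\simeq\textup{S}^2$, and the isotropy representation of $\textup{U}_2$ on $W_1\cong\mathfrak{so}(4)/\mathfrak{u}_2\cong\mathbb{R}^2$ factors through $\det\colon\textup{U}_2\to\textup{SO}(2)$; hence it has no nonzero fixed vector and $N:=\ker\bigl(\textup{U}_2\to\textup{O}(W_1)\bigr)\cong\textup{SU}_2$ has dimension $3$. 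Combining the two: a vector in $W_1\cap W_2$ lies in $W_2$ and so is fixed by $\textup{U}_2$, but it also lies in $W_1$, so it is zero; thus $W_1\cap W_2=0$ and $\dim(W_1\oplus W_2)=2+\dim\mathcal{O}^q\ge q+1$. Therefore the $N$-invariant orthogonal complement $W:=(W_1\oplus W_2)^\perp$ in $\textup{T}_xM$ satisfies $\dim W=(q+3)-(2+\dim\mathcal{O}^q)\le 2$.

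Now I close the argument by counting. Since $N$ acts trivially on $W_1\oplus W_2$, the $N$-action on $\textup{T}_xM=W_1\oplus W_2\oplus W$ is governed by the homomorphism $N\to\textup{O}(W)$, and $\dim W\le 2$ gives $\dim\textup{O}(W)\le 1$; hence $\ker(N\to\textup{O}(W))$ has dimension $\ge 3-1=2>0$. Its identity component is a positive-dimensional subgroup of $\textup{Stab}_K(x)$ fixing $\textup{T}_xM$ pointwise; being a compact group of isometries fixing $x$ with trivial differential there, it acts trivially on a neighbourhood of $x$, and hence — the action being analytic — on all of $M$. This contradicts local effectiveness, so the action cannot extend to $\textup{SO}^\circ(4,q)$ (which, being simple, acts locally effectively whenever it acts nontrivially).

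I expect the only nonroutine step to be pinning down the isotropy representation of $\textup{U}_2$ on $\textup{T}_x\mathcal{O}^p$ precisely enough to conclude that it has no nonzero fixed vector (equivalently $W_1\cap W_2=0$). If one wishes to bypass the $\textup{S}^2\simeq\textup{SO}(4)/\textup{U}_2$ identification, one can instead note that $\textup{U}_2$ is connected, so its image in $\textup{O}(W_1)=\textup{O}(2)$ is a connected subgroup of $\textup{SO}(2)$: either all of $\textup{SO}(2)$, in which case there is no fixed vector and the argument above applies verbatim, or trivial, in which case $N=\textup{U}_2$ is $4$-dimensional while $\dim W\le(q+3)-(1+\dim\mathcal{O}^q)\le 3$, so $\ker(N\to\textup{O}(W))$ again has positive dimension.
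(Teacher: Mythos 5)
Your proof is correct and takes essentially the same route as the paper: both arguments exhibit the subgroup $\textup{SU}_2 \leq \textup{U}_2$ acting trivially on all of $\textup{T}_xM$ --- trivially on $\textup{T}_x\mathcal{O}^p$, on $\textup{T}_x\mathcal{O}^q$ because $\textup{SO}(4)$ and $\textup{SO}(q)$ commute, and on the small orthogonal complement by the dimension hypothesis --- contradicting local effectiveness (the paper cites that $\textup{SU}_2$ has no nontrivial representation of dimension $\leq 2$ where you count dimensions against $\textup{O}(W)$). Your explicit identification of the isotropy representation of $\textup{U}_2$ on $\textup{T}_x\mathcal{O}^p$ as the determinant rotation, giving $\textup{T}_x\mathcal{O}^p \cap \textup{T}_x\mathcal{O}^q = 0$, is a detail the paper leaves implicit when writing the direct sum.
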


\begin{proof}
Indeed, consider $\text{SU}_2 \leq \text{U}_2$. Then, $\text{SU}_2$ fixes $x$. Looking at the isotropy representation, $\text{SU}_2$ acts trivially on $\text{T}_x\mathcal{O}^p$ since it is 2-dimensional, see \cite{ITZKOWITZ1991285}. It also acts trivially on $\text{T}_x\mathcal{O}^q$. Finally, introducing a $\text{U}_2$ invariant metric on $M$ and taking the orthogonal complement of $\text{T}_x\mathcal{O}^p \oplus \text{T}_x\mathcal{O}^q$, which will also be $\text{SU}_2$-invariant, $\text{SU}_2$ acts trivially on it also by our assumption on the dimensions. Therefore, $\text{SU}_2$ acts trivially on $\text{T}_xM$ and so the action is not locally effective.
\end{proof}

\begin{lemma}
Suppose $\textup{SO}(9)\times \textup{SO}(7)$ acts on a manifold $M$ of dimension $15$. Let $x \in M$ and denote $\mathcal{O}^p$ the orbit of $x$ with respect to $\textup{SO}(9)\times \{ I_7 \}$ and $\mathcal{O}^q$ the orbit of $x$ with respect to $\{I_9 \} \times \textup{SO}(7)$. Suppose 
\[ \mathcal{O}^p \simeq \textup{SO}(9) \big/ \textup{Spin}(7) \]
Then, the action is not locally effective. In particular, it does not extend to an action of $\textup{SO}^\circ(9,7)$.
    \end{lemma}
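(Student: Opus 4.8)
The plan is to examine the isotropy representation at $x$, in the spirit of the other lemmas of this section. Write $H'=\textup{Stab}_{\textup{SO}(9)}(x)$, so that $H'\cong\textup{Spin}(7)$ via the usual embedding $\textup{Spin}(7)\hookrightarrow\textup{SO}(8)\hookrightarrow\textup{SO}(9)$ (the $8$-dimensional real spin representation $\Delta_7$ followed by the standard inclusion). First I would record the dimension coincidence $\dim\mathcal{O}^p=\dim\textup{SO}(9)-\dim\textup{Spin}(7)=36-21=15=\dim M$, so that $\textup{T}_x\mathcal{O}^p=\textup{T}_xM$ and in particular $\textup{T}_x\mathcal{O}^q\subseteq\textup{T}_x\mathcal{O}^p$. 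Next I would identify $\textup{T}_x\mathcal{O}^p\cong\mathfrak{so}(9)/\mathfrak{spin}(7)$ as an $H'$-module: since $\mathbb{R}^9=\Delta_7\oplus\mathbb{R}$ as $\textup{Spin}(7)$-modules, we get $\mathfrak{so}(9)=\Lambda^2(\Delta_7\oplus\mathbb{R})=\Lambda^2\Delta_7\oplus\Delta_7=\bigl(\mathfrak{spin}(7)\oplus\mathbb{R}^7\bigr)\oplus\Delta_7$, where $\mathbb{R}^7$ is the vector representation pulled back along $\textup{Spin}(7)\to\textup{SO}(7)$; hence $\textup{T}_x\mathcal{O}^p\cong\mathbb{R}^7\oplus\Delta_7$. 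Both summands are nontrivial, irreducible, inequivalent, and of real type, so $\textup{T}_x\mathcal{O}^p$ has no nonzero $H'$-fixed vector.

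From here I would run the usual ``commuting subgroup'' argument. As $\textup{SO}(9)$ and $\textup{SO}(7)$ commute in $K$, every element of $H'$ fixes $\mathcal{O}^q$ pointwise (for $h\in H'$ and $y=g\star x$ with $g\in\{I_9\}\times\textup{SO}(7)$ one has $h\star y=(hg)\star x=(gh)\star x=g\star x=y$), so $H'$ acts trivially on $\textup{T}_x\mathcal{O}^q$; but $\textup{T}_x\mathcal{O}^q\subseteq\textup{T}_x\mathcal{O}^p$ and the latter has no nonzero $H'$-fixed vector, so $\textup{T}_x\mathcal{O}^q=0$, i.e.\ $\textup{SO}(7)$ fixes $x$. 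Fixing a $K$-invariant metric on $M$, the subgroup $\{I_9\}\times\textup{SO}(7)$ now acts on $\textup{T}_xM\cong\mathbb{R}^7\oplus\Delta_7$ by orthogonal transformations commuting with the $H'$-action; since $\mathbb{R}^7\oplus\Delta_7$ is a sum of two inequivalent real-type irreducibles of multiplicity one, any such transformation acts as $\pm1$ on each summand, so the group of these transformations is finite and, $\textup{SO}(7)$ being connected, $\textup{SO}(7)$ acts trivially on $\textup{T}_xM$. Linearising the $\textup{SO}(7)$-action near the fixed point $x$, we conclude that $\textup{SO}(7)$ acts trivially on a neighbourhood of $x$, contradicting local effectiveness; as $\textup{SO}^\circ(9,7)$ is simple this also rules out an extension to $\textup{SO}^\circ(9,7)$.

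The only non-formal input is the representation theory in the first paragraph, in particular that $\mathfrak{so}(9)/\mathfrak{spin}(7)$ carries no $\textup{Spin}(7)$-invariant vector and is a sum of real-type irreducibles (so that its group of equivariant isometries is finite rather than a positive-dimensional torus). For the invariants one may argue directly that the centraliser of $\mathfrak{spin}(7)$ in $\mathfrak{so}(9)$ is trivial, using that $\textup{Spin}(7)$ acts irreducibly of real type on $\mathbb{R}^8$ (so its centraliser in $\mathfrak{so}(8)=\Lambda^2\mathbb{R}^8$ is the intersection of the scalar operators with the skew-symmetric ones, namely $0$) together with the fact that $\Delta_7$ has no invariants. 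An alternative, shorter route once $\mathcal{O}^q=\{x\}$ has been established is to invoke Lemma~\ref{SOq fix pnt}, applicable since $\dim M-\dim\mathcal{O}^p=0\le q-1$, to contradict an assumed extension to $\textup{SO}^\circ(9,7)$; but going through the isotropy representation has the advantage of yielding the stronger non-local-effectiveness statement without presupposing the extension.
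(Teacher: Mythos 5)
Your proof is correct, but it takes a genuinely different route from the paper's. The paper's argument is a short dimension dichotomy on the \emph{other} factor's stabiliser: since $\dim\bigl(\textup{SO}(9)\big/\textup{Spin}(7)\bigr)=15=\dim M$, the group $H_7'=\textup{Stab}_{\textup{SO}(7)}(x)$ fixes $\mathcal{O}^p$ pointwise and hence acts trivially on $\textup{T}_x\mathcal{O}^p=\textup{T}_xM$; if $\dim H_7'\geq 1$ this already kills local effectiveness, while if $\dim H_7'=0$ then $\dim\mathcal{O}^q=\dim\textup{SO}(7)=21>15=\dim M$, a contradiction. No representation theory of $\textup{Spin}(7)$ is needed. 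You instead compute the $\textup{Spin}(7)$-module structure $\mathfrak{so}(9)/\mathfrak{spin}(7)\cong V_7\oplus\Delta_7$ (which is correct, given that the only faithful $9$-dimensional orthogonal representation of $\textup{Spin}(7)$ is $\Delta_7\oplus\mathbb{R}$ and $\Lambda^2\Delta_7\cong\mathfrak{spin}(7)\oplus V_7$), deduce that $\textup{SO}(7)$ must fix $x$, and then use Schur's lemma for the two inequivalent real-type irreducibles plus Bochner linearisation to conclude that $\textup{SO}(7)$ acts trivially near $x$. Both arguments are sound and both use the commuting-subgroups trick and an invariant metric; the paper's version is shorter and more elementary (a pure dimension count, symmetric with the style of the neighbouring lemmas), while yours yields extra structural information (that $\textup{SO}(7)$ necessarily fixes $x$ and that the isotropy representation has finite commutant) at the cost of the $\textup{Spin}(7)$ representation-theoretic input. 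Your closing alternative via Lemma~\ref{SOq fix pnt} is also legitimate for the non-extendability statement, and you correctly note it is weaker than the direct ineffectiveness conclusion.
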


\begin{proof}
        Indeed, we have that dim$\left( \text{SO}(9) \big/ \text{Spin}(7) \right) = 15$. Let $H_7' \leq \text{SO}(7)$ be the isotropy group of $x$, for the restricted action of $\{I_9\} \times \text{SO}(7)$. Then, $H_7'$ acts trivially on $\text{T}_x\mathcal{O}^p$ which is all of $\text{T}_xM$. If dim$(H_7')\geq 1$ then the action is not locally effective. If dim$(H_7')=0$, then dim$(\mathcal{O}^q) =$ dim$(\text{SO}(7)) = 21 > 15 =$ dim$(M)$, a contradiction.
   \end{proof}

\begin{lemma}
        Suppose that $\textup{SO}(p)\times \textup{SO}(q)$ acts on $M$. Let $x \in M$ denote $\mathcal{O}^p$ the orbit of $x$ with respect to $\textup{SO}(p)\times \{ I_q \}$ and $\mathcal{O}^q$ the orbit of $x$ with respect to $\{I_p \} \times \textup{SO}(q)$. If
        \[ \textup{dim} \mathcal{O}^p = p-1 \quad \text{and}  \quad \textup{dim} \mathcal{O}^q = q-1\]
        Let 
        \[ \mathcal{O}^p \simeq \textup{SO}(p) \Big/ H_p' \quad \text{and} \quad \mathcal{O}^q \simeq \textup{SO}(q) \Big/ H_q'\]
        and suppose the isotropy representations of $H_p'$ and $H_q'$ are irreducible, and $-I_p \in H_p'$, where $I_p$ is the $p\times p$ identity matrix. Assume that the action extends to $\textup{SO}^\circ(p,q)$. Then, the $\textup{SO}^\circ(p,q)$-orbit of $x$, denoted $\mathcal{O}$, cannot be an open orbit.
    \end{lemma}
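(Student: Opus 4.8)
The plan is to assume $\mathcal{O}=G\cdot x$ is open and derive a contradiction, the key player being the element $c:=\operatorname{diag}(-I_p,I_q)$. Since $-I_p\in H_p'\le\text{SO}(p)$, necessarily $p$ is even, so $c\in\text{SO}(p)\times\{I_q\}\le K\le G$; moreover $c=(-I_p,I_q)\in H_p'\times\{I_q\}\le G_x:=\operatorname{Stab}_G(x)$, so $c$ fixes $x$ and hence $\operatorname{Ad}(c)$ preserves the isotropy algebra $\mathfrak{g}_x$. If $\mathcal{O}$ is open then $\mathfrak{g}_x$ is a proper subalgebra of $\mathfrak{g}=\mathfrak{so}(p,q)$ of codimension $\dim M=p+q-1$. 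By Table~\ref{table:1}, an orbit type $\text{SO}(p)/H_p'$ of dimension $p-1$ with irreducible isotropy representation is either of ``$\text{SO}(p)/\text{SO}(p-1)$ type'' — meaning $\mathfrak{h}_p'$ is the standard $\mathfrak{so}(p-1)\le\mathfrak{so}(p)$ — or the exceptional $\text{SO}(8)/\text{Spin}(7)$; the remaining candidate $\text{SO}(4)/\text{SU}(2)$ is excluded because there the isotropy representation is trivial, not irreducible. The same applies to $\mathcal{O}^q$, and I split into two cases.

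\textbf{Case 1: neither $\mathcal{O}^p$ nor $\mathcal{O}^q$ is of $\text{Spin}(7)$ type.} Conjugating by an element of $K$ (which fixes $c$) we may assume $\mathfrak{g}_x$ contains the standard $\mathfrak{h}\simeq\mathfrak{so}(p-1)\oplus\mathfrak{so}(q-1)$. Then Lemma~\ref{lemma Uch} applies; since $\mathfrak{h}_{[1:\epsilon]}\oplus\theta^1$ has codimension $p+q-2\ne p+q-1$, it forces $\mathfrak{g}_x=\mathfrak{h}_{[a:b]}$, the Lie algebra of the $G$-stabilizer of $v=a\,e_1+b\,e_{p+1}$. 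As $\operatorname{Ad}(c)$ preserves $\mathfrak{g}_x$ and $c\cdot v=-a\,e_1+b\,e_{p+1}$, we get $\mathfrak{h}_{[-a:b]}=\mathfrak{h}_{[a:b]}$, i.e. $[-a:b]=[a:b]$ in $\mathbb{R}\text{P}^1$, so $a=0$ or $b=0$. If $b=0$ then $\mathfrak{g}_x=\mathfrak{h}_{[1:0]}$ is the Lie algebra of the stabilizer of $e_1$, which contains the standard $\mathfrak{so}(q)$ (rotations of $e_{p+1},\dots,e_{p+q}$); hence $\mathfrak{so}(q)\cdot x=0$ and $\dim\mathcal{O}^q=0$, contradicting $\dim\mathcal{O}^q=q-1\ge 2$. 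If $a=0$ the symmetric argument yields $\mathfrak{so}(p)\le\mathfrak{g}_x$ and $\dim\mathcal{O}^p=0$, again impossible.

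\textbf{Case 2: $\mathcal{O}^p$ (or, symmetrically, $\mathcal{O}^q$) is of $\text{Spin}(7)$ type.} Then $p=8$ and $\mathfrak{h}_p'\simeq\mathfrak{spin}(7)$ acts on $\mathbb{R}^8$ through the spin representation $\Delta$. As a $\mathfrak{spin}(7)$-module $T_xM=\mathfrak{g}\cdot x$ is a quotient — hence, by complete reducibility, a submodule — of $\mathfrak{g}/\mathfrak{h}_p'\cong\mathbb{R}^7\oplus(\text{trivial})^{\binom q2}\oplus\Delta^{\oplus q}$, using $\mathfrak{so}(8)/\mathfrak{spin}(7)\cong\mathbb{R}^7$ (the vector representation) and $\mathfrak{p}\cong\mathbb{R}^8\otimes\mathbb{R}^q\cong\Delta^{\oplus q}$. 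But $T_xM$ contains $T_x\mathcal{O}^p\cong\mathbb{R}^7$ and the $\mathfrak{spin}(7)$-trivial subspace $T_x\mathcal{O}^q$ (trivial since $\mathfrak{h}_p'\le\mathfrak{so}(p)$ commutes with $\mathfrak{so}(q)$ and annihilates $x$) of dimension $q-1$; as $\dim T_xM=7+q$ this forces $T_xM\cong\mathbb{R}^7\oplus(\text{trivial})^q$, so $\Delta$ does not occur in $T_xM$. Hence the $\mathfrak{spin}(7)$-equivariant image $\mathfrak{p}\cdot x$ of $\mathfrak{p}\cong\Delta^{\oplus q}$ vanishes, i.e. $\mathfrak{p}\le\mathfrak{g}_x$; but then $\dim\mathfrak{g}_x\ge\dim\mathfrak{p}+\dim\mathfrak{h}_p'+\dim\mathfrak{h}_q'=pq+\binom{p-1}2+\binom{q-1}2=\binom{p+q-1}2+1$, contradicting $\dim\mathfrak{g}_x=\binom{p+q-1}2$.

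The main obstacle is Case 1: everything there rests on Lemma~\ref{lemma Uch}, which guarantees that a codimension-$(p+q-1)$ proper subalgebra of $\mathfrak{so}(p,q)$ containing $\mathfrak{so}(p-1)\oplus\mathfrak{so}(q-1)$ must be a vector stabilizer $\mathfrak{h}_{[a:b]}$; only afterwards does the involution $c=-I_p$ (which exists precisely because of the hypothesis $-I_p\in H_p'$, and without which the hyperboloid orbits would be genuine counterexamples) pin $[a:b]$ down to $a=0$ or $b=0$, whence $\dim\mathcal{O}^p=p-1$ and $\dim\mathcal{O}^q=q-1$ rule both out. A secondary point requiring care is the enumeration of admissible orbit types — discarding $\text{SO}(4)/\text{SU}(2)$ by irreducibility and isolating $\text{SO}(8)/\text{Spin}(7)$ for the separate treatment of Case 2.
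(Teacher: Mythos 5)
Your proposal takes a genuinely different route from the paper. The paper's proof never classifies the isotropy groups: it produces an $H_p'\times H_q'$-invariant line in $\mathrm{T}_xM$ complementary to $\mathrm{T}_x\mathcal{O}^p\oplus \mathrm{T}_x\mathcal{O}^q$, uses irreducibility of the two isotropy representations to lift it to some $X^v\in\mathfrak{so}(p,q)$ with vanishing diagonal blocks, and then gets the contradiction from $\operatorname{Ad}_{(-I_p,I_q)}X^v=-X^v$ together with $\operatorname{Ad}_{(-I_p,I_q)}X^v\equiv X^v \bmod \mathfrak{g}_x$; it is uniform over all orbit types precisely because it never invokes Table~\ref{table:1}. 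Your Case 1 (Lemma~\ref{lemma Uch} pins $\mathfrak{g}_x=\mathfrak{h}_{[a:b]}$, the central element forces $[a:b]=[1:0]$ or $[0:1]$, and the orbit dimensions rule both out) and your Case 2 (the $\mathrm{Spin}(7)$-module count forcing $\mathfrak{p}\le\mathfrak{g}_x$ and a dimension contradiction) are both correct as written; the only small point worth a line in Case 1 is why $\mathfrak{h}_{[-a:b]}=\mathfrak{h}_{[a:b]}$ forces $[-a:b]=[a:b]$ (a common stabiliser of two independent vectors in $\operatorname{span}(e_1,e_{p+1})$ annihilates that plane and is therefore of strictly smaller dimension).

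The genuine gap is in the enumeration step. You discard the candidate $\mathrm{SO}(4)/\mathrm{SU}(2)$ ``because the isotropy representation is trivial,'' but that is only true for the identity component. Nothing in the hypotheses forces $H_p'$ or $H_q'$ to be connected: if, say, $q=4$, $(H_q')^\circ=\mathrm{SU}(2)_L\subset\mathrm{SO}(4)$ and the component group of $H_q'$ maps onto a tetrahedral, octahedral or icosahedral subgroup of $\mathrm{SO}(3)\cong\mathrm{SO}(4)/\mathrm{SU}(2)_L$, then the isotropy representation of the full $H_q'$ on $\mathrm{T}_x\mathcal{O}^q\cong\mathfrak{su}(2)_R\cong\mathbb{R}^3$ is irreducible, so every hypothesis of the lemma is met, yet neither of your cases applies: Case 1 breaks down because $\mathfrak{su}(2)_L$ is not conjugate in $\mathfrak{so}(4)$ to the standard $\mathfrak{so}(3)$ (it has no nonzero fixed vector in $\mathbb{R}^4$), so Lemma~\ref{lemma Uch} cannot be invoked, and Case 2 only treats $\mathrm{Spin}(7)$. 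This branch is not an idle technicality — the paper itself goes on to apply the lemma in situations with an $\mathrm{SU}_2$-type orbit — so it needs an argument. The good news is that your own Case 2 template repairs it: as an $\mathfrak{su}(2)_L$-module, $\mathfrak{p}\cong\mathbb{H}^{\oplus p}$ with $\mathbb{H}$ the irreducible $4$-dimensional real module, while $\mathrm{T}_xM$ contains $\mathrm{T}_x\mathcal{O}^p\oplus\mathrm{T}_x\mathcal{O}^q$ (the sum is direct because the intersection is an $H_p'$-invariant, $H_p'$-trivial subspace of the nontrivial irreducible $\mathrm{T}_x\mathcal{O}^p$), which leaves only one extra dimension, so no copy of $\mathbb{H}$ occurs, $\mathfrak{p}\le\mathfrak{g}_x$, and the same count $pq+\binom{p-1}{2}+\binom{q-1}{2}=\binom{p+q-1}{2}+1$ gives the contradiction. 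As submitted, though, the proof does not cover this case.
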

\begin{proof}
    We can assume that $M$ is endowed with an $\text{SO}(p)\times \text{SO}(q)$-invariant metric. Looking at the isotropy representation of $H_p' \times H_q'$, there exists an invariant line in $\text{T}_xM$ because of the assumptions on the dimensions of the orbits, and $H_p' \times H_q'$ must act trivially on this line.  
    
     The Lie algebra $\mathfrak{so}(p,q)$ comprises matrices of the following form $\begin{bmatrix}
        X_1 & X_2 \\
        X_3 & X_4
    \end{bmatrix}$, where $X_1 \in \mathfrak{so}_p$, $X_4 \in \mathfrak{so}_q$, $X_3 = X_2^\text{T}$ and $X_2$ is a $p \times q$ matrix. We identify $H_p' \times H_q'$ with $\displaystyle \left\{ \begin{bmatrix}
        h_1 &\\&h_2
    \end{bmatrix} : h_1 \in H_p', \, h_2 \in H_q' \right\}$. Then, if $h_0 = (h_1,h_2) \in H_p' \times H_q'$ and  
    $X= \begin{bmatrix}
        X_1 & X_2 \\
        X_2^\text{T} & X_4
    \end{bmatrix} \in \mathfrak{so}(p,q)$, $\text{Ad}_{h_0} \left( X \right) = \begin{bmatrix}
        h_1X_1h_1^\text{T} & h_1X_2h_2^\text{T} \\
        h_2X_2^\text{T}h_1^\text{T} & h_2X_4h_2^\text{T}
    \end{bmatrix}$. Now, assume that $\mathcal{O}$ is an open orbit. Then, $\mathcal{O} \simeq \text{SO}^\circ(p,q) \Big/ G_x$, where $G_x$ is the isotropy group of $x$ with respect to the $\textup{SO}^\circ(p,q)$-action. If $\mathfrak{g}_x$ is the isotropy algebra at $x$, then $\text{T}_xM$ can be identified with $\mathfrak{so}(p,q) \Big/ \mathfrak{g}_x$ and the isotropy representation of $G_x$ with the adjoint representation mod $\mathfrak{g}_x$ (see \cite[Corollary 10.2.13]{HilgertNeeb}). Let ${v} \in \text{T}_xM$ be a nonzero vector on which $H_p' \times H_q'$ acts trivially. There exists an $X^{v} \in \mathfrak{so}(p,q)$ such that $X^v \, \text{mod} \mathfrak{g}_x \equiv v$ under the identification mentioned above. Write $X^v = \begin{bmatrix}
        X^v_1 & X^v_2 \\
        (X^v_2)^\text{T} & X^v_4
    \end{bmatrix}$. Since the isotropy representations of $H_p'$ and $H_q'$ are irreducible, we can assume that $X^v_1 = X^v_4 = 0$. Since $v$ is non zero, $X^v_2 \neq 0$ and $\text{Ad}_{h_0} \left( X^v \right) \equiv X^v \, \text{mod} \mathfrak{g}_x$, for $h_o \in H_p' \times H_q'$. But, $h_0 = (-I_p, I_q) \in H_p' \times H_q'$ and $\text{Ad}_{h_0} \left( X^v \right) = - X^v$ which implies $- X^v \equiv X^v \, \text{mod} \mathfrak{g}_x$. But this is a contradiction, since $X^v \notin \mathfrak{g}_x$.
\end{proof}

 Now, suppose $K = \text{SO}(p) \times \text{SO}(q)$ acts on a manifold $M$, $x \in M$ and either $p=8$ and $H_p' \simeq \text{Spin}(7)$ or $p=4$ and $H_p' \simeq \text{SU}_2$, where $H_p'$ is like in the lemma above. Furthermore, assume that the $\text{SO}(q)$-orbit of $x$ is any of the first three in the bottom part of Table  \ref{table:1} and that the $K$ action extends to a $G$ action. If $\text{SO}(q)$ fixes $x$, note that the isotropy representation of $\text{SO}(q)$ on $\text{T}_x \mathcal{O}^q$ has to be the standard irreducible one; so, we can always find a point near $x$ in the same $G$-orbit that is fixed by neither $\text{SO}(p)$ nor $\text{SO}(q)$. Then, $\mathcal{O}^p$ and $\mathcal{O}^q$ for this new point are some from Table \ref{table:1}. So we can assume that $x$ is not fixed by $\text{SO}(q)$. Since the respective $-I_p$ is in both $\text{SU}_2$ and Spin$(7)$, see \cite{Varadarajan}, by the lemma above the $G$-orbit of $x$ cannot be open in $M$. Therefore, it is closed and hence compact. If $G_x$ is the isotropy group of $x$ with respect to the $G$ action, $G_x$ is cocompact and hence we can apply Witte's result and by similar reasoning as in Section \ref{2nd kind parabolics} we can show that this is impossible. Therefore, such $K$ actions do not extend to $G$.

\subsection{Conclusion} 

The following proposition now follows from the results of this section:
\begin{prop} \label{concl for SOpxSOq orbit types}
    Let $p,q \geq 3$ and suppose $\textup{SO}^\circ(p,q)$ acts analytically on a manifold $M$. Then, the restricted $K$ action on $M$ is not transitive and with respect to the restricted action of $\textup{SO}(p) \times \{ I \} \leq K$, respectively $\{ I \} \times \textup{SO}(q) \leq K$, the orbit type of a point $x \in M$ is either $\textup{SO}(p) \big/ \textup{SO}(p-1)$ or $x$ is fixed by $\textup{SO}(p)$, respectively $\textup{SO}(q) \big/ \textup{SO}(q-1)$ or fixed by $\textup{SO}(q)$. 
\end{prop}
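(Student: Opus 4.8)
The statement has two ingredients: non-transitivity of the restricted $K$-action and the orbit-type dichotomy for $\textup{SO}(p)$ and for $\textup{SO}(q)$. The first is precisely Lemma~\ref{transitive axns}. (If the $\textup{SO}^\circ(p,q)$-action is trivial the whole proposition is vacuous, so we may assume it is nontrivial; since $G$ is simple it is then locally effective, which is the standing hypothesis of every lemma of Section~\ref{SOp SOq axns}.) For the orbit types, I would normalise $p\ge q$ and first settle the $\textup{SO}(p)$-orbit $\mathcal{O}^p$ of a point $x$. As shown at the start of Section~\ref{SOp SOq axns}, $\dim\mathcal{O}^p\le 2p-2$, so $\mathcal{O}^p$ is a point or $\textup{SO}(p)/H'$ for one of the subgroups $H'$ listed in Table~\ref{table:1}, and the task is to discard every row save $H'=\textup{SO}(p-1)$. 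When $\mathcal{O}^p$ lies in the top part of the table, the admissible $\textup{SO}(q)$-orbit $\mathcal{O}^q$ is restricted by the ``$q$''-column, cannot be $1$-dimensional (Lemma~\ref{ Oq 1-dim}), cannot be a point once $\dim M-\dim\mathcal{O}^p\le q-1$ (Lemma~\ref{SOq fix pnt}), cannot realise the numerical configuration forbidden by Lemma~\ref{axn contra 1}, and must obey $\dim\mathcal{O}^p+\dim\mathcal{O}^q<\dim M$ (Lemma~\ref{transitive axns}); reading these conditions off the finitely many rows leaves, as recorded after Lemma~\ref{transitive axns}, only the three pairs $(\textup{SO}(7)/\textup{G}_2,\textup{SO}(4)/\textup{U}_2)$, $(\textup{SO}(6)/\textup{U}_3,\textup{SO}(4)/\textup{U}_2)$ and $(\textup{SO}(4)/(\textup{SO}(2)\times\textup{SO}(2)),\textup{SO}(4)/\textup{U}_2)$, which are killed by Lemmas~\ref{U2 U2}, \ref{U2 case1} and \ref{U2 case 2}. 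When $\mathcal{O}^p$ lies in the bottom part, the same four lemmas eliminate every $\mathcal{O}^q$ from the top part, the $\textup{U}_2$-lemmas dispose of $\mathcal{O}^q\cong\textup{SO}(4)/\textup{U}_2$, and the remaining configurations — in which both $\mathcal{O}^p$ and $\mathcal{O}^q$ come from the bottom part with a non-$\textup{SO}(\,\cdot-1)$ stabiliser — are exactly those treated at the end of Section~\ref{non-ext non-trnstv axns}: using that $-I_p$ lies in $\textup{Spin}(7)$ and in $\textup{SU}_2$ and that the relevant isotropy representations are irreducible, one shows the $G$-orbit of $x$ cannot be open, hence is compact, and then contradicts Witte's theorem as in Section~\ref{2nd kind parabolics}. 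This leaves $\mathcal{O}^p\cong\textup{S}^{p-1}=\textup{SO}(p)/\textup{SO}(p-1)$ or $x$ fixed by $\textup{SO}(p)$.

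It then remains to transfer this dichotomy to $\mathcal{O}^q$. If $\mathcal{O}^p\cong\textup{S}^{p-1}$, then since $\textup{T}_x\mathcal{O}^p\cap\textup{T}_x\mathcal{O}^q$ is at most $1$-dimensional, inequality~(\ref{q ineq 1}) gives $\dim\mathcal{O}^q\le\dim M-\dim\mathcal{O}^p+1=q+1\le 2q-2$ (using $q\ge 3$), so Uchida's classification applies to $\mathcal{O}^q$ as well; as all the lemmas invoked above are insensitive to which factor of $K$ is called $\textup{SO}(p)$, the identical elimination forces $\mathcal{O}^q\cong\textup{SO}(q)/\textup{SO}(q-1)$ or a point. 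If instead $x$ is fixed by $\textup{SO}(p)$, then — because $\textup{SO}(p)$ and $\textup{SO}(q)$ commute — $\textup{SO}(p)$ fixes the whole orbit $\mathcal{O}^q$ pointwise and so acts trivially on $\textup{T}_x\mathcal{O}^q$; taking a $K$-invariant metric and the orthogonal complement $W$ of $\textup{T}_x\mathcal{O}^q$ in $\textup{T}_xM$, the action of $\textup{SO}(p)$ on $W$ is nontrivial (otherwise $\textup{SO}(p)$ would act trivially on $\textup{T}_xM$ and, by analyticity, on a neighbourhood of $x$, contradicting local effectiveness), so $\dim W\ge p$ if $p\ne 4$ and $\dim W\ge 3$ if $p=4$ (in which case also $q\le 4$); in either case $\dim\mathcal{O}^q\le\dim M-\dim W\le 2q-2$, Uchida's classification applies to $\mathcal{O}^q$, and the elimination of the previous case goes through verbatim. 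This completes the proof.

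The heavy lifting is clearly the exhaustive pass through Table~\ref{table:1}: after the elementary numerical eliminations one is left with the exceptional pairs and, more seriously, with the configurations in which both $\mathcal{O}^p$ and $\mathcal{O}^q$ lie in the bottom part, whose disposal rests on the low-dimensional isotropy computations of Section~\ref{non-ext non-trnstv axns}, on the inclusions $-I_p\in\textup{Spin}(7),\textup{SU}_2$, and on Witte's cocompact-subgroup theorem applied to the maximal parabolics of $\textup{SO}^\circ(p,q)$. The only other point that needs care is ensuring the dimension bounds keep $\mathcal{O}^q$ within Uchida's range $\dim\le 2q-2$ when one passes from $\mathcal{O}^p$ to $\mathcal{O}^q$.
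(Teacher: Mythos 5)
Your proposal is correct and takes essentially the same route as the paper: the proposition is simply the assembly of the lemmas of Section \ref{SOp SOq axns} (non-transitivity from Lemma \ref{transitive axns}, the Table \ref{table:1} eliminations via the numerical lemmas, and the residual cases disposed of in Section \ref{non-ext non-trnstv axns}), which is exactly what the paper's one-line proof invokes. Your only addition — the explicit check that $\dim\mathcal{O}^q\le 2q-2$ so that Uchida's classification also applies to the $\textup{SO}(q)$-orbit — is a routine detail the paper leaves implicit and does not constitute a different approach.
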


 In particular, we have
\begin{corollary} \label{fix pnt set of H nonempty}
    Let $p,q \geq 3$ and suppose $\textup{SO}^\circ(p,q)$ acts analytically on $M$. The fixed point set of $H = \textup{SO}(p-1) \times \textup{SO}(q-1)$ is non-empty and 1-dimensional.
\end{corollary}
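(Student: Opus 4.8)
The plan is to deduce this directly from Proposition \ref{concl for SOpxSOq orbit types} together with elementary facts about the slice representation of $K = \textup{SO}(p) \times \textup{SO}(q)$. First I would establish non-emptiness of $\Fc := M^H$. By Proposition \ref{concl for SOpxSOq orbit types}, every point $x \in M$ has $\textup{SO}(p)$-orbit either a point or diffeomorphic to $\textup{SO}(p)/\textup{SO}(p-1) = \textup{S}^{p-1}$, and similarly for $\textup{SO}(q)$; in particular the $\textup{SO}(p-1)$-fixed set inside each $\textup{SO}(p)$-orbit is non-empty (the two poles of the sphere, or the whole orbit if it is a point), and likewise for $\textup{SO}(q-1)$. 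To promote this to a global statement, pick any $x \in M$ and consider its $\textup{SO}(p)$-orbit $\Oc^p$: there is a point $x_1 \in \Oc^p$ fixed by $\textup{SO}(p-1)$. Since $\textup{SO}(p-1)$ and $\textup{SO}(q)$ commute, $\textup{SO}(q)$ preserves $M^{\textup{SO}(p-1)}$; now run the same argument inside $M^{\textup{SO}(p-1)}$ using the $\textup{SO}(q)$-orbit of $x_1$ to land at a point fixed by $\textup{SO}(p-1) \times \textup{SO}(q-1) = H$. This shows $\Fc \neq \emptyset$.

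Next I would compute the dimension of $\Fc$ near a point $x \in \Fc$ via the slice theorem. Fix a $K$-invariant analytic Riemannian metric on $M$ (average any metric over the compact group $K$); then $\Fc = M^H$ is a closed analytic submanifold, and near $x$ it is modelled on $(\textup{T}_x M)^H$, the $H$-fixed subspace of the isotropy representation. So it suffices to show $\dim (\textup{T}_x M)^H = 1$. Decompose $\textup{T}_x M = \textup{T}_x \Oc^p \oplus \textup{T}_x \Oc^q \oplus W$ orthogonally, where $\Oc^p, \Oc^q$ are the $\textup{SO}(p)$- and $\textup{SO}(q)$-orbits and $W$ is the common normal space; each summand is $H$-invariant since $\textup{SO}(p)$ and $\textup{SO}(q)$ commute (here I use that $\textup{T}_x\Oc^p \cap \textup{T}_x\Oc^q = 0$, which holds because $\dim M = p+q-1$ while a principal orbit would have dimension $p+q-2$: by Proposition \ref{concl for SOpxSOq orbit types} the orbits are spheres or points, and one checks the orbit directions are independent — this is the point I would spell out carefully). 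On $\textup{T}_x \Oc^p \cong \textup{T}_{x}(\textup{SO}(p)/\textup{SO}(p-1))$ the group $\textup{SO}(p-1) \le H$ acts as the standard representation of $\textup{SO}(p-1)$ on $\R^{p-1}$, which has no nonzero fixed vectors since $p - 1 \ge 2$; hence $(\textup{T}_x\Oc^p)^H = 0$, and $(\textup{T}_x\Oc^p)^H = 0$ also when $\Oc^p$ is a point (trivially). Symmetrically $(\textup{T}_x\Oc^q)^H = 0$. Thus $(\textup{T}_x M)^H = W^H$.

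Finally I would pin down $\dim W^H$. Since $\dim M = p+q-1$ and the orbits account for dimensions $p-1$ (or $0$) and $q-1$ (or $0$), we have $\dim W \in \{1,2, q, p, p+q-1\}$ depending on which of $\textup{SO}(p), \textup{SO}(q)$ fix $x$. If both $\textup{SO}(p)$ and $\textup{SO}(q)$ move $x$ then $\dim W = 1$ and $W^H = W$ is $1$-dimensional. If exactly $\textup{SO}(p)$ fixes $x$, then $\Oc^q \cong \textup{S}^{q-1}$, $\dim W = p$, and $W$ is an $H$-invariant complement on which $\textup{SO}(p)$ acts; the $\textup{SO}(p)$-representation on $W$ must be the standard $\R^p$ (the only $\textup{SO}(p)$-representation of dimension $p$ with the right properties, by the irreducibility facts underlying Table \ref{table:1}, since the action is locally effective), so $W^H = W^{\textup{SO}(p-1)}$ is the $1$-dimensional fixed line of $\textup{SO}(p-1)$ acting standardly on $\R^p$. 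Symmetrically if only $\textup{SO}(q)$ fixes $x$. The case where both $\textup{SO}(p)$ and $\textup{SO}(q)$ fix $x$ is excluded: then $H$, in fact all of $K$, would fix $x$, and $W = \textup{T}_x M$ would be a $(p+q-1)$-dimensional faithful-mod-kernel representation of $\textup{SO}(p)\times\textup{SO}(q)$, but $p+q-1 < p + q \le \dim$ of the smallest faithful representation pieces forced by local effectiveness — more directly, such a fixed point would contradict Proposition \ref{concl for SOpxSOq orbit types}'s classification of orbit types together with the fact that the action is nontrivial, as developed in Section \ref{SOp SOq axns}. In every surviving case $\dim(\textup{T}_x M)^H = 1$, so $\Fc$ is a $1$-dimensional closed analytic submanifold, hence a finite union of circles.

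\textbf{Main obstacle.} The genuinely delicate point is verifying that the orbit tangent spaces $\textup{T}_x\Oc^p$ and $\textup{T}_x\Oc^q$ are linearly independent (so that the orthogonal decomposition is honest) and identifying the $\textup{SO}(p)$- (resp.\ $\textup{SO}(q)$-) representation on the normal space $W$ when the other factor fixes $x$; both rely on the dimension bound $p+q-1$ and on the representation-theoretic input already assembled for Table \ref{table:1}, but need to be invoked cleanly rather than re-derived.
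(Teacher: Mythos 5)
Your proof is correct in substance, but for the $1$-dimensionality it takes a genuinely different route from the paper. The paper's own argument is a one-line deduction: non-emptiness follows, just as in your first paragraph, from Proposition \ref{concl for SOpxSOq orbit types} (every $K$-stabiliser contains a conjugate of $H$), while the dimension count is read off from Lemma \ref{lemma Uch}: at $x \in \mathcal{F}$ the $G$-isotropy algebra contains $\mathfrak{h}$ and has codimension at most $p+q-1$, hence is $\mathfrak{h}_{[a:b]}$ or $\mathfrak{h}_{[1:\epsilon]}\oplus\theta^1$, and in either case $\mathfrak{g}/\mathfrak{g}_x$, together with the at most one-dimensional normal direction to the orbit, has exactly a one-dimensional $H$-fixed part; this Lie-algebraic description is needed later anyway (to define $\tilde f$). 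You avoid Lemma \ref{lemma Uch} entirely and compute $(\mathrm{T}_xM)^H$ from the compact slice picture, which is more elementary but costs a case analysis and two extra verifications: (i) when $\textup{SO}(p)$ fixes $x$, the claim that its representation on the $p$-dimensional normal space $W$ is the standard one needs the $p=4$ possibility of three-dimensional summands ruled out exactly as in Lemmas \ref{SOq fix pnt} and \ref{K fixed pnt} (a positive-dimensional kernel of the isotropy representation contradicts local effectiveness); and (ii) the exclusion of a common fixed point of $\textup{SO}(p)$ and $\textup{SO}(q)$ is not contained in Proposition \ref{concl for SOpxSOq orbit types} — it is precisely Lemma \ref{K fixed pnt}, proved later in the paper by the isotropy-representation argument you sketch, so you should either cite it (it is independent of the corollary) or include that short argument. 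Finally, the point you flag as delicate resolves easily: $\mathrm{Stab}_{\textup{SO}(p)}(x)$ fixes $\mathcal{O}^q$ pointwise, so $\mathrm{T}_x\mathcal{O}^p\cap\mathrm{T}_x\mathcal{O}^q$ lies in the $\textup{SO}(p-1)$-fixed part of $\mathrm{T}_x\mathcal{O}^p$, which vanishes for the standard isotropy representation; in fact independence is not even needed, since a sum of $H$-invariant subspaces each with trivial $H$-fixed part again has trivial $H$-fixed part.
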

 This follows from the above Proposition and Lemma \ref{lemma Uch}. Of course, all the components of the aforementioned fixed point set are isomorphic to $\text{S}^1$.
\section{Classification of analytic $\text{SO}^\circ(p,q)$, $p,q\geq3$ actions} \label{SO(p,q) axns}

In this section, we assume $G= \text{SO}^\circ(p,q)$ acts analytically and not trivially on a manifold $M$ of dimension dim$(M)=p+q-1$. As we will see, the presence or absence of fixed points for the subgroups $\text{SO}(p)$ and $\text{SO}(q)$ plays an important role in studying these actions and has consequences on the topology and geometry of $M$. The following theorem is an improved version of Theorem \ref{general thm} in the Introduction.

\begin{theorem} \label{final result SOpq}
    Suppose $\textup{SO}^\circ(p,q)$, $p,q \geq 3$, acts analytically on a  closed, connected manifold $M$ of dimension $p+q-1$. Consider $\textup{SO}(p) \simeq \textup{SO}(p) \times \{1\} \leq \textup{SO}(p) \times \textup{SO}(q) \leq \textup{SO}^\circ(p,q)$ and $\textup{SO}(q)$ similarly.
    \begin{itemize}
        \item Suppose that there exists a point in $M$ that is fixed by $\textup{SO}(p)$ and that there exist no points fixed by $\textup{SO}(q)$. Then $M$ is equivariantly covered by $\textup{S}^p \times \textup{S}^{q-1}$, where the action of $\textup{SO}^\circ(p,q)$ on $\textup{S}^p \times \textup{S}^{q-1}$ is one from the basic construction, see Section \ref{basic constr}.
        \item Suppose that there exists a point in $M$ that is fixed by $\textup{SO}(q)$ and that there exist no points fixed by $\textup{SO}(p)$. Then $M$ is equivariantly covered by $\textup{S}^{p-1} \times \textup{S}^{q}$, where the action of $\textup{SO}^\circ(p,q)$ on $\textup{S}^{p-1} \times \textup{S}^q$ is one from the basic construction, see Section \ref{basic constr}.
        \item Suppose both $\textup{SO}(p)$ and $\textup{SO}(q)$ have fixed points in $M$.  Then, $M$ is equivariantly covered by $\textup{S}^{p+q-1}$, where the action of $\textup{SO}^\circ(p,q)$ on $\textup{S}^{p+q-1}$ is one from the Uchida construction corresponding to a basic $(\textup{J}_1,\textup{J}_2)$-flow, see Section \ref{Uchida constr section}.
        \item Suppose that neither $\textup{SO}(p)$ nor $\textup{SO}(q)$ have a fixed point. Then, the action is equivariantly covered by $G \times_P S$ with the standard left $G$ action, where $P \leq G$ is a maximal parabolic subgroup isomorphic to the stabiliser of an isotropic line in the standard representation of $\textup{SO}^\circ(p,q)$ on $\mathbb{R}^{p+q}$. If $P=M_P A_P N_P$ is the Langlands decomposition of $P$, $P$ acts on $\textup{S}^1$ by a flow via $A_P$, see \textup{(\ref{axn of P on S1})}.
    \end{itemize}
\end{theorem}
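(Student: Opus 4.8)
The plan is to run the reconstruction half of Uchida's correspondence — and its analogues for $\mathrm S^{p}\times\mathrm S^{q-1}$, $\mathrm S^{p-1}\times\mathrm S^{q}$ and $G\times_{P}\mathrm S^{1}$ — on the flow carried by $\mathrm{Fix}(H)$, and then to write down an explicit $G$-equivariant covering of $M$ by the resulting model action. Common set-up: by Proposition~\ref{concl for SOpxSOq orbit types} the restricted $K$-action is non-transitive, of cohomogeneity one with principal isotropy $H$, and by Corollary~\ref{fix pnt set of H nonempty} the set $\mathcal F=\mathrm{Fix}(H)$ is a non-empty finite disjoint union of circles. A direct matrix computation shows $\mathcal M(p,q)$ normalises $H$, hence preserves $\mathcal F$ and induces an analytic flow $\Phi_\theta$ on $\mathcal F$. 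For $z\in\mathcal F$ the isotropy algebra $\mathfrak g_z$ contains $\mathfrak h$ and has codimension $\dim(G\cdot z)\le\dim M=p+q-1$, so Lemma~\ref{lemma Uch} applies and, as in Remark~\ref{rmk definition of f}, yields an analytic map $\widetilde f\colon\mathcal F\to\mathbb{RP}^1$ with $\mathfrak h_{\widetilde f(z)}\subseteq\mathfrak g_z$. Matrix multiplications as in Remarks~\ref{(A1)-(A4) props remark} and~\ref{Uchida conditions} show $\Phi_\theta,\widetilde f$ satisfy relations (i)--(iii) of Remark~\ref{Uchida conditions} on $\mathcal F$, that $\widetilde f(z)=[0:1]$ iff $z$ is fixed by $\mathrm{SO}(p)$ and $\widetilde f(z)=[1:0]$ iff $z$ is fixed by $\mathrm{SO}(q)$, and that if $z$ is a fixed point of $\Phi_\theta$ then $\widetilde f(z)\in\{[1:1],[1:-1]\}$ and $\mathfrak g_z\supseteq\mathfrak h_{[1:\pm1]}\oplus\mathbb R(E_{1,p+1}+E_{p+1,1})$, which by Lemma~\ref{lemma Uch} is the Lie algebra of a maximal parabolic $P$ stabilising an isotropic line, and equals $\mathfrak g_z$ because $M$ has no $G$-fixed point (such a point would be fixed by both $\mathrm{SO}(p)$ and $\mathrm{SO}(q)$, impossible in all four cases). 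The four cases are distinguished by whether $[0:1]$ and $[1:0]$ occur in the image of $\widetilde f$.

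\emph{Cases with a fixed point of $\mathrm{SO}(p)$ or $\mathrm{SO}(q)$.} I single out a $j_1$-invariant component $\mathcal S_0\subseteq\mathcal F$: any $\mathrm{SO}(p)$-fixed point is automatically fixed by $j_1\in\mathrm{SO}(p)$, so its component is $j_1$-invariant (likewise for $j_2$ in cases~2 and~3), and since $j_i$ acts on the circle $\mathcal S_0$ as an involution with a fixed point it is a reflection there. On $\mathcal S_0$ the pair $(\Phi_\theta,\widetilde f)$ then satisfies all of Remark~\ref{Uchida conditions} (when both $\mathrm{SO}(p)$ and $\mathrm{SO}(q)$ have fixed points) or the relations (A1)--(A4) of Remark~\ref{(A1)-(A4) props remark} (when only one does); using the differential equation \eqref{ODE f and g on interval} for $f=a/b$, together with the facts that $f=\tanh\theta$ along flow lines is bounded by $\pm1$ and that $j_1$ reflects $\mathcal S_0$, one checks that $\Phi_\theta|_{\mathcal S_0}$ has exactly the required zeros and hence, by Lemma~\ref{relations <-> basic flow}(ii) (resp. its $(j_1,j_2)$-analogue), is an induced basic $j_1$-flow, $j_2$-flow, or $(j_1,j_2)$-flow, with Jacobian $\pm2/n$ at its zeros. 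Feeding this flow into the basic construction of Section~\ref{basic constr} (resp. the Uchida construction of Section~\ref{Uchida constr section}) produces an analytic $G$-action on the model space $X\in\{\mathrm S^{p}\times\mathrm S^{q-1},\mathrm S^{p-1}\times\mathrm S^{q},\mathrm S^{p+q-1}\}$ whose own fixed-point set of $H$ carries exactly this flow. I then define $\psi\colon X\to M$ to be the identity on the identified copies of $\mathcal S_0$ and extend it $K$-equivariantly, $\psi(k\star\Phi_\theta(z))=k\star\Phi_\theta(z)$, using $G=K\,\mathcal M(p,q)\,U(z)$ from \eqref{eq:1}; well-definedness is the verbatim computation of Section~\ref{subsection: well-def of axns} (it uses only $f$ and Lemmas~\ref{adapted lemma 1}--\ref{adapted lemma 3}), while $G$-equivariance and the local-diffeomorphism property are as in Lemmas~\ref{G-equiv map btw two axns} and~\ref{local anal isom brw two axns}. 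The image of $\psi$ is $G$-invariant and meets every $K$-orbit (because $\mathcal S_0$ surjects onto $M/K$ by relation (iii)), so $\psi$ is onto, and a surjective local diffeomorphism from a compact manifold onto a connected one is a covering. This gives the first three bullets.

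\emph{Case with no fixed points.} Now $\widetilde f$ misses $[0:1]$ and $[1:0]$, so every $K$-orbit is principal and $M\to M/K\cong\mathrm S^{1}$ is an $(\mathrm S^{p-1}\times\mathrm S^{q-1})$-bundle. On every component of $\mathcal F$ the flow $\Phi_\theta$ has a zero: a fixed-point-free analytic flow on a circle is transitive, and then relation (iii) with $\theta\to+\infty$ would force $\widetilde f\equiv[1:1]$ there, hence a zero after all; and in any case a zero must exist, for otherwise all $G$-orbits would be open, forcing $G$ to act transitively on the compact $M$ with stabiliser a conjugate of the non-cocompact $G_{\mathrm{null}}$, which is absurd. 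By the set-up each zero $z_*$ has $\mathfrak g_{z_*}=\mathrm{Lie}(P)$ with $P$ a maximal parabolic stabilising an isotropic line, so $G\cdot z_*\cong G/P$ is a nullcone orbit. Writing $P=M_PA_PN_P$ one has $A_P=\mathcal M(p,q)$; restricting $\Phi_\theta$ to a component $C$ on which it is non-trivial (such $C$ exists, since $\mathcal F$ cannot be pointwise $G$-fixed as $K$ and $\mathcal M(p,q)$ generate $G$) and transporting it to $\mathrm S^{1}$ gives exactly an analytic flow through which $P$ acts on $\mathrm S^{1}$ via $A_P$ with $M_PN_P$ acting trivially, which is the data \eqref{axn of P on S1} defining a model action on $G\times_P\mathrm S^{1}$; the requisite matching of the $A_P$-action near a fixed point with the slice representation of $P$ along $G/P$ in $M$, including its $\pm2/n$ normalisation, is the content of Section~\ref{axns w nullcone}. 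One then builds a $G$-equivariant map $G\times_P\mathrm S^{1}\to M$ as before — on the zero section it is $[g,z_*]\mapsto g\star z_*$, well defined because $G_{z_*}=P$, and it is extended through the slice — and checks that it is a surjective local diffeomorphism, hence a covering, since $G\times_P\mathrm S^{1}$ fibres over the compact $G/P$ with fibre $\mathrm S^{1}$. This is the fourth bullet, and the Langlands statement is built into the description.

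\emph{Main obstacle.} The delicate points are: (a) showing the chosen component of $\mathcal F$ really carries a \emph{basic} flow — the correct number of zeros and the $\pm2/n$ normalisation — which rests on the monotonicity and boundedness of $f=\tanh\theta$ together with the reflection symmetries; (b) in case~4, correctly identifying $P$ as the null-line stabiliser (rather than another subalgebra permitted by Lemma~\ref{lemma Uch}) and setting up $G\times_P\mathrm S^{1}$ together with a map whose well-definedness survives the interplay of the $P$-relation on $\mathrm S^{1}$ with the $U(z)$-decomposition; and (c) surjectivity of each covering, which in every case reduces to the statement that the chosen component of $\mathcal F$ maps onto $M/K$. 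I expect (b) to be the real bottleneck, being the only case not directly reducible to the $\mathrm S^{p+q-1}$ and $\mathrm S^{p}\times\mathrm S^{q-1}$ machinery of Sections~\ref{basic constr}--\ref{subsection: well-def of axns}.
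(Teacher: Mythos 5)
Your strategy is the same as the paper's (flow and isotropy function on $\mathrm{Fix}(H)$, the four-way case split, the basic/Uchida/$G\times_P\mathrm S^1$ models, and a covering map defined via $G=K\,\mathcal M(p,q)\,U(z)$), but your treatment of the third bullet has a genuine gap. First, to get the Uchida conditions on a single component you need the chosen circle $\mathcal S_0$ to contain \emph{both} an $\mathrm{SO}(p)$-fixed and an $\mathrm{SO}(q)$-fixed point; otherwise $\widetilde f$ never takes the value $[1:0]$ on $\mathcal S_0$ and condition (iv) of Remark \ref{Uchida conditions} fails. This is not automatic: the paper proves it indirectly, by noting that if some $\mathrm{SO}(q)$-fixed point lay off $\mathcal S_0$, then the argument of Section \ref{axn with SOp fixed pnt} applied to $\mathcal S_0$ would give an equivariant covering of $M$ by $\mathrm S^p\times\mathrm S^{q-1}$, and the fibre over that fixed point would be a nonempty finite $\mathrm{SO}(q)$-invariant set in a model having no $\mathrm{SO}(q)$-fixed points. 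Second, and more seriously, your step ``define $\psi$ to be the identity on the identified copies of $\mathcal S_0$'' presupposes an isomorphism $\mathcal S_0\to\mathcal S_3$ intertwining $j_1,j_2$ with $\mathrm J_1,\mathrm J_2$. On $\mathcal S_3$ the reflections $\mathrm J_1,\mathrm J_2$ have disjoint fixed sets, but on $\mathcal S_0$ the two commuting involutions may have \emph{coincident} fixed sets (in which case $j_1|_{\mathcal S_0}=j_2|_{\mathcal S_0}$); then $\mathcal S_0$ carries only two flow zeros and a single fixed point of each of $\mathrm{SO}(p)$ and $\mathrm{SO}(q)$, it is a copy of $\mathbb{R}\mathrm P^1$ rather than of $\mathcal S_3$, and no equivariant identification exists. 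The paper handles this subcase (Case 2 of Section \ref{axns with both fxd pts}) by lifting $\Phi_\theta$ and $f_\Sigma$ through the double cover $\mathrm S^1\to\mathbb{R}\mathrm P^1$, verifying the Uchida conditions for the lift, and obtaining a covering $\mathrm S^{p+q-1}\to M$ that is two-to-one over $\mathcal S_0$; a map that is the identity over $\mathcal S_0$ cannot do this, so your construction fails there as written.

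For the fourth bullet you set up exactly the paper's model action (\ref{axn of P on S1}), but the covering map is only sketched (``extended through the slice''), and its well-definedness --- the point you yourself flag as the bottleneck --- is what the paper settles by defining $\Psi([g,z])=g\star\psi(z)$ globally and using (\ref{p.z = m(theta).z}), i.e.\ that every $p\in P$ moves points of $\Sigma$ exactly as $m(\pi(p))$ does; your zero-section description also invokes $G_{z_*}=P$ at the group level, which you do not justify and which the paper's formulation avoids needing. (Your extra observation that the flow must vanish somewhere in case 4 is correct but unnecessary for the construction.) The first two bullets and the general set-up follow the paper's own route, with only minor rearrangements such as choosing the component through a fixed point directly and arguing surjectivity via $M/K$ instead of the open-and-closed image argument.
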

 The proof of the first two cases is in Section \ref{axn with SOp fixed pnt}, the third case is shown in Section \ref{axns with both fxd pts}, while the last case is shown in Section \ref{axns w nullcone}.

\begin{lemma}
    $G$ does not have a fixed point.
\end{lemma}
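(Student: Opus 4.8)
The plan is to argue by contradiction. Suppose $x \in M$ is fixed by all of $G = \text{SO}^\circ(p,q)$; I will show the action is then trivial, contradicting the standing hypothesis of this section. Consider the isotropy representation $\rho\colon G \to \text{GL}(\text{T}_xM)$, $g \mapsto dg_x$, a genuine $(p+q-1)$-dimensional real representation of $G$. The key input is that, for $p,q \ge 3$ (so $p+q \ge 6$), $\text{SO}^\circ(p,q)$ admits no nontrivial real representation of dimension $\le p+q-1$, so $\rho$ is trivial. To see this, a nontrivial real representation of $G$ of dimension $d$ complexifies to a nontrivial complex representation of $\mathfrak{so}(p+q,\C)$ of dimension $\le d$. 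When $p+q \ge 7$, every nontrivial representation of $\mathfrak{so}(p+q,\C)$ has dimension $\ge p+q$ (the vector representation being of least dimension), so $d \ge p+q$. When $p+q = 6$, the smallest nontrivial representations of $\mathfrak{so}(6,\C) \cong \mathfrak{sl}_4(\C)$ are the two $4$-dimensional half-spin representations, realised by $\R^4$ and $(\R^4)^*$ for the double cover $\text{SL}_4(\R) = \text{Spin}(3,3)$ of $\text{SO}^\circ(3,3)$; since $-I_4$ acts nontrivially on these, they do not descend to $\text{SO}^\circ(3,3) = \text{SL}_4(\R)/\{\pm I_4\}$, and so the smallest nontrivial representation of $\text{SO}^\circ(3,3)$ itself has dimension $6$ (the vector representation $\Lambda^2\R^4$). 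In either case $d \le p+q-1$ forces $\rho$ trivial.

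Granting $\rho$ trivial, the maximal compact subgroup $K = \text{SO}(p)\times\text{SO}(q)$ fixes $x$ and acts trivially on $\text{T}_xM$. Averaging a Riemannian metric on $M$ over $K$ and using the exponential map at $x$ produces a $K$-equivariant diffeomorphism from a ball in $\text{T}_xM$, with its trivial $K$-action, onto a $K$-invariant neighbourhood $U$ of $x$; thus $K$ fixes every point of $U$. Now analyticity enters: for each $g \in K$, the map $y \mapsto g \star y$ is a real-analytic diffeomorphism of the connected manifold $M$ agreeing with the identity on the nonempty open set $U$, hence equal to the identity on all of $M$. So $K$ acts trivially on $M$. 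But $\mathfrak{so}(p,q)$ is simple for $p+q \ge 6$, so the kernel of the $G$-action on $M$ --- a closed normal subgroup of the connected group $G$ containing the positive-dimensional subgroup $K$ --- has Lie algebra a nonzero ideal of $\mathfrak{so}(p,q)$, hence all of it, and therefore equals $G$. Thus $G$ acts trivially on $M$, contradicting our assumption, and the lemma follows.

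I expect the only genuinely delicate step to be the representation-theoretic bound, and in particular the case $p+q = 6$, where the heuristic ``the smallest representation of $\mathfrak{so}(n)$ has dimension $n$'' fails already at the level of the Lie algebra, and one must observe that the responsible small representations are spinorial and so do not factor through $\text{SO}^\circ(3,3)$. The remaining ingredients --- linearizing the compact $K$-action near the fixed point, promoting triviality on an open set to triviality on $M$ by analyticity, and concluding via simplicity of $\mathfrak{so}(p,q)$ --- are routine.
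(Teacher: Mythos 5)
Your argument is correct, but it is a genuinely different route from the paper's. In the paper this lemma carries no separate proof: it is deduced from the stronger Lemma \ref{K fixed pnt}, that already $K=\textup{SO}(p)\times\textup{SO}(q)$ has no fixed point, which is proved by noting that simplicity of $\textup{SO}^\circ(p,q)$ forces local effectiveness, that the isotropy representation of $\textup{SO}(p)$ on $\textup{T}_xM\simeq\mathbb{R}^{p+q-1}$ must contain the standard $\mathbb{R}^p$ (with special care for $p=4$), and then counting dimensions of the resulting homomorphism $K\to\textup{SO}(p)\times\textup{SO}(q-1)$ to produce a positive-dimensional kernel. You instead attack the $G$-statement directly: the bound on the minimal dimension of nontrivial representations of $\mathfrak{so}(p+q,\mathbb{C})$ (with the correct special treatment of $p+q=6$, where $\mathfrak{so}(6,\mathbb{C})\cong\mathfrak{sl}_4(\mathbb{C})$ has $4$-dimensional spin representations that do not descend to $\textup{SO}^\circ(3,3)\cong\textup{SL}_4(\mathbb{R})/\{\pm I_4\}$) forces the isotropy representation at a $G$-fixed point to be trivial; Bochner linearization of the compact $K$-action then makes $K$ act trivially near $x$, the identity principle for analytic maps globalizes this, and simplicity of $\mathfrak{so}(p,q)$ turns the positive-dimensional kernel into all of $G$, contradicting nontriviality of the action. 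Both arguments are sound; yours is self-contained at the level of this lemma and uses the representation theory of the noncompact group plus analyticity, while the paper's argument buys the stronger conclusion (no $K$-fixed points), which is what is actually used downstream (e.g.\ in Remark \ref{rmk definition of f} and Corollary \ref{fix pnt set of H nonempty}), so your proof cannot replace Lemma \ref{K fixed pnt} itself. Two small points: the complexification of a $d$-dimensional real representation has complex dimension exactly $d$ (not merely $\le d$), and in the $p+q=6$ case one should say explicitly that a nontrivial representation of dimension $\le 5$ would, after complexification and complete reducibility, contain a $4$-dimensional spin summand on which $-I_4$ acts by $-\textup{Id}$; your sketch contains this observation implicitly and it closes the argument.
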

 Indeed, we actually show something stronger:
\begin{lemma} \label{K fixed pnt}
    $K= \text{SO}(p) \times \text{SO}(q)$ does not have a fixed point.
\end{lemma}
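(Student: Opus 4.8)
The plan is to argue by contradiction. Suppose $x_0\in M$ is fixed by $K$ and examine the isotropy (slice) representation of $K$ on $V:=\mathrm{T}_{x_0}M\cong\mathbb{R}^{p+q-1}$. Since $K$ is compact, fix a $K$-invariant Riemannian metric on $M$; the exponential map at $x_0$ is then a $K$-equivariant diffeomorphism from a ball in $V$ onto a neighbourhood of $x_0$, so the orbit types of the $K$-action near $x_0$, and of its subgroups $\text{SO}(p)$ and $\text{SO}(q)$, agree with those of the corresponding \emph{linear} actions on $V$. The first point is that none of $\text{SO}(p)$, $\text{SO}(q)$, $K$ can act trivially on $V$: if one of them did, it would act trivially on a neighbourhood of $x_0$, hence --- by analyticity of the action and connectedness of $M$ --- on all of $M$, so it would lie in the kernel of the $G$-action; but that kernel is a closed normal subgroup of $G$, and since $\mathfrak{so}(p,q)$ is simple ($p,q\ge 3$) and the $G$-action is nontrivial, it is discrete, contradicting positive-dimensionality.

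Hence $\text{SO}(p)$ and $\text{SO}(q)$ both act nontrivially on $V$. By Proposition \ref{concl for SOpxSOq orbit types}, applied to points of $M$ near $x_0$ and hence (by linearity) to all vectors of $V$, every $\text{SO}(p)$-orbit in $V$ is $\text{SO}(p)$-equivariantly diffeomorphic to $\text{SO}(p)/\text{SO}(p-1)$ or is a fixed point, and likewise for $\text{SO}(q)$. I claim a nontrivial linear $\text{SO}(p)$-representation with this property must be $\mathbb{R}^p\oplus\mathbb{R}^{q-1}$, standard on the $\mathbb{R}^p$-factor and trivial on $\mathbb{R}^{q-1}$. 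Write $V=V_1\oplus W$ for the $\text{SO}(p)$-isotypic splitting into the standard-representation part $V_1$ and the remaining isotypic components $W$. The standard representation must occur (otherwise $\text{SO}(p)$ is trivial on $V$), and it occurs with multiplicity one, since a generic vector of $(\mathbb{R}^p)^{\oplus 2}$ has stabiliser $\cong\text{SO}(p-2)$, of codimension $2p-3>p-1$ --- a forbidden orbit type; thus $\dim V_1=p$. To see $\text{SO}(p)$ acts trivially on $W$, fix $0\ne w\in W$ and any nonzero $v_0\in V_1$: the orbit of $v_0+w$ is not a point (its $V_1$-projection $v_0$ is a nonzero vector of the standard representation, hence non-fixed), so its stabiliser is conjugate to $\text{SO}(p-1)$, of dimension $\binom{p-1}{2}$; but it is contained in $\mathrm{Stab}_{\text{SO}(p)}(v_0)\cong\text{SO}(p-1)$, forcing equality, so $\mathrm{Stab}_{\text{SO}(p)}(v_0)$ fixes $w$. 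Letting $v_0$ range over the nonzero vectors of $V_1\cong\mathbb{R}^p$, the groups $\mathrm{Stab}_{\text{SO}(p)}(v_0)$ generate $\text{SO}(p)$ (already two of them, for linearly independent $v_0$, do so when $p\ge 3$), so $\text{SO}(p)$ fixes $w$; hence $W$ is $\text{SO}(p)$-trivial and $\dim W=q-1$.

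Finally I bring in $\text{SO}(q)$. Since $\text{SO}(q)$ commutes with $\text{SO}(p)$ in $K$, it preserves the $\text{SO}(p)$-isotypic summands $V_1\cong\mathbb{R}^p$ and $W\cong\mathbb{R}^{q-1}$. On $V_1$, which is absolutely irreducible over $\text{SO}(p)$, $\text{SO}(q)$ acts by $\text{SO}(p)$-equivariant endomorphisms, i.e.\ by scalars (Schur); being a character of the connected group $\text{SO}(q)$, this is trivial, so $\text{SO}(q)$ acts on $V$ only through $W\cong\mathbb{R}^{q-1}$. But a nontrivial $\text{SO}(q)$-orbit in $W$ would, by Proposition \ref{concl for SOpxSOq orbit types}, be diffeomorphic to $\text{SO}(q)/\text{SO}(q-1)\cong\text{S}^{q-1}$, a compact $(q-1)$-manifold, which cannot occur as an orbit inside $\mathbb{R}^{q-1}$ (it would be an open, hence noncompact, submanifold). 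So $\text{SO}(q)$ acts trivially on $W$, hence on $V$ --- contradicting the first paragraph, and the lemma follows.

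The step I expect to require the most care is the middle one: extracting the shape of the $\text{SO}(p)$-representation from the orbit-type constraint. The argument above via stabilisers of single nonzero vectors is uniform in $p$ and avoids a case analysis over the small irreducibles of $\text{SO}(3)$ and $\text{SO}(4)$ and over the exotic $\text{Spin}(7)\subset\text{SO}(8)$, but one should verify that Proposition \ref{concl for SOpxSOq orbit types} genuinely excludes those (it specifies the orbit type as the homogeneous space $\text{SO}(p)/\text{SO}(p-1)$, not merely a sphere); the classification of low-codimension subgroups of $\text{SO}(p)$ recalled in Table \ref{table:1} is a convenient cross-check. The remaining ingredients --- the slice theorem, Schur's lemma, simplicity of $\mathfrak{so}(p,q)$, and propagation of a trivial local action by analyticity --- are routine.
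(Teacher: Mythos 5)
Your overall strategy is legitimate and genuinely different from the paper's: the paper pins down the isotropy representation by directly classifying the low-dimensional representations of $\text{SO}(p)$ (treating $p=4$ separately) and then gets a contradiction by a dimension count on the isotropy homomorphism $K\rightarrow \text{SO}(p)\times \text{SO}(q-1)$, whereas you import Proposition \ref{concl for SOpxSOq orbit types} (which is proved in the preceding section and does not depend on this lemma, so there is no circularity), linearise it through the equivariant exponential, and recover the shape of the representation from the orbit-type constraint; your multiplicity-one argument, the ``$\mathrm{Stab}(v_0)$ fixes $w$'' argument, the Schur step for $\text{SO}(q)$ on $V_1$, and the compact-open-orbit argument in $W\simeq\mathbb{R}^{q-1}$ are all correct.

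The genuine gap is the step ``the standard representation must occur (otherwise $\text{SO}(p)$ is trivial on $V$).'' That parenthetical implication is false as stated: $\text{SO}(3)$ acts nontrivially on its $5$-dimensional irreducible representation, $\text{SO}(4)$ has two nontrivial $3$-dimensional irreducibles (and $5$-dimensional ones), and when $q>p$ even the adjoint representation of $\text{SO}(p)$ can fit inside $\dim V=p+q-1$; in all of these $\text{SO}(p)$ acts nontrivially with no standard summand, so nontriviality alone does not produce a nonzero $v_0\in V_1$, and without it your entire second paragraph has nothing to start from. These possibilities \emph{can} be excluded, but only by the very verification you defer at the end: one must check that in each such irreducible the orbits violate Proposition \ref{concl for SOpxSOq orbit types} --- e.g.\ generic stabilisers in $\mathrm{Harm}^k(\mathbb{R}^3)$, $k\geq 2$, are finite, the zonal vectors have stabiliser $\mathrm{O}(2)$ rather than $\text{SO}(2)$, the $2$-sphere orbits in the $3$-dimensional representations of $\text{SO}(4)$ have $4$-dimensional stabilisers, and the $3$-dimensional orbits in its $5$-dimensional representations have stabilisers containing a normal $\text{SU}_2$, none of which is conjugate to $\text{SO}(p-1)$ --- or, equivalently, by a branching argument showing that only the spherical representations $\mathrm{Harm}^k$ admit $\text{SO}(p-1)$-fixed vectors and then eliminating $k\geq 2$. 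So the case analysis over the small irreducibles of $\text{SO}(3)$ and $\text{SO}(4)$ (the exotic $\mathrm{Spin}(7)\subset\text{SO}(8)$ is irrelevant here, since it is not the stabiliser of a vector in any representation of $\text{SO}(8)$ of dimension $\leq 2p-1$) is not avoided by your argument; it is exactly the missing piece, and until it is supplied the proof is incomplete at that point, although the remainder goes through once it is.
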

\begin{proof}
    Assume it does, and let $x \in M$ be a fixed point of $K$. Note that, since $\text{SO}^\circ(p,q)$ is simple, its action is locally effective, and so the $K$ action is also locally effective. Suppose $p \geq q$ and that $p \neq 4$. Consider the isotropy representation of $\text{SO}(p)$ at $x$. If $\text{SO}(p)$ acted trivially on $\text{T}_xM \simeq \mathbb{R}^{p+q-1}$, then it would act trivially on a neighbourhood of $x$ in $M$ and so the action would not be locally effective. Therefore, we have a non-trivial representation of $\text{SO}(p)$ on $\mathbb{R}^{p+q-1}$. The only possible decomposition into irreducibles is $\mathbb{R}^{p+q-1} = \mathbb{R}^p \oplus \mathbb{R}^{q-1}$, where the representation on the first summand is the standard one and on the second it is trivial. Since $\text{SO}(p)$ and $\text{SO}(q)$ commute, the two summands are also invariant by $\text{SO}(q)$ and hence by $K$. Then, if we introduce a $K$-invariant metric on $M$, the isotropy representation of $K$ at $x$ gives a homomorphism $\rho : K \rightarrow \text{SO}(p) \times \text{SO}(q-1)$. Counting dimensions, we see that the kernel of $\rho$ is not 0-dimensional, which contradicts the local effectiveness of the action. If $p=4$ and the decomposition of $\mathbb{R}^{4+q-1}$ into irreducibles is not like above, then it would have to be $\mathbb{R}^{3} \oplus \mathbb{R}^3 \oplus \mathbb{R}$ or $\mathbb{R}^3 \oplus \mathbb{R}^3$, depending on whether $q=4$ or 3 respectively. In these cases, $\rho : \text{SO}(4) \times \text{SO}(4) \rightarrow \text{SO}(3) \times \text{SO}(3)$ or $\rho : \text{SO}(4) \times \text{SO}(3) \rightarrow \text{SO}(3) \times \text{SO}(3)$ respectively. Counting dimensions and considering the kernel of $\rho$, we arrive at a contradiction like before.
\end{proof}
    
 Recall $H = \text{SO}(p-1) \times \text{SO}(q-1)$ and $\mathcal{F} = \text{fixed point set of }H$. Now, by Corollary \ref{fix pnt set of H nonempty}, $\mathcal{F} \neq \emptyset $ and $ \mathcal{F} = \bigcup_{i}S_i$ , where the $S_i$'s are the connected components of $\mathcal{F}$ and with each $S_i$ diffeomorphic to $\text{S}^1$. Let $\Sigma$ be one of these components. By Lemma \ref{K fixed pnt} and Lemma \ref{lemma Uch}, we can define an analytic function $f: \Sigma \rightarrow \mathbb{R}\text{P}^1$, see Remark \ref{rmk definition of f}. We also get an analytic flow $\Phi_\theta$ or equivalently, a vector field $X^\Phi$ on $\Sigma$ by the action of $\mathcal{M}(p,q)$.

\begin{lemma} \label{f =/ [1:1] lemma}
    If there exists a point $\zeta \in \Sigma$ such that $f(\zeta) = [a:b] \neq [\pm 1 , 1]$, then there exists a point $\zeta ' \in \Sigma$ which is a fixed point for $\text{SO}(p)$ or $\text{SO}(q)$.
\end{lemma}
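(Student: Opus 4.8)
The plan is to establish the transformation law for $f$ under the flow $\Phi_\theta$ induced by $\mathcal{M}(p,q)$, to observe that the two values $[0:1]$ and $[1:0]$ of $f$ detect the fixed points of $\text{SO}(p)$ and $\text{SO}(q)$ respectively, and then to flow $\zeta$ until $f$ attains one of those values. For the transformation law, I would argue as follows. Since $\Phi_\theta$ is the action of $m(\theta) \in \mathcal{M}(p,q)$, the Lie isotropy algebras transform by $\mathfrak{g}_{\Phi_\theta(z)} = \text{Ad}(m(\theta))\,\mathfrak{g}_z$. For $[a:b] \neq [\pm 1 : 1]$ the vector $a e_1 + b e_{p+1}$ is non-isotropic, so $\text{Ad}(m(\theta))$ carries $\mathfrak{h}_{[a:b]} = \text{Lie}\,\text{Stab}_G(a e_1 + b e_{p+1})$ onto $\mathfrak{h}_{[a':b']}$, where $[a':b'] = [\, a\cosh\theta + b\sinh\theta : a\sinh\theta + b\cosh\theta \,]$. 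Combining the inclusion $\mathfrak{h}_{[a':b']} \subseteq \mathfrak{g}_{\Phi_\theta(z)}$ with the uniqueness in the definition of $\tilde{f}$ (Remark \ref{rmk definition of f}, which rests on Lemma \ref{lemma Uch}) yields $f(\Phi_\theta(\zeta)) = [a':b']$ whenever $f(\zeta) = [a:b] \neq [\pm 1 : 1]$; this is the analogue here of condition (iii) of Remark \ref{Uchida conditions}. I would also record that $\Phi_\theta$ preserves the component $\Sigma$: since $\mathcal{M}(p,q)$ normalises $H$ it preserves $\mathcal{F}$, and $\theta \mapsto \Phi_\theta$ is continuous with $\Phi_0 = \mathrm{id}$.

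Next I would check that $[0:1]$ and $[1:0]$ are precisely the values of $f$ at fixed points of $\text{SO}(p)$ and $\text{SO}(q)$. A direct matrix computation gives $\text{SO}(p) \times \{I_q\} \subseteq H_{[0:1]} = \text{Stab}_G(e_{p+1})$ and $\{I_p\} \times \text{SO}(q) \subseteq H_{[1:0]} = \text{Stab}_G(e_1)$. Hence if $f(z) = [0:1]$ then $\mathfrak{so}(p) \subseteq \mathfrak{h}_{[0:1]} \subseteq \mathfrak{g}_z$, and since $\text{SO}(p)$ is connected this forces $\text{SO}(p) \subseteq (G_z)^\circ \subseteq G_z$, i.e.\ $z$ is fixed by $\text{SO}(p)$; symmetrically, $f(z) = [1:0]$ forces $z$ to be fixed by $\text{SO}(q)$.

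Finally, given $\zeta \in \Sigma$ with $f(\zeta) = [a:b] \neq [\pm 1:1]$, note that $|a| \neq |b|$ — otherwise $|a| = |b| \neq 0$ and $[a:b] = [\pm 1:1]$. If $|a| < |b|$, choose $\theta_0 \in \mathbb{R}$ with $\tanh\theta_0 = -a/b$ (possible since $|a/b| < 1$); then $a\cosh\theta_0 + b\sinh\theta_0 = 0$ while $a\sinh\theta_0 + b\cosh\theta_0 = \cosh\theta_0\,(b^2 - a^2)/b \neq 0$, so by the transformation law $f(\Phi_{\theta_0}(\zeta)) = [0:1]$ and $\zeta' := \Phi_{\theta_0}(\zeta) \in \Sigma$ is fixed by $\text{SO}(p)$. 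If $|a| > |b|$, choose $\theta_0$ with $\tanh\theta_0 = -b/a$; then $a\sinh\theta_0 + b\cosh\theta_0 = 0$ and $a\cosh\theta_0 + b\sinh\theta_0 \neq 0$, so $f(\Phi_{\theta_0}(\zeta)) = [1:0]$ and $\zeta' := \Phi_{\theta_0}(\zeta) \in \Sigma$ is fixed by $\text{SO}(q)$. The matrix identities and the final $\tanh$ computation are routine; the point that will take the most care is the transformation law for $f$ — in particular verifying that the class $[a':b']$ produced by $\text{Ad}(m(\theta))$ is the one singled out by the uniqueness clause of Remark \ref{rmk definition of f}, which amounts to checking that Uchida's argument applies verbatim to an arbitrary component $\Sigma$ of $\mathcal{F}$ in this general setting.
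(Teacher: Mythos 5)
Your proposal is correct and follows essentially the same route as the paper: the paper likewise invokes Uchida's condition (iii) for the pair $(\Phi_\theta, f)$ (which you rederive via $\mathrm{Ad}(m(\theta))$ acting on isotropy algebras, exactly the justification behind the citation) and then flows $\zeta$ until $f$ hits $[0:1]$ or $[1:0]$, concluding via the definition of $f$ that the resulting point is fixed by $\textup{SO}(p)$ or $\textup{SO}(q)$. Your explicit choice of a finite $\theta_0$ with $\tanh\theta_0 = -a/b$ (or $-b/a$) is in fact a slightly more careful rendering of the paper's ``letting $\theta \rightarrow +\infty$'' step, since the limiting value of $f$ along the flow is $[1:1]$ and the desired value is attained at finite time.
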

\begin{proof}
    Set $f(z) = [a:b] \neq [\pm1:1]$. Now, $\Phi_\theta$ and $f$ satisfy property (iii) from the Uchida conditions in Remark \ref{Uchida conditions}, see \cite{Uchida}. Namely, $f \left( \Phi_\theta (\zeta) \right) = \left[ a \, \text{cosh}(\theta) + b \, \text{sinh}(\theta) : a \, \text{sinh}(\theta) + b \, \text{cosh}(\theta) \right] $. Letting $\theta \rightarrow + \infty$, it is immediate that there exists a point $\zeta ' \in \Sigma$ such that $f( \zeta ') = [0:1]$ or $[1:0]$. Then, $\zeta '$ is a fixed point for $\text{SO}(p)$ or $\text{SO}(q)$ respectively, by the definition of $f$.
\end{proof}
\begin{remark} \label{nullcone <-> no fixed pts for SOp or SOq}
    If we assume that neither $\text{SO}(p)$ nor $\text{SO}(q)$ has a fixed point in $M$ and in particular in $\Sigma$, then $f(\zeta) = [\pm1:1]$ for all $\zeta \in \Sigma$. Then, since $f$ is analytic, it is constantly equal to $[1:1]$ or $[-1:1]$ on all of $\Sigma$. We note that in that case, the orbit of a point in $\Sigma$ at which $X^\Phi$ is not zero, is analytically isomorphic to a component of a nullcone in $\mathbb{R}^{p+q}$ equipped with a scalar form of signature $(p,q)$. We call these, \textit{nullcone orbits}. Therefore, the existence of a nullcone orbit forces all the orbits to either be nullcone orbits or diffeomorphic to $\text{S}^{p-1} \times \text{S}^{q-1}$ as the next section shows and implies that there are no fixed points for $\text{SO}(p)$ or $\text{SO}(q)$.
\end{remark}

\subsection{$\text{SO}^\circ(p,q)$ actions with neither $\text{SO}(p)$ nor $\text{SO}(q)$ fixed points} \label{axns w nullcone}

Let $\mathcal{F}:=$ Fix$\left( \text{SO}(p-1) \times \text{SO}(q-1) \right)$. The connected components of $\mathcal{F}$ are diffeomorphic to circles. Let $\Sigma$ be one component of $\mathcal{F}$. On $\Sigma$, by Lemma \ref{lemma Uch}, we can define an analytic function $f: \Sigma \rightarrow \mathbb{R}\text{P}^1$, see Remark \ref{rmk definition of f}. Suppose neither $\text{SO}(p)$ nor $\text{SO}(q)$ has a fixed point in $M$. Then, by Remark \ref{nullcone <-> no fixed pts for SOp or SOq}, $f \equiv [1:1]$ or $[-1:1]$. Without loss of generality, we assume $f \equiv [1:1]$. Then, for every $\zeta \in \Sigma$, the isotropy group of $\zeta$ is contained in or equal to a maximal parabolic subgroup $P$ of $G$ of the first kind, see Section \ref{firstkindparab}.

Let $\text{S}^1$ be the usual unit circle in $\mathbb{R}^2$. Identify $\Sigma$ and $\text{S}^1$ by an analytic isomorphism $\psi:  \text{S}^1 \rightarrow \Sigma$. Recall that $\mathcal{M}(p,q)$ is the subgroup of $G$ consisting of the matrices $m(\theta) := \begin{bmatrix}
    \text{cosh}(\theta) && \text{sinh}(\theta) & \\ & I_{p-1} & & \\ \text{sinh}(\theta) && \text{cosh}(\theta) & \\
    &&& I_{q-1}
\end{bmatrix}$
for $\theta \in \mathbb{R}$. Now, $\Sigma$ is invariant under the action of $\mathcal{M}(p,q)$, and on $\Sigma$ we have a flow $\Phi^\Sigma_{\theta}(\zeta) = m(\theta) \star \zeta$, where $\star$ denotes the action of $G$. Define a flow on $\text{S}^1$ by
\begin{equation*}
    \Phi_{\theta}(z) := \psi^{-1} \big( m(\theta) \star \psi(z) \big)
\end{equation*}
Let $\pi : P \rightarrow (\mathbb{R},+)$ be the following homomorphism: Let $P=M_P A_P N_P$ be the Langlands decomposition of $P$. Here, $A_P = \mathcal{M}(p,q)$ and $N_P = U(z)$, for any $z \in \Sigma$ see (\ref{eq:1}). Define $\pi$ so that it picks out the ``$A_P$-part''. Namely, if $p \in P$, we write $p=p_{M_P} \cdot p_{A_P} \cdot p_{N_P}$ according to the Langlands decomposition above. Then, $p_{A_P} = m(\theta)$ for some $m(\theta) \in \mathcal{M}(p,q)$ and we define $\pi(p) : = \theta$. It is easy to see that $\pi$ is an analytic homomorphism between $P$ and $(\mathbb{R},+)$; recall that for a parabolic subgroup, $N_P \trianglelefteq P$, and $M_P$ and $A_P$ commute.

Now, we define the following action of $P$ on $\text{S}^1$:
\begin{align} \label{axn of P on S1}
\begin{split}
    P \times \text{S}^1 & \rightarrow \text{S}^1 \\
    (p,z) & \mapsto \Phi_{\pi(p)}(z)
\end{split}
\end{align}
and consequently we get an action of $G$ on the associated bundle $G \times_P \text{S}^1$ where $G$ acts by left multiplication on the first factor. It is easy to see that this action is well defined. Note also, that for $p \in P$ and $\zeta \in \Sigma$,
\begin{equation} \label{p.z = m(theta).z}
    p \star \zeta = m(\pi(p)) \star \zeta
\end{equation}
\begin{remark}
Suppose we are given two different actions of $P$ on $\text{S}^1$ via flows $\Phi^1$ and $\Phi^2$ like above. If the vector fields that these flows generate are isomorphic, then it is immediate that the two actions of $G$ on $G \times_P \text{S}^1$ are also isomorphic. Since the analytic vector fields on the circle are classified in \cite{HITCHIN1991359}, such analytic $G$ actions on $G \times_P \text{S}^1$ are also classified via the corresponding flows $\Phi$.
\end{remark}
Now, let $\Psi$ be the analytic map $\Psi : G \times_P \text{S}^1 \rightarrow M$ defined by $[g,z] \mapsto g \star \psi(z)$, where $[g,z]$ denotes the equivalence class of $(g,z)$ in $G \times_P \text{S}^1$. In the following, let $\zeta = \psi(z)$ for $z\in \Sigma$.

$\bullet$ \, $\Psi$ is well defined: \\
A different representative of the class $[g,z]$ is of the form $[pg^{-1},p \star z]$. We have, $\Psi ( [gp^{-1},p \star z]) = (gp^{-1}) \star \psi (p \star z)$. But, $p \cdot z = \Phi_{\pi(p)}(z) = \psi^{-1}(  m(\pi(p)) \star \zeta) = \psi^{-1} (p \star \zeta) $, and so $\psi (p \star z) = p \star \zeta$
\vspace{10pt}

$\bullet$ \, $\Psi$ is $G$-equivariant: \\
We have $G=KP$; this can be seen from (\ref{eq:1}). Let $[\tilde{g},z] \in G \times_P \Sigma$ and $g \in G$, and write $g \tilde{g} = kp$, according to the above decomposition. We have
\begin{align*}
    \Psi (g [\tilde{g},z]) &= \Psi ([g \tilde{g},z]) 
     = \Psi ([kp,z])
     = \Psi([k,p \star z]) 
     = \Psi( [k , \Phi_{\pi(p)}(z)]) \\
    & = k \star \psi(  \Phi_{\pi(p)}(z))
     = k \star ( m(\pi(p)) \star \zeta) \quad \text{(by the definition of } \Phi \text{ and (\ref{p.z = m(theta).z}) )} \\
    & = k \star (p \star \zeta )
     = g \tilde{g} \star \zeta 
     = g \star \Psi([\tilde{g},z])
\end{align*}

$\bullet$ \, $\Psi$ is a local analytic isomorphism: \\
By the equivariancy, it suffices to look at a point of the form $[e,z]$ for some $z \in \text{S}^1$. By using $(e,z) \in G \times \text{S}^1$ as a representative for $[e,z]$, we have an identification $\text{T}_{[e,z]}(G\times_P\text{S}^1) \simeq \mathfrak{g}/\mathfrak{p} \oplus \text{T}_z\text{S}^1$, where $\mathfrak{g}$ and $\mathfrak{p}$ are the Lie algebras of $G$ and $P$ respectively. Now, for an element $0+v$, for $v \in$ T$_z\text{S}^1$, we can find a curve of the form $\gamma : t \mapsto [e,z_t]$, such that $\gamma (0)=[e,z]$ and $\gamma ' (0)=v$. Then, since $\displaystyle \Psi \big|_{\{e\}\times_P\text{S}^1} = \psi$, we have that $\text{D}\Psi(v)=$ $\text{D}\psi(v) \in$ T$_{\psi(z)}\Sigma$ and $\text{D}\psi(v) \neq 0$ since $\psi$ is an isomorphism.

On the other hand, for an element $w+0$, for $w \in \mathfrak{g}/\mathfrak{p}$, we can find a representative $\tilde{w} = \tilde{w}_1 + \tilde{w}_2 \in \mathfrak{g}$ of $w$ such that $\tilde{w}_1 \in \mathfrak{so}_p$ and $\tilde{w}_2 \in \mathfrak{so}_q$. Consider the curves of $G\times_P\text{S}^1$, $c : t \mapsto [e^{t\tilde{w}_i},z]$, for $i=1,2$. Then, $c(0)=[e,z]$ and $c'(0)=w_i$, where $w_i$ is the class of $\tilde{w}_i$ in $\mathfrak{g}/\mathfrak{p}$. and consequently $ \displaystyle \text{D}\Psi(w_i) = \frac{d}{dt} \Big|_0 (e^{t \tilde{w}_i} \star \zeta)$. But, $e^{t \tilde{w}_i} \star \zeta \in \mathcal{O}_{\text{SO}(p)}(\zeta)$ or $\mathcal{O}_{\text{SO}(q)}(\zeta)$ where $\mathcal{O}_{\text{SO}(p)}(\zeta)$ and $\mathcal{O}_{\text{SO}(q)}(\zeta)$ are the $\text{SO}(p)$ and $\text{SO}(p)$ orbits of $\zeta$ respectively. Those orbits are $(p-1)$-dimensional and $(q-1)$-dimensional respectively by Proposition \ref{concl for SOpxSOq orbit types}. Moreover, they intersect $\mathcal{F}$ transversely. By considering all possible $w+0 \in \mathfrak{g}/\mathfrak{p} \oplus \text{T}_z\text{S}^1$, we see that $p+q-1 =$ dim ( $\text{D}\Psi (\mathfrak{g} / \mathfrak{p} )$ ). As a result, because $\mathcal{O}_{\text{SO}(p)}(\zeta)$, $\mathcal{O}_{\text{SO}(p)}(\zeta)$ and $\mathcal{F}$ intersect transversely, ($\text{D}\Psi)_{[e,z]}$ is an isomorphism, and hence $\Psi$ is a local analytic isomorphism. 

$\bullet$ \, $\Psi$ is a covering map:\\
Since $\Psi$ is a local analytic isomorphism, the image of $\Psi$ is an open set. Additionally, $\Psi$'s domain is compact, and so the image is also closed. As a result, since $M$ is a closed manifold, $\Psi$ is onto $M$. Being a local diffeomorphism between compact sets, $\Psi$ is a covering map. Hence, the fourth case of Theorem \ref{final result SOpq} is proved.


\subsection{$\text{SO}^\circ(p,q)$ actions with only $\text{SO}(p)$ fixed points or only $\text{SO}(q)$ fixed points} \label{axn with SOp fixed pnt}

Suppose $G$ acts analytically on a manifold $M$ of dimension dim$(M)=p+q-1$.
In this section, we assume that $\text{SO}(p)$ has a fixed point, but $\text{SO}(q)$ does not. We are going to show that $M$ is equivariantly, analytically covered by $\text{S}^p \times \text{S}^{q-1}$ with an action from the basic construction. Recall that $H = \text{SO}(p-1) \times \text{SO}(q-1)$ and $\mathcal{F} = \text{fixed point set of }H$. Then, $\mathcal{F} \neq \emptyset $ and $\mathcal{F} = \bigcup_{i}S_i$ with each $S_i$ one dimensional, see the discussion in the beginning of this section. Let $\Sigma := \text{S}_{i_0}$ be a connected component of $\mathcal{F}$. On $\Sigma$ we get an analytic flow, $\Phi^\Sigma$, defined by the action of the one parameter subgroup $\mathcal{M}(p,q)$. Furthermore, by Lemma \ref{lemma Uch} and Remark \ref{rmk definition of f}, we get an analytic function $f'_\Sigma : \Sigma \rightarrow \mathbb{R}\text{P}^1$. The existence of an $\text{SO}(p)$ fixed point, implies that $\text{S}_{i_0}$ contains a point fixed by $\text{SO}(p)$. Indeed, otherwise $f'_\Sigma \equiv [1:1]$ or $[-1:1]$ and then, by Section \ref{axns w nullcone}, $M$ is covered by $G \times_P \text{S}^1$ and that contradicts the existence of an $\text{SO}(p)$ fixed point. We note also that since $j_1 = \begin{bmatrix}
    -I_2 & \\ & I_{p+q-2}
\end{bmatrix} \in \text{SO}(p)$ is an involution from $\Sigma$ to itself with at least one fixed point, it has exactly two fixed points. Moreover, since we also assume that $\text{SO}(q)$ does not have a fixed point, therefore $f'_\Sigma \neq [1:0]$. Let $\mathcal{U}_1 = \{ [a:b] \in \mathbb{R}\text{P}^1 : b \neq 0 \} \subseteq \mathbb{R}\text{P}^1$ and let $\chi_1 :\mathcal{U}_1 \rightarrow \mathbb{R}$ be the function defined by $[a:b] \mapsto \frac{ a}{b}$. Then, $f_\Sigma = \chi_1 \circ f'_\Sigma$. Note that since $f'_\Sigma$ and $\Phi^\Sigma$ satisfy property (iii) from the Uchida conditions in Remark \ref{Uchida conditions}, see \cite{Uchida}, we have that if $f'(z)=[a:b]$, for $z \in \Sigma$, then $|a|<|b|$. Indeed, otherwise Uchida condition (iii) implies that, taking $\theta \rightarrow + \infty$, we would get point $\tilde{z} \in \Sigma$ with $f'_\Sigma(\tilde{z})=[1:0]$, namely $\tilde{z}$ would be a fixed point of $\text{SO}(q)$, which is impossible. Hence, $\lvert f_\Sigma \rvert \leq 1$. We note that $f_\Sigma$ and $\Phi^\Sigma$ satisfy relation (A1)-(A3) and $\text{(A4)}^\prime$ of Remark \ref{(A1)-(A4) props remark}.

Now, consider the space $\text{S}^p \times \text{S}^{q-1}$, where $\text{S}^p \subseteq \mathbb{R}^{p+1}$ and $\text{S}^{q-1} \subseteq \mathbb{R}^{q}$ and we denote the standard bases of $\mathbb{R}^{p+1}$ and $\mathbb{R}^{q}$ by $\{e_1, \cdots , e_{p+1} \}$ and $\{ \epsilon_1, \cdots, \epsilon_q \}$ respectively. Let $\mathcal{S} = \left\{ (\alpha e_1 + \beta e_{p+1}, \epsilon_1) : \alpha^2 + \beta^2 =1 \right\}$. Recall from the beginning of Section \ref{actions on Spq} that we have a standard action of $K$ on $\text{S}^p \times \text{S}^{q-1}$. The action of $j_1$ on $\mathcal{S}$ is $j_1(\alpha e_1 + \beta e_{p+1}, \epsilon_1) = (- \alpha e_1 + \beta e_{p+1}, \epsilon_1)$.
\begin{lemma} \label{circle isom 1}
    There exists an analytic isomorphism \[ \psi : \Sigma \rightarrow \mathcal{S} \] such that \[ \psi \circ j_1 = j_1 \circ \psi \]
\end{lemma}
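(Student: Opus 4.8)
The plan is to prove the purely local-to-global fact that a compact connected real-analytic $1$-manifold equipped with a nontrivial analytic involution having exactly two fixed points is equivariantly analytically isomorphic to $(\mathcal{S},j_1)$. This suffices: as recorded just above the statement, $\Sigma$ is a closed analytic curve and $j_1|_\Sigma$ is an involution with exactly two fixed points; and on $\mathcal{S}$ the map $j_1$ is the restriction of the orthogonal transformation of $\mathbb{R}^{p+1}$ that negates $e_1$ and fixes $e_2,\dots,e_{p+1}$ (acting on the first factor, trivially on $\text{S}^{q-1}$), hence it is an analytic involution of the unit circle $\mathcal{S}$ with precisely the two fixed points $(\pm e_{p+1},\epsilon_1)$. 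So no information about $\Phi^\Sigma$ or $f_\Sigma$ is needed here.

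First I would produce an analytic $j_1$-invariant Riemannian metric on $\Sigma$. Fix any analytic metric $g_0$ on $\Sigma$ (for instance the one pulled back from $\mathbb{R}^2$ under an analytic identification of $\Sigma$ with the standard circle); then $g_\Sigma:=\tfrac12\bigl(g_0+j_1^{*}g_0\bigr)$ is again analytic, being a finite sum of pullbacks under the analytic map $j_1$, and it is $j_1$-invariant. After rescaling we may assume $\Sigma$ has total length $2\pi$. The metric on $\mathcal{S}$ induced from the round metric on $\text{S}^p\times\text{S}^{q-1}$ is already $j_1$-invariant, since $j_1$ acts on $\mathcal{S}$ by an orthogonal transformation, and $\mathcal{S}$ has length $2\pi$. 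Next I would pass to arc-length parametrizations: the arc-length reparametrization of a compact connected analytic Riemannian $1$-manifold of length $2\pi$ is an analytic isomorphism onto $\mathbb{R}/2\pi\mathbb{Z}$ (in a local analytic coordinate $t$ one has $g=E(t)\,dt^2$ with $E$ analytic and positive, so $s(t)=\int\sqrt{E}\,dt$ is analytic with nonvanishing derivative). Choosing one of the two fixed points of $j_1$ on $\Sigma$ as basepoint yields an analytic isomorphism $\phi_\Sigma:\mathbb{R}/2\pi\mathbb{Z}\to\Sigma$ sending $0$ to that fixed point; since $j_1$ then becomes an isometry of $\mathbb{R}/2\pi\mathbb{Z}$ fixing $0$ and distinct from the identity, it must be $t\mapsto -t$. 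Performing the same construction for $\mathcal{S}$, with basepoint $(e_{p+1},\epsilon_1)$, gives an analytic isomorphism $\phi_\mathcal{S}:\mathbb{R}/2\pi\mathbb{Z}\to\mathcal{S}$ conjugating $t\mapsto -t$ to $j_1|_\mathcal{S}$. Then $\psi:=\phi_\mathcal{S}\circ\phi_\Sigma^{-1}:\Sigma\to\mathcal{S}$ is an analytic isomorphism and
\[
\psi\circ j_1=\phi_\mathcal{S}\circ(t\mapsto -t)\circ\phi_\Sigma^{-1}=j_1\circ\phi_\mathcal{S}\circ\phi_\Sigma^{-1}=j_1\circ\psi,
\]
as required.

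There is no genuine obstacle here; the only points that merit a line of justification are the standard facts used silently: averaging over the order-two group $\{1,j_1\}$ preserves real-analyticity, the arc-length reparametrization of an analytic metric on a curve is analytic, and the only nontrivial isometry of the circle fixing a prescribed point is the reflection through it (whence it automatically has exactly two fixed points, matching the hypothesis). If one prefers to avoid the metric averaging, an essentially equivalent route is to linearize $j_1$ analytically near each of its two fixed points (Bochner averaging of a chart) and glue, but the invariant-metric argument is the most economical.
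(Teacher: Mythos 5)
Your proof is correct and follows essentially the same route as the paper: choose a $j_1$-invariant analytic metric on $\Sigma$, base at a fixed point of $j_1$, and use the (arc-length/exponential) parametrization by $\mathbb{R}/2\pi\mathbb{Z}$ to conjugate $j_1|_\Sigma$ to the standard reflection $t\mapsto -t$, then transport to $\mathcal{S}$. The only difference is cosmetic: you spell out the averaging construction of the invariant metric and the isometry classification, where the paper invokes the exponential map and $(Dj_1)_x=-\mathrm{Id}$ at the isolated fixed point.
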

\begin{proof}
    We consider a $j_1$-invariant metric on $\Sigma$; hence, $j_1$ acts as an isometry. Let $x \in \Sigma$ be a fixed point of $j_1$. Since $j_1$ is an involution and $x$ is an isolated fixed point, we have $(Dj_1)_x = - Id$. We identify $\text{T}_x\Sigma$ with $\mathbb{R}$ and consider $\text{exp}_x : \mathbb{R} \rightarrow \Sigma$. We can assume that $\text{exp}_x$ is $2 \pi$-periodic. Since $j_1$ is an isometry, we have $\text{exp}_x \left( Dj_1 (t) \right) = j_1 \circ \text{exp}_x (t)$ for any $t \in \mathbb{R}$. Now, the map $\tilde{\psi} : \mathbb{R} \big/ 2 \pi \mathbb{Z} \rightarrow \Sigma$ defined by $t + 2 \pi \mathbb{Z} \mapsto \text{exp}_x(t)$ is analytic, and if we identify its domain with $[ - \pi , \pi ]$, we have $\tilde{\psi} : [ -\pi, \pi ] \rightarrow \Sigma$ such that 
    \begin{equation} \label{psi eq}
         \tilde{\psi}(-t) = j_1 \circ \tilde{\psi}(t)
\end{equation}
Then, we identify $[ -\pi, \pi ]$ with $\text{S}^1$ by $t \mapsto e^{i t}$, which is also analytic. Let $\rho_0: \text{S}^1 \rightarrow \mathcal{S}$ be the analytic isomorphism from (\ref{rho: S1 -> mathcal S}). Finally, set $\psi = \rho_0 \circ \tilde{\psi}^{-1}$. Equation (\ref{psi eq}) above implies $\psi \circ j_1 = j_1 \circ \psi$
\end{proof}

Using $\psi$ we can define a function $f: \mathcal{S} \rightarrow \mathbb{R}$ by $f = f_\Sigma \circ \psi^{-1}$ and a flow $\Phi : \mathbb{R} \times \mathcal{S} \rightarrow \mathcal{S}$ by $\Phi_\theta (z)  = \psi^{-1} \circ \Phi^\Sigma_\theta \left( \psi(z) \right)$. Then, $\Phi_\theta$ and $f$ are analytic on $\mathcal{S}$ and it is easy to see that they satisfy the relations (A1)-(A4) from Remark \ref{(A1)-(A4) props remark}. Therefore, $f = f_\Phi$, see Remark \ref{f_phi def remark}, and $\Phi_\theta$ is an induced basic $j_1$-flow on $\mathcal{S}$, see Lemma \ref{relations <-> basic flow}. From $\Phi_\theta$ we get an analytic action of $G$ on $\text{S}^p \times \text{S}^{q-1}$ by the basic construction.

We now define the map $F : \text{S}^p \times \text{S}^{q-1} \rightarrow M$ in the following way. Let $x \in\text{S}^p \times \text{S}^{q-1}$. There exist $(\kappa_1, \kappa_2) \in K$ and $z \in\mathcal{S}$ such that $x = (\kappa_1, \kappa_2) \star z$. Then, define
\[ F \left( (\kappa_1, \kappa_2) \star z \right) : = (\kappa_1, \kappa_2) \star \psi(z)\]
$F$ is well defined and locally an analytic isomorphism. 
\begin{lemma}
    $F$ is well defined
\end{lemma}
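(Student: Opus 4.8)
The plan is to imitate, in the present and much simpler setting, the well-definedness argument of Section~\ref{subsection: well-def of axns}. The defining formula for $F$ involves only the standard $K$-action and the identification $\psi$ of Lemma~\ref{circle isom 1}, not the flow $\Phi$, so the only ambiguity to resolve is the choice of $(\kappa_1,\kappa_2)\in K$ and $z\in\mathcal{S}$ with $x=(\kappa_1,\kappa_2)\star z$. Thus suppose $(\kappa_1,\kappa_2)\star z=(\kappa_1',\kappa_2')\star z'$ for $z,z'\in\mathcal{S}$ and $(\kappa_1,\kappa_2),(\kappa_1',\kappa_2')\in K$; one must show $(\kappa_1,\kappa_2)\star\psi(z)=(\kappa_1',\kappa_2')\star\psi(z')$ in $M$.

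First I would unwind the hypothesis inside $\mathrm{S}^p\times\mathrm{S}^{q-1}$. Writing $z=(\alpha_z e_1+\beta_z e_{p+1},\epsilon_1)$ and $z'=(\alpha_{z'}e_1+\beta_{z'}e_{p+1},\epsilon_1)$ and using that $\mathrm{SO}(p)$ is embedded in $\mathrm{SO}(p+1)$ fixing $e_{p+1}$, the equation splits as $\tilde{\kappa}_1(\alpha_z e_1+\beta_z e_{p+1})=\tilde{\kappa}_1'(\alpha_{z'}e_1+\beta_{z'}e_{p+1})$ and $\kappa_2\epsilon_1=\kappa_2'\epsilon_1$. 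Projecting the first onto $e_{p+1}$ forces $\beta_z=\beta_{z'}$, hence $\alpha_z=\pm\alpha_{z'}$, exactly as in Section~\ref{subsection: well-def of axns}. Therefore $z=j_1^{\eta}\star z'$ for some $\eta\in\{0,1\}$, and $k:=(\kappa_1',\kappa_2')^{-1}(\kappa_1,\kappa_2)j_1^{\eta}$ fixes $z'$, i.e.\ $k\in\mathrm{Stab}_K(z')$.

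The core step is to show that $k$ also fixes $\psi(z')\in\Sigma$ for the action on $M$. Writing $k=(\lambda_1,\lambda_2)$, the relation $k\star z'=z'$ gives $\lambda_2\in\{1\}\times\mathrm{SO}(q-1)\subseteq H$, which fixes $\psi(z')$ since $\psi(z')\in\Sigma\subseteq\mathcal{F}=\mathrm{Fix}(H)$. For $\lambda_1$ there are two cases: if $\alpha_{z'}\neq 0$ then $\tilde{\lambda}_1 e_1=e_1$, so $\lambda_1\in\mathrm{SO}(p-1)\subseteq H$ and again fixes $\psi(z')$; if $\alpha_{z'}=0$ then $z'=(\pm e_{p+1},\epsilon_1)$, so $f(z')=0$ by (A4), whence $\psi(z')$ is a fixed point of all of $\mathrm{SO}(p)$ by $(\mathrm{A4})'$ transported along $\psi$, and in particular $\lambda_1\in\mathrm{SO}(p)$ fixes it. Hence $k$ fixes $\psi(z')$ in every case. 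Finally, combining $(\kappa_1,\kappa_2)j_1^{\eta}=(\kappa_1',\kappa_2')k$ with $k\star\psi(z')=\psi(z')$ gives $(\kappa_1,\kappa_2)j_1^{\eta}\star\psi(z')=(\kappa_1',\kappa_2')\star\psi(z')$, while $\psi\circ j_1=j_1\circ\psi$ from Lemma~\ref{circle isom 1} together with $j_1^{\eta}\star z'=z$ gives $(\kappa_1,\kappa_2)j_1^{\eta}\star\psi(z')=(\kappa_1,\kappa_2)\star\psi(z)$, which is the desired equality.

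The only genuine obstacle, and it is mild, is the subcase $\alpha_{z'}=0$, i.e.\ points of $\mathcal{S}$ fixed by $\mathrm{SO}(p)$: there $\mathrm{Stab}_K(z')$ is strictly larger than $H$, namely $\mathrm{SO}(p)\times\mathrm{SO}(q-1)$, so one really needs that the matching point $\psi(z')\in M$ is fixed by all of $\mathrm{SO}(p)$. This is exactly what $(\mathrm{A4})'$ supplies, once one has arranged (as in the main text, using the absence of $\mathrm{SO}(q)$-fixed points, which forces $|f|\le 1$ and makes $\Phi$ an induced basic $j_1$-flow) that the transported function $f$ coincides with the function $f_\Sigma$ obtained via Lemma~\ref{lemma Uch} from the $G$-action on $M$.
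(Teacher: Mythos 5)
Your proof is correct and follows essentially the same route as the paper's: reduce the ambiguity to $z=j_1^{\eta}\star z'$ with $(\kappa_1',\kappa_2')^{-1}(\kappa_1,\kappa_2)j_1^{\eta}\in\mathrm{Stab}_K(z')$, note that away from the $\mathrm{SO}(p)$-fixed points this stabiliser lies in $H$ (which fixes $\Sigma$ pointwise), handle the points $(\pm e_{p+1},\epsilon_1)$ via $f=f_\Sigma\circ\psi^{-1}$ and $(\mathrm{A4})'$ so that $\psi(z')$ is $\mathrm{SO}(p)$-fixed in $M$, and conclude using $\psi\circ j_1=j_1\circ\psi$. The only difference is cosmetic: the paper splits into the three cases $\alpha=\alpha'\neq 0$, $\alpha=-\alpha'\neq 0$, $\alpha=\alpha'=0$ rather than packaging the reflection as $j_1^{\eta}$.
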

\begin{proof}
    Let $z = \left( \alpha e_1 + \beta e_{p+1} , \epsilon_1 \right)$ and $z' = \left( \alpha' e_1 + \beta' e_{p+1} , \epsilon_1 \right)$ be elements of $\mathcal{S}$ and $k=(\kappa_1, \kappa_2), k' = ( \kappa_1', \kappa_2') \in K$. Suppose $k \star z = k' \star z'$. Then, we have $\kappa_2^{-1}\kappa_2' \in \text{SO}(q-1)$, $\beta = \beta'$ and $\alpha = \pm \alpha'$.

If $\alpha = \alpha' \neq 0$, then $z=z'$ and $\kappa_1^{-1}\kappa_1' \in \text{SO}(p-1)$. Hence, $k=k' h$, for some $h \in H$ and $k\star \psi(z) = k'h \star \psi(z) = k' \star \psi(z)$

If $\alpha = -\alpha' \neq 0$, then $z= j_1(z')$ and $\kappa_1^{-1}\kappa_1' \in j_1 \text{SO}(p-1)$. Hence, $k = k'j_1h$ for some $h \in H$ and $k \star \psi(z) = k' j_1 h \star \psi \left( j_1(z') \right) = k'j_1j_1 \star \psi(z') = k' \star \psi(z')$

If $\alpha = \alpha' = 0$, then $z=z' = \left( \pm e_{p+1}, \epsilon_1 \right)$ and $\psi(z) = \zeta$, where $\zeta \in \Sigma$ is fixed by $\text{SO}(p)$, since $f_\Sigma(\zeta)=f(z)=[0:1]$. Moreover, $k=k' (g_1,g_2)$, with $(g_1,g_2) \in \text{SO}(p) \times \text{SO}(q-1)$. Then, $k \star \psi(z) = k' (g_1,g_2) \star \psi(z) = k' \psi(z)$
\end{proof}
\begin{lemma} \label{G-equiv map btw two axns}
    $F$ is $G$-equivariant.
\end{lemma}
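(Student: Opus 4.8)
The plan is to verify the equivariance identity $F(g\star x)=g\star F(x)$ directly, for an arbitrary $g\in G$ and $x\in\text{S}^p\times\text{S}^{q-1}$, relying on two facts already in hand: that $F$ is well defined (the preceding lemma), and that the basic construction genuinely yields a $G$-action (Section \ref{subsection: well-def of axns}). This lets us choose convenient representatives at every step.

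First I would write $x=k_0\star z$ with $k_0=(\kappa_1,\kappa_2)\in K$ and $z\in\mathcal{S}$, and decompose $gk_0=k\,m(\theta)\,u_z$ according to (\ref{eq:1}), with $u_z\in U(z)$, exactly as in the definition (\ref{axn def}) of the basic construction. Then $g\star x=k\star\Phi_\theta(z)$, and since $\Phi_\theta(z)\in\mathcal{S}$ this is already in the form on which $F$ is defined, so $F(g\star x)=k\star\psi\big(\Phi_\theta(z)\big)$, the $\star$ on the right now denoting the given action on $M$. On the other side, using associativity of the action on $M$, $g\star F(x)=g\star\big(k_0\star\psi(z)\big)=(gk_0)\star\psi(z)=k\,m(\theta)\,u_z\star\zeta$, where $\zeta:=\psi(z)\in\Sigma$.

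The crux is to show $u_z\star\zeta=\zeta$. Here is where the compatibility between $f$ and the data on $M$ enters: by construction $f=f_\Sigma\circ\psi^{-1}$, so $f_\Sigma(\zeta)=f(z)$, i.e.\ $[f(z):1]=f'_\Sigma(\zeta)$, and $f'_\Sigma$ was defined via Lemma \ref{lemma Uch} and Remark \ref{rmk definition of f} precisely so that $\mathfrak{h}_{[f(z):1]}\subseteq\mathfrak{g}_\zeta$, the Lie isotropy algebra at $\zeta$. Since $U(z)=\big(H_{[f(z):1]}\big)^\circ$ is connected with Lie algebra $\mathfrak{h}_{[f(z):1]}$, it lies in the identity component of $\text{Stab}_G(\zeta)$, hence $u_z$ fixes $\zeta$. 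It then follows that $(gk_0)\star\psi(z)=k\,m(\theta)\star\zeta=k\star\Phi^\Sigma_\theta(\zeta)$; and from the definition $\Phi_\theta(z)=\psi^{-1}\big(\Phi^\Sigma_\theta(\psi(z))\big)$ we get $\Phi^\Sigma_\theta(\zeta)=\psi\big(\Phi_\theta(z)\big)$, so $g\star F(x)=k\star\psi\big(\Phi_\theta(z)\big)=F(g\star x)$, completing the argument.

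The step I expect to be the main obstacle — although it is morally forced by the setup — is exactly the identity $u_z\star\zeta=\zeta$: one has to be careful that $U(z)$, defined as the identity component of a point stabiliser in the \emph{linear} representation of $G$ on $\mathbb{R}^{p+q}$, genuinely sits inside the isotropy group at $\zeta$ for the action on $M$. This comes down to matching Lie algebras through the analytic function $f'_\Sigma$ of Remark \ref{rmk definition of f} and then invoking connectedness; everything else is routine bookkeeping with (\ref{eq:1}), (\ref{axn def}) and the definitions of $\psi$, $f$ and $\Phi$.
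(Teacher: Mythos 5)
Your proof is correct and follows essentially the same route as the paper: write $x=k_0\star z$, decompose $gk_0=k\,m(\theta)\,u$ via (\ref{eq:1}), and compare $F(g\star x)=k\star\psi(\Phi_\theta(z))$ with $g\star F(x)=k\,m(\theta)\,u\star\psi(z)$. The paper compresses your key step into the remark ``$U(z)=U(\psi(z))$''; your explicit justification that $u$ fixes $\psi(z)$ — matching $f=f_\Sigma\circ\psi^{-1}$ with the isotropy algebra from Lemma \ref{lemma Uch}/Remark \ref{rmk definition of f} and invoking connectedness of $U(z)$ — is exactly the content of that remark, spelled out.
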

\begin{proof}
    Let $g \in G$ and $x = k_0 \star z \in \text{S}^{p} \times \text{S}^{q-1}$, where $k_0 = (\kappa_1, \kappa_2) \in K$ and $z \in\mathcal{S}$. Write $gk_0 = k m(\theta) u$, for $k \in K$, $ \theta \in \mathbb{R}$ and $ u \in U(z)$, according to (\ref{eq:1}). Note that $U(z) = U(\psi(z))$. Then
    \begin{align*}
        F \left( g \star x) \right) & = F \left( k \star \Phi_\theta (z) \right)
         = k \star \psi \left( \Phi_\theta (z) \right)
         = k \star \Phi^\Sigma_\theta \left( \psi(z) \right)
         = km(\theta)u \star \psi(z) \\
        & = g k_0 \star \psi(z) = g \star \left( k_0 \star \psi(z) \right) = g \star F(z)
    \end{align*}
\end{proof}
\begin{lemma} \label{local anal isom brw two axns}
    $F$ is a local analytic isomorphism.
\end{lemma}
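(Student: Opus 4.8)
The plan is to mimic exactly the local-triviality argument already used for the covering map $\Psi : G \times_P \mathrm{S}^1 \to M$ in Section~\ref{axns w nullcone}, but now on $\mathrm{S}^p \times \mathrm{S}^{q-1}$, and in fact the same proof will apply verbatim to $\tilde\Psi$ in the proof of Theorem~\ref{basic axns thm} (which is why the author cross-references this lemma there). Since $F$ is $G$-equivariant by Lemma~\ref{G-equiv map btw two axns} and $G$ acts transitively on the $K$-translates of $\mathcal{S}$, it suffices to check that $(\mathrm{D}F)_x$ is an isomorphism at points $x \in \mathcal{S}$ — and by equivariance under $K$ and under $j_1$ it is enough to treat the three representative cases for $x=(\alpha e_1 + \beta e_{p+1},\epsilon_1)$: (i) $x$ with $f(x) \notin\{0,\pm1\}$ (a principal orbit point where $\mathrm{Stab}_K(x)=H$), (ii) $x$ a fixed point of $\mathrm{SO}(p)$, i.e. $f(x)=0$, and (iii) $x$ with $f(x)=\pm1$, a fixed point of the flow $\Phi_\theta$.

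First I would decompose the tangent space. At $x \in \mathcal{S}$ write $\mathrm{T}_x(\mathrm{S}^p\times\mathrm{S}^{q-1}) = \mathrm{T}_x\mathcal{O}_K(x) \oplus \mathrm{T}_x\mathcal{S}$, where $\mathcal{O}_K(x)$ is the $K$-orbit (this is a direct sum because the $K$-orbit meets $\mathcal{S}$ transversely — $\mathcal{S}$ is a slice-like arc inside the fixed set of $H$, and a dimension count using $\dim\mathcal{O}_K(x) = \dim\mathcal{O}_{\mathrm{SO}(p)}(x)+\dim\mathcal{O}_{\mathrm{SO}(q)}(x)$ from Proposition~\ref{concl for SOpxSOq orbit types} gives $(p-1)+(q-1)+1 = p+q-1$ in case (i), with the obvious adjustments in the degenerate cases). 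On the $\mathcal{S}$-direction, $F|_{\mathcal{S}} = \psi$ is an analytic isomorphism onto $\Sigma$ by Lemma~\ref{circle isom 1}, so $\mathrm{D}F$ is injective there with image $\mathrm{T}_{\psi(x)}\Sigma$. On the $K$-orbit direction, for $w \in \mathfrak{k} = \mathfrak{so}_p \oplus \mathfrak{so}_q$ the curve $t\mapsto e^{tw}\star x$ maps under $F$ to $t\mapsto e^{tw}\star\psi(x)$ (the $K$-actions agree on both sides by construction of $F$), so $\mathrm{D}F$ carries $\mathrm{T}_x\mathcal{O}_K(x)$ isomorphically onto $\mathrm{T}_{\psi(x)}\mathcal{O}_K(\psi(x))$. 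Finally, since in $M$ the orbits $\mathcal{O}_{\mathrm{SO}(p)}(\psi(x))$, $\mathcal{O}_{\mathrm{SO}(q)}(\psi(x))$ and $\mathcal{F}$ intersect transversely (same transversality used for $\Psi$ in Section~\ref{axns w nullcone}, coming from Proposition~\ref{concl for SOpxSOq orbit types}), the images $\mathrm{T}_{\psi(x)}\mathcal{O}_K(\psi(x))$ and $\mathrm{T}_{\psi(x)}\Sigma$ together span $\mathrm{T}_{\psi(x)}M$, and a dimension count ($p+q-1 = p+q-1$) forces $(\mathrm{D}F)_x$ to be an isomorphism.

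I would then dispatch the degenerate cases (ii) and (iii) the same way, noting only the book-keeping changes: in case (ii) $x$ is $\mathrm{SO}(p)$-fixed so $\mathcal{O}_K(x)=\mathcal{O}_{\mathrm{SO}(q)}(x)$ has dimension $q-1$, but $\psi(x)=\zeta$ is also $\mathrm{SO}(p)$-fixed (because $f_\Sigma(\zeta)=f(x)=[0:1]$, as used in the well-definedness lemma), so the $\mathrm{SO}(q)$-orbit plus $\mathcal{F}$ still span a $(q-1)+1+(p-1)/\text{-free}$ ... more precisely, near an $\mathrm{SO}(p)$-fixed point the $\mathrm{SO}(p)$-action is nontrivial on a transverse $\mathbb{R}^p$ and $\mathrm{D}F$ intertwines it with the corresponding $\mathrm{SO}(p)$-action near $\zeta$, again giving a full-rank count; in case (iii), $x$ is a flow fixed point, $\psi(x)$ is too, and the flow directions contribute nothing extra on either side, so one balances $\dim\mathcal{O}_K(x) + \dim\mathcal{S}$ against $\mathrm{T}_{\psi(x)}M$ exactly as before.

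\textbf{Main obstacle.} The only genuinely delicate point is the transversality/spanning claim at the degenerate points $x$ with $f(x)\in\{0,\pm1\}$: there the naive sum $\mathrm{T}_x\mathcal{O}_K(x)\oplus\mathrm{T}_x\mathcal{S}$ may no longer be a direct-sum decomposition of the full tangent space, and one must instead argue that $\mathrm{D}F$ is injective by carefully matching the $K$-isotropy representations at $x$ and at $\psi(x)$ — this is where the facts that $\psi\circ j_1 = j_1\circ\psi$ and that $\psi$ sends $\mathrm{SO}(p)$-fixed points (resp. flow fixed points) to points of the same type are essential. I expect everything else to be a routine transcription of the $\Psi$ argument from Section~\ref{axns w nullcone}, and indeed the author signals this by asserting ``the proof is identical'' in the corresponding bullet points of Theorem~\ref{basic axns thm}.
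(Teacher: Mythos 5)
Your plan works at the generic points and at the flow fixed points, but it has a genuine gap exactly at the $\textup{SO}(p)$-fixed points of $\mathcal{S}$ (the points with $f=0$), and this is the case the paper's proof is really designed to handle. Two problems occur there. First, your whole argument presupposes that $F$ is analytic (or at least differentiable) near such a point, so that $(\mathrm{D}F)_x$ makes sense; but unlike $\Psi$ in Section \ref{axns w nullcone}, which is analytic by its very construction on $G\times_P\textup{S}^1$, the map $F$ is defined through a non-canonical choice $x=k\star z$, and its analyticity must be proved. At points with $f(z)\neq 0$ one can get it from the fact that $(k,z)\mapsto k\star z$ is a submersion $K\times\mathcal{S}\to \textup{S}^p\times\textup{S}^{q-1}$, but at an $\textup{SO}(p)$-fixed point this map has rank only $q$, so no local analytic section exists and the $K$-orbit-plus-circle framework cannot produce analyticity. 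Second, even granting differentiability, your decomposition $\mathrm{T}_x\mathcal{O}_K(x)\oplus \mathrm{T}_x\mathcal{S}$ spans only $q$ of the $p+q-1$ directions there; your proposed fix (``$\mathrm{D}F$ intertwines the $\textup{SO}(p)$-actions, giving a full-rank count'') is only a gesture. It can be completed — $(\mathrm{D}F)_x$ is $\textup{SO}(p)$-equivariant, nonzero on the line $\mathrm{T}_x\mathcal{S}$ sitting inside the standard $p$-dimensional summand, hence injective on that summand by irreducibility ($p\geq 3$), with image meeting the trivial isotypic part (which contains the $\textup{SO}(q)$-orbit direction) only in $0$ — but this Schur-type argument is not in your write-up, and it still does not resolve the analyticity issue above. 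The paper's missing idea is to split the cases by the $G$-orbit structure instead: when $f(z)\neq\pm1$ (which includes $f(z)=0$) the $G$-orbits of both $z$ and $\psi(z)$ are \emph{open}, so $G$-equivariance alone gives analyticity of $F$ and that $(\mathrm{D}F)_z$ is an isomorphism in one stroke; only when $f(z)=\pm1$ does one argue as you do, using the submersion $K\times\mathcal{S}\to\textup{S}^p\times\textup{S}^{q-1}$ for analyticity and transversality of $\mathcal{F}$ with the codimension-one orbit for surjectivity.

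A smaller point: at the $f=\pm1$ points you attribute the needed transversality of $\mathcal{F}$ with the orbit to Proposition \ref{concl for SOpxSOq orbit types}, but that proposition only lists orbit types. The paper instead derives it from the local model of the closed orbit, $G/P\simeq\textup{S}^{p-1}\times\textup{S}^{q-1}$, in which the fixed point set of $H$ is $0$-dimensional, so the one-dimensional $\mathcal{F}$ must leave the orbit transversally; if you keep your route you should either argue as the paper does or check directly that $(\mathrm{T}_{\psi(x)}M)^H$ meets the orbit tangent space trivially.
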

\begin{proof}
    By the $G$-equivariance and the fact that $G\star \mathcal{S}=\text{S}^p \times \text{S}^{q-1}$, it suffices to show that $F$ is a local analytic isomorphism around a $z \in \mathcal{S}$.

    $\bullet$ If $f(z) \neq \pm 1$, then $f_\Sigma (F(z)) \neq 1$ and so, the $G$-orbits of both $z$ and $F(z)$ are open. Then, $G$-equivariance of $F$ implies that $F$ is analytic at $z$ and (D$F)_z$ is an isomorphism and so we have the result. 
    
    $\bullet$ If $f(z)= \pm 1$, then the map $K \times \mathcal{S}  \rightarrow \text{S}^p \times \text{S}^{q-1}$ defined by $\left( k, z \right)  \mapsto k \star z$ is a submersion at $\left( (I_p, I_q) ,z \right)$, and so $F$ is analytic at $z$. Moreover, the dimension of the orbits of $z$ and $\psi(z)$ are $p+q-2$. Now, $G$-equivariance gives us that $(DF)_z$ is onto the tangent space of $\psi(z)$. On the other hand, the orbit of $\psi(z)$, $\mathcal{O}\left( \psi(z) \right)$, is locally isomorphic to $ G/ P $ where $P$ is a maximal parabolic for which $G/ P$ is diffeomorphic to $\text{S}^{p-1} \times \text{S}^{q-1}$. In this model, the fixed point set of $H$ is 0-dimensional. Therefore, in $M$, $\mathcal{F}$ and $\mathcal{O} \left( \psi(z) \right)$ intersect transversely. Hence, if $\gamma (t)$ is a non constant curve in $\mathcal{S}$ through $z$, we have $(DF)_z (\gamma ' (0) )  = (D \psi)_z (\gamma ' (0)) \neq 0$, since $\psi$ is an isomorphism, and therefore $(DF)_z( \gamma '(0) ) \in \text{T}_{\psi(z)}M \, \Big\backslash \,\text{T}_{\psi(z)}\mathcal{O} \left( \psi(z) \right)$. Hence, $(DF)_z$ is onto all of T$_{\psi(z)}M$, hence it is an isomorphism and we have the desired result.
\end{proof}
\begin{lemma}
    M = $F \left( \textup{S}^p \times \textup{S}^{q-1} \right)$
\end{lemma}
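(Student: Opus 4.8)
The plan is to run the standard connectedness argument: $F$ is a local diffeomorphism whose domain is compact, so its image is at once open and closed in $M$, and therefore all of $M$ since $M$ is connected.

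First I would invoke Lemma \ref{local anal isom brw two axns}, which says that $F$ is a local analytic isomorphism; in particular $F$ is an open map, so $F\left(\textup{S}^p \times \textup{S}^{q-1}\right)$ is an open subset of $M$. Next, since $\textup{S}^p \times \textup{S}^{q-1}$ is compact and $F$ is continuous, the image $F\left(\textup{S}^p \times \textup{S}^{q-1}\right)$ is compact, hence closed in the Hausdorff manifold $M$. The image is clearly nonempty. As $M$ is connected by the standing hypothesis of Theorem \ref{final result SOpq}, a nonempty subset that is simultaneously open and closed must equal $M$, so $M = F\left(\textup{S}^p \times \textup{S}^{q-1}\right)$.

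I do not expect a real obstacle here; the only things to keep in mind are that ``local analytic isomorphism'' is being used only to extract openness of $F$, and that connectedness of $M$ is part of the assumptions. As a remark, once surjectivity is established, the combination of Lemma \ref{local anal isom brw two axns} (local analytic isomorphism), Lemma \ref{G-equiv map btw two axns} ($G$-equivariance), and the compactness of both $\textup{S}^p \times \textup{S}^{q-1}$ and $M$ (manifolds of the same dimension $p+q-1$) shows that $F$ is in fact a finite-sheeted equivariant analytic covering map, which is exactly what is asserted in the first bullet of Theorem \ref{final result SOpq}.
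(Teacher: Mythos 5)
Your argument is correct and is essentially the same as the paper's: openness from the local analytic isomorphism property of $F$, closedness from compactness of $\textup{S}^p \times \textup{S}^{q-1}$, and connectedness of $M$ to conclude. The closing remark about $F$ being a covering map matches the paper's subsequent discussion as well.
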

\begin{proof}
    Since $F$ is a local isomorphism, its image is open in $M$. Since S$^p \times$S$^{q-1}$ is compact, the image of $F$ is also closed. The result follows, since $M$ is connected.
\end{proof}

Finally, since $F$ is a local isomorphism that is onto $M$, and both $M$ and $\text{S}^p \times \text{S}^{q-1}$ are compact, it is a covering map. Hence, we have proved the first case of Theorem \ref{final result SOpq}. Of course, changing the roles of $\text{SO}(p)$ and $\text{SO}(q)$, in an analogous manner we get the second case as well.

\subsection{$\text{SO}^\circ(p,q)$ actions with both $\text{SO}(p)$ and $\text{SO}(q)$ fixed points} \label{axns with both fxd pts}

Recall $H= \text{SO}(p-1) \times \text{SO}(q-1)$ and $\mathcal{F} = \text{fixed point set of }H$. Assume that both $\text{SO}(p)$ and $\text{SO}(q)$ have fixed points in $M$. As it was noted in the beginning of Section \ref{SO(p,q) axns}, by Corollary \ref{fix pnt set of H nonempty}, $\mathcal{F}$ is non empty and all its connected components are 1-dimensional. Let $\Sigma$ be one of $\mathcal{F}$'s connected components. On $\Sigma$, by Lemma \ref{lemma Uch}, there exists an analytic function $f_\Sigma : \Sigma \rightarrow \mathbb{R}\text{P}^1$, see Remark \ref{rmk definition of f}. If $f \equiv [\pm 1:1]$, then it was shown in Section \ref{axns w nullcone} that $M$ is covered by $G \times_P \text{S}^1$, which contradicts the existence of $\text{SO}(p)$ and $\text{SO}(q)$ fixed points.
Then, by Lemma \ref{f =/ [1:1] lemma}, we may assume that there exists an $\text{SO}(p)$ fixed point on $\Sigma$.
Let $x \in M$ be a fixed point of $\text{SO}(q)$. First of all, we note that $x$ must be on $\Sigma$. Indeed, if it is not, the proof of the first case of Theorem \ref{final result SOpq}, see Section \ref{axn with SOp fixed pnt}, shows that $M$ is covered by $\text{S}^p \times \text{S}^{q-1}$ on which the action of $K$ is the standard one. Since $x$ is fixed by $\text{SO}(q)$, the fiber of $x$ would be a discrete, $\text{SO}(q)$-invariant subset of  
$\text{S}^p \times \text{S}^{q-1}$. However, evidently there are not any such sets. Being involutions on a circle with at least one fixed point, $j_1$ and $j_2$ have exactly two fixed points. Since they also commute, they either have the same fixed points on $\Sigma$ or their fixed point sets on $\Sigma$ are disjoint.

Consider $\text{S}^1 $ and let $\text{J}_1$ and $\text{J}_2$ be the reflections with respect to the $y$-axis and the $x$-axis respectively.

\noindent \underline{\textbf{Case 1:}} $j_1$ and $j_2$ have disjoint fixed point sets on $\Sigma$. \\
Similarly to Lemma \ref{circle isom 1}, we can show:
\begin{lemma} \label{circle isom 2}
 There exists an analytic isomorphism \[ \psi : \Sigma \rightarrow \text{S}^1 \] such that \[ \psi \circ J_i = j_i \circ \psi \]    
 for $i=1,2$.
\end{lemma}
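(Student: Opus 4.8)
The plan is to mimic the proof of Lemma \ref{circle isom 1}, this time averaging a Riemannian metric over the four-element group $\langle j_1, j_2 \rangle$. First I would record two elementary facts: $j_1$ and $j_2$ are analytic involutions of the analytic $1$-manifold $\Sigma$, each with exactly two fixed points, and an orientation-preserving involution of a circle with a fixed point is the identity; hence both $j_1$ and $j_2$ are orientation-reversing, so $j_1 j_2$ is orientation-preserving. Now pick any analytic Riemannian metric on $\Sigma$ and average it over $\{1, j_1, j_2, j_1 j_2\}$; since the group is finite this yields an analytic metric for which $j_1$, $j_2$, $j_1 j_2$ all act isometrically. Let $L$ be its total length, choose a fixed point $x_1$ of $j_1$, and use arc length from $x_1$ to get an analytic identification $\Sigma \cong \mathbb{R}/L\mathbb{Z}$ in which $j_1$ becomes the reflection $t \mapsto -t$.

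Next I would pin down $j_2$ in this coordinate. The map $j_1 j_2$ is an orientation-preserving isometric involution of $\mathbb{R}/L\mathbb{Z}$, hence a rotation $t \mapsto t + c$ with $2c \equiv 0 \pmod L$; it cannot be the identity, since otherwise $j_1 = j_2$, contradicting the hypothesis that $\mathrm{Fix}(j_1)$ and $\mathrm{Fix}(j_2)$ are disjoint. Therefore $c = L/2$ and $j_2(t) = -t + L/2$, whose fixed points sit at $t = \pm L/4$. In particular the four points $0, L/4, L/2, 3L/4$ are, in cyclic order, alternately fixed by $j_1$ and by $j_2$ — exactly the configuration of $\pm e_2$ and $\pm e_1$ in the standard model on $\text{S}^1$.

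Finally, viewing $\text{S}^1 \subseteq \mathbb{R}^2 = \mathbb{C}$, I would set $\psi : \mathbb{R}/L\mathbb{Z} \to \text{S}^1$, $\psi(t) = e^{i(\pi/2 - 2\pi t/L)}$, which is clearly an analytic isomorphism, and verify equivariance by a one-line computation using $J_1(e^{i\varphi}) = e^{i(\pi - \varphi)}$ and $J_2(e^{i\varphi}) = e^{-i\varphi}$: one gets $\psi(j_1(t)) = \psi(-t) = e^{i(\pi/2 + 2\pi t/L)} = J_1(\psi(t))$ and $\psi(j_2(t)) = \psi(L/2 - t) = e^{i(-\pi/2 + 2\pi t/L)} = J_2(\psi(t))$. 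Composing with the identification $\Sigma \cong \mathbb{R}/L\mathbb{Z}$ gives the required $\psi : \Sigma \to \text{S}^1$ with $\psi \circ j_i = J_i \circ \psi$.

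I do not expect a serious obstacle here; the only points deserving care are that the averaged metric and the resulting arc-length chart are genuinely real-analytic (which holds because the averaging group is finite and $1$-manifolds carry analytic metrics), and the observation that the disjointness hypothesis of Case 1 is precisely what forces $j_1 j_2$ to be the half-turn rather than the identity. It is this that makes the four fixed points interleave as in the standard model and lets the single explicit formula for $\psi$ handle everything at once.
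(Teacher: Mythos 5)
Your proof is correct and takes essentially the same route as the paper, which establishes this lemma ``similarly to Lemma \ref{circle isom 1}'': an invariant (here group-averaged) analytic metric, an arc-length chart at a fixed point of $j_1$ turning $j_1$ into $t \mapsto -t$, and an explicit equivariant map to $\textup{S}^1$; your use of the disjointness of the fixed-point sets to force $j_1 j_2$ to be the half-turn, hence $j_2(t) = L/2 - t$, is precisely the detail the paper leaves implicit. Note only that the equivariance you obtain, $\psi \circ j_i = \textup{J}_i \circ \psi$, is the form the paper actually uses afterwards (the lemma's displayed equation swaps $j_i$ and $\textup{J}_i$, which does not match the domains for $\psi : \Sigma \rightarrow \textup{S}^1$), so your conclusion is the intended one.
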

Now, consider the sphere $\text{S}^{p+q-1} \subseteq \mathbb{R}^{p+q}$ and denote the standard basis of $\mathbb{R}^{p+q}$ by $\{ e_1 , \cdots e_{p+q} \}$. Let $\mathcal{S} = \{ \alpha e_1 + \beta e_{p+1} : \alpha^2 + \beta^2 =1 \} \subseteq \text{S}^{p+q-1}$. In the standard orthogonal action of $K$ on $\text{S}^{p+q-1}$, $j_1 = \begin{bmatrix}
    -I_2 & \\ & I_{p+q-2}
\end{bmatrix} \in \text{SO}(p)$ and $j_2 = \begin{bmatrix}
    I_{p+q-2} & \\ & -I_2
\end{bmatrix} \in \text{SO}(q)$ act by $j_1 \star \left( \alpha e_1 + \beta e_{p+1} \right) =  -\alpha e_1 + \beta e_{p+1}$ and $j_2 \star  \left( \alpha e_1 + \beta e_{p+1} \right) =  \alpha e_1 - \beta e_{p+1}$. Using Lemma \ref{circle isom 2}, we get an analytic isomorphism $\psi': \Sigma \rightarrow \text{S}^1$, such that $\psi' \circ j_i = \text{J}_i \circ \psi'$ for $i=1,2$. Let $\rho: \text{S}^1 \rightarrow \mathcal{S}$ be the analytic isomorphism defined by $\rho (\alpha e_1 + \beta e_{2})=\alpha e_1 + \beta e_{p+1}$ and let $\psi = \rho \circ \psi'$. Then, $\rho: \Sigma \rightarrow \mathcal{S} $ and $\psi \circ j_i = j_i \circ \psi$.

Now, similarly to the case of Section \ref{axn with SOp fixed pnt}, we get a flow and a function on $\mathcal{S}$, which satisfy the Uchida conditions, see Remark \ref{Uchida conditions}. Hence the flow is an induced basic $(j_1,j_2)$-flow. By \cite{Uchida}, we get an action of $\text{SO}^\circ(p,q)$ on $\text{S}^{p+q-1}$ from the Uchida construction corresponding to a basic $(\text{J}_1,\text{J}_2)$-flow, see Section \ref{Uchida constr section}. Then, essentially the same steps as in Section \ref{axn with SOp fixed pnt} show that this action covers equivariantly the action of $\text{SO}^\circ(p,q)$ on $M$.

\noindent \underline{\textbf{Case 2:}} $j_1$ and $j_2$ have the same fixed points on $\Sigma$.\\
First we note that $\displaystyle j_1 \Big|_\Sigma = j_2 \Big|_\Sigma$. Indeed, by considering a $j_1, j_2$ invariant metric on $\Sigma$, $j_1$ and $j_2$ have the same fixed points, which are isolated, and if $x$ is one of those fixed points, $\displaystyle \left(\text{D}j_1\right)_x = \left( \text{D}j_2 \right)_x = -\text{Id} $. Therefore, they are equal on $\Sigma$. On $\Sigma$ we also get an analytic flow $\Phi_\theta$ on $\Sigma$ induced by the action of the subgroup $\mathcal{M}(p,q)$. Because there are no nullcone orbits and because of Uchida condition (iii), see Remark \ref{Uchida conditions}, between two points fixed by $\Phi_\theta$ there must be a point fixed by $\text{SO}(p)$ or $\text{SO}(q)$ and vice versa. Therefore, on $\Sigma$ there are two points fixed by $\Phi_\theta$. Note that $\text{SO}(p)$ or $\text{SO}(q)$ cannot have a common fixed point, by Lemma \ref{K fixed pnt}.

A slight modification of the proof of Lemma \ref{circle isom 1} shows that there exists an analytic isomorphism $\psi : \Sigma \rightarrow  \mathbb{R}\text{P}^1$ such that if $j_0:\mathbb{R}\text{P}^1 \rightarrow \mathbb{R}\text{P}^1 $ is the map $[a:b] \mapsto [-a:b]$, then $\psi \circ j_i = j_0 \circ \psi$ for $i=1,2$. The fixed points of $j_0$ are $[1:0]$ and $[0:1]$. Moreover, via $\psi$, $\Phi_\theta$ induces a flow, $\Phi'_\theta$, on $\mathbb{R}\text{P}^1$ and let $\zeta_1$, $\zeta_2$ be the points fixed by $\Phi'_\theta$. Let $\pi : \text{S}^1 \rightarrow \mathbb{R}\text{P}^1$ be the standard covering map. Let $z_1, z_3 \in \text{S}^1$ be the points that map to $\zeta_1$ and $z_2,z_4 \in \text{S}^1$ the points that map to $\zeta_2$. Note that because of the property $\Phi_\theta (j_1 \star z) = j_1 \star \Phi_{-\theta}(z)$ for $\theta \in \mathbb{R}$ and $z \in \Sigma$, we have $\Phi'_\theta (j_0 \star \zeta) = j_0 \star \Phi'_{-\theta}(\zeta)$
for $\theta \in \mathbb{R}$ and $\zeta \in \mathbb{R}\text{P}^1$. Hence, if $\zeta_1 = [a:b]$ then $\zeta_2 = [-a:b]$. Therefore, we can assume that $z_2 = \text{J}_2(z_1), \, z_3 = \text{J}_1 \circ \text{J}_2 (z_1),$ and $z_4 = \text{J}_1(z_1)$. Note that $\text{J}_1 \circ \text{J}_2$ is the antipodal map on $\text{S}^1$.

We will now define a flow on $\text{S}^1$ that will cover the flow $\Phi'_\theta$. Let $\mathcal{S}_1$ be the connected component of $\text{S}^1 \backslash \{z_1,z_3\} $ that contains $z_2$. Then $\pi \Big|_{\mathcal{S}_1}: \mathcal{S}_1 \rightarrow \mathbb{R}\text{P}^1 \backslash \{\zeta_1\}$ is an analytic isomorphism. Define $\Phi''_\theta$ on $\mathcal{S}_1$ by
\[ \Phi''_\theta (z) = \left( \pi \Big|_{\mathcal{S}_1} \right)^{-1} \circ \Phi'_\theta \left( \pi(z) \right) \]
Then, on $\text{J}_1\circ \text{J}_2 \left( \mathcal{S}_1 \right)$, which is  the connected component of $\text{S}^1 \backslash \{z_1,z_3\} $ that contains $z_4$, define $\Phi''_\theta$ by $\Phi''_\theta (z) = (\text{J}_1\circ \text{J}_2) \circ \Phi''_\theta \left( \text{J}_1\circ \text{J}_2(z) \right)$. Note that, on $\{ z_1,z_2,z_3,z_4 \}$ $\Phi''_\theta$ is constant. It is immediate that $\Phi''_\theta$ is an analytic flow on $\text{S}^1$ such that $\pi \circ \Phi''_\theta (z) = \Phi'_\theta \circ \pi (z)$ for $\theta \in \mathbb{R}$ and $z \in \text{S}^1$. Now, define a function 
\[ f_{\text{S}^1} := f_\Sigma \circ \pi\]
It is a straightforward calculation that $\Phi''_\theta$ and $f_{\text{S}^1}$ satisfy the Uchida conditions, see Remark \ref{Uchida conditions}.

 As in Case 1 above, there exists an analytic action of $\text{SO}^\circ(p,q)$ on $\text{S}^{p+q-1}$ from the Uchida construction corresponding to a basic $(\text{J}_1,\text{J}_2)$-flow, see Section \ref{Uchida constr section}, that covers the action of $\text{SO}^\circ(p,q)$ on $M$. Hence, we have proved the third case of Theorem \ref{final result SOpq}.
\appendix
\section{Analyticity of the actions in the basic construction} \label{analyticity of actions}
Following a similar approach as in \cite{Uchida}, we will show analyticity of the action defined in (\ref{axn def}) by writing $G \times \left( \text{S}^p \times \text{S}^{q-1} \right)$  as a union of three open sets and showing that the action map is analytic when restricted to any of them. Two of these open sets are going to be $G$ times the $G$-orbit of the two fixed points of $\text{SO}(p)$, namely $(\pm e_{p+1}, \epsilon_1)$, while the third one will be an open set that accounts for the closed $G$-orbits in $\text{S}^p \times \text{S}^{q-1}$.

Let $\mathcal{O}$ be the orbit of $(e_{p+1},\epsilon_1)$ under the action defined in (\ref{axn def}).

\begin{prop} \label{analytic axn orbit prop}
The $G$ action defined in (\ref{axn def}) is analytic on $\mathcal{O}$.
\end{prop}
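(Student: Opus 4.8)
The plan is to realise the action of $G$ on $\mathcal{O}$ as the transport, via an explicit analytic $G$-equivariant local diffeomorphism, of the manifestly analytic linear action of $G$ on the hyperboloid $Q := \{\, v \in \mathbb{R}^{p+q} : v^{T} I_{p,q}\, v = 1 \,\}$ (which $G$ preserves since $X^{T} I_{p,q} X = I_{p,q}$ for $X \in G$).

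First I would write down a formula for the orbit map $\omega : G \to \text{S}^{p} \times \text{S}^{q-1}$, $\omega(g) := g \star (e_{p+1},\epsilon_1)$. Put $z_{0} = (e_{p+1},\epsilon_1)$; since $f(z_{0}) = 0$ we have $U(z_{0}) = (H_{[0:1]})^{\circ} = \text{Stab}_{G}(e_{p+1})$ in the standard representation. Writing $g = k\, m(\theta)\, u$ as in (\ref{eq:1}) with $k = (\kappa_1,\kappa_2) \in K$, $u \in U(z_{0})$, and using $u e_{p+1} = e_{p+1}$ together with $m(\theta) e_{p+1} = \sinh\theta\, e_1 + \cosh\theta\, e_{p+1}$, one gets $v := g e_{p+1} = \sinh\theta\,(\kappa_1 e_1) + \cosh\theta\,(\kappa_2 e_{p+1})$. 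Let $P'$ and $P''$ be the orthogonal projections of $\mathbb{R}^{p+q}$ onto $\text{span}(e_1,\dots,e_p)$ and $\text{span}(e_{p+1},\dots,e_{p+q})$; then $\cosh\theta = |P'' v|$ (note $|P'' v| \geq 1$, as $v^{T} I_{p,q} v = 1$), $\kappa_2 e_{p+1} = P'' v/|P'' v|$, and $\kappa_1 e_1 = P' v/\sinh\theta$ when $\theta \neq 0$. On the other hand $\Phi_{\theta}(z_{0}) = (\alpha(\theta)\, e_1 + \beta(\theta)\, e_{p+1},\ \epsilon_1)$ with $\alpha,\beta$ analytic, $\alpha(0) = 0$, $\beta(0) = 1$; by (A1) applied at the $j_1$-fixed point $z_{0}$, $\alpha$ is odd and $\beta$ even, $\beta'(0) = 0$ (from $\alpha^{2}+\beta^{2} \equiv 1$), and $\alpha'(0) \neq 0$ because $z_{0}$ is not a fixed point of $\Phi$. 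Hence $\alpha(\theta)/\sinh\theta$ extends to a nowhere-vanishing even analytic function of $\theta$, so there are analytic $R,S$ on $[1,\infty)$ with $R(\cosh\theta) = \alpha(\theta)/\sinh\theta$ (so $R(1) = \alpha'(0) \neq 0$ and $R$ is nowhere zero) and $S(\cosh\theta) = \beta(\theta)$ (so $S(1) = 1$). Since $\omega(g) = k \star \Phi_{\theta}(z_{0}) = (\alpha(\theta)\kappa_1 e_1 + \beta(\theta) e_{p+1},\ \kappa_2\epsilon_1)$, this yields, after the identifications $\text{span}(e_1,\dots,e_p) \cong \mathbb{R}^{p} \subseteq \mathbb{R}^{p+1}$ and $\text{span}(e_{p+1},\dots,e_{p+q}) \cong \mathbb{R}^{q}$,
\[
\omega(g) = \Big(\, R\big(|P'' v|\big)\, P' v \,+\, S\big(|P'' v|\big)\, e_{p+1}\,,\ \ P'' v\,/\,|P'' v| \,\Big), \qquad v = g e_{p+1} .
\]
As $v = g e_{p+1}$ is linear in $g$, $|P'' v| \geq 1$, and $R,S$ are analytic on $[1,\infty)$, the right-hand side is analytic in $g$; so $\omega$ is analytic (this also re-proves its well-definedness). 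Let $\Omega : Q \to \text{S}^{p} \times \text{S}^{q-1}$ be the analytic map defined by the bracket above, so $\omega(g) = \Omega(g e_{p+1})$.

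Next I would establish two properties of $\Omega$. \emph{(i) $G$-equivariance:} $Q$ is a single $\text{O}(p,q)$-orbit, hence — $Q$ being connected and $G$ of finite index in $\text{O}(p,q)$ — a single $G$-orbit; so every $v \in Q$ equals $h e_{p+1}$ for some $h \in G$, and then, using $\omega(h) = \Omega(h e_{p+1})$ and the group law of (\ref{axn def}) proved in Section \ref{subsection: well-def of axns},
\[
\Omega(g v) = \Omega\big((gh) e_{p+1}\big) = \omega(gh) = (gh) \star z_{0} = g \star (h \star z_{0}) = g \star \Omega(v)
\]
for all $g \in G$; in particular $\Omega(Q) = G \star z_{0} = \mathcal{O}$. \emph{(ii) $\Omega$ is a local analytic diffeomorphism:} at $v = e_{p+1}$ one computes $d\Omega_{e_{p+1}}(\delta u + \delta w) = \big(R(1)\,\delta u,\ \delta w\big)$ for $\delta u \in \text{span}(e_1,\dots,e_p)$, $\delta w \in \text{span}(e_{p+2},\dots,e_{p+q})$, an isomorphism onto $T_{z_{0}}(\text{S}^{p} \times \text{S}^{q-1})$ since $R(1) \neq 0$; by (i) and the analyticity of the $K$-action the same holds at each pure-spacelike point $v = k e_{p+1}$, $k \in K$; and for $v \in Q$ with $|P'' v| > 1$ a direct computation of $d\Omega_{v}$ on $T_{v}Q = \{\, w : w^{T} I_{p,q} v = 0 \,\}$ shows the only way it could fail to be injective is if $S'(|P'' v|) = 0$ — which does not occur, because $S'(\cosh\theta) = \beta'(\theta)/\sinh\theta$ and $\beta'(\theta) \neq 0$ for $\theta \neq 0$: the $\Phi$-orbit of $z_{0}$ is the open arc of $\mathcal{S}$ through $z_{0}$ with the two $\Phi$-fixed points as endpoints, and it does not contain $(-e_{p+1},\epsilon_1)$ (otherwise $f_{\Phi} = \tanh(\cdot)$ would vanish twice on it), so over such an arc the $e_{p+1}$-component (in the first factor) of a point of $\mathcal{S}$ attains its maximum, value $1$, only at $z_{0}$. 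Thus $\Omega$ is an analytic local diffeomorphism everywhere on $Q$, and consequently $\mathcal{O} = \Omega(Q)$ is open.

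Finally, to conclude: given $x_{0} \in \mathcal{O}$, since $\Omega : Q \to \mathcal{O}$ is a surjective analytic local diffeomorphism there are an open $V \ni x_{0}$ in $\mathcal{O}$ and an analytic section $\sigma : V \to Q$ with $\Omega \circ \sigma = \mathrm{id}_{V}$; then for $(g,x) \in G \times V$ the equivariance of $\Omega$ gives $g \star x = g \star \Omega(\sigma(x)) = \Omega\big(g\, \sigma(x)\big)$, which is analytic in $(g,x)$ because $(g,v) \mapsto g v$ is the analytic linear action $G \times Q \to Q$ and $\Omega$ is analytic. As $x_{0}$ was arbitrary, the action map $G \times \mathcal{O} \to \mathcal{O}$ is analytic. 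I expect the main obstacle to be step (ii): the explicit computation of $d\Omega_{v}$ for $|P'' v| > 1$ and the accompanying input $\beta'(\theta) \neq 0$ for $\theta \neq 0$; beyond that, what requires care is mainly the bookkeeping between the standard representation space $\mathbb{R}^{p+q}$ and the two factors $\mathbb{R}^{p+1}$, $\mathbb{R}^{q}$ and the identifications of their relevant subspaces.
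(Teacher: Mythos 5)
Your proof is correct, and it rests on the same underlying idea as the paper's — transport analyticity from a standard model of the orbit through an explicit $G$-equivariant analytic comparison map, with an even/odd parity argument doing the delicate work at the $\text{SO}(p)$-fixed point — but the execution is genuinely different. The paper (Lemmas \ref{analytic axn on orbit lemma 1} and \ref{analytic axn on orbit lemma 2}) maps in the opposite direction: it builds $F_0:\mathcal{O}\to\mathcal{O}^1\subseteq\text{S}^{p+q-1}$ from the isotropy function $f$, proves it is a $G$-equivariant analytic \emph{isomorphism} by exhibiting an explicit inverse $F_1$ (via $\hat f=\text{pr}_{\text{S}^p}\circ (f|_{\mathcal I})^{-1}$), and pulls back the projective action; you map from the hyperboloid $Q$ \emph{onto} $\mathcal{O}$, prove only that $\Omega$ is an equivariant surjective local analytic diffeomorphism, and finish with local sections. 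This spares you the inverse construction, but obliges you to check $d\Omega_v$ at the points with $|P''v|>1$, which you do correctly via $S'(\cosh\theta)\sinh\theta=\beta'(\theta)\neq 0$ (the paper never needs such a computation because $F_1$ does that job); conversely the paper's route yields the stronger statement that the action on $\mathcal{O}$ is analytically conjugate to the projective action on $\mathcal{O}^1$, which your argument does not produce. Two steps you assert should be expanded to the paper's level of detail: (a) the passage from ``$\alpha(\theta)/\sinh\theta$ and $\beta(\theta)$ are even analytic in $\theta$'' to ``analytic functions $R,S$ of $\cosh\theta$ on $[1,\infty)$'' is precisely where the paper invests its technical effort (the $\tilde{\mathcal H}(s^2)=\mathcal H(s)$ argument in the proof of Lemma \ref{analytic axn on orbit lemma 1}); you obtain it by writing an even analytic function as an analytic function of $\theta^2$ and inverting $\theta^2\mapsto\cosh\theta-1$ with the analytic inverse function theorem; and (b) transitivity of $G$ on $Q$ does not follow from connectedness plus finite index alone — argue instead that every $G$-orbit in $Q$ is open (the isotropy algebra at $e_{p+1}$ is $\mathfrak{so}(p,q-1)$, of codimension $p+q-1=\dim Q$), so connectedness forces a single orbit. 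With those two points made explicit, your argument is a complete and somewhat more economical alternative to the paper's proof of Proposition \ref{analytic axn orbit prop}.
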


 In order to prove the proposition, we will define a $G$-equivariant, analytic isomorphism from $\mathcal{O}$ to $\mathcal{O}_1$, where $\mathcal{O}_1$ is the orbit of $e_{p+1}$ under the standard projective $G$ action on S$^{p+q-1}$, which we see as a subset of $\mathbb{R}^{p+q}$ with its standard basis; we will however rename the standard basis as $\{ e_1, \cdots e_p, \epsilon_1, \cdots , \epsilon_q \}$. We will break the proof of the proposition in a series of lemmas.

For $x\in \mathbb{R}^p$ and $y \in \mathbb{R}^q$, we write $x \oplus y$ for the vector $\begin{bmatrix}
    x\\y
\end{bmatrix} \in \mathbb{R}^{p+q}$. Note that
\begin{equation*}
    \mathcal{O}^1 = \{ x \oplus y \in \text{S}(\mathbb{R}^p \oplus
     \mathbb{R}^q) : \lVert x \rVert < \lVert y \rVert \}
\end{equation*}
Recall $\mathcal{S} =  \big\{ ( \alpha e_1 + \beta e_{p+1} , \epsilon_1 ) : \alpha^2 + \beta^2 =1 \big\} \subseteq \text{S}^p \times \text{S}^{q-1}$. Let $\mathcal{I}$ be the intersection of $\mathcal{S}$ and $\mathcal{O}$, namely
\begin{equation*}
    \mathcal{I}  = \big\{ \Phi_\theta ( e_{p+1}, \epsilon_1 ) : \theta \in \mathbb{R} \big\}
\end{equation*}
By property (A3) of Remark \ref{(A1)-(A4) props remark}, $f$ is an analytic isomorphism between $\mathcal{I}$ and $(-1,1)$. Recall that $f(e_{p+1},\epsilon_1)=0$. We can assume that 
\begin{equation} \label{assumption f>0}
f > 0 \text{ on } \{ z \in \mathcal{I} : z=(\alpha_z e_1 + \beta_z e_{p+1},\epsilon_1) \text{ with } \alpha_z > 0 \}
\end{equation}
For $(v,w) \in \mathcal{O}$, we write $v$ as $v= \begin{bmatrix}
    v_0 \\ v_{p+1}
\end{bmatrix}$, with $v_0 \in \mathbb{R}^p$ and $v_{p+1} \in \mathbb{R}$. Define a function $F : \mathcal{O}  \rightarrow B^{p}_1(0) \times \text{S}^{q-1}$ by
\begin{equation*}
    F(v,w)  = \begin{cases}
      \frac{f\big( \lVert v_0 \rVert e_1 + v_{p+1}e_{p+1},\epsilon_1 \big)}{\lVert v_0 \rVert} v_0 \oplus w  \quad & \text{if } v \neq e_{p+1} \\
      0 \oplus w & \text{if }v = e_{p+1}
    \end{cases} 
\end{equation*}
where $B^p_1(0)$ is the unit open ball in $\mathbb{R}^p$ centered at 0. Define $\phi : \mathcal{O} \rightarrow \mathcal{I}$ by $(v,w) \mapsto ( \lVert v_0 \rVert e_1 + v_{p+1}e_{p+1},\epsilon_1)$, so that $\displaystyle F(v,w) = \Big( \frac{f\big( \phi(v,w) \big)}{\lVert v_0 \rVert} v_0 \oplus w \Big)$. 
\begin{lemma} \label{analytic axn on orbit lemma 1}
    The function $F : \mathcal{O}  \rightarrow B^{p}_1(0) \times \text{S}^{q-1}$ is analytic.
\end{lemma}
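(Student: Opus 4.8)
The plan is to verify analyticity locally, treating separately the region of $\mathcal{O}$ where $v\neq e_{p+1}$ and a neighbourhood of the exceptional set $\{e_{p+1}\}\times\text{S}^{q-1}$. On the first region things are transparent: there $v_0\neq 0$ (for $v\in\text{S}^p$, $v_0=0$ forces $v=\pm e_{p+1}$, and $(-e_{p+1},\cdot)\notin\mathcal{O}$ since otherwise $(-e_{p+1},\epsilon_1)\in\mathcal{I}$, contradicting injectivity of $f$ on $\mathcal{I}$ together with $f(-e_{p+1},\epsilon_1)=0=f(e_{p+1},\epsilon_1)$); so $v\mapsto\lVert v_0\rVert=\sqrt{v_0\cdot v_0}$ is analytic and nonvanishing, hence $\phi$ is analytic, $f$ is analytic on $\mathcal{I}$ (noted just before the lemma, via (A3) of Remark \ref{(A1)-(A4) props remark}), and $F$ is built from these by multiplication, division by the nonzero scalar $\lVert v_0\rVert$, the standard projections, and the analytic inclusion $B^p_1(0)\times\text{S}^{q-1}\hookrightarrow\mathbb{R}^p\oplus\mathbb{R}^q$. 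I would also record that $\phi(v,w)\in\mathcal{I}$ for every $(v,w)\in\mathcal{O}$, so that $F$ is genuinely defined: choosing $\kappa_1\in\text{SO}(p)$ with $\kappa_1v_0=\lVert v_0\rVert e_1$ and $\kappa_2\in\text{SO}(q)$ with $\kappa_2w=\epsilon_1$ gives $(\kappa_1,\kappa_2)\star(v,w)=\phi(v,w)$, and since $\mathcal{O}$ is $K$-invariant and $\mathcal{I}=\mathcal{O}\cap\mathcal{S}$ we get $\phi(v,w)\in\mathcal{I}$, so $f(\phi(v,w))\in(-1,1)$.

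The heart of the argument is the behaviour of $f$ near $(e_{p+1},\epsilon_1)$. Since $(\pm e_{p+1},\epsilon_1)$ are not fixed points of the induced basic $j_1$-flow (Lemma \ref{induced basic j1 flows properties}), the arc $\mathcal{I}=\{\Phi_\theta(e_{p+1},\epsilon_1):\theta\in\mathbb{R}\}$ has $(e_{p+1},\epsilon_1)$ in its interior and hence contains a neighbourhood of $(e_{p+1},\epsilon_1)$ in $\mathcal{S}$. On $\mathcal{S}$ the first coordinate $\alpha$ is an analytic chart near this point (as $\beta=\sqrt{1-\alpha^2}>0$ there), so $f|_\mathcal{I}$ is represented by an analytic function $\hat f(\alpha)$ on some interval $|\alpha|<\delta$, with $\hat f(0)=f(e_{p+1},\epsilon_1)=0$. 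By (A1) of Remark \ref{(A1)-(A4) props remark}, $j_1\star\Phi_\theta(e_{p+1},\epsilon_1)=\Phi_{-\theta}(e_{p+1},\epsilon_1)$, so $j_1$ preserves $\mathcal{I}$ and acts on it by $\alpha\mapsto-\alpha$; combined with (A2) this gives $\hat f(-\alpha)=-\hat f(\alpha)$. An odd analytic function factors as $\hat f(\alpha)=\alpha\,\hat g(\alpha^2)$ with $\hat g$ analytic near $0$. This oddness --- and the resulting absorption of $\alpha$ into $\alpha^2$ --- is the key structural point: it is precisely what kills the apparently singular factor $v_0/\lVert v_0\rVert$.

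Finally, for $(v,w)\in\mathcal{O}$ with $\lVert v_0\rVert<\delta$ one has $\lVert v_0\rVert^2+v_{p+1}^2=1$, so $\phi(v,w)=(\lVert v_0\rVert e_1+v_{p+1}e_{p+1},\epsilon_1)\in\mathcal{I}$ has $\alpha$-coordinate $\lVert v_0\rVert$, whence $f(\phi(v,w))=\hat f(\lVert v_0\rVert)=\lVert v_0\rVert\,\hat g(\lVert v_0\rVert^2)$. Thus on this neighbourhood
\[ F(v,w)=\hat g(\lVert v_0\rVert^2)\,v_0\oplus w , \]
which also returns $0\oplus w$ at $v_0=0$, matching the definition of $F$. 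Since $v\mapsto\lVert v_0\rVert^2=\sum_{i=1}^{p}v_i^2$ is a polynomial and $\hat g$ is analytic near $0$, the right-hand side is analytic on a neighbourhood of $\{e_{p+1}\}\times\text{S}^{q-1}$; combined with the first region, this shows $F$ is analytic on all of $\mathcal{O}$. The only genuine obstacle is extracting the oddness of $\hat f$ and the factorization $\hat f(\alpha)=\alpha\,\hat g(\alpha^2)$; once that is in hand, everything else is routine.
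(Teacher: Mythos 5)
Your proof is correct and follows essentially the same route as the paper: split off the locus $v\neq e_{p+1}$, represent $f$ near $(e_{p+1},\epsilon_1)$ in the chart $s\mapsto (se_1+\sqrt{1-s^2}\,e_{p+1},\epsilon_1)$, use the $j_1$-symmetry (A1)--(A2) to see the one-variable representative is odd, and absorb the singular factor by writing it as $\alpha\,\hat g(\alpha^2)$ composed with the polynomial $\lVert v_0\rVert^2$. The only difference is cosmetic: you invoke the standard factorization of an odd analytic function, whereas the paper derives it explicitly via the even quotient $\mathcal{H}(s)=\tilde f(s)/s$ and its Taylor series.
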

\begin{proof}
It is easy to see that $F$ analytic at any point with $v \neq e_{p+1}$, since $f$ is analytic.  Around $(e_{p+1},\epsilon_1)$, we parametrise $\mathcal{S}$ by 
\begin{align} \label{chart psi}
    \psi : & ( -\delta , \delta )  \rightarrow \mathcal{V} \\
    & s  \mapsto ( se_1 + \sqrt{1-s^2} e_{p+1}, \epsilon_1 )
\end{align}
for some $0< \delta < 1$ and $\mathcal{V} \subseteq \mathcal{S}$ a neighbourhood of $(e_{p+1}, \epsilon_1)$. Set 
\begin{equation} \label{f tilde}
    \tilde{f}(s) := f \circ \psi =f \big( se_1 + \sqrt{1-s^2}e_{p+1} , \epsilon_1\big)
\end{equation}
Then, $\tilde{f}$ is an analytic function on $(- \delta , \delta)$. Moreover, $\tilde{f}(0) = 0$ and \begin{align*}f \big( -se_1 + \sqrt{1-s^2}e_{p+1}, \epsilon_1 \big) & = f \big( j_1 ( se_1 + \sqrt{1-s^2}e_{p+1}, \epsilon_1) \big) \\ 
& = - f \big( se_1 + \sqrt{1-s^2}e_{p+1} , \epsilon_1 \big)
\end{align*}
by (A2). Hence, $\tilde{f}(-s)=-\tilde{f}(s)$. Consider the function on $(-\delta,\delta)$ defined by
\begin{equation*}
 \mathcal{H}(s) =
\begin{cases}
    \frac{\tilde{f}(s)}{s} \, , s\neq 0\\
    \tilde{f}'(0) \, , s=0
\end{cases}
\end{equation*}
Since $\tilde{f}$ is analytic and $\tilde{f}(0)=0$, the function $\mathcal{H}$ is analytic, and it is an even function. Smoothness of $\mathcal{H}$ suffices to give us the existence of a smooth function $\tilde{\mathcal{H}}$, defined around 0 such that $\tilde{\mathcal{H}}(s^2) = \mathcal{H}(s)$, see \cite[Ch VIII, \S 14 , Problem 6]{dieudonnefoundations}. In fact, $\tilde{\mathcal{H}}$ can actually be taken analytic. Indeed, since $\mathcal{H}$ is analytic, there exists a power series centered at 0 that converges to $\mathcal{H}(s)$ for $s$ close to 0. However, $\mathcal{H}$ is an even function, which implies that all its derivatives of the form $\mathcal{H}^{(2n+1)}$ for $n \in \mathbb{N}$, are odd functions and so $\mathcal{H}^{(2n+1)}(0)=0$. Therefore, all the odd powers in the Taylor series of $\mathcal{H}$ around 0 vanish. Hence, there are real numbers $\{ c_2, c_4, 
\cdots , c_{2n}, \cdots \}$ and $0 < \delta ' < \delta$ such that for any $\lvert s \rvert < \delta '$, the series $ \sum_{n \geq 1}c_{2n}s^{2n}$ converges to $\mathcal{H}(s)$. Now, consider the power series $\sum_{n \geq 1}c_{2n}s^n$. Then $\limsup_{n \rightarrow \infty} ( \sqrt[n]{\lvert c_{2n} \rvert} ) = \limsup_{n \rightarrow \infty} \big( [\sqrt[2n]{\lvert c_{2n} \rvert}]^2 \big) = \Big( \limsup_{n \rightarrow \infty} ( \sqrt[2n]{\lvert c_{2n} \rvert} ) \Big)^2$ which converges, since $\frac{1}{\limsup_{n \rightarrow \infty} ( \sqrt[2n]{\lvert c_{2n} \rvert} )}$ is the radius of convergence of $\sum_{n \geq 1}c_{2n}s^{2n}$. Hence, the series $\sum_{n \geq 1}c_{2n}s^n$ converges and defines an analytic function around 0. Then, we can take $\tilde{\mathcal{H}}$ to be defined by this series and by the uniqueness of Taylor series, we see that $\tilde{\mathcal{H}}(s^2) = \mathcal{H}(s)$. \\
Now, the function $v \mapsto \tilde{\mathcal{H}}(\lVert v_0 \rVert^2 ) $ is an analytic function around 0. Hence, the function $v \mapsto \mathcal{H}(\lVert v_0 \rVert )$ is an analytic function around 0. But $\mathcal{H}(\lVert v_0 \rVert ) =  \frac{f\big( \phi(v,w) \big)}{\lVert v_0 \rVert}$. Consequently, the function $v \mapsto \frac{f\big( \phi(v,w) \big)}{\lVert v_0 \rVert} \, v_0$ is analytic around $v= e_{p+1}$, which in turn implies that the function $F$ is analytic around $(e_{p+1}, \epsilon_1)$.
\end{proof}
Subsequently, we define $F_0 : \mathcal{O} \rightarrow \mathcal{O}^1$ by
\begin{equation*}
    F_0(v,w) = \frac{1}{\sqrt{1 + f^2 \big( \phi(v,w) \big)}} F(v,w)
\end{equation*}
\begin{lemma} \label{analytic axn on orbit lemma 2}
    The function $F_0: \mathcal{O} \rightarrow \mathcal{O}^1$ is a $G$-equivariant, analytic isomorphism.
\end{lemma}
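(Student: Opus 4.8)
The plan is to verify, in turn, that $F_0$ maps $\mathcal{O}$ into $\mathcal{O}^1$, that it is $G$-equivariant, and that it is an analytic isomorphism onto $\mathcal{O}^1$; analyticity of $F_0$ itself is immediate from Lemma~\ref{analytic axn on orbit lemma 1}, since $F_0=(1+f^2\circ\phi)^{-1/2}F$ and $f^2\circ\phi$ is analytic on $\mathcal{O}$ (the only delicate point, at $v=e_{p+1}$, being exactly the even-function argument already carried out in the proof of that lemma). For the image, when $v\neq e_{p+1}$ the $\mathbb{R}^p$- and $\mathbb{R}^q$-components of $F(v,w)$ have Euclidean norms $|f(\phi(v,w))|$ and $1$, so those of $F_0(v,w)$ have norms $|f(\phi)|/\sqrt{1+f(\phi)^2}$ and $1/\sqrt{1+f(\phi)^2}$; their squares sum to $1$, so $F_0(v,w)\in\mathrm{S}(\mathbb{R}^p\oplus\mathbb{R}^q)$, and since $f$ maps $\mathcal{I}$ into $(-1,1)$ the first is strictly smaller, i.e. $F_0(v,w)\in\mathcal{O}^1$; the case $v=e_{p+1}$ is immediate. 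The observation underlying everything below is that, for $z\in\mathcal{I}$, one reads off directly from the definitions that $F_0(z)$ is the normalisation of the vector $f(z)\,e_1+\epsilon_1\in\mathbb{R}^p\oplus\mathbb{R}^q$, where $\epsilon_1$ here denotes the $(p+1)$st standard basis vector of $\mathbb{R}^{p+q}$ (that is, $e_{p+1}$ in the original labelling).

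For equivariance I would use $G=K\,\mathcal{M}(p,q)\,U(z)$ from (\ref{eq:1}) and the defining formula (\ref{axn def}): writing $(v,w)=k_0\star z$ with $z\in\mathcal{I}$, $k_0\in K$, and $gk_0=k\,m(\theta)\,u$, one has $g\star(v,w)=k\star\Phi_\theta(z)$, so it is enough to check three things. First, $F_0$ is $K$-equivariant: $\tilde\kappa_1$ fixes $e_{p+1}$, so $\|v_0\|$, $v_{p+1}$ and $\phi(v,w)$ are unchanged by the $K$-action except through $\kappa_1$ acting linearly on $v_0$ and $\kappa_2$ on $w$, which is exactly the standard $K$-action on $\mathrm{S}^{p+q-1}$. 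Second, $F_0(\Phi_\theta(z))=m(\theta)\star F_0(z)$ for $z\in\mathcal{I}$: since $m(\theta)$ mixes only $e_1$ and $\epsilon_1$, property (A3) of Remark~\ref{(A1)-(A4) props remark} says precisely that $m(\theta)\,(f(z)\,e_1+\epsilon_1)$ is a positive multiple of $f(\Phi_\theta(z))\,e_1+\epsilon_1$, so the two sides agree after normalising. Third, $u\star F_0(z)=F_0(z)$ for $u\in U(z)$: by (\ref{U(z) eq definition}), $U(z)$ lies in the stabiliser in $G$ of the vector $f(z)\,e_1+\epsilon_1$, hence fixes its normalisation $F_0(z)$ under the projective action. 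Combining, $g\star F_0(v,w)=g\star(k_0\star F_0(z))=(k\,m(\theta)\,u)\star F_0(z)=k\star\bigl(m(\theta)\star(u\star F_0(z))\bigr)=k\star\bigl(m(\theta)\star F_0(z)\bigr)=k\star F_0(\Phi_\theta(z))=F_0(k\star\Phi_\theta(z))=F_0(g\star(v,w))$.

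For the isomorphism claim, $F_0|_{\mathcal{I}}$ is the map sending $z$ to the normalisation of $f(z)\,e_1+\epsilon_1$, which is an analytic isomorphism of $\mathcal{I}$ onto the arc $\mathcal{I}^1:=\{(1+t^2)^{-1/2}(t\,e_1+\epsilon_1):t\in(-1,1)\}\subseteq\mathcal{O}^1$, because $f\colon\mathcal{I}\to(-1,1)$ is one. Now $\mathcal{O}=K\star\mathcal{I}$ (from $\mathcal{O}=G\star(e_{p+1},\epsilon_1)$, the decomposition $G=K\,\mathcal{M}(p,q)\,U(z)$, $\mathcal{M}(p,q)\star(e_{p+1},\epsilon_1)=\mathcal{I}$, and the fact that $U(z)$ fixes $(e_{p+1},\epsilon_1)$) and $\mathcal{O}^1=K\star\mathcal{I}^1$ (the $K$-orbits in $\mathcal{O}^1$ are the level sets of $\|x\|$, and $\mathcal{I}^1$ meets each of them), so $K$-equivariance gives surjectivity; injectivity follows by checking, using (A4) and (\ref{assumption f>0}), that $\mathrm{Stab}_K(z)=\mathrm{Stab}_K(F_0(z))$ for every $z\in\mathcal{I}$ (both equal $\mathrm{SO}(p-1)\times\mathrm{SO}(q-1)$, or $\mathrm{SO}(p)\times\mathrm{SO}(q-1)$ when $z=(e_{p+1},\epsilon_1)$), so that $F_0(k_1\star z_1)=F_0(k_2\star z_2)$ forces, after possibly replacing $z_1$ by $j_1\star z_1$, both $z_1=z_2$ and $k_1\star z_1=k_2\star z_2$. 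Finally $F_0$ is a local analytic isomorphism: by $G$-equivariance it suffices to see that $(DF_0)$ is invertible at $(e_{p+1},\epsilon_1)$, where it is essentially the identity on the $\mathbb{R}^{q-1}$-factor of the tangent space and multiplication by a nonzero scalar on the $\mathbb{R}^p$-factor (the scalar being the derivative at $0$ of $s\mapsto f(s\,e_1+\sqrt{1-s^2}\,e_{p+1},\epsilon_1)$, which is nonzero because $f|_{\mathcal{I}}$ is a diffeomorphism). A bijective local analytic isomorphism is an analytic isomorphism; this proves the lemma, and, transporting the standard projective $G$-action on $\mathcal{O}^1$ through $F_0$, also Proposition~\ref{analytic axn orbit prop}.

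I expect the genuine content to be the identification of $F_0(z)$, for $z\in\mathcal{I}$, with the normalisation of $f(z)\,e_1+\epsilon_1$, after which property (A3) and the definition of $U(z)$ make the $\mathcal{M}(p,q)$- and $U(z)$-equivariance automatic — in other words, recognising that $F_0$ is forced to be exactly this normalisation. The remaining technical points (analyticity of $F_0$ and invertibility of $DF_0$ across $v=e_{p+1}$, the $j_1$-bookkeeping and the comparison of $K$-stabilisers for injectivity) are routine once that is in hand, the only genuinely subtle spot being the behaviour near $v=e_{p+1}$, where the even-function extension of Lemma~\ref{analytic axn on orbit lemma 1} is again what is needed.
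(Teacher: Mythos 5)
Most of your argument is sound and in places cleaner than the paper's: the identification of $F_0(z)$, for $z\in\mathcal{I}$, with the normalisation of $f(z)\,e_1+\epsilon_1$, together with (A3) and the fact that $U(z)=(H_{[f(z):1]})^\circ$ stabilises that vector, gives the $G$-equivariance more conceptually than the paper's explicit matrix computation (which is carried out only at the base point and then propagated by the decomposition $g=km(\theta)u$), and your analyticity and bijectivity arguments (orbit decompositions $\mathcal{O}=K\star\mathcal{I}$, $\mathcal{O}^1=K\star\mathcal{I}^1$, equality of $K$-stabilisers along $\mathcal{I}$) are correct.

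The genuine gap is in the last step, where you claim that ``by $G$-equivariance it suffices to see that $DF_0$ is invertible at $(e_{p+1},\epsilon_1)$.'' Transporting invertibility of $DF_0$ from the base point to an arbitrary point $g\star(e_{p+1},\epsilon_1)$ through the identity $F_0\circ L_g=L^1_g\circ F_0$ requires the translation $L_g\colon x\mapsto g\star x$ on $\mathcal{O}$ (for the action defined in (\ref{axn def})) to be differentiable with invertible differential. At this stage only set-theoretic well-definedness of that action and analyticity of the standard $K$-translations are known; analyticity of the $G$-translations on $\mathcal{O}$ is precisely what Lemma \ref{analytic axn on orbit lemma 2} and Proposition \ref{analytic axn orbit prop} are meant to deliver, so the reduction is circular. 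What you may legitimately use is $K$-equivariance, which reduces the problem to checking $DF_0$ at every point of $\mathcal{I}$, not just at the base point; at points with $f(z)\neq 0$ this is a direct (if slightly tedious) computation, e.g.\ by observing that $F_0$ maps the $K$-orbit of $z$ diffeomorphically onto the $K$-orbit of $F_0(z)$ (equal stabilisers), maps $\mathcal{I}$ into $\mathcal{I}^1$ with nonvanishing derivative since $f|_{\mathcal{I}}$ is an analytic isomorphism, and that orbit and arc are transverse on both sides. The paper avoids the issue entirely by a different route: it exhibits the inverse $F_1$ explicitly via $\hat f=\mathrm{pr}_{\mathrm{S}^p}\circ(f|_{\mathcal{I}})^{-1}$ and proves $F_1$ is analytic directly, the delicate point at $x=0$ being handled by the same odd/even power-series argument as in Lemma \ref{analytic axn on orbit lemma 1}. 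Either repair closes your argument; as written, the local-isomorphism-everywhere claim is not established.
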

\begin{proof}
By Lemma \ref{analytic axn on orbit lemma 1}, $F_0$ is analytic. Before we calculate its inverse, we show that $F_0$ is $G$-equivariant. Firstly, we show equivarience with respect to $K$. Let $k =(\kappa_1, \kappa_2) \in K$ and $(v,w) \in \mathcal{O}$. Recall $\displaystyle \tilde{\kappa} = 
    \begin{bmatrix}
        \kappa & \\ & 1
    \end{bmatrix}$ for $\kappa \in \text{SO}(p)$; see (\ref{SOp embed in SOp+1}). Then, $k\star (v,w) = (\kappa_1, \kappa_2) \star (v,w)  = ( \tilde{\kappa}_1 v , \kappa_2 w) 
   = \big( \begin{bmatrix}
        \kappa_1 v_0 \\ 
        v_{p+1} 
    \end{bmatrix}, \kappa_2 w \big)$. Now, $\phi \big( \begin{bmatrix}
        \kappa_1 v_0 \\ 
        v_{p+1} 
    \end{bmatrix}, \kappa_2 w \big) =  ( \lVert \kappa_1 v_0 \rVert e_1 + v_{p+1}e_{p+1},\epsilon_1) = ( \lVert v_0 \rVert e_1 + v_{p+1}e_{p+1},\epsilon_1) = \phi(v,w)$
since $\kappa_1 \in\text{SO}(p)$. Therefore,
\begin{align*}
    F_0 \Big( (\kappa_1,\kappa_2) \star (v,w) \Big) & = \frac{1}{\sqrt{1 + f^2 \big( \phi\big( (\kappa_1,\kappa_2) \star (v,w) \big) \big)}} ( \kappa_1 v_0 \oplus \kappa_2 w) \\
    & = \frac{1}{\sqrt{1 + f^2 \big( \phi (v,w) \big)}} ( \kappa_1 v_0 \oplus \kappa_2 w) = (\kappa_1,\kappa_2) \star F_0(v,w)
\end{align*}
where the action in the right hand side of the last equality is the orthogonal action of $K$ on $\text{S}^{p+q-1}$. Therefore, we have $K$-equivarience. Now, we let $\theta \in \mathbb{R}$ and write $m(\theta) \star (e_{p+1},\epsilon_1) = \Phi_\theta (e_{p+1},\epsilon_1) = ( \alpha_\theta e_1+ \beta_\theta e_{p+1}, \epsilon_1 )$. Then,
\begin{equation*} 
    F_0 \big( \Phi_\theta (e_{p+1},\epsilon_1 ) \big) = \frac{1}{\sqrt{1+\text{tanh}^2(|\theta |)}} \Big( \frac{\text{tanh} ( \lvert \theta \rvert ) }{\lvert \alpha_\theta \rvert} \alpha_\theta e_1 \oplus \epsilon_1 \Big) \\
\end{equation*}
Recall that $f \left( \Phi_\theta (e_{p+1},\epsilon_1) \right) = \tanh{(\theta)}$ by relation (A3) in Remark \ref{(A1)-(A4) props remark} in Section \ref{actions on Spq}. If $\theta > 0$, we have that $f \left( \Phi_\theta (e_{p+1},\epsilon_1) \right) = \tanh{(\theta)} >0$. Then $ \alpha_\theta > 0$ by assumption (\ref{assumption f>0}) and so $\frac{\text{tanh} ( \lvert \theta \rvert ) }{\lvert \alpha_\theta \rvert} \alpha_\theta e_1 = \frac{\text{tanh}(\theta)}{\alpha_\theta} \alpha_\theta e_1 = \text{tanh}(\theta ) e_1$. On the other hand, if $\theta < 0$, then $\alpha_\theta < 0$ and then also $\frac{\text{tanh} ( \lvert \theta \rvert ) }{\lvert \alpha_\theta \rvert} \alpha_\theta e_1 = \frac{\text{tanh}(- \theta)}{- \alpha_\theta} \alpha_\theta e_1 = \text{tanh}(\theta ) e_1$. Hence
\begin{equation} \label{F0 img of m.e1}
    F_0 \big( \Phi_\theta (e_{p+1},\epsilon_1 ) \big) = \frac{1}{\sqrt{1+\text{tanh}^2(|\theta |)}} \Big( \tanh{(\theta)} e_1 \oplus \epsilon_1 \Big)
\end{equation}
Computing now the action of $m(\theta)$ on $F_0(e_{p+1},\epsilon_1) = \epsilon_1$, we get
\begin{align*}
    \begin{bmatrix}
        \text{cosh}(\theta) && \text{sinh}(\theta) & \\
        & I_{p-1} & & \\
        \text{sinh}(\theta) && \text{cosh}n(\theta) & \\
        &&& I_{q-1}
    \end{bmatrix}
    \epsilon_1 & = \begin{bmatrix}
        \text{cosh}(\theta) && \text{sinh}(\theta) & \\
        & I_{p-1} & & \\
        \text{sinh}(\theta) && \text{cosh}(\theta) & \\
        &&& I_{q-1}
    \end{bmatrix} 
    \begin{bmatrix}
        \\
        \\
        1\\
        \\
    \end{bmatrix} 
    \\
    & = \begin{bmatrix}
        \text{sinh}(\theta) \\
        \\
        \text{cosh}(\theta) \\
        \\
    \end{bmatrix}
     = \text{sinh}(\theta) e_1 \oplus \text{cosh}(\theta) \epsilon_1 \\
\end{align*}
The Euclidean norm of $\text{sinh}(\theta) e_1 \oplus \text{cosh}(\theta) \epsilon_1$ is $\text{cosh}(\theta) \sqrt{1+ \text{tanh}^2(\theta)}$. Hence,
\begin{align*}
    m(\theta) \star F_0(e_{p+1},\epsilon_1 ) & =
    m(\theta) \star \epsilon_1 = \frac{1}{\text{cosh}(\theta) \sqrt{1+ \text{tanh}^2(\theta)}} \big( \text{sinh}(\theta) e_1 \oplus \text{cosh}(\theta) \epsilon_1 \big)\\
    & = \frac{1}{\sqrt{1+ \text{tanh}^2(\theta)}} \big( \text{tanh}(\theta) e_1 \oplus \epsilon_1 \big)  = F_0 \left( \Phi_\theta(e_{p+1}\epsilon_1) \right) \text{   (by (\ref{F0 img of m.e1}))}\\
    & = F_0 \big( m(\theta) \star (e_{p+1}, \epsilon_1) \big)
\end{align*}
where the action in either side of the first equality is the projective action of $G$ on $\text{S}^{p+q-1}$. Now, we can show $G$-equivarience. Let $g \in G$ and, according to (\ref{eq:1}), write $g = k m(\theta) u$, with $u \in U(e_{p+1},\epsilon_1) = H^\circ_{[0:1]}$, see (\ref{U(z) eq definition}). Note that then, $u$ fixes $F_0(e_{p+1},\epsilon_1)$. Then
\begin{align*}
    F_0 (g \star (e_{p+1},\epsilon_1)) & = F_0( km(\theta) \star (e_{p+1}, \epsilon_1)) = \left( km(\theta) \right) \star F_0(e_{p+1}, \epsilon_1) \\
    & = \left( km(\theta)  \right)\star \epsilon_1 = \left( km(\theta) u \right) \star \epsilon_1 = g \star F_0(e_{p+1},\epsilon_1)
\end{align*}

Finally, we define the inverse of $F_0$. Let $x \oplus y \in \mathcal{O}^1 \\ = \left\{ x \oplus y \in \text{S}^{p+q-1}:\lVert x\rVert < \lVert y\rVert \right\}$. Recall that $f$ restricted on \\ $\mathcal{I} = \big\{ \Phi_\theta ( e_{p+1}, \epsilon_1 ) : \theta \in \mathbb{R} \big\}$ is invertible. Let $\text{pr}_{\text{S}^p} : \text{S}^p \times \text{S}^{q-1} \rightarrow \text{S}^p$ be the projection to the first factor. Let $\hat{f}$ be the function
\begin{equation} \label{f hat}
    \hat{f} = \text{pr}_{\text{S}^p} \circ \left(f \big|_\mathcal{I} \right)^{-1}
\end{equation}
Namely, if $f \left( \alpha \, e_1 + \beta \, e_{p+1} , \epsilon_1 \right) = s$ then, $\hat{f}(s) = \alpha \, e_1 + \beta \, e_{p+1}$. Define
\begin{equation} \label{v and w def}
    v(x\oplus y) = \begin{bmatrix}
        \frac{ \langle \hat{f}\big( \frac{\lVert x \rVert}{\lVert y \rVert } \big), e_1 \rangle}{\lVert x \rVert} x \\
        \langle \hat{f}\big( \frac{\lVert x \rVert}{\lVert y \rVert } \big), e_{p+1} \rangle
    \end{bmatrix}
\text{  and  }
    w(x\oplus y) = \frac{1}{\lVert y \rVert} y
\end{equation}
where $\langle \cdot , \cdot \rangle$ is the euclidean inner product in $\mathbb{R}^{p+1}$. 
Note that, for $x \oplus y \in \mathcal{O}^1$ we have $\lVert x \rVert < \lVert y \rVert$ and hence, $y\neq0$. Therefore, $w$ is well defined and analytic on $\mathcal{O}^1$. Define
\begin{align*}
    F_1 :& \mathcal{O}^1  \rightarrow \mathcal{O} \\
    &  x \oplus y  \mapsto \big( v(x\oplus y), w( x \oplus y ) \big) 
\end{align*}
Suppose $x \neq 0$. Then, $\left(f \big|_\mathcal{I} \right)^{-1}\big( \frac{\lVert x \rVert}{\lVert y \rVert } \big) = \big( \alpha e_1 + \beta e_{p+1},\epsilon_1 \big)$ with $\alpha > 0$. Let $F_1(x \oplus y) = (v,w)$. We have $v_0 = \frac{\alpha}{\lVert x \rVert}x$, $v_{p+1}= \beta$ and $w = \frac{1}{\Vert y \rVert}y$. Then, $f \big( \phi(v,w) \big) = \frac{\lVert x \rVert }{\lVert y \rVert}$. As a result,
\begin{align*}
    F_0(v,w) & = \frac{1}{\sqrt{1+\frac{\lVert x \rVert^2}{\lVert y \rVert^2}}} \left( \frac{\frac{\lVert x \rVert}{\lVert y \rVert}}{\alpha} \frac{\alpha}{\lVert x \rVert} x \oplus \frac{1}{ \lVert y \rVert} y \right) \\
    & = \frac{\lVert y \rVert}{\sqrt{\lVert y \rVert^2 + \lVert x \rVert^2}} \Big( \frac{1}{\lVert y \rVert} x \oplus \frac{1}{\lVert y \rVert} y \Big) = \frac{1}{\sqrt{\lVert y \rVert^2 + \lVert x \rVert^2}} \big( x \oplus y \big) 
 \end{align*}
But, $x\oplus y \in \mathcal{O}^1 \subset \text{S}^{p+q-1}$ and so $\lVert y \rVert^2 + \lVert x \rVert^2 = 1$. Hence, 
\begin{equation} \label{F0 F1 inverse eq 1}
    F_0(v,w) = F_0 \big( F_1(x \oplus y) \big) = x \oplus y
\end{equation}

If $x=0$, then $y \in \text{S}^{q-1}$. Hence, $v(0\oplus y) = e_{p+1}$ and $w(0 \oplus y) = y$, by (\ref{v and w def}). Then, $F_1(0 \oplus y) = \left( e_{p+1}, y \right)$. Since $f(e_{p+1}, \epsilon_1)=0$, by the definition of $F$ and $F_0$, we have $F_0(e_{p+1},y) = 0 \oplus y$. Therefore, 
\begin{equation} \label{F0 F1 inverse eq 2}
    F_0 \left( F_1(0 \oplus y) \right) = 0 \oplus y
\end{equation}
As a result, by equations (\ref{F0 F1 inverse eq 1}) and (\ref{F0 F1 inverse eq 2}) we have that $F_0 \big( F_1(x \oplus y) \big) = x \oplus y$ for any $x \oplus y \in \mathcal{O}^1$. Similarly, we can see that $F_1 \big( F_0(v,w) \big) = (v,w)$ and so $F_1 = F_0^{-1}$.

As for the analyticity of $F_0^{-1}$, it follows easily at points with $x \neq 0$. In order to deal with the case $x=0$, set $\displaystyle v_0 = \frac{ \langle \hat{f}\big( \frac{\lVert x \rVert}{\lVert y \rVert } \big), e_1 \rangle}{\lVert x \rVert} x$ and $v_{p+1} = \langle \hat{f}\big( \frac{\lVert x \rVert}{\lVert y \rVert } \big), e_{p+1} \rangle$. Now, around $(e_{p+1}, \epsilon_1)$ in $\mathcal{S}$, we use the chart given by the inverse of $\psi$ from (\ref{chart psi}), namely $\psi^{-1} :  \mathcal{V}   \rightarrow ( - \delta, \delta )$ such that $ \psi^{-1} \big( s e_1 + \sqrt{1-s^2} e_{p+1} , \epsilon_1 \big)  \mapsto s$.
Consider the function $h' = \psi^{-1} \circ \left(f \big|_\mathcal{I} \right)^{-1}$
We note that $h' = \tilde{f}^{-1}$, see (\ref{f tilde}). Since $\tilde{f}$ is an odd function, $h'$ is also an odd analytic function. In addition, $h'(0)=0$. Then, the function $s \mapsto \frac{h'(s)}{s}$ is an even, analytic function on $(-\delta,\delta)$. Using the same argument as in the proof of Lemma \ref{analytic axn on orbit lemma 1}, we conclude that the function $ \displaystyle x \oplus y \mapsto \frac{\left(f \big|_\mathcal{I} \right)^{-1}\Big( \frac{\lVert x \rVert}{\lVert y \rVert} \Big)}{\frac{\lVert x \rVert}{\lVert y \rVert}}$ is an analytic function around $0 \oplus y \in \mathcal{O}^1$. Therefore, $ \displaystyle x \oplus y \mapsto \frac{\left(f \big|_\mathcal{I} \right)^{-1}\Big( \frac{\lVert x \rVert}{\lVert y \rVert} \Big)}{\lVert x \rVert}$ is an analytic function around $0 \oplus y \in \mathcal{O}^1$. Hence $v_0$ is an analytic function. As for $v_{p+1}$, around $x=0$, it is equal to the function $\displaystyle v_0 \mapsto \sqrt{1 - v_0^2}$. However, $v_0 \neq 1$ around $x=0$ and we just saw that it is analytic. Hence, $v_{p+1}$ is also analytic.
\end{proof}

Proposition \ref{analytic axn orbit prop} now follows from Lemma \ref{analytic axn on orbit lemma 2}. Similarly, it can be shown that the $G$ action restricted to the orbit of $(-e_{p+1}, \epsilon_1)$ is analytic.

Now, let $\mathcal{S}_+ = \{ z \in \mathcal{S} : f(z) > 0 \} = \{ z = ( \alpha_z e_1 + \beta_z e_{p+1} , \epsilon_1) : \alpha_z > 0 \}$
Also, define $D_+ = \left\{ (\theta, z) \in \mathbb{R}  \times \text{S}^1 : \Phi_\theta (z) \in \mathcal{S}_+ \right\}$ and $W_+= \left\{ (g,z) \in G\times \mathcal{S}_+ : \pm Tr \big( gP(z)g^T \big) \neq \frac{1-f^2(z)}{1+f^2(z)} \right\}$ as in \cite{Uchida}. See equation (\ref{P(z) eq}) for the definition of the matrices $ P(z)$. We then have

\begin{lemma}{\cite[Lemma 4.7]{Uchida}} \label{analytic axn lemma 3}
    For any $(g,z) \in W_+$, there exist unique $kH \in K/H$ and $\theta \in \mathbb{R}$ such that \begin{equation} \label{eq:5}
    (\theta , z) \in D_+ \text{  and   } g=km(\theta)u
    \end{equation}
for some $u \in U(z)$. Moreover, the function \begin{align*}
    \Delta : W_+ & \rightarrow K/H \times D_+ \\
    (g,z) & \mapsto \left( kH , \theta,z \right)
\end{align*} is analytic.
\end{lemma}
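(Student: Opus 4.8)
The plan is to recover $\theta$ and the coset $kH$ from $(g,z)$ by an explicit analytic formula, obtained by evaluating $g$ on the distinguished vector $v_z := f(z)\,e_1 + e_{p+1}\in\mathbb{R}^{p+q}$. Write $w_1\in\mathbb{R}^p$, $w_2\in\mathbb{R}^q$ for the two blocks of a vector $w\in\mathbb{R}^{p+q}=\mathbb{R}^p\oplus\mathbb{R}^q$. Note $P(z)=\|v_z\|^{-2}v_zv_z^T$ with $\|v_z\|^2=f(z)^2+1$ (see (\ref{P(z) eq})), so $U(z)=U(P(z))^\circ$ is contained in the stabiliser of $v_z$ and $uv_z=v_z$ for all $u\in U(z)$. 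A direct matrix computation together with property (A3) of Remark \ref{(A1)-(A4) props remark} gives $m(\theta)v_z=\mu(\theta,z)\,v_{\Phi_\theta(z)}$, where $\mu(\theta,z):=f(z)\sinh\theta+\cosh\theta>0$; if $\tanh\tau=f(z)$ with $|f(z)|<1$ then $\mu(\theta,z)=\cosh(\theta+\tau)/\cosh\tau$ and $f(\Phi_\theta(z))=\tanh(\theta+\tau)$, while if $f(z)=1$ then $\mu(\theta,z)=e^\theta$ and $\Phi_\theta(z)\equiv z$. Hence for any expression $g=k\,m(\theta)\,u$ with $k=(\kappa_1,\kappa_2)\in K$, $u\in U(z)$, one has $(gv_z)_1=\mu(\theta,z)f(\Phi_\theta(z))\kappa_1e_1$ and $(gv_z)_2=\mu(\theta,z)\kappa_2\epsilon_1$. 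Independently of any such expression, $gP(z)g^T=\|v_z\|^{-2}(gv_z)(gv_z)^T$ and $g^TI_{p,q}g=I_{p,q}$ yield
\[
\Tr\!\big(gP(z)g^T\big)=\frac{\|(gv_z)_1\|^2+\|(gv_z)_2\|^2}{f(z)^2+1},\qquad \|(gv_z)_2\|^2-\|(gv_z)_1\|^2=1-f(z)^2 .
\]
The second identity forces $(gv_z)_2\neq0$ (since $gv_z\neq0$), and, combined with the first, shows that the defining inequality of $W_+$ is equivalent to $(gv_z)_1\neq0$.

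For uniqueness, suppose $g=k\,m(\theta)\,u=k'\,m(\theta')\,u'$ with $(\theta,z),(\theta',z)\in D_+$. Evaluating both expressions on $v_z$ and comparing the norms of the two blocks of $gv_z$ gives $\mu(\theta,z)=\mu(\theta',z)$, hence $\kappa_2\epsilon_1=\kappa_2'\epsilon_1$, and $|f(\Phi_\theta(z))|=|f(\Phi_{\theta'}(z))|$; membership in $D_+$ makes both of these positive, so $f(\Phi_\theta(z))=f(\Phi_{\theta'}(z))$ and $\kappa_1e_1=\kappa_1'e_1$. Thus $kH=k'H$ under the identification $K/H\cong\text{S}^{p-1}\times\text{S}^{q-1}$, $kH\leftrightarrow(\kappa_1e_1,\kappa_2\epsilon_1)$. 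Finally $\mu(\theta,z)=\mu(\theta',z)$ together with $\tanh(\theta+\tau)=\tanh(\theta'+\tau)$ (resp. $e^\theta=e^{\theta'}$ when $f(z)=1$) gives $\theta=\theta'$.

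For existence, take any decomposition $g=k_0\,m(\theta_0)\,u_0$ provided by $G=K\,\mathcal{M}(p,q)\,U(z)$, see (\ref{eq:1}). Since $(g,z)\in W_+$ we have $(gv_z)_1\neq0$, i.e. $f(\Phi_{\theta_0}(z))\neq0$. If $f(\Phi_{\theta_0}(z))>0$ then $(\theta_0,z)\in D_+$ and we are done. If $f(\Phi_{\theta_0}(z))<0$ then necessarily $|f(z)|<1$; using $j_1m(\theta_0)=m(-\theta_0)j_1$ and the identity $m(2\tau)j_1v_z=v_z$ (a short computation), together with the fact that the full stabiliser of the positive‑norm vector $v_z$ in $G$ is connected and hence equals $U(z)$, one gets $u_1:=m(2\tau)j_1u_0\in U(z)$ and $g=(k_0j_1)\,m(-\theta_0-2\tau)\,u_1$ with $f\big(\Phi_{-\theta_0-2\tau}(z)\big)=-f(\Phi_{\theta_0}(z))>0$, so $(-\theta_0-2\tau,z)\in D_+$. (When $f(z)=1$ the original decomposition already lies over $D_+$, as then $f(\Phi_{\theta_0}(z))=1$.) These manipulations are of the same type as those in Section \ref{subsection: well-def of axns}.

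It remains to prove analyticity, and this is now essentially automatic. The formulas above show that for $(g,z)\in W_+$ the recovered data are
\[
\theta=\ln\frac{\|(gv_z)_1\|+\|(gv_z)_2\|}{1+f(z)},\qquad kH\ \longleftrightarrow\ \Big(\tfrac{(gv_z)_1}{\|(gv_z)_1\|},\ \tfrac{(gv_z)_2}{\|(gv_z)_2\|}\Big)\in\text{S}^{p-1}\times\text{S}^{q-1},
\]
where the logarithm is checked, using $\mu(\theta,z)$ and (A3), to equal the value of $\theta$ coming from any decomposition over $D_+$. On $W_+$ the map $(g,z)\mapsto gv_z$ is analytic ($f$ is analytic on $\mathcal{S}_+$ and matrix–vector multiplication is analytic), both blocks $(gv_z)_1,(gv_z)_2$ are nowhere vanishing, and $1+f(z)>0$ on $\mathcal{S}_+$; since $v\mapsto\|v\|$, $v\mapsto v/\|v\|$ (for $v\neq0$) and $\ln$ are analytic, $\Delta$ is analytic. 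The only genuinely delicate points are the connectedness of $\mathrm{Stab}_G(v_z)$ used for the "flip" in the existence step — which fails to be automatic precisely at the degenerate locus $|f(z)|=1$, where $v_z$ is null, and which therefore has to be treated separately — and the bookkeeping of signs; the conditions cutting out $W_+$ and $D_+$ are exactly what guarantee $(gv_z)_1\neq0$ and single out the correct branch of $\theta$.
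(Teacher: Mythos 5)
Your proof is correct. The paper itself gives no argument for this lemma --- it only says the proof is similar to Uchida's Lemma 4.7 (and the thesis) --- and Uchida's template extracts $\theta$ and the coset from the rank-one matrix $gP(z)g^{T}$ (its trace and block structure), in the spirit of the manipulations in Section \ref{subsection: well-def of axns}. You instead evaluate $g$ on the vector $v_z=f(z)e_1+e_{p+1}$, i.e.\ you work with a square root of $gP(z)g^{T}$; this is the same circle of ideas, but it buys you closed formulas for $\theta$ and for $kH$ in terms of the two blocks of $gv_z$, which makes the analyticity of $\Delta$ immediate rather than something to be imported from Uchida's computation. The points you flag as delicate all check out: $m(2\tau)j_1v_z=v_z$ holds; the $W_+$ condition is indeed equivalent to $(gv_z)_1\neq 0$, since $\Tr\bigl(gP(z)g^{T}\bigr)=\bigl(2\lVert (gv_z)_1\rVert^2+1-f^2(z)\bigr)/\bigl(1+f^2(z)\bigr)\geq 0$, so the ``$-$'' alternative in the definition of $W_+$ never occurs; the identity $\mu(\theta,z)\bigl(1+f(\Phi_\theta(z))\bigr)/\bigl(1+f(z)\bigr)=e^{\theta}$ is valid both for $|f(z)|<1$ and for $f(z)=1$, so your logarithmic formula recovers $\theta$ in all cases; and for $|f(z)|<1$ the full stabiliser in $G$ of the positive-norm vector $v_z$ is connected (it is $\mathrm{SO}^\circ(p,q-1)$, because the other component of $\mathrm{SO}(p,q-1)$ reverses the orientation of the negative-definite part and hence leaves $\mathrm{SO}^\circ(p,q)$), so it equals $U(z)$ and your flip element $m(2\tau)j_1=j_1m(-2\tau)$ lies in $U(z)$, consistently with Lemma \ref{adapted lemma 3}; the degenerate case $f(z)=1$ is correctly handled separately (no flip is needed there, and $\theta$ is pinned down by $\mu=e^{\theta}$). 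So your argument is complete and stands as a legitimately more explicit, self-contained alternative to the citation of Uchida.
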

The proof is similar to \cite{Uchida}. See also \cite{Lentas_thesis} for more details. Now, define the set $W_0 := \left\{ ( g , k \star z) : (gk,z) \in W_+ \right\}$. It is easy to see that $W_0$ is an open subset of $G \times \left( \text{S}^p \times \text{S}^{q-1} \right)$.
\begin{prop}
    The action map of the $G$ action defined in (\ref{axn def}) is analytic on $W_0$.
\end{prop}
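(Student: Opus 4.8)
The plan is to factor the action map on $W_0$ through the analytic function $\Delta$ supplied by Lemma~\ref{analytic axn lemma 3}. First I set up an auxiliary analytic map. Consider
\[
  \alpha \colon K \times D_+ \longrightarrow \text{S}^p \times \text{S}^{q-1}, \qquad (k,\theta,z) \longmapsto k \star \Phi_\theta(z),
\]
where $k\star$ denotes the standard (hence analytic, in fact real-algebraic) action of $K$ and $\Phi$ is the induced basic $j_1$-flow, which is analytic in $(\theta,z)$; so $\alpha$ is analytic. Since $\mathcal{S}$ is contained in the fixed point set of $H$, we have $h\star\Phi_\theta(z)=\Phi_\theta(z)$ for $h\in H$, so $\alpha(kh,\theta,z)=\alpha(k,\theta,z)$ and $\alpha$ descends to an analytic map $\bar\alpha\colon(K/H)\times D_+\to\text{S}^p\times\text{S}^{q-1}$, using that $K\to K/H$ is an analytic submersion. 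By the definition (\ref{axn def}) of the $G$-action, $\bar\alpha(kH,\theta,z)=(km(\theta)u)\star z$ for every $u\in U(z)$; that is, $\bar\alpha$ is exactly the action read through a factorization of type (\ref{eq:1}).

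Next I pass to a convenient parametrization of $W_0$. Put $\widetilde W_0=\{(g,k,z)\in G\times K\times\mathcal{S}_+:(gk,z)\in W_+\}$, an open subset of $G\times K\times\mathcal{S}_+$ since $W_+$ is open and $(g,k,z)\mapsto(gk,z)$ is analytic, and let $\pi\colon\widetilde W_0\to W_0$, $\pi(g,k,z)=(g,k\star z)$, which is surjective by the definition of $W_0$. I claim $\pi$ is an analytic submersion. It suffices to check that $(k,z)\mapsto k\star z$ is submersive on $K\times\mathcal{S}_+$, and for $z\in\mathcal{S}_+$ the image of its differential is the sum of the tangent space to the $K$-orbit of $z$, which has dimension $p+q-2$ because $H$ is the principal isotropy group there, and the tangent line to $\mathcal{S}_+$; as $\mathcal{S}_+$ meets the $K$-orbits transversally, this sum is the full $(p+q-1)$-dimensional tangent space. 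Since a surjective analytic submersion admits local analytic sections, the action map $a\colon W_0\to\text{S}^p\times\text{S}^{q-1}$ is analytic as soon as $a\circ\pi$ is.

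Finally I compute $a\circ\pi$. For $(g,k,z)\in\widetilde W_0$ one has $a\circ\pi(g,k,z)=g\star(k\star z)=(gk)\star z$, and $(gk,z)\in W_+$, so Lemma~\ref{analytic axn lemma 3} gives a factorization $gk=k'm(\theta)u$ with $u\in U(z)$ and $\Delta(gk,z)=(k'H,\theta,z)$, with $\Delta$ analytic. By the definition (\ref{axn def}) (applied with $k_0=I$ and the point $z\in\mathcal{S}$),
\[
  a\circ\pi(g,k,z)=(gk)\star z=k'\star\Phi_\theta(z)=\bar\alpha\big(k'H,\theta,z\big)=\bar\alpha\big(\Delta(gk,z)\big),
\]
which is a composition of the analytic maps $(g,k,z)\mapsto(gk,z)$, $\Delta$ and $\bar\alpha$, hence analytic on $\widetilde W_0$. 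Therefore $a$ is analytic on $W_0$.

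The only substantial input is Lemma~\ref{analytic axn lemma 3}, the adaptation of Uchida's Lemma~4.7, which is assumed; within the argument the point requiring a little care is the submersivity of $\pi$, i.e.\ that the orbit map $(k,z)\mapsto k\star z$ on $K\times\mathcal{S}_+$ is submersive, which uses that $H$ is a principal isotropy subgroup with codimension-one $K$-orbits and that $\mathcal{S}_+$ is transverse to them, together with the trivial fact that $H$ fixes $\mathcal{S}$ pointwise, needed to make $\bar\alpha$ well defined.
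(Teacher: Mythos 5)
Your proof is correct and follows essentially the same route as the paper: the action map on $W_0$ is expressed as the composition of the analytic map $\Delta$ from Lemma~\ref{analytic axn lemma 3} with the analytic map $(kH,\theta,z)\mapsto k\star\Phi_\theta(z)$. The extra details you supply (the descent to $K/H$ and the submersivity of $(g,k,z)\mapsto(g,k\star z)$, which the paper leaves implicit) are accurate and simply make the argument explicit.
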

\begin{proof}
The action map restricted to $W_0$ is $g \star \left( k \star z \right) = \Delta_1(gk,z) \star \Phi_{\Delta_2(gk,z)} (z)$ and hence the action is analytic, when restricted to $W_0$, by Lemma \ref{analytic axn lemma 3}.
\end{proof}

 Finally, we only have to observe that that 
\[ G \times \big( \text{S}^p \times \text{S}^{q-1} \big) =( G \times \mathcal{O} ) \cup ( G \times \tilde{\mathcal{O}}) \cup W_0 \]
where $\mathcal{O}$ and $\tilde{\mathcal{O}}$ are the $G$-orbits of $(e_{p+1}, \epsilon_1)$ and $(-e_{p+1}, \epsilon_1)$ respectively. Each of the sets on the right hand side is open, and the action map is analytic when restricted to either of them. Hence, the action of $G$ on $\text{S}^p \times \text{S}^{q-1}$ defined by (\ref{axn def}) is analytic.
\printbibliography
\end{document}